\documentclass[11pt]{article}
\usepackage{amssymb, amsmath, amsthm, srcltx, oldgerm}
\usepackage{graphicx}
\usepackage{color}
\usepackage{comment}
\usepackage{fancyhdr}
\usepackage{titletoc}
\usepackage[linktocpage=true]{hyperref}
\vfuzz2pt 
\hfuzz2pt 

\textwidth=15cm \textheight=20.5cm \hoffset=-1.3cm
\newtheorem{thm}{Theorem}[section]
\newtheorem{lem}[thm]{Lemma}
\newtheorem{cor}[thm]{Corollary}
\newtheorem{prop}[thm]{Proposition}
\theoremstyle{definition}
\newtheorem{assn}[thm]{Assumption}
\newtheorem{defn}[thm]{Definition}
\theoremstyle{remark}
\newtheorem{rem}[thm]{Remark}
\numberwithin{equation}{section}
\newcommand{\Tr}{\mathrm{Tr}\;\!}

\newcommand{\Real}{\mathbb R}

\newcommand{\E}{\mathbf{E}\,}
\newcommand{\F}{\mathbf{F}}            
\newcommand{\I}{\mathbf{I}}            
\newcommand{\J}{\mathbf{J}}            
\newcommand{\M}{\mathbf{M}}
\newcommand{\R}{\mathbf{R}}
\newcommand{\U}{\mathbf{U}}
\newcommand{\V}{\mathbf{V}}
\newcommand{\W}{\mathbf{W}}
\newcommand{\X}{\mathbf{X}}
\newcommand{\Y}{\mathbf{Y}}
\newcommand{\Z}{\mathbf{Z}}
\newcommand{\re}{\mathrm{Re}\;\!}
\newcommand{\im}{\mathrm{Im}\;\!} 

\newenvironment{Proof of}{\removelastskip\par\medskipstraightforward
\noindent{\em Proof of} \rm}{\penalty-20\null\hfill$\square$\par\medbreak}

\makeatletter
\renewcommand\tableofcontents{\paragraph{Table of Contents.} \ \@starttoc{toc}}
\makeatother

\begin{document}
\date{August 13, 2014}
\title{\bf Asymptotic Spectra of Matrix-Valued Functions \\
of Independent Random Matrices \\ and Free Probability}

\author{{\bf F. G\"otze$^1$}\\{\small Faculty of Mathematics}
\\{\small University of Bielefeld}\\{\small Germany}
\and {\bf H. K\"osters$^1$}\\{\small Faculty of Mathematics}
\\{\small University of Bielefeld}\\{\small Germany}
\and {\bf A. Tikhomirov}$^{1,2}$\\{\small Department of Mathematics
}\\{\small Komi Research Center of Ural Branch of RAS,}\\{\small Syktyvkar State University}
\\{\small Russia}
}

\maketitle
 \footnote{$^1$Partially supported by CRC 701 ``Spectral Structures and Topological Methods in Mathematics'', Bielefeld.
 $^2$Partially supported by RFBR, grant N 14-01-00500 and by Program of Fundamental Research
 Ural Division of RAS 12-P-1-1013.}

\title{}
\author{}

\maketitle

\begin{abstract}
We investigate the universality of singular value and eigenvalue distributions 
of matrix valued functions of independent random matrices and apply 
these general results in several examples. In particular we determine 
the limit distribution and prove universality under general conditions  
for singular value and eigenvalue distributions of products 
of independent matrices from spherical ensembles.
\end{abstract}

\titlecontents{section}[1.5em]{\addvspace{1pt}}{\contentslabel{1.5em}}{}{\titlerule*[0.72pc]{.}\contentspage}


\section{Introduction} \label{sec:notation}

One of the main questions studied in Random Matrix Theory 
is the asymptotic universality, meaning the dependence
on a few global characteristics of the distribution of the matrix entries,  
of the distribution of spectra of random matrices 
when their dimension goes to infinity.
This holds for the spectra of Hermitian random matrices 
with independent entries (up to symmetry),
first proved by Wigner in 1955 \cite{Wigner:55}.
Another well studied case is that of sample covariance matrices 
(i.e.\@ $\W=\X\X^*$, where $\X$ is a matrix 
with independent entries), first studied in \cite{MP:67} by Marchenko--Pastur. 
The spectrum of non Hermitian random matrices with 
independent identically distributed entries is universal as well.
The limiting complex spectrum of this \emph{Ginibre--Girko Ensemble}
is the circular law (i.e.\@ the uniform distribution on the unit circle 
in the complex plane).
The universality here was first proved in \cite{Girko:84} by Girko. 
In the last years different models of random matrices 
which were derived from Wigner and Ginibre--Girko matrices were studied.
For instance, in \cite{AGT:10}, \cite{AGT:10a}  
the universality of the singular value distribution 
of powers of Ginibre--Girko matrices was shown.
In \cite{GT:12} and \cite{SR:10} the universality of the spectrum of products of 
independent random matrices from the Ginibre--Girko Ensemble was proved.
Moreover, more recently, the local properties of the spectrum have also been investigated
in the Gaussian case; see e.g.\@ \cite{Akemann-Ipsen-Kieburg} and \cite{Kuijlaars-Zhang}.

In this paper we describe a general approach to prove 
the universality of singular value and eigenvalue distributions
of {\it matrix-valued functions} of independent random matrices. 
More precisely, we consider random matrices of the form
$$
\F = \mathbb{F}(\X^{(1)},\hdots,\X^{(m)}) \,,
$$
where $\X^{(1)},\hdots\X^{(m)}$ are independent \emph{non-Hermitian} random matrices 
with independent entries and $\mathbb{F}$ is a matrix-valued function.
Our approach is based on the Lindeberg principle of replacing
matrix entries with arbitrary distributions by matrix entries with Gaussian distributions. 
This approach has proved to be fruitful and is used by many authors in random matrix theory;
see e.g.\@ \cite{Chaterj}, \cite{TaoVu:10}, \cite{PasShcherb}, \cite{GT:12}. 
To prove the universality of singular value distributions,
we assume a Lindeberg-type condition for the matrix entries
and a certain rank condition as well as certain smoothness conditions
for the matrix-valued function;
see Equations \eqref{lind}, \eqref{rank} and \eqref{mat3} -- \eqref{mat32a}
in Section \ref{sec:universality-SVD}.
To prove the universality of eigenvalue distributions,
we use Girko's principle of Hermitization (see \cite{Girko:84}),
according to which there is a close connection between 
the eigenvalue distribution of the (square) matrix $\F$
and the family of the singular value distributions 
of all \emph{shifted matrices} $\F - \alpha \I$,
with $\alpha \in \mathbb{C}$.
Here we need some assumptions on the large and small singular values of the shifted matrices;
see Conditions $(C0)$, $(C1)$, $(C2)$ in Section \ref{sec:universality-EVD}.

Furthermore, we introduce a general approach to identify the limiting eigenvalue distribution
of the (square) matrix $\F$. 
Our main results here show how to derive the density of the limiting eigenvalue distribution
of the matrix $\F$ from (the $S$-transform of) its limiting singular value distribution.
This derivation can be divided into two major steps:

In a first step, we derive equations for the Stieltjes transforms $g(z,\alpha)$
of the (symmetrized) singular value distributions of the shifted matrices $\F - \alpha \I$
via the $S$-transform $S(z)$ of the (symmetrized) singular value distribution
of the unshifted matrix $\F$. The key system of equations here reads
\begin{align}\label{eq:intro-1}
w(z,\alpha)&=z+\frac{\widetilde R_{\alpha}(-g(z,\alpha))}{g(z,\alpha)},\notag\\
g(z,\alpha)&=(1+w(z,\alpha)g(z,\alpha))S(-(1+w(z,\alpha)g(z,\alpha))),
\end{align}
where $w(z,\alpha)$ is an unknown auxiliary function and $\widetilde R_\alpha(z)$ is a known function.
To derive this system of equations, we use the asymptotic freeness of the matrices
\begin{align}\label{eq:intro-2}
\begin{bmatrix}&\mathbf O&\F&\\&{\F}^*&\mathbf O&\end{bmatrix}
\qquad\text{and}\qquad
\begin{bmatrix}&\mathbf O&-\alpha\I&\\&-\overline \alpha\I&\mathbf O&\end{bmatrix}
\end{align}
as well as the calculus for $R$-transforms and $S$-transforms.
Furthermore, we show that it~is possible take the limit $z \to 0$ in \eqref{eq:intro-1}.
Since we are working in a quite general framework,
the investigation of the existence of this limit 
as well as its analytic properties require some work.

In a second step, we identify the density $f$ of the limiting eigenvalue distribution 
of the random matrix $\F$ using logarithmic potential theory.
The main observation here is that the function $\psi(\alpha) := - w(0,\alpha) g(0,\alpha)$
is closely related to the partial derivatives of the logarithmic potential
of the limiting eigenvalue distribution. Thus, under regularity assumptions,
we obtain the relation
 \begin{equation}\label{eq:intro-3}
 f(u,v)=\frac{1}{2\pi |\alpha|^2}  \left(u\frac{\partial \psi}{\partial u}+v\frac{\partial \psi}{\partial v}\right),
\end{equation}
where $u$ and $v$ denote the real and imaginary part of $\alpha$, respectively.

Let us emphasize that this identification of the limiting eigenvalue distribution is quite general.
In principle, we only need the $S$-transform of the limiting singular value distribution
and the asymptotic freeness of the matrices in \eqref{eq:intro-2}.

%
%

In Section \ref{sec:applications} we give several examples for applications of our main universality results 
(Theorems \ref{singularvalueuniversality} and \ref{eigenvalueuniversality}).
The guiding principle here is (i) to establish universality
and (ii) to compute the~limits in the Gaussian case,
using tools from free probability theory.
\linebreak Here we focus on a special class of matrix-valued functions, 
namely \emph{products} of matrices or powers and inverses thereof.
Although our framework should, in principle, cover more general functions as well,
products of independent matrices represent a convenient class of examples
in which the assumptions of our main results can be checked.
For instance, the~conditions $(C0)$, $(C1)$, $(C2)$ on the large and small singular values
can be deduced from existing results by Tao and Vu \cite{TaoVu:10}
and G\"otze and Tikhomirov \cite{GT:10aop}, \cite{GT:12} here.
Moreover, once universality is proved, it suffices to~identify
the limiting eigenvalue and singular value distributions in the \emph{Gaussian} case.
But if the random matrices $\X^{(1)},\hdots,\X^{(m)}$ 
have independent standard Gaussian entries,
their distributions are invariant under rotations,
and the $S$-transforms of the limiting singular value distributions
of their products are readily obtained using tools from free probability theory,
see e.g.\@ Voiculescu \cite{Voiculescu:98} or Hiai and Petz \cite{Petz-1}.
From here it is possible to obtain the limiting singular value distributions and,
as we have seen, the limiting eigenvalue distributions.


Our~examples illustrate that our main results provide a unifying framework
to derive old and new results for products of independent random matrices.
In particular, we determine the limiting singular value and eigenvalue distributions
for products of independent random matrices from the so-called \emph{spherical ensemble}
(see e.g.\@ \cite{Mays}),
i.e.\@ for products of the form $\X^{(1)} (\X^{(2)})^{-1} \cdots \X^{(2m-1)} (\X^{(2m)})^{-1}$,
where $\X^{(1)},\ldots,\X^{(2m)}$ are independent Girko--Ginibre matrices.
 
\tableofcontents

\bigskip

\section{General Framework}

We now introduce our main assumptions and notation.
Generalizations and specializations will be indicated 
at the beginnings of later sections.

Let $m \geq 1$ be fixed.
Let $\mathcal M_{n\times p}$ denote the space of $n\times p$ matrices. 
Let $\mathbb F=(f_{jk})$, $1\le j\le n$, $1\le k\le p$ be a map 
from the space of $m$-tuples of $n_0 \times n_1$, $n_1\times n_2,\ldots,n_{m-1}\times n_m$ 
matrices $\mathcal M_{n_0\times n_1}\times\cdots\times\mathcal M_{n_{m-1}\times n_m}$ to 
$\mathcal M_{n\times p}$. Here we assume that $n_0=n$ and $n_m=p$.

\pagebreak[2]

In order to study the spectral asymptotics of sequences of such matrix tuples we shall
make a so-called {\it dimension shape assumption}, meaning that $n_q=n_q(n)$, 
and that for any $q=1,\ldots,m$,
\begin{equation}\label{samplesize}
 \lim_{n\to\infty}\frac n{n_q(n)}=y_q>0.
\end{equation}

Let $\X = (\X^{(1)},\ldots, \X^{(m)})$ 
be an $m$-tuple of independent random matrices of dimensions
$n_0\times n_1,\ldots,n_{m-1}\times n_m$, respectively, 
with independent entries. More precisely, we assume that 
$$
\X^{(q)} = (\tfrac{1}{\sqrt{n_q}} X^{(q)}_{jk}),
$$
where the $X^{(q)}_{jk}$ are independent complex random variables
such that for all $q=1,\hdots,m$ and $j=1,\hdots,n_{q-1};\,k=1,\hdots,n_{q}$,
we have $\E X_{jk}^{(q)}=0$ and $\E|X_{jk}^{(q)}|^2=1$.

Furthermore, let $\Y = (\Y^{(1)},\ldots,\Y^{(m)})$ 
be an $m$-tuple of independent random matrices of dimensions
$n_0\times n_1,\ldots,n_{m-1}\times n_m$, respectively,
with independent {\it Gaussian} entries. 
More~precisely, we assume that
$$
\Y^{(q)} = (\tfrac{1}{\sqrt{n_q}} Y^{(q)}_{jk}) \,,
$$
where the $Y^{(q)}_{jk}$ are independent complex random variables
such that for all $q=1,\hdots,m$ and $j=1,\hdots,n_{j-q};\,k=1,\hdots,n_{q}$,
$(\re Y^{(q)}_{jk},\im Y^{(q)}_{jk})$ has a bivariate Gaussian distribution
with the same first and second moments as $(\re X^{(q)}_{jk},\im X^{(q)}_{jk})$.
By this we mean that
\begin{align}
\label{eq:secondmomentstructure-1}
\E \re Y^{(q)}_{jk} = \E \re X^{(q)}_{jk} &\,,\
\E \im Y^{(q)}_{jk} = \E \im X^{(q)}_{jk} \,,\
\nonumber\\
\E |\re Y^{(q)}_{jk}|^2 = \E |\re X^{(q)}_{jk}|^2 &\,,\
\E |\im Y^{(q)}_{jk}|^2 = \E |\im X^{(q)}_{jk}|^2 \,,\
\nonumber\\
\E (\re Y^{(q)}_{jk} \im Y^{(q)}_{jk}) &= \E (\re X^{(q)}_{jk} \im X^{(q)}_{jk}) \,.
\end{align}
In particular, $\E Y_{jk}^{(q)}=0$ and $\E|Y_{jk}^{(q)}|^2=1$.

In Section~\ref{sec:applications}, 
when we determine the limiting singular value and eigenvalue distributions
in the Gaussian case, we will impose the stronger assumption that the $Y^{(q)}_{jk}$ 
are \emph{standard} real or complex Gaussian random variables.
By Eq.~\eqref{eq:secondmomentstructure-1}, this entails some restrictions
on the second moments of the $X^{(q)}_{jk}$.
%
%

We shall also assume that the random matrices 
$\X^{(1)},\ldots,\X^{(m)}$ and $\Y^{(1)},\ldots,\Y^{(m)}$ 
are defined on the same probability space and that
$\Y^{(1)},\ldots,\Y^{(m)}$ are independent of $\X^{(1)},\ldots,\X^{(m)}$. 
Finally, for any $m$-tuple $\Z = (\Z^{(1)},\hdots,\Z^{(m)})$
in $\mathcal M_{n_0\times n_1}\times\cdots\times\mathcal M_{n_{m-1}\times n_m}$,
we~set 
\begin{equation}\label{eq:F-definition}
\F_{\Z} := \mathbb F(\Z^{(1)},\hdots,\Z^{(m)}) \,.
\end{equation}
Note that since we are interested in asymptotic singular value and eigenvalue distributions,
we are actually dealing with \emph{sequences} of matrix tuples of increasing dimension.
However, the dependence on $n$ is usually suppressed in our notation.

Throughout this paper, we use the following notation.
For a matrix $\mathbf A = (a_{jk})$ $\in \mathcal M_{n \times p}$,
we write $\|\mathbf A\|$ for the \emph {operator norm} of $\mathbf A$
and $\|\mathbf A\|_2 := (\sum_{j=1}^{n} \sum_{k=1}^{p} |a_{jk}|^2)^{1/2}$
for the \emph{Frobenius norm} of $\mathbf A$.
The singular values of $\mathbf A$ are the square-roots
of the eigenvalues of the $n \times n$ matrix $\mathbf A \mathbf A^*$.
Finally, unless otherwise indicated, $C$ and $c$ denote
sufficiently large and small positive constants, respectively,
which may change from step to step.

\section{Universality of Singular Value Distributions of Functions \\ of Independent Random Matrices}
\label{sec:universality-SVD}

We start with the singular value distribution of functions of independent random matrices.
Let $\X = (\X^{(1)},\hdots,\X^{(m)})$ be an $m$-tuple 
of independent random matrices with independent entries
as in Section \ref{sec:notation},
and let $\F_{\X}=\mathbb F(\X^{(1)},\ldots,\X^{(m)})$
be a matrix-valued function of~$\X$.
We are interested in the empirical distribution of the singular values of $\F_\X$,
i.e.\@ of the square-roots of the eigenvalues of $\F_\X^{} \F_\X^*$.
  
We shall assume that the random variables $X_{jk}^{(q)}$, 
for $q=1,\ldots,m$, $j=1,\ldots,n_{q-1};\,k=1,\ldots,n_q$,
satisfy the following {\it Lindeberg condition}, 
i.~e.
\begin{equation}\label{lind}
 \text{for any $\tau > 0$,} \quad
 L_n(\tau):=\frac1{n^2}\sum_{q=1}^m\sum_{j=1}^{n_{q-1}}\sum_{k=1}^{n_q}
 \E|X_{jk}^{(q)}|^2 \mathbb I\{|X_{jk}^{(q)}|>\tau\sqrt n\}\to0
\quad \text{ as }n\to \infty.
\end{equation}

We shall assume as well that the function $\mathbb F$ satisfies a so-called {\it rank condition}, 
i.~e.\@ for any $m$-tuples $({\mathbf A}^{(1)},\ldots,{\mathbf A}^{(m)}),({\mathbf B}^{(1)},\ldots,{\mathbf B}^{(m)})
\in \mathcal M_{n_0\times n_1}\times\cdots\times\mathcal M_{n_{m-1}\times n_{m}}$ 
we have
\begin{equation}\label{rank}
\text{\rm rank}\{\mathbb F(\mathbf A^{(1)},\ldots,\mathbf A^{(m)})
-\mathbb F(\mathbf B^{(1)},\ldots,\mathbf B^{(m)})\}
\le
C(\mathbb F)\sum_{q=1}^m
\text{\rm rank}
\{\mathbf A^{(q)}-\mathbf B^{(q)}\}.
\end{equation}

We now define truncated matrices. 
Note that by \eqref{lind} there exists a sequence $(\tau_n)$ such that
\begin{equation}\label{asumplind}
 \tau_n\to0 \qquad\text{and}\quad L_n(\tau_n)\tau_n^{-4}\to 0 \qquad \text{ as } n\to\infty.
\end{equation}
Clearly, we may additionally require that $\tau_n \geq n^{-1/3}$ for all $n$.
We fix such a sequence and consider the matrix tuple
$\widehat{\X} = (\widehat{\X}^{(1)},\hdots,\widehat{\X}^{(m)})$
consisting of the matrices
$\widehat {\X}^{(q)} = (\tfrac{1}{\sqrt{n_q}} \widehat X_{jk}^{(q)})$,
$q=1,\hdots,m$, where
\begin{equation}\notag
 \widehat{X}_{jk}^{(q)}=X_{jk}^{(q)}\mathbb I\{|X_{jk}^{(q)}|\le\tau_n\sqrt n\}.
\end{equation}
Let $\mathbf B$ be a non-random matrix of order $n\times p$,
let $\F_{\X}$ and $\F_{\widehat{\X}}$
be defined as in \eqref{eq:F-definition}, and let 
$s_1({\X})\ge\ldots\ge s_n({\X})$
and 
$s_1({\widehat{\X}})\ge\ldots\ge s_n({\widehat{\X}})$ 
denote the singular values of the matrices
$\F_{\X}+\mathbf B$ and $\F_{\widehat{\X}}+\mathbf B$,
respectively.
Let $\mathcal F_{\X}(x)$ (resp. $\mathcal F_{\widehat{\X}}(x)$)
denote the empirical distribution function of the \emph{squared} singular values 
of the matrix $\F_{\X}+\mathbf B$ 
(resp. $\F_{\widehat{\X}}+\mathbf B$), i.e.
\begin{equation}\notag
\mathcal F_{\X}(x)=\frac1n\sum_{j=1}^n\mathbb I\{s_j^2(\X)\le x\},\quad 
\mathcal F_{\widehat{\X}}(x)=\frac1n\sum_{j=1}^n\mathbb I\{s_j^2(\widehat{\X})
\le x\},
\quad x \in \mathbb R \,.
\end{equation}
The corresponding Stieltjes transforms  of these empirical distributions
are denoted by $m_{\X}(z)$ and $m_{\widehat{\X}}(z)$, i.e.
\begin{equation}\notag
 m_{\X}(z)=\frac1n\sum_{j=1}^n\frac1{s_j^2(\X)-z},\quad 
m_{\widehat{\X}}(z)=\frac1n\sum_{j=1}^n\frac1{s_j^2(\widehat{\X})-z},
\quad z \in \mathbb C_+ \,.
\end{equation}

\pagebreak[2]
First we prove the following
\begin{lem}
\label{lem:truncation}
Assume that the conditions \eqref{lind} and \eqref{rank} hold.
Then
\begin{equation}\notag
\E \sup_x|\mathcal F_{\X}(x)-\mathcal F_{\widehat{\X}}(x)|\le C\tau_n^2,
\end{equation}
and, for any $z = u + iv$ with $v > 0$,
\begin{equation}\notag
 \E|m_{\X}(z)-m_{\widehat{\X}}(z)|\le Cv^{-1}\tau_n^2.
\end{equation}
\end{lem}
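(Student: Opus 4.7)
My strategy is to reduce both bounds to counting how many entries of $\X$ were truncated, by combining the rank condition \eqref{rank} with the classical Cauchy–type interlacing inequality for singular values under low–rank perturbation. Concretely, for two $n\times p$ matrices $\mathbf M,\mathbf N$ with $r:=\text{rank}(\mathbf M-\mathbf N)$, the min–max characterisation of singular values yields $s_{j+r}(\mathbf M)\le s_j(\mathbf N)\le s_{j-r}(\mathbf M)$, and hence $\sup_x|\mathcal F_{\mathbf M}(x)-\mathcal F_{\mathbf N}(x)|\le r/n$, where $\mathcal F$ denotes the empirical distribution of squared singular values. Since the deterministic shift $\mathbf B$ cancels in the difference, this applies verbatim to $(\F_\X+\mathbf B)$ and $(\F_{\widehat\X}+\mathbf B)$.

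Applying this with $\mathbf M=\F_\X+\mathbf B$ and $\mathbf N=\F_{\widehat\X}+\mathbf B$, I get
\begin{equation*}
\sup_x|\mathcal F_{\X}(x)-\mathcal F_{\widehat\X}(x)|\le \frac{1}{n}\,\text{rank}(\F_\X-\F_{\widehat\X}).
\end{equation*}
By the rank condition \eqref{rank}, the right-hand side is bounded by $\frac{C(\mathbb F)}{n}\sum_{q=1}^m \text{rank}(\X^{(q)}-\widehat\X^{(q)})$. Each matrix $\X^{(q)}-\widehat\X^{(q)}$ is supported on the entries where the truncation triggered, so its rank is crudely bounded by the number of such entries:
\begin{equation*}
\text{rank}(\X^{(q)}-\widehat\X^{(q)})\le \sum_{j=1}^{n_{q-1}}\sum_{k=1}^{n_q}\mathbb I\{|X_{jk}^{(q)}|>\tau_n\sqrt n\}.
\end{equation*}

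Taking expectations and applying Chebyshev's inequality $P(|X_{jk}^{(q)}|>\tau_n\sqrt n)\le (\tau_n^2 n)^{-1}\E|X_{jk}^{(q)}|^2\mathbb I\{|X_{jk}^{(q)}|>\tau_n\sqrt n\}$, the definition of $L_n$ in \eqref{lind} gives
\begin{equation*}
\sum_{q=1}^m\E\,\text{rank}(\X^{(q)}-\widehat\X^{(q)})\le \frac{n^2 L_n(\tau_n)}{\tau_n^2 n}=n\tau_n^2\cdot\frac{L_n(\tau_n)}{\tau_n^4}.
\end{equation*}
Since the sequence $(\tau_n)$ is chosen to satisfy \eqref{asumplind}, the factor $L_n(\tau_n)/\tau_n^4$ is bounded uniformly in $n$ (tending to $0$), so the whole expression is $\le C n\tau_n^2$. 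Dividing by $n$ yields the first claim.

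For the Stieltjes transform estimate, I use the integration–by–parts representation: since both empirical distribution functions have compact support in $[0,\infty)$, their difference $\Delta(x):=\mathcal F_\X(x)-\mathcal F_{\widehat\X}(x)$ has compact support, and
\begin{equation*}
m_\X(z)-m_{\widehat\X}(z)=\int_{\mathbb R}\frac{d\Delta(x)}{x-z}=\int_{\mathbb R}\frac{\Delta(x)}{(x-z)^2}\,dx.
\end{equation*}
Taking absolute values and using $\int_{\mathbb R}|x-z|^{-2}\,dx=\pi/v$ gives $|m_\X(z)-m_{\widehat\X}(z)|\le \pi v^{-1}\sup_x|\Delta(x)|$, and the second claim follows from the first after taking expectations. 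The only subtle point is making sure the support of $\Delta$ really is bounded (so that boundary terms vanish), which is immediate for any fixed realisation, and the tightness of rank counting at the very largest entries does not affect the inequality.
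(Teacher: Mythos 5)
Your proof is correct and follows essentially the same route as the paper: both arguments combine the rank inequality for empirical singular value distributions, the rank condition \eqref{rank}, a per-entry rank bound on $\X^{(q)}-\widehat\X^{(q)}$, Chebyshev's inequality, and the rate condition \eqref{asumplind}. The only difference is cosmetic: where the paper cites Bai's rank inequality (Theorem A.44 in \cite{BS:10}) and says ``by integration by parts,'' you re-derive the rank inequality from Weyl's interlacing for singular values under low-rank perturbation and spell out the integration by parts explicitly to obtain the $\pi/v$ factor.
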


\begin{proof}
 By the rank inequality of Bai, see \cite{BS:10}, Theorem A.44, we have
\begin{equation}\label{trunc1}
\E \sup_x|\mathcal F_{\X}(x)-\mathcal F_{\widehat{\X}}(x)|
\le \frac1{n}\E{\text{\rm rank}
\{\F_{\X}-\F_{\widehat{\X}}\}},
\end{equation}
and, by integration by parts,
\begin{equation}\label{trunc2}
 \E|m_{\X}(z)-m_{\widehat{\X}}(z)|\le\frac{\pi}{nv}\E{\text{\rm rank}
\{\F_{\X}-\F_{\widehat{\X}}\}}.
\end{equation}
By condition \eqref{rank}, we have
\begin{equation}\label{trunc3}
 \text{\rm rank}
\{\F_{\X}-\F_{\widehat{\X}}\}
\le 
C(\mathbb F)\sum_{q=1}^m\text{\rm rank}\{\X^{(q)}-{\widehat{\X}}^{(q)}\}.
\end{equation}
Furthermore,
\begin{align}\label{trunc4}
\sum_{q=1}^m\E\text{\rm rank}\{\X^{(q)}-{\widehat{\X}}^{(q)}\}
&\le \sum_{q=1}^m\sum_{j=1}^{n_{q-1}}\sum_{k=1}^{n_q}
\E\mathbb I\{|X_{jk}^{(q)}|\ge \tau_n\sqrt n\}\notag\\&
\le \frac1{n\tau_n^2}\sum_{q=1}^m \sum_{j=1}^{n_{q-1}}\sum_{k=1}^{n_q}
\E|X_{jk}^{(q)}|^2\mathbb I\{|X_{jk}^{(q)}|\ge \tau_n\sqrt n\}\notag\\
&=\frac{nL_n(\tau_n)}{\tau_n^2}. 
\end{align}
Inequalities \eqref{trunc1}--\eqref{trunc4} and assumption \eqref{asumplind} 
together complete the proof of the Lemma.
\end{proof}

\emph{Remark.}
Lemma \ref{lem:truncation} is about the distribution functions of the squared singular values,
$\mathcal F_{\X}(x)=\frac1n\sum_{j=1}^n\mathbb I\{s_j^2(\X)\le x\}$ ($x>0$).
Similar results hold for the distribution functions of the non-squared singular values,
$\mathcal F_{\X}(x^2)=\frac1n\sum_{j=1}^n\mathbb I\{s_j(\X)\le x\}$ ($x>0$),
as well as for their symmetrizations,
$\widetilde{\mathcal F}_{\X}(x):=\frac12(1+\text{sign}(x) \mathcal F_{\X}(x^2))$ ($x \ne 0$). 
It is this consequence of~Lemma \ref{lem:truncation}
that will be used below.

\medskip

Let $\Y = (\Y^{(1)},\hdots,\Y^{(m)})$ be an $m$-tuple 
of independent random matrices with independent {\it Gaussian} entries
as in Section \ref{sec:notation},
and let $\widehat{\Y} = (\widehat{\Y}^{(1)},\hdots,\widehat{\Y}^{(m)})$
denote the $m$-tuple consisting of the matrices
$\widehat{\Y}^{(q)} = (\tfrac{1}{\sqrt{n_q}} \widehat{Y}^{(q)}_{jk})$,
where
$$
 \widehat{Y}_{jk}^{(q)}=Y_{jk}^{(q)}\mathbb I\{|Y_{jk}^{(q)}|\le\tau_n\sqrt n\}.
$$
for $q=1,\hdots,m$ and $j=1,\hdots,n_{q-1};\,k=1,\hdots,n_{q}$.
Set 
$$
 \widetilde L_n(\tau_n):=\frac1{n^2}\sum_{q=1}^m\sum_{j=1}^{n_{q-1}}\sum_{k=1}^{n_q}
 \E|Y_{jk}^{(q)}|^2 \mathbb I\{|Y_{jk}^{(q)}|>\tau_n\sqrt n\} \,.
$$
Then, using the relation $\tau_n \geq n^{-1/3}$ and the special properties of the Gaussian distribution,
it is easy to check that we also have
\begin{align}
\label{asumplind2}
\widetilde{L}_n(\tau_n) \tau_n^{-4} \to 0 \qquad \text{ as } n \to \infty.
\end{align}
Furthermore, note that for the truncated random variables, 
the moment identities \eqref{eq:secondmomentstructure-1}
need not hold anymore. However, we have the relations
\begin{equation}
\label{eq:momentsAfterTruncation-11}
|\E \widehat X_{jk}^{(q)}| = \left| \E X_{jk}^{(q)} \mathbb I\{|X_{jk}^{(q)}|>\tau_n\sqrt n\} \right|
\le \frac{1}{\tau_n\sqrt{n}} \E |X_{jk}^{(q)}|^2 \mathbb I\{|X_{jk}^{(q)}|>\tau_n\sqrt n\} \,,
\end{equation}
\begin{equation}
\label{eq:momentsAfterTruncation-12}
     \left| \E (\re \widehat{X}_{jk}^{(q)})^2-\E (\re X_{jk}^{(q)})^2 \right| 
\leq \E|X_{jk}^{(q)}|^2 \mathbb I\{|X_{jk}^{(q)}|>\tau_n\sqrt n\} \,,
\end{equation}
\begin{equation}
\label{eq:momentsAfterTruncation-13}
     \left| \E (\im \widehat{X}_{jk}^{(q)})^2-\E (\im X_{jk}^{(q)})^2 \right| 
\leq \E|X_{jk}^{(q)}|^2 \mathbb I\{|X_{jk}^{(q)}|>\tau_n\sqrt n\} \,,
\end{equation}
\begin{equation}
\label{eq:momentsAfterTruncation-14}
     \left| \E (\re \widehat{X}_{jk}^{(q)} \im \widehat{X}_{jk}^{(q)})-\E (\re X_{jk}^{(q)} \im X_{jk}^{(q)}) \right| 
\leq \E|X_{jk}^{(q)}|^2 \mathbb I\{|X_{jk}^{(q)}|>\tau_n\sqrt n\} \,,
\end{equation}
as well as the analogous relations for the r.v.'s $\widehat Y_{jk}^{(q)}$,
which imply that
\begin{equation}
\label{eq:momentsAfterTruncation-16}
\frac1{n^{3/2}}\sum_{q=1}^m\sum_{j=1}^{n_{q-1}}\sum_{k=1}^{n_q} \left( | \E \widehat X_{jk}^{(q)} | + | \E \widehat Y_{jk}^{(q)} | \right) \le \frac{L(\tau_n) + \widetilde L(\tau_n)}{\tau_n} \,, 
\end{equation}
\begin{equation}
\label{eq:momentsAfterTruncation-17}
\frac1{n^2}\sum_{q=1}^m\sum_{j=1}^{n_{q-1}}\sum_{k=1}^{n_q} \left| \E (\re \widehat X_{jk}^{(q)})^2 - \E (\re \widehat Y_{jk}^{(q)})^2 \right| \le L(\tau_n) + \widetilde L(\tau_n) \,, 
\end{equation}
\begin{equation}
\label{eq:momentsAfterTruncation-18}
\frac1{n^2}\sum_{q=1}^m\sum_{j=1}^{n_{q-1}}\sum_{k=1}^{n_q} \left| \E (\im \widehat X_{jk}^{(q)})^2 - \E (\im \widehat Y_{jk}^{(q)})^2 \right| \le L(\tau_n) + \widetilde L(\tau_n) \,, 
\end{equation}
\begin{equation}
\label{eq:momentsAfterTruncation-19}
\frac1{n^2}\sum_{q=1}^m\sum_{j=1}^{n_{q-1}}\sum_{k=1}^{n_q} \left| \E (\re \widehat X_{jk}^{(q)})(\im \widehat X_{jk}^{(q)}) - \E (\re \widehat Y_{jk}^{(q)})(\im \widehat Y_{jk}^{(q)}) \right| \le L(\tau_n) + \widetilde L(\tau_n) \,. 
\end{equation}

\pagebreak[2]

For the rest of this section, we use the following notation. 
For any matrix tuple $\X = (\X^{(1)},\hdots,\X^{(m)})$
and any $n \times p$ matrix $\mathbf B$, we~introduce the matrix
$\F_\X$ as in \eqref{eq:F-definition},
the Hermitian matrix
\begin{equation}\notag
  \V_{\X}:=\begin{bmatrix}&\mathbf O&\F_{\X}+\mathbf B&\\&
  ({\F}_{\X}+\mathbf B)^*&\mathbf O&\end{bmatrix},
\end{equation}
as well as the corresponding resolvent matrix
\begin{equation}\notag
\R_{\X}:= \R_{\X}(z)=(\V_{\X}-z\I)^{-1}.
\end{equation}
Furthermore, let $s_1({\X})\ge\ldots\ge s_n({\X})$
denote the singular values of the matrix $\F_{{\X}}+\mathbf B$.
Note that, apart from a fixed number of zero eigenvalues,
the eigenvalues of the matrix $\V_{\X}$ are given by
$\pm s_1({\X}),\ldots\,\pm s_n({\X})$.
The corresponding Stieltjes transform will be denoted by 
\begin{equation}\notag
m_n(z,\mathbf{X}) := \frac1{2n} \left( \Tr \R_{\X} + \frac{p-n}{z} \right).
\end{equation}

For $0\le\varphi\le \frac{\pi}2$ and $q=1,\ldots,m$, let
\begin{equation}\label{reprsimple}
\Z^{(q)}(\varphi)=
(\widehat{\X}^{(q)}\cos\varphi +\widehat{\Y}^{(q)}\sin\varphi),
\end{equation}
and $\Z(\varphi) := (\Z^{(1)}(\varphi),\hdots,\Z^{(m)}(\varphi))$.
For abbreviation, we shall write
$\mathbf{F}(\varphi)$, $\V(\varphi)$, $\mathbf{R}(\varphi)$, and $m_n(z,\varphi)$
instead of 
$\mathbf{F}_{\Z(\varphi)}$, $\V_{\Z(\varphi)}$, $\mathbf{R}_{\Z(\varphi)}$, and $m_n(z,\Z(\varphi))$.
With this notation we~have 
$\F_{\widehat{\X}}=\F(0)$, $\F_{\widehat{\Y}} =\F(\frac{\pi}{2})$,
$m_n(z,\widehat{\X})=m_n(z,0)$ and $m_n(z,\widehat{\Y})=m_n(z,\frac{\pi}2)$.
Also, we~may~write 
\begin{equation}\notag
 m(z,\tfrac{\pi}2)-m_n(z,0)
 =\int_0^{\frac{\pi}2}\frac{\partial m_n(z,\varphi)}{\partial\varphi}d\varphi.
\end{equation}
The representation of type \eqref{reprsimple} has been used
for sums of random variables, for instance, in \cite{Bentkus} 
(second relation on page 367). 
For random matrices \eqref{reprsimple} has been used, for example, 
by Pastur and Lytova in \cite{PasLut} (see Equation (60)).

A simple computation shows that
\begin{equation}\notag
 \frac{\partial m_n(z,\varphi)}{\partial\varphi}
 =-\frac1{2n}\Tr\frac{\partial\V(\varphi)}{\partial\varphi}\R^2(\varphi).
\end{equation}
Furthermore, using this relation we get
\begin{align}\label{mat0}
 \frac{\partial m_n(z,\varphi)}{\partial\varphi}
 =&-\frac1{2n}\sum_{q=1}^m\sum_{j=1}^{n_{q-1}}\sum_{k=1}^{n_{q}}
\frac1{\sqrt{n_q}}\Big({-}\re \widehat X_{jk}^{(q)}\sin\varphi +\re \widehat Y_{jk}^{(q)}\cos\varphi
\Big)\Tr\frac{\partial \V}{\partial \re Z_{jk}^{(q)}}\R^2\notag\\&-
\frac i{2n}\sum_{q=1}^m\sum_{j=1}^{n_{q-1}}\sum_{k=1}^{n_{q}}
\frac1{\sqrt{n_q}}\Big({-}\im \widehat X_{jk}^{(q)}\sin\varphi +\im \widehat Y_{jk}^{(q)}\cos\varphi
\Big)\Tr\frac{\partial \V}{\partial \im Z_{jk}^{(q)}}\R^2.
\end{align}
We denote by
\begin{align}\label{def+}
g_{jk}^{(q)}&:= g_{jk}^{(q)}(\Z^{(1)}(\varphi),\ldots,\Z^{(m)}(\varphi))=
\Tr\frac{\partial \V}{\partial \re Z_{jk}^{(q)}}\R^2,\notag\\
\widehat g_{jk}^{(q)}
&:=\widehat  g_{jk}^{(q)}(\Z^{(1)}(\varphi),\ldots,\Z^{(m)}(\varphi))=
\Tr\frac{\partial \V}{\partial \im Z_{jk}^{(q)}}\R^2.
\end{align}
Let $g_{jk}^{(q)}(\theta)$ denote the function obtained from $g_{jk}^{(q)}$ 
by replacing the indeterminate $Z_{jk}^{(q)}$ with $\theta Z_{jk}^{(q)}$.


\begin{thm}\label{singularvalueuniversality} 
Assume that the Lindeberg condition \eqref{lind}
and the rank condition \eqref{rank} hold.
Furthermore suppose that there exist constants $A_0>0$, $A_1>0$ and $A_2>0$ 
such that for any random~variable $\theta$ which is uniformly distributed 
on the interval $[0,1]$ and independent of the r.v.'s $X_{jk}^{(q)}$ and $Y_{jk}^{(q)}$,
the following conditions hold:
\begin{align}
\label{mat3}
&\sup_{j,k,q} \Big\|\E\Big\{g_{jk}^{(q)}(\theta)\Big|X_{jk}^{(q)}, Y_{jk}^{(q)}\Big\}\Big\|_{\infty}
\le A_0,\\
\label{mat3a}
&\sup_{j,k,q} \Big\|\E\Big\{\widehat g_{jk}^{(q)}(\theta)\Big|X_{jk}^{(q)}, Y_{jk}^{(q)}\Big\}\Big\|_{\infty}
\le A_0,\\
\label{mat31}
&\sup_{j,k,q}\max\Big\{\Big\|\E\Big\{\frac{\partial g_{jk}^{(q)}(\theta)}
{\partial {\re Z_{jk}^{(q)}}}
\Big|X_{jk}^{(q)}, Y_{jk}^{(q)}\Big\}
\Big\|_{\infty},\Big\|\E\Big\{\frac{\partial g_{jk}^{(q)}(\theta)}{\partial {\im Z_{jk}^{(q)}}}
\Big|X_{jk}^{(q)}, Y_{jk}^{(q)}\Big\}\Big\|_{\infty}\Big\}
\le A_1 ,\\
\label{mat31a}
&\sup_{j,k,q}\max\Big\{\Big\|\E\Big\{\frac{\partial \widehat g_{jk}^{(q)}(\theta)}
{\partial {\re Z_{jk}^{(q)}}}
\Big|X_{jk}^{(q)}, Y_{jk}^{(q)}\Big\}\Big\|_{\infty},\Big\|
\E\Big\{\frac{\partial \widehat g_{jk}^{(q)}(\theta)}{\partial {\im Z_{jk}^{(q)}}}
\Big|X_{jk}^{(q)}, Y_{jk}^{(q)}\Big\}\Big\|_{\infty}\Big\}
\le A_1 ,\displaybreak[2]\\
&\sup_{j,k,q}\max\Big\{\Big\|\E\Big\{\frac{\partial^2 g_{jk}^{(q)}(\theta)}
{\partial {\re Z_{jk}^{(q)}}^2}\Big|X_{jk}^{(q)}, Y_{jk}^{(q)}\Big\}
\Big\|_{\infty},
\Big\|\E\Big\{\frac{\partial^2 g_{jk}^{(q)}(\theta)}{\partial {\im Z_{jk}^{(q)}}^2}
\Big|X_{jk}^{(q)}, Y_{jk}^{(q)}\Big\}
\Big\|_{\infty},\notag\\
\label{mat32}&\qquad\qquad\qquad\qquad\qquad\qquad\qquad
\Big\|\E\Big\{\frac{\partial^2 g_{jk}^{(q)}(\theta)}
{\partial {\re Z_{jk}^{(q)}} \partial\im Z_{jk}^{(q)}}\Big|X_{jk}^{(q)}, Y_{jk}^{(q)}\Big\}
\Big\|_{\infty}\Big\}\le A_2,\\
&\sup_{j,k,q}\max\Big\{\Big\|\E\Big\{\frac{\partial^2 \widehat g_{jk}^{(q)}(\theta)}
{\partial {\re Z_{jk}^{(q)}}^2}\Big|X_{jk}^{(q)}, Y_{jk}^{(q)}\Big\}
\Big\|_{\infty},
\Big\|\E\Big\{\frac{\partial^2\widehat  g_{jk}^{(q)}(\theta)}
{\partial {\im Z_{jk}^{(q)}}^2}\Big|X_{jk}^{(q)}, Y_{jk}^{(q)}\Big\}
\Big\|_{\infty},\notag\\
\label{mat32a}
&\qquad\qquad\qquad\qquad\qquad\qquad\qquad
\Big\|\E\Big\{\frac{\partial^2\widehat  g_{jk}^{(q)}(\theta)}
{\partial {\re Z_{jk}^{(q)}}\partial\im Z_{jk}^{(q)}}\Big|X_{jk}^{(q)}, Y_{jk}^{(q)}\Big\}
\Big\|_{\infty}\Big\}\le A_2.
\end{align}
Then, for any $z=u+iv$ with $v>0$,
\begin{equation}\notag
 \lim_{n\to\infty}(m_n(z,\Y)-m_n(z,\X))=0 \qquad \text{in probability} \,.
\end{equation}
\end{thm}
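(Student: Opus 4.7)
The strategy will proceed by a Lindeberg-type replacement combining truncation, Gaussian interpolation, and entry-by-entry Taylor expansion. First, Lemma \ref{lem:truncation} (applied both to $\X$ and, with $\widetilde L_n$ in place of $L_n$, to $\Y$) reduces matters to proving that $m_n(z,\widehat\X) - m_n(z,\widehat\Y) \to 0$, since the truncation errors are bounded by $Cv^{-1}\tau_n^2 \to 0$.

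Next, using the interpolation \eqref{reprsimple} together with the identities $\F_{\widehat\X} = \F(0)$ and $\F_{\widehat\Y} = \F(\pi/2)$, I would write
\begin{equation*}
m_n(z,\widehat\Y) - m_n(z,\widehat\X) = \int_0^{\pi/2} \frac{\partial m_n(z,\varphi)}{\partial\varphi}\, d\varphi,
\end{equation*}
and expand the integrand via \eqref{mat0} as a sum over $(q,j,k)$ of products of a coefficient of the form $(-\re\widehat X^{(q)}_{jk}\sin\varphi + \re\widehat Y^{(q)}_{jk}\cos\varphi)/\sqrt{n_q}$ (or its imaginary analogue) with the trace $g^{(q)}_{jk}$ (or $\widehat g^{(q)}_{jk}$).

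The core of the argument is an entry-by-entry Lindeberg swap. For each triple $(q,j,k)$, I would Taylor-expand $g^{(q)}_{jk}(\theta)$ in $\theta$ around $\theta=0$ to second order with integral remainder, $g^{(q)}_{jk}(1) = g^{(q)}_{jk}(0) + \tfrac{d}{d\theta}g^{(q)}_{jk}\big|_{\theta=0} + R^{(q)}_{jk}$, noting that $g^{(q)}_{jk}(0)$ and the derivative at $0$ depend on all entries \emph{except} $Z^{(q)}_{jk}$ and so are independent of the pair $(\widehat X^{(q)}_{jk}, \widehat Y^{(q)}_{jk})$. Taking expectations, three contributions arise: (i) the zeroth-order piece is weighted by $\E\re\widehat X^{(q)}_{jk}$ or $\E\re\widehat Y^{(q)}_{jk}$, whose total contribution is controlled by $(L_n(\tau_n)+\widetilde L_n(\tau_n))/\tau_n$ via \eqref{eq:momentsAfterTruncation-16}; (ii) the first-order piece produces bilinear expressions in the real/imaginary parts of $\widehat X^{(q)}_{jk}$ and $\widehat Y^{(q)}_{jk}$, whose expectations match up to an error $O(L_n(\tau_n)+\widetilde L_n(\tau_n))$ via \eqref{eq:momentsAfterTruncation-17}--\eqref{eq:momentsAfterTruncation-19}, weighted by the uniform bounds in \eqref{mat3}--\eqref{mat31a}; (iii) the remainder $R^{(q)}_{jk}$ is cubic in $\widehat X^{(q)}_{jk}, \widehat Y^{(q)}_{jk}$, and by the truncation bound $|\widehat X^{(q)}_{jk}|^3 \le \tau_n\sqrt n\,|X^{(q)}_{jk}|^2$ combined with \eqref{mat32}--\eqref{mat32a} it gives a contribution of order $\tau_n$ after summation. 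Combining (i)--(iii) with \eqref{asumplind} and \eqref{asumplind2} forces $\E|m_n(z,\widehat\X) - m_n(z,\widehat\Y)|\to 0$, yielding the desired convergence in probability.

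The hard part will be the careful bookkeeping in step (iii): the $1/(2n\sqrt{n_q})$ prefactors in \eqref{mat0}, multiplied by the cubic-moment estimates and summed against $\sum_q n_{q-1}n_q = O(n^2)$ entries, require the uniform $L^\infty$ bounds \eqref{mat3}--\eqref{mat32a} to absorb all size-dependent factors so that only the small parameter $\tau_n$ survives. This is precisely what the choice \eqref{asumplind} of $\tau_n$ (with $L_n(\tau_n)\tau_n^{-4}\to 0$) is designed to enable.
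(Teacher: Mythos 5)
Your proposal reproduces the paper's strategy almost exactly --- truncation via Lemma~\ref{lem:truncation}, the Gaussian interpolation \eqref{reprsimple} giving $m_n(z,\tfrac{\pi}{2})-m_n(z,0)=\int_0^{\pi/2}\partial_\varphi m_n(z,\varphi)\,d\varphi$, and an entry-wise second-order Taylor expansion whose zeroth-, first-, and remainder-order pieces are controlled just as in \eqref{mat7}, \eqref{mat5}--\eqref{mat5a}, and \eqref{mat6}--\eqref{mat6a}.

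There is, however, a genuine gap in the final step. The cancellations you invoke --- $\E\widehat X^{(q)}_{jk}\approx\E\widehat Y^{(q)}_{jk}\approx 0$, near-equality of truncated second moments, independence of $g_{jk}^{(q)}(0,0)$ from $(X_{jk}^{(q)},Y_{jk}^{(q)})$ --- only take effect after taking expectations. Your Taylor bookkeeping therefore establishes $\lim_{n\to\infty}\E\bigl(m_n(z,\widehat\Y)-m_n(z,\widehat\X)\bigr)=0$, i.e.\ smallness of the \emph{difference of expectations}, and nothing more. You then assert that this ``forces $\E|m_n(z,\widehat\X)-m_n(z,\widehat\Y)|\to 0$,'' but that jump is not justified: a small mean says nothing about fluctuations, and without absolute values the estimate cannot be upgraded by itself. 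Since the theorem asserts convergence \emph{in probability}, you need a concentration step showing that each of $m_n(z,\widehat\X)$ and $m_n(z,\widehat\Y)$ concentrates around its expectation. The paper supplies precisely this as Lemma~\ref{variance} in Appendix~\ref{sec:variance}, whose proof uses a martingale-difference decomposition (row-by-row replacement) together with the rank condition \eqref{rank} and Bai's rank inequality to show $\E\bigl|\tfrac1n\Tr\R-\E\tfrac1n\Tr\R\bigr|^2\le C/(nv^2)$. You need to invoke that lemma (or prove an equivalent variance bound) before you can pass from the expectation estimate to the claimed convergence in probability.
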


\emph{Remark.} 
It follows from the conclusion of the theorem 
and basic properties of the Stieltjes transform
that if the singular value distributions of the matrices $\F_\Y+\mathbf B$
are weakly convergent in probability to some limit $\nu$,
then so are the singular value distributions of the matrices $\F_\X+\mathbf B$.
In~this sense Theorem \ref{singularvalueuniversality} proves 
the \emph{universality of singular value distributions}.

\begin{proof}[Proof of Theorem \ref{singularvalueuniversality}]
By Lemma \ref{lem:truncation} and the subsequent remark, 
it is sufficient to prove the claim with $m_n(z,\widehat{\X})$ and $m_n(z,\widehat{\Y})$ 
instead of $m_n(z,\X)$ and $m_n(z,\Y)$.
Furthermore, according to Lemma \ref{variance} in the Appendix, it is enough to prove that
\begin{equation}
\label{eq:convergenceOfExpectation-1}
 \lim_{n\to\infty}\E(m(z,\tfrac{\pi}2)-m_n(z,0))=0.
\end{equation}
%
%
%
Using Taylor's formula in the form
\begin{align}\notag
f(x,y)=f(0,0)+xf'_x(0,0)&+yf'_y(0,0)+x^2\E_{\theta}(1-\theta)
f_{xx}''(\theta x,\theta y)\notag\\&+2xy\E_{\theta}(1-\theta)f_{xy}''(\theta x,\theta y)+
 y^2\E_{\theta}(1-\theta)f_{yy}''(\theta x,\theta y),
\end{align}
where $\theta$ is a random variable which is uniformly distributed on the unit interval,
we get
\begin{align}\label{mat1}
 g_{jk}^{(q)}&=g_{jk}^{(q)}(0)+
\frac1{\sqrt{n_{q}}}(\re \widehat X_{jk}^{(q)}\cos\varphi+\re \widehat Y_{jk}^{(q)}\sin\varphi)
\frac{\partial g_{jk}^{(q)}}
{\partial \re Z_{jk}^{(q)}}(0)\notag\\&
+\frac1{\sqrt{n_{q}}}(\im \widehat X_{jk}^{(q)}\cos\varphi+\im \widehat Y_{jk}^{(q)}\sin\varphi)
\frac{\partial g_{jk}^{(q)}}
{\partial \im Z_{jk}^{(q)}}(0)\notag\\&+\frac{1}{{n_{q}}}(\re \widehat X_{jk}^{(q)}
\cos\varphi+\re \widehat Y_{jk}^{(q)}\sin\varphi)^2
\E_{\theta}(1-\theta)\frac{\partial^2 g_{jk}^{(q)}}
{\partial\re {Z_{jk}^{(q)}}^2}(\theta)\notag\\&+\frac{2}{{n_{q}}}(\re \widehat X_{jk}^{(q)}
\cos\varphi+\re \widehat Y_{jk}^{(q)}\sin\varphi)
(\im \widehat X_{jk}^{(q)}\cos\varphi+\im \widehat Y_{jk}^{(q)}\sin\varphi)\notag\\&
\qquad\qquad\qquad\qquad\qquad\qquad\qquad\times
\E_{\theta}(1-\theta)\frac{\partial^2 g_{jk}^{(q)}}
{\partial\re {Z_{jk}^{(q)}}\partial\im {Z_{jk}^{(q)}}}(\theta)\notag\\&
+\frac{1}{{n_{q}}}
(\im \widehat X_{jk}^{(q)}\cos\varphi+\im \widehat Y_{jk}^{(q)}\sin\varphi)^2
\E_{\theta}(1-\theta)\frac{\partial^2 g_{jk}^{(q)}}
{\partial{\im Z_{jk}^{(q)}}^2}(\theta).
\end{align}
Here $\E_{\theta}$ denotes the expectation with respect to the r.v. $\theta$ 
conditioning on all other r.v.'s.
Inserting (\ref{mat1}) into (\ref{mat0}), we get
\begin{align}\notag
\E(m(z,\tfrac{\pi}2)-m_n(z,0))= - \sum_{j=1}^{12} \Sigma_j, 
\end{align}
where
\begin{align}\notag
 \Sigma_1=&\frac1{2n}\sum_{q=1}^m\sum_{j=1}^{n_{q-1}}\sum_{k=1}^{n_{q}}
\int_0^{\frac{\pi}2}\frac1{\sqrt{n_{q}}}\E\bigg((-\re \widehat X_{jk}^{(q)}\sin\varphi 
+\re \widehat Y_{jk}^{(q)}\cos\varphi)g_{jk}^{(q)}(0,0)\bigg) d\varphi,
\notag\\
\Sigma_2=&
\frac1{2n}\sum_{q=1}^m\sum_{j=1}^{n_{q-1}}\sum_{k=1}^{n_{q}}
\int_0^{\frac{\pi}2}\frac1{n_{q}}\E\bigg((-\re \widehat X_{jk}^{(q)}\sin\varphi +\re \widehat Y_{jk}^{(q)}
\cos\varphi)\notag\\&\qquad\qquad\qquad
\times(\re \widehat X_{jk}^{(q)}\cos\varphi+\re \widehat Y_{jk}^{(q)}
\sin\varphi)\frac{\partial g_{jk}^{(q)}}
{\partial \re Z_{jk}^{(q)}}(0,0)\bigg) d\varphi,\notag\\
\Sigma_3=&
\frac1{2n}\sum_{q=1}^m\sum_{j=1}^{n_{q-1}}\sum_{k=1}^{n_{q}}
\int_0^{\frac{\pi}2}\frac1{n_{q}}\E\bigg((-\re \widehat X_{jk}^{(q)}\sin\varphi +\re \widehat Y_{jk}^{(q)}
\cos\varphi)\notag\\&\qquad\qquad\qquad\times(\im \widehat X_{jk}^{(q)}\cos\varphi
+\im \widehat Y_{jk}^{(q)}
\sin\varphi)\frac{\partial g_{jk}^{(q)}}
{\partial \im Z_{jk}^{(q)}}(0,0)\bigg) d\varphi,\notag
,\displaybreak[2]\\
\Sigma_4=&\frac1{2n}\sum_{q=1}^m\sum_{j=1}^{n_{q-1}}\sum_{k=1}^{n_{q}}
\int_0^{\frac{\pi}2}\frac1{n_{q}\sqrt{n_{q}}}\E\bigg((-\re \widehat X_{jk}^{(q)}\sin\varphi 
+\re \widehat Y_{jk}^{(q)}\cos\varphi)
\notag\\&\qquad\qquad\qquad\times(\re \widehat X_{jk}^{(q)}\cos\varphi+\re \widehat Y_{jk}^{(q)}
\sin\varphi)^2
(1-\theta_{jk}^{(q)})
\frac{\partial^2 g_{jk}^{(q)}}
{\partial {\re Z_{jk}^{(q)}}^2}(\theta_{jk}^{(q)})\bigg) d\varphi,\notag\\
\Sigma_5=&\frac1{2n}\sum_{q=1}^m\sum_{j=1}^{n_{q-1}}\sum_{k=1}^{n_{q}}
\int_0^{\frac{\pi}2}\frac1{n_{q}\sqrt{n_{q}}}\E\bigg((-\re \widehat X_{jk}^{(q)}\sin\varphi 
+\re \widehat Y_{jk}^{(q)}\cos\varphi)
\notag\\&\qquad\qquad\qquad\times(\im \widehat X_{jk}^{(q)}\cos\varphi+\im \widehat Y_{jk}^{(q)}
\sin\varphi)^2
(1-\theta_{jk}^{(q)})
\frac{\partial^2 g_{jk}^{(q)}}
{\partial {\im Z_{jk}^{(q)}}^2}(\theta_{jk}^{(q)})\bigg) d\varphi,\notag\\
\Sigma_6=&\frac1{n}\sum_{q=1}^m\sum_{j=1}^{n_{q-1}}\sum_{k=1}^{n_{q}}
\int_0^{\frac{\pi}2}\frac1{n_{q}\sqrt{n_{q}}}\E\bigg((-\re \widehat X_{jk}^{(q)}\sin\varphi 
+\re \widehat Y_{jk}^{(q)}\cos\varphi)
\notag\\&\times(\im \widehat X_{jk}^{(q)}\cos\varphi+\im \widehat Y_{jk}^{(q)}\sin\varphi)(\re \widehat X_{jk}^{(q)}
\cos\varphi+\re \widehat Y_{jk}^{(q)}\sin\varphi)
\notag\\&\qquad\qquad\qquad\qquad\qquad\qquad\times(1-\theta_{jk}^{(q)})
\frac{\partial^2 g_{jk}^{(q)}}
{\partial {\im Z_{jk}^{(q)}}\partial {\re Z_{jk}^{(q)}}}(\theta_{jk}^{(q)})\bigg) d\varphi,
\end{align}
and $\Sigma_{7},\hdots,\Sigma_{12}$ denote similar terms
coming from the second line in \eqref{mat0}.
Since $\Sigma_{7},\hdots,\Sigma_{12}$ can be treated
in the same way as $\Sigma_{1},\hdots,\Sigma_{6}$,
we provide the details for the latter only.

Since $g_{jk}^{(q)}(0,0)$ and $X_{jk}^{(q)}$, $Y_{jk}^{(q)}$ are independent,
it follows from \eqref{eq:momentsAfterTruncation-16} that 
\begin{equation}\label{mat7}
 |\Sigma_1|\le \frac{C A_0 (L_n(\tau_n) + \widetilde L_n(\tau_n))}{\tau_n}\le C\tau_n^3.
\end{equation}
Using again that the random variables $\frac{\partial g_{jk}^{(q)}}
{\partial Z_{jk}^{(q)}}(0,0)$ and $X_{jk}^{(q)}$, $Y_{jk}^{(q)}$ 
are independent, we get
\begin{align}\notag
 \E\bigg((-\re \widehat X_{jk}^{(q)}\sin\varphi +\re \widehat Y_{jk}^{(q)}\cos\varphi)(\re \widehat X_{jk}^{(q)}
 \cos\varphi+\re \widehat Y_{jk}^{(q)}
\sin\varphi)\frac{\partial g_{jk}^{(q)}}{\partial\re Z_{jk}^{(q)}}(0,0)\bigg)\quad\qquad\notag\\=
\E\bigg((-\re \widehat X_{jk}^{(q)}\sin\varphi +\re \widehat Y_{jk}^{(q)}\cos\varphi)
(\re \widehat X_{jk}^{(q)}\cos\varphi+\re \widehat Y_{jk}^{(q)}\sin\varphi)\bigg)
\E\bigg(\frac{\partial g_{jk}^{(q)}}{\partial\re  Z_{jk}^{(q)}}(0,0)\bigg).\notag
\end{align}
Since $\re \widehat X_{jk}^{(q)}$ and $\re \widehat Y_{jk}^{(q)}$
are independent, we have
\begin{align}\notag
 \E&(-\re \widehat X_{jk}^{(q)}\sin\varphi +\re \widehat Y_{jk}^{(q)}\cos\varphi)(\re \widehat X_{jk}^{(q)}
 \cos\varphi+\re \widehat Y_{jk}^{(q)}\sin\varphi)\notag\\
&=\E\re \widehat X_{jk}^{(q)}\E\re \widehat Y_{jk}^{(q)}(\cos^2\varphi-\sin^2\varphi)
  -(\E|\re \widehat X_{jk}^{(q)}|^2 - \E|\re \widehat Y_{jk}^{(q)}|^2) \cos\varphi \sin\varphi\notag
\end{align}
and therefore, by \eqref{eq:momentsAfterTruncation-11},
\begin{align}\notag
\Big|\E&(-\re \widehat X_{jk}^{(q)}\sin\varphi +\re \widehat Y_{jk}^{(q)}\cos\varphi)
(\re \widehat X_{jk}^{(q)}\cos\varphi+\re \widehat Y_{jk}^{(q)} \sin\varphi)\Big|\notag\\
&\le |\E\re \widehat X_{jk}^{(q)}|^2 + |\E\re \widehat Y_{jk}^{(q)}|^2+\Big|\E|\re \widehat X_{jk}^{(q)}|^2 - \E|\re \widehat Y_{jk}^{(q)}|^2\Big| \notag\\
&\le \frac1{n\tau_n^2}(\E|X_{jk}^{(q)}|^2\mathbb I\{|X_{jk}^{(q)}| \ge \tau_n\sqrt n\})^2+\frac1{n\tau_n^2}(\E|Y_{jk}^{(q)}|^2\mathbb I\{|Y_{jk}^{(q)}| \ge \tau_n\sqrt n\})^2\notag\\&\qquad\,\,+\Big|\E|\re \widehat X_{jk}^{(q)}|^2 - \E|\re \widehat Y_{jk}^{(q)}|^2\Big| \notag\\
&\le \frac1{n\tau_n^2} \E|X_{jk}^{(q)}|^2\mathbb I\{|X_{jk}^{(q)}| \ge \tau_n\sqrt n\}   +\frac1{n\tau_n^2} \E|Y_{jk}^{(q)}|^2\mathbb I\{|Y_{jk}^{(q)}| \ge \tau_n\sqrt n\}   \notag\\&\qquad\,\,+\Big|\E|\re \widehat X_{jk}^{(q)}|^2 - \E|\re \widehat Y_{jk}^{(q)}|^2\Big|. \notag
\end{align}
By \eqref{eq:momentsAfterTruncation-17},
the~last inequality implies that
\begin{equation}\label{mat5}
 |\Sigma_2|\le C A_1\Big(\frac{L_n(\tau_n)+\widetilde L_n(\tau_n)}{n\tau_n^2}+L_n(\tau_n)+\widetilde L_n(\tau_n)\Big)\le C\Big(\frac{\tau_n^2}{n}+\tau_n^4\Big).
\end{equation}
Similarly, using \eqref{eq:momentsAfterTruncation-11} and \eqref{eq:momentsAfterTruncation-19}, we~get
\begin{equation}\label{mat5a}
 |\Sigma_3|\le C A_1\Big(\frac{L_n(\tau_n)+\widetilde L_n(\tau_n)}{n\tau_n^2}+L_n(\tau_n)+\widetilde L_n(\tau_n)\Big)\le C\Big(\frac{\tau_n^2}{n}+\tau_n^4\Big).
\end{equation}
Also, note that 
\begin{align}\notag
 \frac1{\sqrt n}\bigg|\E\bigg((-\re \widehat X_{jk}^{(q)}\sin\varphi+\re \widehat Y_{jk}^{(q)}\cos\varphi)
&(\re \widehat X_{jk}^{(q)}\cos\varphi+\re \widehat Y_{jk}^{(q)}\sin\varphi)^2\notag\\&
\times(1-\theta_{jk}^{(q)})
\frac{\partial^2 g_{jk}^{(q)}}
{\partial {\re Z_{jk}^{(q)}}^2}(\theta_{jk}^{(q)})\bigg)\bigg|\notag\\
\le \frac{C}{\sqrt n}
\E \bigg( (|\widehat X_{jk}^{(q)}|^3+|\widehat Y_{jk}^{(q)}|^3) \bigg| &\E\Big\{\frac{\partial^2 g_{jk}^{(q)}}{\partial {\re Z_{jk}^{(q)}}^2}(\theta_{jk}^{(q)}) \Big| \widehat X_{jk}^{(q)},\widehat Y_{jk}^{(q)}\Big\} \bigg| \bigg)\notag\\
\le \frac {C A_2}{\sqrt n} \E \Big( |\widehat X_{jk}^{(q)}|^3+|\widehat Y_{jk}^{(q)}|^3 \Big)
&\le CA_2 \tau_n \,. \notag
\end{align}
It is simple to check now that
\begin{equation}\label{mat6}
|\Sigma_4| \le CA_2\tau_n. 
\end{equation}
Analogously we show that
\begin{equation}\label{mat6a}
 \max\{|\Sigma_5|,|\Sigma_6|\}\le CA_2\tau_n. 
\end{equation}
Combining the preceding estimates, we obtain \eqref{eq:convergenceOfExpectation-1}.
%
%
Thus, Theorem \ref{singularvalueuniversality} is proved.
\end{proof}

\section{Universality of Eigenvalue Distributions of Functions \\ of Independent Random Matrices}
\label{sec:universality-EVD}

We now turn to the eigenvalue distribution of functions of independent random matrices.
We use the assumptions and the notation from Section~\ref{sec:notation},
but throughout this section we~assume additionally that $n = p$, 
so that ${\F}_{\X}$ and ${\F}_{\Y}$ are square matrices.

Let $\mu$ a probability measure on the complex plane with compact support.
Define the logarithmic potential of the measure $\mu$ as 
\begin{equation}\notag
U_{\mu}(\alpha)=-\int_{\mathbb C}\log|\alpha-\zeta|d\mu(\zeta).
\end{equation}
Let $\mu_{\X}$ (resp. $\mu_{\Y}$) denote the empirical spectral 
measure of the matrix $\F_{\X}$
(resp. $\F_{\Y}$), \linebreak i.e. $\mu_{\X}$ 
(resp. $\mu_{\Y}$) is the uniform distribution on the eigenvalues 
$\{\lambda_1(\X),\ldots,\lambda_n(\X)\}$
(resp. $\{\lambda_1(\Y),\ldots\lambda_n(\Y)\}$) of the matrix 
$\F_{\X}$ (resp. $\F_{\Y}$).
Then
\begin{align}
U_{\X}(\alpha)&=-\int_{\mathbb C}\log|\alpha-\zeta|d\mu_{\X}(\zeta)=
-\frac1n\sum_{j=1}^n\log|\lambda_j(\X)-\alpha|,\notag\\
U_{\Y}(\alpha)&=-\int_{\mathbb C}\log|\alpha-\zeta|d\mu_{\Y}(\zeta)=
-\frac1n\sum_{j=1}^n\log|\lambda_j(\Y)-\alpha|.\notag
\end{align}
%
%
%

Let $\alpha \in \mathbb{C}$,
and let $s_1(\F_{\X}-\alpha\I)\ge \cdots\ge s_n(\F_{\X}-\alpha\I)$ 
and $s_1(\F_{\Y}-\alpha\I)\ge \cdots\ge s_n(\F_{\Y}-\alpha\I)$
denote the singular values of the matrices 
$\F_{\X}-\alpha\I$ and $\F_{\Y}-\alpha\I$, 
respectively.
Note that we have the representations
\begin{align}\label{eq:altRep}
U_{\X}(\alpha)=-\frac1n\sum_{j=1}^n\log s_j(\F_{\X}-\alpha \I),\quad
U_{\Y}(\alpha)=-\frac1n\sum_{j=1}^n\log s_j(\F_{\Y}-\alpha \I).
\end{align}

\begin{defn}
  If there exists some $p>0$ such that the quantity
  \begin{equation}\notag
   \frac1n\sum_{k=1}^ns_k^p(\F_{\X})
  \end{equation}
  is bounded in probability as $n \to \infty$, 
  we say that the matrices  $\F_{\X}$ satisfy condition $(C0)$.
\end{defn}

\begin{defn}
 If, for any fixed $\alpha \in \mathbb{C}$,
 there exists some $Q>0$ such that the relation
 \begin{equation}\notag
  \lim_{n\to\infty}\Pr\{s_n(\F_{\X} - \alpha \I)\le n^{-Q}\}=0
 \end{equation}
 holds, we say that the matrices  $\F_{\X}$ satisfy  condition $(C1)$.
\end{defn}

\begin{defn}
 If, for any fixed $\alpha \in \mathbb{C}$,
 there exists some $0<\gamma<1$ such that
 for \emph{any} sequence $\delta_n \to 0$,
 \begin{equation}\label{singval}
 \lim_{n\to\infty}\Pr\bigg\{\frac1n\sum_{{n_1}\le j\le{ n_2} }|
 \log s_j(\F_{\X}-\alpha\I)|>\varepsilon\bigg\}=0
\quad\text{for all $\varepsilon > 0$},
\end{equation}
 with $n_1=[n-n\delta_n]+1$, $n_2=[n-n^{\gamma}]$,
 we say that the matrices  $\F_{\X}$ satisfy  condition $(C2)$.
\end{defn}

We now prove the \emph{universality of eigenvalue distributions}.

\begin{thm}\label{eigenvalueuniversality}
Assume that the  matrices $\F_{\X}$ and $\F_{\Y}$ 
satisfy the conditions $(C0)$, $(C1)$ and $(C2)$.
Assume additionally that the conditions \eqref{lind} and \eqref{rank} hold
and that, for any fixed $\alpha \in \mathbb{C}$,
the conditions \eqref{mat3} -- \eqref{mat32a} of Theorem \ref{singularvalueuniversality} hold
with $\mathbf B = \alpha \I$.
Then the empirical distributions of the eigenvalues of the matrices 
$\F_{\X}$ and $\F_{\Y}$ have the same limit distribution in probability
in the sense that
\begin{equation}\notag
 \lim_{n\to\infty}\Pr\Big\{\Big|\int_{\mathbb C} fd\mu_{\X}
 -\int_{\mathbb C} fd\mu_{\Y}\Big|>\varepsilon\Big\}=0
\end{equation}
for any bounded continuous function $f$ and any $\varepsilon>0$.
\end{thm}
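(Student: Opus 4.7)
My plan is to follow Girko's Hermitization scheme: reduce the eigenvalue comparison to an $L^1_{\mathrm{loc}}$ comparison of the logarithmic potentials $U_\X(\alpha), U_\Y(\alpha)$, then apply Theorem \ref{singularvalueuniversality} for each fixed shift $\alpha$, and use conditions $(C0)$--$(C2)$ to bootstrap weak singular-value convergence into convergence of the log integrals.

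\emph{Reduction.} Using $(C0)$ to get tightness of the eigenvalue measures, it suffices to prove the conclusion for $f\in C_c^\infty(\mathbb C)$. For such $f$, the distributional identity $\mu = \tfrac{1}{2\pi}\Delta U_\mu$ yields
\begin{equation*}
\int f\, d(\mu_\X - \mu_\Y) = \frac{1}{2\pi}\int_{\mathbb C} \Delta f(\alpha)\,\bigl(U_\X(\alpha) - U_\Y(\alpha)\bigr)\, dA(\alpha),
\end{equation*}
so the theorem reduces to $U_\X - U_\Y \to 0$ in $L^1_{\mathrm{loc}}(dA)$ in probability. By a Vitali-type argument, it is enough to show pointwise convergence in probability for a.e.\ $\alpha$, together with a $(C0)$-based uniform integrable envelope for $|U_\X|,|U_\Y|$ over compact sets.

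\emph{Pointwise comparison at fixed $\alpha$.} Applying Theorem \ref{singularvalueuniversality} with $\mathbf B = -\alpha\I$ gives $m_n(z,\Y) - m_n(z,\X)\to 0$ in probability for $z\in\mathbb C_+$, so the empirical singular value distributions $\nu_{\X,\alpha}, \nu_{\Y,\alpha}$ of $\F_\X - \alpha\I, \F_\Y - \alpha\I$ share a common weak limit $\nu_\alpha$ in probability. By \eqref{eq:altRep},
\begin{equation*}
U_\X(\alpha) - U_\Y(\alpha) = \int_0^\infty \log x\,d\bigl(\nu_{\Y,\alpha} - \nu_{\X,\alpha}\bigr)(x),
\end{equation*}
so I must pass from weak convergence to convergence against the unbounded function $\log x$. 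I split the integration range as $(0,\epsilon] \cup [\epsilon,M] \cup [M,\infty)$: on $[\epsilon,M]$, $\log x$ is bounded continuous, so a smooth cutoff plus weak convergence kills this piece; the upper tail is handled by $(C0)$, since $s_k(\F_\X - \alpha\I) \le s_k(\F_\X) + |\alpha|$ bounds the $p$-th singular value moments and forces $\int_M^\infty \log x\,d\nu_{\X,\alpha} \to 0$ as $M\to\infty$ in probability.

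\emph{The delicate piece} is the lower tail $\tfrac{1}{n}\sum_{j:s_j\le\epsilon} |\log s_j(\F_\X - \alpha\I)|$, which I dissect by index: (a) the $n^\gamma$ smallest indices ($j>n_2=[n-n^\gamma]$) contribute at most $n^{\gamma-1}Q\log n \to 0$ using $(C1)$, which gives $s_j \ge s_n \ge n^{-Q}$ with high probability; (b) indices $j\in[n_1,n_2]$ with $n_1=[n-n\delta_n]+1$ are controlled directly by $(C2)$. For (b) to absorb all $j$ with $s_j\le\epsilon$, the sequence $\delta_n\to 0$ in $(C2)$ must be chosen slowly enough that $n\delta_n$ eventually exceeds $\#\{j:s_j\le\epsilon\}$, which by the weak convergence of Step 2 tends to $n\nu_\alpha([0,\epsilon])$. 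The main obstacle is this coordination of scales: I take $n\to\infty$ first and then $\epsilon\to 0$, exploiting the fact that $(C1)$ and $(C2)$ together force $\nu_\alpha$ to have no mass at $0$ so that $\nu_\alpha([0,\epsilon])\to 0$. Combining the three pieces gives uniform integrability of $\log x$ against $\nu_{\X,\alpha}$, hence $U_\X(\alpha)-U_\Y(\alpha)\to 0$ in probability, and the $(C0)$-based envelope upgrades this to $L^1_{\mathrm{loc}}$ convergence, closing the argument.
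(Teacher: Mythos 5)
Your overall strategy matches the paper's: Girko's Hermitization reduces the eigenvalue comparison to pointwise (in $\alpha$) convergence of the logarithmic potentials, which is established via Theorem \ref{singularvalueuniversality} plus Conditions $(C0)$--$(C2)$ controlling the unbounded $\log$-integrals. The difference is that you re-derive the replacement principle (via the distributional Laplacian and a Vitali/dominated-convergence argument over compact sets), whereas the paper cites Tao--Vu's Theorem 2.1 as a black box; this part of your argument is a reasonable sketch.

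There is, however, a genuine gap in your treatment of the lower tail. You fix a threshold $\epsilon>0$, dissect the indices with $s_j\le\epsilon$ into (a) the bottom $n^\gamma$ indices (controlled by $(C1)$), and (b) the indices in $[n_1,n_2]$ (controlled by $(C2)$), and then assert that $\delta_n$ can be ``chosen slowly enough that $n\delta_n$ eventually exceeds $\#\{j:s_j\le\epsilon\}$.'' But for fixed $\epsilon$ the quantity $\#\{j:s_j\le\epsilon\}$ is $\sim n\nu_\alpha([0,\epsilon])=\Theta(n)$, while $n\delta_n=o(n)$; so the indices $j<n_1$ with $s_j\le\epsilon$ — a positive fraction of $n$ — escape both (a) and (b). Your proposed patch (``$n\to\infty$ first, then $\epsilon\to 0$'') does not close this: for any fixed $\epsilon>0$ the requirement $\delta_n\ge\nu_\alpha([0,\epsilon])$ is incompatible with $\delta_n\to 0$ unless $\nu_\alpha([0,\epsilon])=0$, which is typically false. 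Moreover, even granting the partial control, the uncontrolled piece is of size $\sim \nu_\alpha([0,\epsilon])|\log\epsilon|$, and ``no mass at $0$'' only gives $\nu_\alpha([0,\epsilon])\to 0$, not the stronger $\nu_\alpha([0,\epsilon])|\log\epsilon|\to 0$ that you need. There is also a secondary issue: you implicitly assume a limiting measure $\nu_\alpha$ exists, which is not part of the hypotheses.

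What is missing is the coupling of the \emph{value} threshold to $n$. The paper's proof uses a vanishing, data-dependent threshold $\eta_n=\max\{(\E\varkappa_n)^{1/3},(\log n)^{-1}\}$, where $\varkappa_n$ is the L\'evy distance between $\mathcal G_\X(\cdot,\alpha)$ and $\mathcal G_\Y(\cdot,\alpha)$, and takes $\delta_n=1/|\log 2\eta_n|$. A crucial additional event $\Omega_3=\{s_{n_1}(\F_\X-\alpha\I)\ge 2\eta_n\}\cap\{s_{n_1}(\F_\Y-\alpha\I)\ge 2\eta_n\}$ — shown to have probability $1-o(1)$ precisely via $(C2)$ — then guarantees that all indices with $s_j<2\eta_n$ lie in the range $j\ge n_1$ that $(C1)$--$(C2)$ control. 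The middle range $[a,a^{-1}]$ with $a\in[\eta_n,2\eta_n]$ is then handled by integration by parts against the L\'evy distance bound (never requiring a limit measure to exist), and the far tail by $(C0)$. Without an analogue of $\Omega_3$ and the coupling $\eta_n\leftrightarrow\delta_n\leftrightarrow\varkappa_n$, the lower-tail estimate does not go through.
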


\begin{proof} 
The proof of Theorem \ref{eigenvalueuniversality} is based 
on the ``replacement principle'' by Tao and Vu (see \cite{TaoVu:10}, Theorem 2.1)
which builds upon a sort of inversion formula
for the logarithmic potential that goes back to Girko \cite{Girko:84}
and that was also investigated by Bai \cite{Bai},~\cite{BS:10}.

We prove that, for any fixed $\alpha \in \mathbb{C}$,
\begin{equation}\label{potential}
 \lim_{n\to\infty}\Big(\frac1n\log\Big|\det\{\F_{\X}-\alpha\I\}\Big|
 -\frac1n\log\Big|\det\{\F_{\Y}-\alpha\I\}\Big|\Big)=0
\quad\text{in probability.}
\end{equation}
This is equivalent to
\begin{equation}\notag
 \lim_{n\to\infty}(U_{\X}(\alpha)-U_{\Y}(\alpha))=0
\quad\text{in probability.}
\end{equation}
Note that by condition (C1) the determinants in \eqref{potential}
are not zero with probability $1-o(1)$.

\pagebreak[1]

Let $L(F,G)$ denote the L\'evy distance between two distribution functions $F$ and $G$. 
Recall that
\begin{equation}\notag
 L(F,G)=\inf\{\varepsilon>0: F(x-\varepsilon)-\varepsilon\le G(x)\le F(x+\varepsilon) +\varepsilon \text{ for all } x \in \Real \} \in [0,1].
\end{equation}
Let $\mathcal G_{\X}(x,\alpha)$ and $\mathcal G_{\Y}(x,\alpha)$ denote 
the distribution functions of the singular values of the matrices 
$\F_{\X}-\alpha\I$ and $\F_{\Y}-\alpha\I$, respectively.
Let $\varkappa_n=L(\mathcal G_{\X}(\cdot,\alpha),\mathcal G_{\Y}(\cdot,\alpha))$. 
According to Theorem \ref{singularvalueuniversality} (with $\mathbf B = \alpha \I$), we have
\begin{equation}\label{leviprob}
 \lim_{n\to\infty}\varkappa_n=0
\quad\text{in probability}.
\end{equation}
We introduce the events
\begin{equation}\notag
 \Omega_0=\{s_n(\F_{\X}-\alpha\I)\ge n^{-Q}\} \,\cap\, \{s_n(\F_{\Y}-\alpha\I)\ge n^{-Q}\},
\quad
 \Omega_1=\{\varkappa_n\le (\E\varkappa_n)^{\frac12}\}. 
\end{equation}
Note that by \eqref{leviprob} and Markov's inequality, we have
\begin{equation}\label{omega1}
 \Pr\{\Omega_1^{c}\}\le (\E\varkappa_n)^{\frac12} \to 0 \text{ as }n\to\infty.
\end{equation}
Furthermore, put $\eta_n=\max \{ (\E\varkappa_n)^{\frac13}, (\log n)^{-1} \}$,
and introduce the event
\begin{equation}\notag
 \Omega_2=\{\text{there exists } a\in[\eta_n,2\eta_n]:\, 
 \mathcal G_{\Y}(a+\varkappa_n,\alpha)-\mathcal G_{\Y}(a-\varkappa_n,\alpha)
 \le \varkappa_n^{\frac12}\}.
\end{equation}
It is straightforward to check that
\begin{align}\label{omega2}
    \Pr\{\Omega_2^c\}
\le \Pr\{\mathcal G_{\Y}(2\eta_n,\alpha)-\mathcal G_{\Y}(\eta_n,\alpha) \ge \varkappa_n^{\frac12}\Big[\frac{\eta_n}{2\varkappa_n}\Big]\} 
\le \Pr\{\frac{\eta_n}{2\varkappa_n^{\frac12}}\le 2\}
\le \frac{16\E\varkappa_n}{\eta_n^2}\le 16\eta_n.
\end{align}
Let $\delta_n := 1/|\log 2\eta_n| \to 0$,
let $n_1 = [n-n\delta_n]+1$ and $n_2 = [n-n^\gamma]$
be defined as in condition (C2), 
and introduce the event
$$
\Omega_3 = \{ s_{n_1}(\F_{\X}-\alpha \I) \geq 2\eta_n \} \cap \{ s_{n_1}(\F_{\Y}-\alpha \I) \geq 2\eta_n \} \,.
$$
Note that on the set $\{ s_{n_1}(\F_{\X}-\alpha \I) < 2\eta_n \}$,
we~have, for large enough $n$,
\begin{multline*}
    \frac1n \sum_{n_1 \leq j \leq n_2} |\log s_j(\F_{\X} - \alpha \I)|
\ge \frac1n (n_2 - n_1 + 1) |\log 2\eta_n|
\ge \frac1n (n\delta_n-n^\gamma-1) |\log 2\eta_n| \\
  = (\delta_n - n^{\gamma-1}-n^{-1}) |\log 2\eta_n|
\ge \tfrac12 \delta_n |\log 2\eta_n|
  = \tfrac12 \,,
\end{multline*}
so that
$$
\Pr\{ s_{n_1}(\F_{\X}-\alpha \I) < 2\eta_n \} 
\le
\Pr\{ \frac1n \sum_{n_1 \leq j \leq n_2} |\log s_j(\F_{\X} - \alpha \I)| \geq \tfrac12 \}
\to 
0
$$
by condition (C2).
Since a similar estimate holds with $\F_{\Y}$ instead of $\F_{\X}$,
it follows that
\begin{equation}\label{omega3}
\Pr\{\Omega_3^c\} 
\leq
\Pr\{ s_{n_1}(\F_{\X}-\alpha \I) < 2\eta_n \} 
+ 
\Pr\{ s_{n_1}(\F_{\Y}-\alpha \I) < 2\eta_n \} 
\to 0
\quad \text{as $n \to \infty$.}
\end{equation}

\pagebreak[2]

\noindent{}Furthermore, on the set $\Omega_3$ we have, for any $a \in [\eta_n,2\eta_n]$,
\begin{align}
  \Big|\frac1n\log&\Big|\det\{\F_{\X}-\alpha\I\}\Big|
  -\frac1n\log\Big|\det\{\F_{\Y}-\alpha\I\}\Big|\Big|\notag\\&\le 
  \frac1n\sum_{k=n_2}^n|\log s_k(\F_{\X}-\alpha\I)|+
\frac1n\sum_{k=n_2}^{n}|\log s_k(\F_{\Y}-\alpha\I)|\notag\\&+\frac1n\sum_{k=n_1}^{n_2-1}|
\log s_k(\F_{\X}-\alpha\I)|+\frac1n\sum_{k=n_1}^{n_2-1}|\log s_k(\F_{\Y}-\alpha\I)|\notag\\&
+ \Big|\int_{a}^{a^{-1}} \log x\; d(\mathcal G_{\X}(x,\alpha)-\mathcal G_{\Y}(x,\alpha)) \Big|\notag\\&
+ \int_{(2\eta_n)^{-1}}^{\infty} |\log x| \;d(\mathcal G_{\X}(x,\alpha)+\mathcal G_{\Y}(x,\alpha)).\notag
\end{align}
Now fix $\varepsilon > 0$. The preceding inequality implies that
\begin{align}\label{big}
 \Pr\Big\{\Big|\frac1n\log&\Big|\det\{\F_{\X}-\alpha\I\}\Big|
 -\frac1n\log\Big|\det\{\F_{\Y}-\alpha\I\}\Big|\Big|\ge\varepsilon\Big\}
 \notag\\&\le\Pr\{\Omega_0^c\}
 +\Pr\{\Omega_1^c\}+\Pr\{\Omega_2^c\}+\Pr\{\Omega_3^c\}\notag\\&+\Pr\Big\{\frac1n\sum_{k=n_2}^n|
 \log s_k(\F_{\X}-\alpha\I)|+
\frac1n\sum_{k=n_2}^{n}|\log s_k(\F_{\Y}-\alpha\I)|
\ge\varepsilon/4;\Omega_0\Big\}\notag\\&+
\Pr\Big\{\frac1n\sum_{k=n_1}^{n_2-1}|\log s_k(\F_{\X}-\alpha\I)|+\frac1n\sum_{k=n_1}^{n_2-1}|
\log s_k(\F_{\Y}-\alpha\I)|\ge
\varepsilon/4\Big\}\notag\\&+\Pr\Big\{\Big|\int_{a}^{a^{-1}}\log x \;d
(\mathcal G_{\X}(x,\alpha)-\mathcal G_{\Y}(x,\alpha))
\Big|\ge\varepsilon/4;\Omega_2\cap\Omega_1\Big\}
\notag\\
&+\Pr\Big\{\int_{(2\eta_n)^{-1}}^{\infty} |\log x| \; d(\mathcal G_{\X}(x,\alpha)+\mathcal G_{\Y}(x,\alpha))
\ge\varepsilon/4\Big\}.
\end{align}
By condition $(C1)$ and inequalities \eqref{omega1} -- \eqref{omega3},
\begin{equation}\label{vip1}
 \Pr\{\Omega_0^c\}
 +\Pr\{\Omega_1^c\}+\Pr\{\Omega_2^c\}+\Pr\{\Omega_3^c\}\to 0 \quad\text{as}\quad n\to\infty.
\end{equation}
By definition of $\Omega_0$ and by condition $(C2)$, we have, for $n\to\infty$,
\begin{align}
\Pr&\{\frac1n\sum_{k=n_2}^n|\log s_k(\F_{\X}-\alpha\I)|+
\frac1n\sum_{k=n_2}^{n}|\log s_k(\F_{\Y}-\alpha\I)|\ge\varepsilon/4;\Omega_0\}
\notag\\&+\Pr\{\frac1n\sum_{k=n_1}^{n_2}|\log s_k(\F_{\X}-\alpha\I)|
+\frac1n\sum_{k=n_1}^{n_2}|\log s_k(\F_{\Y}-\alpha\I)|\ge
\varepsilon/4\}\to0.
\end{align}
Moreover, using that for any $p>0$,
the function $x^{-p}\log x$ is decreasing in $x$
for $x\ge \text{\rm e}^{1/p}$, 
we get, for~large enough $n$,
\begin{multline*}
    \int_{(2\eta_n)^{-1}}^{\infty} |\log x| \; d(\mathcal G_{\X}(x,\alpha)+\mathcal G_{\Y}(x,\alpha))
\le C\eta_n^{p}|\log\eta_n|\int_{0}^{\infty}x^{p}d(\mathcal G_{\X}(x,\alpha)+\mathcal G_{\Y}(x,\alpha))\notag\\
\le C\eta_n^p|\log\eta_n|\bigg(\frac1n\sum_{k=1}^ns_k^p(\F_\X-\alpha\I)+\frac1n\sum_{k=1}^ns_k^p(\F_\Y-\alpha\I)\bigg).\notag
\end{multline*}
By the inequality $s_k(\F-\alpha\I) \le s_k(\F)+|\alpha|$ and condition $(C0)$, 
this quantity converges to~zero in probability.

Thus, it remains  to bound the last but one summand in \eqref{big}.
Recall that $a \in [\eta_n,2\eta_n]$.
Integrating by parts, we~have
\begin{align}\label{integr1}
 \Big|\int_{a}^{a^{-1}}\log xd(\mathcal G_{\X}(x,\alpha)-\mathcal G_{\Y}(x,\alpha))\Big|
&\le 
 |\log a||\mathcal G_{\X}(a,\alpha)-\mathcal G_{\Y}(a,\alpha)|\notag
\\&\qquad+
 |\log a||\mathcal G_{\X}(a^{-1},\alpha)-\mathcal G_{\Y}(a^{-1},\alpha)|\notag
\\& \qquad+ \bigg|\int_{a}^{a^{-1}}\frac{\mathcal G_{\X}(x,\alpha)-\mathcal G_{\Y}(x,\alpha)}{x}dx\bigg|. 
\end{align}
Recall that we need to bound this expression for $\omega \in \Omega_2 \cap \Omega_1$ only
and that $\varkappa_n \leq \eta_n$ for~such~$\omega$.
By Chebyshev's inequality, we have
\begin{equation}\notag
 \max\{1-\mathcal G_{\X}(M,\alpha),1-\mathcal G_{\Y}(M,\alpha)\}
 \le \frac1{M^p}\bigg(\frac1n\sum_{k=1}^ns_k^p(\F_\X-\alpha\I)+
\frac1n\sum_{k=1}^ns_k^p(\F_\Y-\alpha\I)\bigg)
\end{equation}
for any $M > 0$.
It therefore follows from condition $(C0)$ that 
the second term on the r.h.s. of \eqref{integr1} 
converges to zero in probability. 
Furthermore, by the definition of $\varkappa_n$,
we have the following bound 
\begin{equation}\label{levi2}
 |\mathcal G_{\X}(x,\alpha)-\mathcal G_{\Y}(x,\alpha)|\le\varkappa_n
 + \mathcal G_{\Y}(x+\varkappa_n,\alpha)-\mathcal G_{\Y}(x-\varkappa_n,\alpha),
\quad{x \in \mathbb R.}
\end{equation}
Thus, for~$\omega\in\Omega_2 \cap \Omega_1$ we may find $a\in [\eta_n,2\eta_n]$ 
such that
\begin{equation}\notag
    |\mathcal G_{\X}(a,\alpha)-\mathcal G_{\Y}(a,\alpha)|\notag   
\le \varkappa_n + \mathcal G_{\Y}(a+\varkappa_n,\alpha)-\mathcal G_{\Y}(a-\varkappa_n,\alpha)
\le \varkappa_n + \varkappa_n^{\frac12}
\le 2\varkappa_n^{\frac12}
\le 2\eta_n^{\frac12} \,.
\end{equation}
Because $\eta_n^{\frac12} |\log \eta_n| \to 0$ as $n\to\infty$,
it follows that the first term on the r.h.s of \eqref{integr1} 
converges to zero in probability.
Finally, using inequality \eqref{levi2} again, we obtain
\begin{multline*}
    \bigg|\int_{a}^{a^{-1}}\frac{\mathcal G_{\X}(x,\alpha)-\mathcal G_{\Y}(x,\alpha)}{x}dx\bigg|
\le \int_{\eta_n}^{\eta_n^{-1}} \frac{|\mathcal G_{\X}(x,\alpha)-\mathcal G_{\Y}(x,\alpha)|}{x}dx \\
\le C\varkappa_n|\log\eta_n|+\int_{\eta_n}^{\eta_n^{-1}}
 \frac{\mathcal G_{\Y}(x+\varkappa_n,\alpha)-\mathcal G_{\Y}(x-\varkappa_n,\alpha)}{x}dx.
\end{multline*}
It is straightforward to check that for any $0<\varepsilon<a<b$ and any distribution 
function $F(x)$ the following inequality
\begin{equation}\notag
\int_a^b \frac{F(x+\varepsilon)-F(x-\varepsilon)}{x} \, dx
\le 2\varepsilon(\frac1{a-\varepsilon}+\frac1{b-\varepsilon})
\le \frac{4\varepsilon}{a-\varepsilon}
\end{equation}
holds.
Applying this inequality, we get
\begin{equation}\notag
\int_{\eta_n}^{\eta_n^{-1}}\frac{\mathcal G_{\Y}(x+\varkappa_n,\alpha)-\mathcal G_{\Y}(x-\varkappa_n,\alpha)}{x}dx 
\le \frac{C\varkappa_n}{\eta_n-\varkappa_n}.
\end{equation}
Therefore, for $\omega\in\Omega_2\cap\Omega_1$ we obtain, for large enough $n$,
\begin{equation}\notag
\bigg|\int_{a}^{a^{-1}}\frac{\mathcal G_{\X}(x,\alpha)-\mathcal G_{\Y}(x,\alpha)}{x}dx\bigg|
\le C \left( \varkappa_n |\log \eta_n| + \frac{\varkappa_n}{\eta_n - \varkappa_n} \right)
\le C\eta_n^{1/2}.
\end{equation}
This implies that
\begin{equation}\notag
 \lim_{n\to\infty}\Pr\Big\{\Big|\int_{a}^{a^{-1}}\log x \; d(\mathcal G_{\X}(x,\alpha)-\mathcal G_{\Y}(x,\alpha))\Big|\ge\varepsilon/4;\Omega_2\cap\Omega_1\Big\}=0.
\end{equation}
Thus relation \eqref{potential} is proved. 

\pagebreak[1]

We may now apply the ``replacement principle'' by Tao~and~Vu;
see \cite{TaoVu:10}, Theorem 2.1.
Note that this theorem is based on two assumptions (i) and (ii).
Assumption (ii) is just a reformulation of relation \eqref{potential}.
Assumption (i) is only needed to show that the probability measures $\mu_\X$ and $\mu_\Y$
are tight in probability (see Equations (3.3) and (3.4) in \cite{TaoVu:10}),
and may be replaced with our assumption ($C0$).
It~therefore follows that $\mu_{\X}-\mu_{\Y}$ converges weakly to~zero in probability,
i.e.\@ for any bounded continuous function $f$ and any $\varepsilon>0$, we have
\begin{equation}\notag
 \lim_{n\to\infty}\Pr\bigg\{\bigg|\int_{\mathbb C} fd\mu_{\X}
 -\int_{\mathbb C} fd\mu_{\Y}\bigg|>\varepsilon\bigg\}=0.
\end{equation}
Thus Theorem \ref{eigenvalueuniversality} is proved.
\end{proof}


\begin{rem}\label{spher0}
It follows from the proof of Theorem \ref{eigenvalueuniversality} 
that it suffices to assume, instead of the conditions \eqref{lind}, \eqref{rank}, 
and \eqref{mat3} -- \eqref{mat32a} of Theorem \ref{singularvalueuniversality},
that for any $\alpha\in\mathbb C$,
\begin{equation}\label{newcondition}
  \lim_{n\to\infty}L(\mathcal G_{\X}(\cdot,\alpha),\mathcal G_{\Y}(\cdot,\alpha))=0
  \quad\text{in probability.}
\end{equation}
\end{rem}

\section{Asymptotic Freeness of Random Matrices}\label{freeprob}
In this section we consider the asymptotic freeness of random matrices with special structure.
Before that, we recall the definition of Voiculescu's asymptotic freeness as well as some basic notions
from free probability theory. See also the survey by Speicher \cite{Speicher}
and the lecture notes by Voiculescu \cite{Voiculescu:98}.
\begin{defn}
A pair $(\mathcal A,\varphi)$ consisting of a unital algebra $\mathcal A$ 
and a linear functional $\varphi:\,\mathcal A\rightarrow \mathbb C$ 
with $\varphi(1)=1$ is called a \emph{non-commutative probability space}.
Elements of $\mathcal A$ are called \emph{random variables}, the numbers 
$\varphi(a_{i(1)}\cdots a_{i(n)})$ for such random variables 
$a_1,\ldots,a_k\in\mathcal A$ are called \emph{moments}, and the collection of 
all moments is called the \emph{joint distribution} of $a_1,\ldots,a_k$.
Equivalently, we may say that the joint distribution of $a_1,\hdots,a_k$
is given by the linear functional 
$\mu_{a_1,\hdots,a_k} : \mathbb C \langle X_1,\hdots,X_k \rangle \to \mathbb{C}$
with 
$\mu_{a_1,\hdots,a_k}(P(X_1,\hdots,X_k)) = \varphi(P(a_1,\hdots,a_k))$,
where $\mathbb C \langle X_1,\hdots,X_k \rangle$ denotes the algebra 
of all polynomials in $k$ non-commuting indeterminates $X_1,\hdots,X_k$.

If, for a given element $a \in \mathcal{A}$, there exists 
a unique probability measure $\mu_a$ on $\mathbb R$ such~that 
$\int t^k \, d\mu_a(t) = \varphi(a^k)$ for all $k \in \mathbb N$,
we identify the distribution of $a$ with the~probability measure $\mu_a$.
\end{defn}

\begin{defn}
Let $(\mathcal A,\varphi)$ be a non-commutative probability space.

$1)$ 
Let $(\mathcal A_i)_{i \in I}$ be a family of unital sub-algebras of $\mathcal A$.
The sub-algebras $\mathcal A_i$ are called \emph{free} or \emph{freely independent},
if, for any positive integer $k$,
$\varphi(a_{1}\cdots a_k)=0$ 
whenever the following set of conditions holds:
$a_j\in\mathcal A_{i(j)}$ (with $i(j)\in I$) for all $j=1,\ldots,k$, 
$\varphi(a_j)=0$ for all $j=1,\ldots,k$,
and neighbouring elements are from taken different sub-algebras, 
i.e.\@ $i(1)\ne i(2),\,i(2)\ne i(3),\,\ldots,\, i(k-1)\ne i(k)$.

2)
Let $(\mathcal A_i')_{i \in I}$ be a family of subsets of $\mathcal A$.
The subsets $\mathcal A_i'$ are called \emph{free} or \emph{freely independent},
if their generated unital sub-algebras are free, 
i.e. if $(\mathcal A_i)_{i\in I}$ are free, where, 
for each $i\in I$, $\mathcal A_i$ is the smallest unital sub-algebra of $\mathcal A$ 
which contains $\mathcal A_i'$.

3) 
Let $(a_i)_{i \in I}$ be a family of elements from $\mathcal A$.
The elements $a_i$ are called \emph{free} or \emph{freely independent},
if the subsets $\{ a_i \}$ are free.
\end{defn}

Consider two random variables $a$ and $b$ which are free. 
Then the distributions of $a+b$ and $ab$ 
(in the sense of linear functionals)
depend only on the distributions of $a$ and $b$
(see~e.g.\@ \cite{VoiculescuNica}, Chapter 2),
and we can make the following definition:

\begin{defn} 
For free random variables $a$ and $b$,
the distributions of $a+b$ and $ab$ are called 
the \emph{free additive convolution} and the \emph{free multiplicative convolution}
of $\mu_a$ and $\mu_b$ and are denoted by
\begin{equation}\notag
\mu_{a+b}=\mu_a \boxplus \mu_b
\quad\text{and}\quad
\mu_{ab}=\mu_a\boxtimes \mu_b
\end{equation}
respectively.
\end{defn}

It can be shown (see e.g.\@ \cite{VoiculescuNica}, Chapter~3)
that this defines commutative and associative operations.
Furthermore, it is well known 
that if $\mu_a$ and $\mu_b$ are compactly supported 
probability measures on $\mathbb R$, then $\mu_a \boxplus \mu_b$ 
is also a compactly supported probability measure on $\mathbb R$.
Similarly, if $\mu_a$ and $\mu_b$ are compactly supported 
probability measure on $\mathbb R_{+}$, then $\mu_a \boxtimes \mu_b$ 
is also a compactly supported probability measure on $\mathbb R_{+}$.

\pagebreak[2]

We shall now define the {\it $R$-transform} and the {\it $S$-transform} 
of a random variable \linebreak $a \in \mathcal A$ 
as~introduced by Voiculescu (see e.g.\@ \cite{Voiculescu:98}, p. 296 and p. 310).
Let $a \in \mathcal A$ be \linebreak a random variable with distribution $\mu_a$ on $\Real$.
Let $M_a(z)$ denote the generic formal moment generating function of $\mu_a$, 
namely $M_a(z)=\sum_{k=1}^{\infty} m_k z^k$, where $m_k=\int_{-\infty}^{\infty}x^k \, d\mu_a(x)=\varphi(a^k)$. 
Let $G_a(z) := \frac{1}{z} (1 + M_a(\tfrac{1}{z}))$,
and define the {\it $R$-transform} of $a$ by
\begin{equation}\label{R-transform}
R_{a}(z) := G_a^{-1}(z) - \frac{1}{z} \,,
\end{equation}
where $G_a^{-1}(z)$ denotes the inverse of $G_a(z)$ w.r.t.\@ composition of functions.
Moreover, when~$\varphi(a) \ne 0$,
define the {\it $S$-transform} of $a$ by
\begin{equation}\label{S-transform}
S_{a}(z) := \frac{z+1}{z} M_a^{-1}(z) \,,
\end{equation}
where $M_a^{-1}(z)$ denotes the inverse of $M_a(z)$ w.r.t.\@ composition of functions.
These defini\-tions have to be understood at the level of formal series. 
However, when $a$ has a distribution on $\Real$ with compact support,
$R_a(z)$ and $S_a(z)$ may be regarded as analytic functions
in a certain neighborhood of the origin.

Then, for free random variables $a$~and~$b$,
we have
\begin{equation}\label{freesum}
R_{a+b}(z)=R_{a}(z)+R_{b}(z)
\end{equation}
and, when $\phi(a)\ne 0$ and $\phi(b) \ne 0$, 
\begin{equation}\label{freeproduct}
S_{ab}(z)=S_{a}(z) S_{b}(z);
\end{equation}
see e.g.\@ \cite{Voiculescu:98}, p. 296 and p. 310.

For a generalization of the $S$-transform to the case 
where $\varphi(a)=0$ and $\varphi(a^2) \ne 0$,
see Rao and Speicher \cite{SpeicherRao:2007}
as well as Arizmendi and P\'erez-Abreu \cite{Perez}.
Here the $S$-transform is a~formal power series in $\sqrt{z}$,
and there exist two branches of the $S$-transform.
We will use this generalization only in the case where 
$a$ has a \emph{symmetric} distribution $\mu_a \ne \delta_0$ on $\Real$,
in which we adopt the approach by Arizmendi and P\'erez-Abreu \cite{Perez};
see definition \eqref{eq:ST-symmetric} below.

Let $a$ have a compactly supported distribution $\mu_a \ne \delta_0$ on $\Real$,
and define the function $\widetilde R_a(z):=z R_a(z)$. 
Then
\begin{equation}\label{nica}
z S_a(z)={\widetilde R}_a^{-1}(z),
\end{equation}
where ${\widetilde R}_a^{-1}(z)$ denotes the inverse
of ${\widetilde R}_a(z)$ w.r.t.\@ composition of functions.
See for instance Nica \cite{Nica}, Equation (21) in Chapter 13.
For clarity, let us emphasize that we call $\widetilde{R}_a(z)$
what is called $R_a(z)$ in \cite{Nica}.
Furthermore, let us note that the argument in \cite{Nica}
requires that $\varphi(a) \ne 0$.
However, when $\varphi(a) = 0$ and $\varphi(a^2) \ne 0$,
one can use similar arguments as in Rao and Speicher \cite{SpeicherRao:2007}
to show that, similarly as for the $S$-transform, 
there exist two~branches of ${\widetilde R}_a^{-1}(z)$
and that \eqref{nica} continues to hold 
with an appropriate choice of these branches.
We shall always take the branches such that
\begin{align}
\label{eq:ChoiceOfBranch}
\text{$\im {\widetilde R}_a^{-1}(z) \le 0$ \ and \ $\im S_a(z) \ge 0$ \ for \ $z \approx 0$, $z \not\in \mathbb R_{+}$.}
\end{align}

\pagebreak[2]

\noindent{}For example, when the distribution of $a$ is the semi-circular law, we take
$\widetilde R_a^{-1}(z)=-\sqrt z$ and $S_a(z)=-\frac1{\sqrt z}$,
where the branch of $\sqrt z$ is chosen such that 
$\im\sqrt z\ge 0$ for $z\in\mathbb C \setminus \mathbb R_{+}$.

\begin{rem}
The $R$-transform and the $S$-transform (as well as the other transforms)
can also be introduced for a compactly supported probability measure $\mu$ 
on $\mathbb R$ and will then be denoted by $R_\mu$ and $S_\mu$, respectively.
For the $S$-transform, we require that $\mu \ne \delta_0$.
\end{rem}

We will also need the additive and multiplicative convolution 
for probability measures with \emph{unbounded} support;
see Bercovici and Voiculescu \cite{BercVoic} for details.
For our purposes, the following definitions are convenient:
For any probability measures $\mu,\nu$ on $\Real$,
we set
\begin{align}
\label{eq:boxplus-1}
\mu \boxplus \nu := \lim_{n \to \infty} (\mu_n \boxplus \nu_n) \,,
\end{align}
where $(\mu_n),(\nu_n)$ are sequences of probability measures on $\Real$
with bounded support which converge to $\mu,\nu$ w.r.t.\@ L\'evy distance $d_L$.
Similarly, for any probability measures $\mu,\nu$ on~$\Real_+$,
we set
\begin{align}
\label{eq:boxtimes-1}
\mu \boxtimes \nu := \lim_{n \to \infty} (\mu_n \boxtimes \nu_n) \,,
\end{align}
where $(\mu_n),(\nu_n)$ are sequences of probability measures on $\Real_+$
with bounded support which converge to $\mu,\nu$ w.r.t.\@ Kolmogorov distance $d_K$.
It can be shown that these operations are well-defined and continuous
in the sense that
\begin{align}
\label{eq:boxplus-2}
d_L(\mu_1 \boxplus \nu_1,\mu_2 \boxplus \nu_2) \le d_L(\mu_1,\mu_2) + d_L(\nu_1,\nu_2)
\end{align}
and
\begin{align}
\label{eq:boxtimes-2}
d_K(\mu_1 \boxtimes \nu_1,\mu_2 \boxtimes \nu_2) \le d_K(\mu_1,\mu_2) + d_K(\nu_1,\nu_2) \,,
\end{align}
cf. Propositions 4.13 and 4.14 in \cite{BercVoic}.

The above transforms also have extensions to unbounded probability measures.
Let us provide the details for the $S$-transform;
cf. Section~6 in \cite{BercVoic}.
For a probability measure $\nu$ on $(0,\infty)$, define the function
$$
\psi_\nu(z) := \int_0^\infty  \frac{tz}{1-tz} \, d\nu(t) \,.
$$
By Proposition 6.2 in \cite{BercVoic}, this function is univalent in the left half-plane $i \mathbb{C}^+$,
with $\psi_\nu(i \mathbb{C}^+) \cap \Real = (-1,0)$. 
The $S$-transform of $\nu$ is the function on $\psi_\nu(i \mathbb{C}^+)$ defined by
\begin{align}
\label{eq:ST-unbounded}
S_\nu = \frac{z+1}{z} \psi_\nu^{-1}(z) \,,
\end{align}
where $\psi_\nu^{-1}$ denotes the inverse of $\psi_\nu$.

\pagebreak[2]

Finally, we also need the $S$-transform for \emph{symmetric} probability measures on $\Real \setminus \{ 0 \}$.
Here we follow Arizmendi and P\'erez-Abreu \cite{Perez}.
Note that the function
\begin{align}
\label{eq:Q-definition}
\begin{array}{ccccc}
\mathcal Q &:& \Big\{ \text{symmetric p.m.'s on $\Real \setminus \{ 0 \}$} \Big\} &\to& \Big\{ \text{p.m.'s on $(0,\infty)$} \Big\} \\[+5pt]
&& \mu &\mapsto& \text{induced measure of $\mu$} \\ &&&& \text{under the mapping $x \mapsto x^2$}
\end{array}
\end{align}
is one--to--one. 
For a symmetric probability measure $\mu$ on $\Real \setminus \{ 0 \}$,
we define the $S$-transform by
\begin{align}
\label{eq:ST-symmetric}
S_\mu(z) = \sqrt{\frac{z+1}{z} \, S_{\mathcal Q(\mu)}(z)} \,,  
\end{align}
where the branch of the square-root is such that $\im S_\mu(z) \ge 0$ for $z \in (-1,0)$.
Of~course, for~probability measures with compact support (and the respective additional properties),
the new definitions \eqref{eq:ST-unbounded} and \eqref{eq:ST-symmetric} are consistent with the previous ones.
Also, let us mention that the assumption $\mu(0) = 0$ is more restrictive than necessary
(it~could be relaxed to $\mu(0) < 1$), but sufficient for our purposes.

Let $\mu,\mu_1,\mu_2,\mu_3,\dots$ be a family either of probability measures on $(0,\infty)$
or of symmetric probability measures on $\Real \setminus \{ 0 \}$. Then we have the following results:
\begin{align}
\label{ST-uniqueness} &\text{The $S$-transform $S_\mu$ determines the measure $\mu$.} \\
\label{ST-continuity} &\text{If $\mu_n \Rightarrow \mu$ then $S_{\mu_n} \to S_{\mu}$ locally uniformly on $(-1,0)$.}
\end{align}

\medskip

Next, we~recall Voiculescu's definition of asymptotic freeness for random matrices.
Note that for any (classical) probability space $(\Omega,\mathcal F,\Pr)$,
the set $\mathcal A := \mathcal M_{n \times n}(L^{\infty-}(\Omega))$
\linebreak[1] of $n \times n$ matrices with entries in $L^{\infty-}(\Omega)$
endowed with the functional $\varphi(\mathbf A) := \E\tfrac1n\Tr(\mathbf A)$
is a non-commutative probability space.

\begin{defn}\label{def:asymptoticfreeness} \
%
%
%
%
Let $I$ be an index set, and let $I = I_1 \cup \cdots \cup I_l$ be a partition of $I$.
For each $n \in \mathbb N$, let $(\mathbf A_{n,i})_{i \in I}$ a family
of random~matrices in $\mathcal{M}_{n \times n}(L^{\infty-}(\Omega))$.
The~families $\{ \mathbf A_{n,i} : i \in I_j \}$, $j=1,\hdots,l$, are called \emph{asymptotically free}
if there exists a family $(a_i)_{i \in I}$ of non-commutative random variables
in some non-commutative probability space $(\mathcal A, \varphi)$ such~that
for all $k \in \mathbb N$ and all $i(1),\hdots,i(k) \in I$,
$$
\lim_{n \to \infty} \frac1n\E\Tr (\mathbf A_{n,i(1)} \cdots \mathbf A_{n,i(k)}) = \varphi(a_{i(1)} \cdots a_{i(k)})
$$
and the families $\{ a_i : i \in I_j \}$, $j=1,\hdots,l$, are free.
\end{defn}


\begin{rem} \label{rem:application-RMT}
Let $(\mathbf A_n)_{n \in \mathbb N}$ and $(\mathbf B_n)_{n \in \mathbb N}$ be sequences 
of self-adjoint random matrices 
with $\mathbf A_n,\mathbf B_n$ $\in \mathcal{M}_{n \times n}(L^{\infty-}(\Omega))$
such~that 
the mean eigenvalue distributions of $\mathbf A_n$ and $\mathbf B_n$ 
converge in moments to compactly supported p.m.'s $\mu_{\mathbf A}$ and $\mu_{\mathbf B}$,
respectively, 
and $(\mathbf A_n)_{n \in \mathbb N}$ and $(\mathbf B_n)_{n \in \mathbb N}$ are asymptotically free.
Then the mean eigenvalue distribution of $\mathbf A_n + \mathbf B_n$ converges in moments
to the p.m.\@ $\mu_{\mathbf A} \boxplus \mu_{\mathbf B}$.
Moreover, if $\mathbf A_n$~and~$\mathbf B_n$ are additionally positive semi-definite,
the mean eigenvalue distribution of $\mathbf A_n \mathbf B_n$ converges in moments
to the p.m.\@ $\mu_{\mathbf A} \boxtimes \mu_{\mathbf B}$.
\end{rem}

\begin{rem}
It is easy to see that 
two sequences $(\mathbf A_n)_{n\in\mathbb N}$ and $(\mathbf B_n)_{n\in\mathbb N}$
with $\mathbf A_n,\mathbf B_n$ $\in \mathcal{M}_{n \times n}(L^{\infty-}(\Omega))$
are \emph{asymptotically free} if and only if for all $k \ge 1$ and all $j_1,l_1,\ldots,j_k,l_k$ $\ge 1$,
the~following relation holds, assuming that all limits involved exist,
\begin{align}\label{asfree}
 \lim_{n\to\infty}\E\frac1n\Tr\Big(
 \Big(\mathbf A_n^{j_1}-(\lim_{\nu\to\infty} \frac1\nu\E\Tr \mathbf A_\nu^{j_1})\I_n\Big)
 \Big(\mathbf B_n^{l_1}-(\lim_{\nu\to\infty} \frac1\nu\E\Tr \mathbf B_\nu^{l_1})\I_n\Big)\cdots&
\notag\\ 
 \Big(\mathbf A_n^{j_k}-(\lim_{\nu\to\infty} \frac1\nu\E\Tr \mathbf A_\nu^{j_k})\I_n\Big)
 \Big(\mathbf B_n^{l_k}-(\lim_{\nu\to\infty} \frac1\nu\E\Tr \mathbf B_\nu^{l_k})\I_n\Big)\Big)&=0.
\end{align}
Here $\I_n$ denotes the identity matrix of dimension $n \times n$.
\end{rem}

\medskip

Consider a sequence of random matrices $(\Y_n)_{n\in\mathbb N}$
with $\Y_n \in \mathcal M_{n \times n}(L^{\infty-}(\Omega))$, $n \in \mathbb N$.
Suppose that the matrix $\Y_n$ is \emph{bi-unitary invariant},
i.\,e.\@ the joint distribution of the entries of $\Y_n$ is equal 
to that of the entries of $\V_1\Y_n\V_2$, 
for any unitary $n \times n$ matrices $\V_1$~and~$\V_2$.
(For instance, this is the case if $\Y_n$ is a (non-self-adjoint) matrix
with independent standard complex Gaussian entries.) 
Introduce the matrices 
$$
\mathbf A_n=\begin{bmatrix}&\mathbf O&\Y_n&\\&{\Y}_n^*&\mathbf O&\end{bmatrix}
$$
and, for $\alpha=u+iv$ with real $u,v$,
$$
\mathbf B_n=\J_n(\alpha)=\begin{bmatrix}&\mathbf O&-\alpha\I_n&\\&-\overline \alpha\I_n&\mathbf O&\end{bmatrix}.
$$
We shall investigate the asymptotic freeness of the matrices
$(\mathbf A_n)_{n\in\mathbb N}$ and $(\mathbf B_n)_{n\in\mathbb N}$.

\begin{prop}
\label{prop:asympfree}
Let $(\mathbf A_n)_{n\in\mathbb N}$ and $(\mathbf B_n)_{n\in\mathbb N}$ 
be sequences of matrices as above, where for each $n \in \mathbb{N}$, 
$\Y_n$ is a bi-unitary invariant matrix, and $\alpha$ is a fixed complex number.
Moreover, suppose that the eigenvalue distribution of $\mathbf A_n$
converges weakly in probability (as $n \to \infty$)
to a compactly supported probability measure $\mu$ on $\mathbb R$
and that for any $k \in \mathbb N$, $\sup_{n \in \mathbb N} \tfrac1n \E \Tr A_n^{2k} < \infty$.
Then the sequences $(\mathbf A_n)_{n\in\mathbb N}$ and $(\mathbf B_n)_{n\in\mathbb N}$
are asymptotically~free.
\end{prop}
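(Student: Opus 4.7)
The plan is to exploit the bi-unitary invariance of $\Y_n$ to effectively introduce a Haar unitary and reduce the claim to Voiculescu's classical theorem on the asymptotic $*$-freeness of a Haar unitary and an independent matrix sequence with a converging spectral distribution.

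\textbf{Step 1 (Reduction via SVD).} Fix any measurable singular value decomposition $\Y_n = \V_1 \mathbf{D}_n \V_2^*$. By the assumed bi-unitary invariance, $\Y_n$ has the same distribution as $\W_1 \Y_n \W_2 = (\W_1 \V_1) \mathbf{D}_n (\W_2^* \V_2)^*$ for any pair of independent Haar unitaries $\W_1, \W_2$ on $\mathbb U(n)$; by left-invariance of Haar measure, $\U_1 := \W_1 \V_1$ and $\U_2 := \W_2^* \V_2$ are then independent Haar distributed on $\mathbb U(n)$ and independent of $\mathbf{D}_n$. Setting $\mathbf{P} := \text{diag}(\U_1, \U_2)$ and $\tilde{\mathbf{A}}_n := \begin{bmatrix} \mathbf O & \mathbf{D}_n \\ \mathbf{D}_n & \mathbf O \end{bmatrix}$, one has $\mathbf A_n = \mathbf P \tilde{\mathbf A}_n \mathbf P^*$ in distribution, and cyclicity of the trace yields that any mixed moment of $(\mathbf A_n, \mathbf B_n)$ coincides with the corresponding moment of $(\tilde{\mathbf A}_n, \hat{\mathbf B}_n)$, where
$$
\hat{\mathbf B}_n := \mathbf P^* \mathbf B_n \mathbf P = \begin{bmatrix} \mathbf O & -\alpha \W \\ -\bar\alpha \W^* & \mathbf O \end{bmatrix}, \qquad \W := \U_1^* \U_2.
$$
The unitary $\W$ is itself Haar on $\mathbb U(n)$ and independent of $\mathbf D_n$.

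\textbf{Step 2 (Block-to-scalar and Voiculescu).} Using the block identities $\tilde{\mathbf A}_n^{2k} = \text{diag}(\mathbf D_n^{2k}, \mathbf D_n^{2k})$ and $\hat{\mathbf B}_n^2 = |\alpha|^2 \mathbf I_{2n}$, any $2n$-dimensional mixed trace $\tfrac{1}{2n}\E\Tr(\tilde{\mathbf A}_n^{j_1} \hat{\mathbf B}_n^{l_1}\cdots)$ expands as a finite linear combination, with coefficients polynomial in $\alpha$ and $\bar\alpha$, of normalized $n$-dimensional traces of the form $\tfrac{1}{n}\E\Tr(\mathbf D_n^{a_1} \W^{\epsilon_1} \mathbf D_n^{a_2} \W^{\epsilon_2}\cdots)$ with $\epsilon_i \in \{+1,-1\}$. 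By Voiculescu's classical theorem on the asymptotic $*$-freeness of a Haar unitary and an independent matrix sequence (see, e.g., \cite{Voiculescu:98} or \cite{Petz-1}), applied conditionally on $\mathbf D_n$, the pair $(\mathbf D_n, \W)$ is asymptotically $*$-free in $\mathcal M_n$; note that $\mathbf D_n$ has a weakly in probability converging spectral distribution with uniformly bounded moments (inherited from the hypotheses on $\mathbf A_n$). Each of the above $n$-dimensional traces therefore converges to the corresponding free product moment in the $*$-algebra generated by the limit $d$ of $\mathbf D_n$ and by a Haar unitary $u$ free from $d$.

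\textbf{Step 3 (Conclusion and main obstacle).} It remains to verify the centering criterion \eqref{asfree} for $(\tilde{\mathbf A}_n, \hat{\mathbf B}_n)$, which, by Step 1, is equivalent to the asserted asymptotic freeness of $(\mathbf A_n, \mathbf B_n)$. Every centered alternating polynomial word at the $2n$-level decomposes under the block expansion into centered alternating words at the $n$-level, each of which has vanishing limiting trace by the asymptotic $*$-freeness of $(\mathbf D_n, \W)$. The main technical obstacle is precisely this combinatorial bookkeeping: one must track how the parities of the exponents $j_i, l_i$ interact with the block structure, so that $2n$-level centerings correspond to genuinely centered $n$-level alternating products to which Voiculescu's theorem can be invoked cleanly.
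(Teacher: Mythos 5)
Your Steps~1 and 2 are sound and close to the paper's own route: bi-unitary invariance plus the singular value decomposition reduces everything to a positive diagonal matrix and Haar unitaries, and one then invokes Hiai--Petz/Voiculescu. Your reformulation with a single Haar unitary $\W = \U_1^*\U_2$ is a legitimate and slightly cleaner way to organize the same computation; the paper keeps two independent Haar unitaries $\U_n,\V_n$, works with the three families $\{\U_n,\U_n^*\}$, $\{\V_n,\V_n^*\}$, $\{\mathbf\Delta_n\}$, and also performs the subsequence/truncation bookkeeping (to pass from convergence in probability to almost sure convergence and to uniformly bounded operator norm) that you only gesture at.

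The genuine gap is in Step~3, and it is exactly where you flag the ``main technical obstacle'': the claim that centered alternating words at the $2n$ level decompose into \emph{centered} alternating words at the $n$ level is false as stated. For even $j_i$ the $(1,1)$-block contribution is $\mathbf D_n^{j_i} - a_{j_i}\mathbf I$, which is indeed centered. But for odd $j_i$ the $2n$-level centering subtracts nothing (since $\mu$ is symmetric, $a_{j_i}=0$), and the $n$-level factor that survives is $\mathbf D_n^{j_i}$ with $\tfrac1n \E\Tr \mathbf D_n^{j_i} \to \int x^{j_i}\, d\mu_+ \neq 0$ in general, because your $\mathbf D_n$ carries only nonnegative singular values, not the symmetric spectrum. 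So the $n$-level products are \emph{not} centered alternating words, and asymptotic freeness does not give the vanishing ``cleanly''; for instance $\tau(d\, u\, d'\, u^*)=\tau(d)\tau(d')$ for free $d$ and Haar $u$, which is nonzero unless one of $d,d'$ is centered. The paper avoids this entirely by putting \emph{random signs} on the diagonal of $\mathbf\Delta_n$, which makes $\mathbf\Delta_n$ asymptotically symmetric and hence $\mathbf\Delta_n^{2p+1}$ asymptotically centered, so that \emph{all} $n$-level factors are centered and freeness applies directly. Your argument can be rescued without the random signs, but it requires the additional (not obvious) observation that the parity structure forces $\epsilon_{i}=-\epsilon_{i+1}$ exactly when $j_{i+1}$ is even, so the $\W\cdots\W^*$ contractions only ever sandwich a centered $\mathbf D_n^{2p}-a_{2p}\mathbf I$; with that observation, expanding the remaining $\mathbf D_n^{\text{odd}}$ factors into centered plus scalar parts only produces traces of nontrivial powers of $\W$, which vanish. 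As written, though, you have asserted the conclusion of the bookkeeping rather than carried it out, and the naive form of the assertion does not hold.
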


\begin{proof}
We check Equation \eqref{asfree}. Note that
\begin{equation}\label{jz}
 \mathbf B_n^l=\J_n^l(\alpha)=\begin{cases}|\alpha|^{2p}\,\I_{2n},&\text{ if } 
 l=2p,\\|\alpha|^{2p}\,\J_n(\alpha),&\text{ if }l=2p+1.\end{cases}
\end{equation}
From here it follows immediately that
$$
\J_n^{2p}(\alpha)-(\lim_{\nu \to \infty}\frac1{2\nu}\E\Tr\J_\nu^{2p}(\alpha))\I_{2n}=\mathbf O.
$$
This means that relation (\ref{asfree}) holds 
if at least one of the $l_1,l_2,\ldots, l_k$ is even.
Hence, suppose that $l_1,\ldots,l_k$ are all odd.
In this case we~may reduce relation (\ref{asfree}) to
\begin{align}\label{freeas1}
 \lim_{n\to\infty}\E\frac1n\Tr\Big( \widetilde{\mathbf A_n^{j_1}}\J(\alpha)\cdots\widetilde{\mathbf A_n^{j_k}}\J(\alpha)\Big)=0,
\end{align}
where
$$
\widetilde{\mathbf A_n^j} = \mathbf A_n^j - (\lim_{\nu\to\infty} \frac1{2\nu}\E\Tr \mathbf A_\nu^j)\I_{2n}.
$$

\pagebreak[2]

Note that
\begin{equation}\notag
 \mathbf A_n^{j}=\begin{cases}
 	\begin{bmatrix}&(\Y_n\Y_n^*)^p&\mathbf O&\\&\mathbf O&(\Y_n^*\Y_n)^p&\end{bmatrix},&\text{ if }j=2p,\\ \\
	\begin{bmatrix}&\mathbf O&(\Y_n\Y_n^*)^p\Y_n&\\&(\Y_n^*\Y_n)^p\Y_n^*&\mathbf O&\end{bmatrix},&\text{ if }j=2p+1.
 \end{cases}  
\end{equation}

In order to complete the proof, we proceed similarly as in Hiai and Petz \cite{Petz-1}.
Using the bi-unitary invariance and the singular value decomposition
of the matrix $\Y_n$, we may represent the matrix $\Y_n$
as $\U_n \mathbf \Delta_n \V_n^*$,
where $\U_n$, $\mathbf \Delta_n$, $\V_n$ are independent,
$\U_n$~and~$\V_n$ are random unitary matrices
(with Haar distribution),
and $\mathbf \Delta_n$ is a random diagonal matrix
whose diagonal elements are the singular values of $\Y_n$,
\emph{but with random signs} (chosen uniformly at random and independently from everything else).
Note that
$$
\widetilde{\mathbf A_n^j}
=
\begin{cases}\begin{bmatrix}&\U_n (\mathbf \Delta_n^{2p} - \int x^{2p} d\mu \I) \U_n^*&\mathbf O&\\&
 \mathbf O&\V_n (\mathbf \Delta_n^{2p} - \int x^{2p} d\mu \I) \V_n^*&\end{bmatrix},&\text{ if }j=2p,\\ \\
\begin{bmatrix}&\mathbf O&\U_n \mathbf \Delta_n^{2p+1} \V_n^*&\\&
\V_n \mathbf \Delta_n^{2p+1} \U_n^*&\mathbf O&\end{bmatrix},&\text{ if }j=2p+1.
\end{cases}  
$$
Thus, the non-zero $n \times n$ blocks in the matrix
$
\widetilde{\mathbf A_n^{j_1}} \J(\alpha) \cdots \widetilde{\mathbf A_n^{j_k}} \J(\alpha)
$
in \eqref{freeas1} are products of the matrices
$\U_n (\mathbf \Delta_n^{2p} - \int x^{2p} d\mu \I) \U_n^*$,
$\V_n (\mathbf \Delta_n^{2p} - \int x^{2p} d\mu \I) \V_n^*$,
$\U_n \mathbf \Delta_n^{2p+1} \V_n^*$,
$\V_n \mathbf \Delta_n^{2p+1} \U_n^*$,
as~well~as certain powers of $\alpha$ and $\overline\alpha$,
such that each $\U_n^*$ is followed by a $\V_n$,
and each $\V_n^*$ is followed by a $\U_n$.

\pagebreak[2]

Now, by our assumptions, it is easy to see that
the eigenvalue distribution of $\mathbf \Delta_n$ converges weakly in probability to $\mu$.
Furthermore, since the matrices $\mathbf A_n^2$ and $\mathbf \Delta_n^2$ have the same eigenvalue distributions,
we have $\sup_{n \in \mathbb N} \tfrac1n \E \Tr \mathbf \Delta_n^{2k} < \infty$ for each $k \in \mathbb N$.
Starting from these observations, it is straightforward to show that
$$
\lim_{n \to \infty} \tfrac1n \Tr \mathbf \Delta_n^{k} = \int x^k \, d\mu(x) \qquad \text{in probability}
$$
for each $k \in \mathbb N$.
Therefore, by a diagonalization argument, we may find a subsequence $(\mathbf \Delta_{n_l})$ such that
$$
\lim_{l \to \infty} \tfrac{1}{n_l} \Tr \mathbf \Delta_{n_l}^{k} = \int x^k \, d\mu(x) \qquad \text{almost surely}
$$
for each $k \in \mathbb N$. 
Now, applying Theorem 2.1 in \cite{Petz-1} conditionally on the sequence $(\mathbf \Delta_{n})$,
we obtain that the families
$\{ \U_{n_l},\U_{n_l}^* \}$, $\{ \V_{n_l},\V_{n_l}^* \}$ and $\{ \mathbf \Delta_{n_l} \}$
are asymptotically free almost surely.
(Strictly speaking, we may not apply Theorem 2.1 directly, but we first have to replace
the matrices $\mathbf \Delta_{n_l}$ with matrices of uniformly bounded operator norm,
as in the proofs of Theorems 3.2 and 4.3 in \cite{Petz-1}.)
Since we may repeat the preceding argument for any subsequence of the original sequence $(\mathbf \Delta_n)$,
we come to the conclusion that the families
$\{ \U_{n},\U_{n}^* \}$, $\{ \V_{n},\V_{n}^* \}$ and $\{ \mathbf \Delta_{n} \}$
are also asymptotically free almost surely. \linebreak
Finally, as $\sup_{n \in \mathbb N} \tfrac1n \E \Tr \mathbf \Delta_n^{2k} < \infty$ for each $k \in \mathbb N$,
these families are asymptotically free in the ordinary sense as well.

But this implies that the limit in \eqref{freeas1} is equal to zero. 
This completes the~proof of the asymptotic free\-ness of $\mathbf A_n$ and $\mathbf B_n$.
\end{proof}

\begin{rem} \label{rem:asympfree}
In our applications of Proposition \ref{prop:asympfree}, the matrices $\Y_n$ 
will be products of Gaussian matrices with independent entries 
and / or matrices of uniformly bounded operator norm.
It is easy to see that in this case the assumption that
$\sup \tfrac1n \E \Tr \mathbf A_n^{2k} < \infty$
for each $k \in \mathbb N$ is satisfied.
\end{rem}

\medskip

\begin{rem} \label{BOI}
The notion of \emph{bi-unitary invariance} is relevant for computing
the limiting spectral distributions for random matrices with i.i.d.\@ standard complex Gaussian entries.
To compute the limiting distributions for random matrices with i.i.d.\@ standard real Gaussian entries
we need the notion of \emph{bi-orthogonal invariance}.
According to a side-remark in Hiai and Petz \cite{Petz-1}, 
the results for this case are analogous.

Moreover, using bi-orthogonal invariance, 
it is even possible to treat random matrices with i.i.d.\@ entries
with a common \emph{bivariate} real Gaussian distribution.
(Thus, we may allow for correlations between the real and imaginary parts, for example.)
Indeed, suppose~that the matrices $\Y^{(1)},\hdots,\Y^{(q)}$
have i.i.d.\@ Gaussian entries such that $\E Y_{jk}^{(q)} = 0$, $\E |Y_{jk}^{(q)}|^2 = 1$ and 
\begin{align}
\label{eq:secondmomentstructure-4}
\E|\re Y^{(q)}_{jk}|^2 = \sigma_1^2,\quad
\E|\im Y^{(q)}_{jk}|^2 = \sigma_2^2,\quad
\E(\re Y^{(q)}_{jk} \im X^{(q)}_{jk}) = \varrho \sigma_1 \sigma_2
\end{align}
for all $q=1,\hdots,m$, $j=1,\hdots,n_{q-1}$, $k=1,\hdots,n_q$,
where $\sigma_1^2,\sigma_2^2 \in [0,1]$ with $\sigma_1^2 + \sigma_2^2 = 1$ 
and $\varrho \in [-1,+1]$.
Then it~is easy to see that the matrices $\Y^{(j)}$ may be represented 
in the form $\Y^{(j)} = u (\tau_1 \Y'_j + i \tau_2 \Y''_j)$,
where $u \in \mathbb{C}$, $|u| = 1$, $\tau_1,\tau_2 \in [0,1]$, $\tau_1^2 + \tau_2^2 = 1$,
and the matrices $\Y'_j$ and $\Y''_j$ contain independent standard real Gaussian entries.
Note that the factor $u$ has no influence on the~singular value distribution,
and, as long as it~is rotationally invariant, on the~eigen\-value distribution.
Thus, it suffices to determine the limiting distribution
for products of the matrices $\tau_1 \Y'_j + i \tau_2 \Y''_j$.
Note that these matrices are bi-orthogonal invariant.

Since the matrices $\tau_1 \Y'_j + i \tau_2 \Y''_j$ have independent complex entries
with mean $0$ and variance $1$, their limiting singular value distribution
is well-known by the Marchenko--Pastur theorem; see e.g.\@ Theorem 3.7 in \cite{BS:10}.
Also, from independence and bi-orthogonal invariance,
it follows that for each $q=m-1,\hdots,1$, the matrices
$$
(\tau_1 \Y'_q + i \tau_2 \Y''_q)^* (\tau_1 \Y'_q + i \tau_2 \Y''_q)
\quad\text{and}\quad
\bigg(\prod_{j=q+1}^{m} (\tau_1 \Y'_j + i \tau_2 \Y''_j)\bigg)\bigg(\prod_{j=q+1}^{m} (\tau_1 \Y'_j + i \tau_2 \Y''_j)\bigg)^*
$$
are asymptotically free. Thus, the limiting singular value distribution of the product
$\prod_{j=1}^{m} (\tau_1 \Y'_j + i \tau_2 \Y''_j)$ may be found by repeated application
of Lemma \ref{rectang} in the appendix. Along these lines, 
many of the results in Section~\ref{sec:applications} may be extended to random matrices 
with a more general second moment structure as in \eqref{eq:secondmomentstructure-4}.
\end{rem}

\section{Stieltjes Transforms of Spectral Limits of Shifted Matrices}
\label{sec:stieltjes}

In this section, we assume that $n = p$, i.e.\@ $\F_\Y$ is a square matrix,
and that the matrices $\Y^{(q)}$ have independent \emph{standard} 
complex Gaussian entries, up to normalization.

Our aim is to describe the limit of the singular value distributions
of the ``shifted'' matrices $\F_\Y - \alpha \I$ ($\alpha \in \mathbb C$)
in terms of the limit of the singular value distributions of $\F_Y$.
It will be convenient to consider 
instead of the singular value distributions of the matrices $\F_\Y$ and $\F_\Y - \alpha \I$
the eigenvalue distributions of the (Hermitian) matrices 
$$
\V=\V_{\Y}=\begin{bmatrix}&\mathbf O&\F_{\Y}&\\&{\F_{\Y}^*}&\mathbf O&\end{bmatrix}
$$
and
$$
\V(\alpha)=\V_\Y(\alpha)=\V_\Y+\J(\alpha),
$$
where $\J(\alpha)$ is defined in Equation \eqref{jdef} below.
More precisely, we will show that, under appropriate conditions,
the mean eigenvalue distributions of the matrices $\V(\alpha)$
converge in moments to probability measures with compact support.
Note that this implies the weak convergence
of the mean eigenvalue distributions,
and hence the weak convergence in probability
of the eigenvalue distributions,
by the variance estimate from Section \ref{sec:variance}.

\medskip

Recall the $2n\times2n$ block matrix
\begin{equation}\label{jdef}
\J(\alpha)=\begin{bmatrix}&\mathbf O& -\alpha\I_n&\\&-\overline 
\alpha\I_n& \mathbf O&\end{bmatrix}
\end{equation}
from the previous section.
This matrix has spectral distribution 
$T(\alpha)=\frac12\delta_{+|\alpha|}+\frac12\delta_{-|\alpha|}$, 
where $\delta_a$ denotes the unit atom in the point $a$. 
We now calculate the $R$-transform of the distribution $T(\alpha)$.
It is straightforward to check that 
for the distribution $T(\alpha)$, we have
\begin{equation}\label{mz}
M_\alpha(z)=\frac{|\alpha|^2z^2}{1-|\alpha|^2z^2}
\end{equation}
and
\begin{equation}\label{gz}
G_\alpha(z)=\frac{z}{z^2-|\alpha|^2}.
\end{equation}
(Recall that $M(z)$ and $G(z)$ have been introduced 
above Equation \eqref{R-transform}.)
From \eqref{gz} it~follows that
\begin{equation}
G_\alpha^{-1}(z)=\frac{1\pm\sqrt{1+4|\alpha|^2z^2}}{2z}.
\end{equation}
Here we consider the principal branch of the square root.
In order to obtain a function $R_\alpha(z)$ that is analytic at zero,
we must take the plus sign. Therefore, \eqref{R-transform} yields
\begin{equation}\label{r}
R_{\alpha}(z):=\frac{-1+\sqrt{1+4|\alpha|^2z^2}}{2z}
\end{equation}
and
\begin{equation}\label{rtilde}
\widetilde{R}_{\alpha}(z):=\frac{-1+\sqrt{1+4|\alpha|^2z^2}}{2} \,.
\end{equation}

\pagebreak[2]
\medskip

\noindent\emph{Remark.} 
Similarly, we find that the $S$-transform of the distribution $T(\alpha)$ is~given~by
\begin{equation}\label{s}
  S_\alpha(z) 
= \frac{1+z}{z} M_\alpha^{-1}(z)
= \sqrt{\frac{1+z}{z}} \frac{1}{|\alpha|} \,.
\end{equation}
Here we take the branch of the square root such that $\im S_\alpha(z) \ge 0$
for $z \approx 0$, $z \not\in \Real_{+}$.

\medskip

We can now state the main result of this section.

\begin{thm}\label{freer}
Assume that the mean eigenvalue distributions of the matrices $\V_n$ 
converge in moments (as $n \to \infty$)
to a probability distribution $\mu_{\V} \ne \delta_0$ with compact support,
with corresponding $S$-transform $S_{\V}(z)$.
Assume also that the sequences $\V_n$ and $\J_n(\alpha)$ are asymptotically free. 
Then the mean eigenvalue distributions of the matrices $\V_n(\alpha)$ 
converge in moments (as $n \to \infty$)
to the probability measure $\mu_{\V(\alpha)} := \mu_{\V} \boxplus T(\alpha)$,
and, for $z \in \mathbb C^{+}$ with $|z|$ sufficiently large,
the Stieltjes transform $g(z,\alpha)$ of $\mu_{\V(\alpha)}$ satisfies 
the following system of equations,
\begin{align}\label{mainequation}
w(z,\alpha)&=z+\frac{\widetilde R_{\alpha}(-g(z,\alpha))}{g(z,\alpha)},\notag\\
g(z,\alpha)&=(1+w(z,\alpha)g(z,\alpha))S_{\V}(-(1+w(z,\alpha)g(z,\alpha))).
\end{align}
\end{thm}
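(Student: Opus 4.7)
The plan is to handle the two parts of the theorem separately.

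For the convergence of the mean eigenvalue distributions of $\V_n(\alpha) = \V_n + \J_n(\alpha)$, I would invoke Remark \ref{rem:application-RMT} directly. By \eqref{jz}, the eigenvalue distribution of $\J_n(\alpha)$ is exactly $T(\alpha) = \tfrac12\delta_{+|\alpha|} + \tfrac12\delta_{-|\alpha|}$ for every $n$, so the mean spectra of $\J_n(\alpha)$ trivially converge in moments to the compactly supported $T(\alpha)$. Combined with the hypotheses that the mean spectra of $\V_n$ converge in moments to the compactly supported $\mu_\V$ and that $(\V_n)$ and $(\J_n(\alpha))$ are asymptotically free, the remark yields that the mean eigenvalue distributions of $\V_n(\alpha)$ converge in moments to $\mu_\V \boxplus T(\alpha) =: \mu_{\V(\alpha)}$. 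The resulting limit is again compactly supported by general free probability.

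For the system \eqref{mainequation}, fix $z \in \mathbb{C}^+$ with $|z|$ large enough that $z$ lies outside the support of $\mu_{\V(\alpha)}$ and that $\zeta := -g(z,\alpha)$ lies in a common domain of analyticity of $\widetilde{R}_\V$, $\widetilde{R}_\alpha$, and $S_\V$ near the origin; this is possible since $g(z,\alpha) \sim -1/z$ as $|z|\to\infty$. Multiplying \eqref{freesum} by $\zeta$ gives the additive identity
\begin{equation*}
\widetilde{R}_{\V(\alpha)}(\zeta) = \widetilde{R}_\V(\zeta) + \widetilde{R}_\alpha(\zeta).
\end{equation*}
In the paper's sign convention, $g$ is the negative of the Cauchy transform whose compositional inverse is $R_{\V(\alpha)}(w) + 1/w$, so the standard identity $\widetilde{R}_\mu(G_\mu(z)) = z G_\mu(z) - 1$ translates into $\widetilde{R}_{\V(\alpha)}(-g(z,\alpha)) = -z g(z,\alpha) - 1$. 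Combining these, we get
\begin{equation*}
\widetilde{R}_\V(-g(z,\alpha)) = -z g(z,\alpha) - 1 - \widetilde{R}_\alpha(-g(z,\alpha)).
\end{equation*}

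Next I would eliminate $\widetilde{R}_\V$ via the $S$-transform. By \eqref{nica} and the branch convention \eqref{eq:ChoiceOfBranch}, $\widetilde{R}_\V(\eta S_\V(\eta)) = \eta$ in a punctured neighborhood of the origin, so setting $\eta := \widetilde{R}_\V(-g(z,\alpha))$ yields $\eta S_\V(\eta) = -g(z,\alpha)$. Introducing the auxiliary function
\begin{equation*}
w(z,\alpha) := z + \frac{\widetilde{R}_\alpha(-g(z,\alpha))}{g(z,\alpha)} \,,
\end{equation*}
a direct rearrangement shows $\eta = -z g(z,\alpha) - 1 - \widetilde{R}_\alpha(-g(z,\alpha)) = -\bigl(1 + w(z,\alpha) g(z,\alpha)\bigr)$, and substituting into $\eta S_\V(\eta) = -g(z,\alpha)$ produces exactly the second equation of \eqref{mainequation}, while the definition of $w$ is the first.

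The main obstacle will be the branch bookkeeping. Since $\mu_\V$ is symmetric with vanishing first moment, both $\widetilde{R}_\V^{-1}$ and $S_\V$ are two-branched in the sense of Arizmendi and P\'erez-Abreu \cite{Perez}, and one must verify that the branch prescription \eqref{eq:ChoiceOfBranch}, the large-$|z|$ asymptotics $g(z,\alpha) \sim -1/z$, and the identity $\eta S_\V(\eta) = -g(z,\alpha)$ are consistently paired throughout the argument. Once this is checked, the manipulation above is purely algebraic at the level of formal power series near the origin, and its validity as an identity of analytic functions on $\{|z| > R\}$ for sufficiently large $R$ follows by analytic continuation.
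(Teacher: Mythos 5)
Your proof is correct and follows essentially the same route as the paper: both invoke Remark \ref{rem:application-RMT} for the free-convolution limit, then combine the additivity of $\widetilde R$ (Eq.~\eqref{freesum}) with the $\widetilde R$--$S$ duality \eqref{nica} and the substitution $z \mapsto -g(z,\alpha)$ to derive the system \eqref{mainequation}. Your explicit introduction of $\eta := \widetilde R_\V(-g(z,\alpha))$ and the remark on the two-branched nature of $\widetilde R_\V^{-1}$ and $S_\V$ are just slightly more verbose renderings of the same algebraic steps the paper performs, and your branch caveat matches what the paper handles via the convention \eqref{eq:ChoiceOfBranch}.
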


\begin{rem}
Note that the measure $\mu_\V$ is symmetric with respect to the origin,
and recall that in this case we choose the branch of $S$-transform $S_\V(z)$
as in \eqref{eq:ChoiceOfBranch}.
\end{rem}

\begin{proof}[Proof of Theorem \ref{freer}]
By asymptotic freeness, the matrices $\V(\alpha)$ converge in moments
to the probability measure $\mu_{\V(\alpha)} := \mu_{\V} \boxplus T(\alpha)$;
see the remark below Definition \ref{def:asymptoticfreeness}.
Let~$R_{\V(\alpha)}$ and $\widetilde R_{\V(\alpha)}$ 
denote the $R$-transform and $\widetilde R$-transform
of $\mu_{\V(\alpha)}$, respectively.
Then, by the additivity of the $\widetilde R$-transform
(see Eq.~\eqref{freesum}), we have
\begin{equation}\label{rtransform}
\widetilde R_{\V(\alpha)}(z)=\widetilde R_{\V}(z)+\widetilde R_{\alpha}(z).
\end{equation}
Now we rewrite equation (\ref{rtransform}) in terms of Stieltjes transforms.
Let $\mu$ be a symmetric probability measure on $\Real$ with compact support,
and let
$$
g_\mu(z) := \int \frac{1}{t-z} d\mu(t)
$$
denote the Stieltjes transform of $\mu$.
It is well-known that $g_\mu$ maps $\mathbb C_{+}$ to $\mathbb C_{+}$
and that $g_\mu(z) = -z^{-1} + o(z^{-2})$ as $z \to \infty$.
\pagebreak[2]
Suppose that $z \in \mathbb C_{+}$ with $|z|$ sufficiently large.
Then, with $M_\mu(z)$ and $G_\mu(z)$ as in Section~\ref{freeprob},
we~have $g_\mu(z) = - G_\mu(z) = - \tfrac{1}{z} (1 + M_\mu(\tfrac{1}{z}))$.
It~there\-fore follows from \eqref{S-transform} that
\begin{equation}\notag
 S_{\mu}(-(1+zg_\mu(z)))=\frac{g_\mu(z)}{1+zg_\mu(z)}.
\end{equation}
Using equation \eqref{nica}, we get
\begin{equation}\notag
{\widetilde R}_{\mu}^{-1}(-(1+zg_\mu(z)))=-g_\mu(z),
\end{equation}
or
\begin{equation}\notag
\widetilde  R_{\mu}(-g_\mu(z))=-(1+zg_\mu(z)).
\end{equation}
Denote by $g(z,\alpha)$ the Stieltjes transform 
of the limiting spectral measure of the matrices $\V(\alpha)$.
Now replace $z$ with $-g(z,\alpha)$ in equation (\ref{rtransform}),
for $z \in \mathbb C_+$ with $|z|$ sufficiently large.
We get
\begin{equation}\notag
 -(1+zg(z,\alpha))=\widetilde R_{\V}(-g(z,\alpha))+\widetilde R_{\alpha}(-g(z,\alpha)).
\end{equation}
 We may rewrite this equation as follows
$$
-g(z,\alpha)=\widetilde R_{\V}^{-1}\left(-\left(1+g(z,\alpha)
\Big(z+\frac{\widetilde R_{\alpha}(-g(z,\alpha))}{g(z,\alpha)}\Big)\right)\right).
$$
Using the relation \eqref{nica} again, we finally get
\begin{align}\label{mainequation1}
 w&=z+\frac{\widetilde R_{\alpha}(-g(z,\alpha))}{g(z,\alpha)},\notag\\
g(z,\alpha)&=(1+wg(z,\alpha))S_{\V}(-(1+wg(z,\alpha))).
\end{align}
Thus, Theorem \ref{freer} is proved completely.
\end{proof}

\pagebreak[2]
\medskip

Henceforward, we assume that $\mu_\V$ is a symmetric probability measure on $\mathbb R \setminus \{ 0 \}$
(not~necessarily with compact support). Write $S_\V(z)$ for the corresponding $S$-transform.
For $\alpha \in \mathbb C \setminus \{ 0 \}$,
let $\mu_\alpha := T(\alpha)$ and $\mu_{\V(\alpha)} := \mu_{\V} \boxplus \mu_\alpha$.
Write $g_\V(z)$, $g_\alpha(z)$ and $g(z,\alpha) := g_{\V(\alpha)}(z)$
for the Stieltjes transforms of $\mu_\V$, $\mu_\alpha$ and $\mu_{\V(\alpha)}$.
Then, for $z = iy$ with $y > 0$ large enough,
the Stieltjes trans\-form $g(z,\alpha)$ still satisfies
the system \eqref{mainequation}.
This follows from the proof of Theorem \ref{freer}
if the probability measure $\mu_\V$ has bounded support,
and by the approximations mentioned in Section \ref{freeprob}
if the probability measure $\mu_\V$ has unbounded support.


In the next section, we will consider the system \eqref{mainequation}
with $z = iy$, where $y \geq 0$ is \emph{any} non-negative real number.
The next results show that this is possible under appropriate conditions.

\begin{lem}
\label{lem:continuation}
Let $\mu_\V$, $\mu_\alpha$ and $\mu_{\V(\alpha)}$ be defined as above.
\begin{enumerate}
\item[(i)]
For any symmetric probability measure $\mu$ on $\mathbb R$,
we have $g_\mu(i y) \in i \mathbb R^+$ for all $y > 0$.
\item[(ii)]
The function $y \mapsto \im g_\alpha(iy)$ is strictly increasing on $(0,|\alpha|]$
and strictly decreasing on $[|\alpha|,\infty)$, with $g_\alpha(i|\alpha|) = \frac{i}{2|\alpha|}$.
\item[(iii)]
For all $y > 0$, $g(iy,\alpha) \in i [0,\tfrac{1}{2|\alpha|}]$. 
\item[(iv)]
The function $z \mapsto \widetilde R_{\alpha}(-g(z,\alpha))$
has an analytic continuation to an open neighborhood $U$
of the upper imaginary half-axis. \pagebreak[2]
\end{enumerate}
By abuse of notation, we still write $\widetilde R_{\alpha}(-g(z,\alpha))$ 
for the analytic continuation in part~(iv),
and we then define $w = w(z,\alpha)$ as in \eqref{mainequation}.
\begin{enumerate}
\item[(v)]
For $z = iy$ with $y > 0$, we~have the representation
\begin{align}
\label{eq:extrtilde}
\widetilde R_\alpha(-g(iy, \alpha)) = -\tfrac12 + \tfrac12 \sqrt{1 + 4 |\alpha|^2 g(iy,\alpha)^2} \,,
\end{align}
where the branch of the square root is determined by the analytic continuation
\linebreak in part (iv). (Thus, the square root may be positive or negative!)
In particular, $\widetilde R_\alpha(-g(iy, \alpha)) \in [-1,0]$.
\item[(vi)]
Suppose that $\mu_\V$ is a probability measure on $\Real \setminus \{ 0 \}$.
Then $1 + w(i y,\alpha) g(i y,\alpha) \in (0,1)$ for all $y > 0$,
and \eqref{mainequation} holds for all $z = iy$ with $y > 0$.
\end{enumerate}
\end{lem}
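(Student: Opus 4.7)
The plan is to handle the six parts in sequence: (i), (ii), (v) reduce to direct computations, while (iii), (iv), (vi) require subordination theory for free additive convolution and analytic continuation from the defining system \eqref{mainequation}. For (i), writing $g_\mu(iy)=\int (t+iy)/(t^2+y^2)\,d\mu(t)$, symmetry of $\mu$ kills the real part and the imaginary part is manifestly strictly positive. For (ii), direct evaluation at the two atoms of $T(\alpha)$ yields $g_\alpha(iy)=iy/(y^2+|\alpha|^2)$; differentiating the imaginary part in $y$ gives $(|\alpha|^2-y^2)/(y^2+|\alpha|^2)^2$, from which the claimed monotonicity and the value $g_\alpha(i|\alpha|)=i/(2|\alpha|)$ follow immediately.

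For (iii), I would invoke Voiculescu's subordination principle: there exist analytic functions $\omega_1,\omega_2:\mathbb{C}^+\to\mathbb{C}^+$ with $g(z,\alpha)=g_\V(\omega_1(z))=g_\alpha(\omega_2(z))$, extended to the unbounded-support case via the approximations \eqref{eq:boxplus-1}--\eqref{eq:boxplus-2}. Since $\mu_\V$ and $T(\alpha)$ are both symmetric, their Stieltjes transforms obey $g(-\bar z)=-\overline{g(z)}$, and uniqueness of the subordination functions propagates this to $\omega_j(-\bar z)=-\overline{\omega_j(z)}$, so $\omega_2$ preserves $i\mathbb{R}^+$. Writing $\omega_2(iy)=i\eta$ with $\eta>0$ and applying (ii) yields $g(iy,\alpha)=g_\alpha(i\eta)\in i(0,\tfrac{1}{2|\alpha|}]$.

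The main obstacle is part (iv). The branch points of $\widetilde R_\alpha(w)=\tfrac12(-1+\sqrt{1+4|\alpha|^2w^2})$ sit at $w=\pm i/(2|\alpha|)$, exactly on the boundary of the image set produced in (iii). I plan to argue that the equality $g(iy,\alpha)=i/(2|\alpha|)$ would force $\omega_2(iy)=i|\alpha|$ by the strict monotonicity in~(ii), and then the subordination identity $\omega_1(iy)+\omega_2(iy)-iy=1/g(iy,\alpha)$ would determine $\omega_1(iy)$ as a specific finite value on the imaginary axis, so the equality can hold at most on an isolated subset of $\{iy:y>0\}$. Away from such exceptional $y$, $\widetilde R_\alpha\circ(-g)$ is locally analytic by composition, and the correct branch is globally selected by analytic continuation from large $|z|$, where Theorem~\ref{freer} pins it down uniquely. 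One-sided approach from $\mathbb{C}^+$ prevents winding around branch points, so the extension is single-valued in a tubular neighborhood $U$ of the upper imaginary half-axis.

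For (v), substituting $g(iy,\alpha)=it$ with $t\in[0,\tfrac{1}{2|\alpha|}]$ makes the radicand $1-4|\alpha|^2t^2\in[0,1]$ real and non-negative, so the analytic continuation from (iv) selects a real square root in $[-1,+1]$, yielding \eqref{eq:extrtilde} and $\widetilde R_\alpha(-g(iy,\alpha))\in[-1,0]$. For (vi), combining (i) and (v) shows that $w(iy,\alpha)$ is purely imaginary and $1+w(iy,\alpha)g(iy,\alpha)$ is real; because $\mu_\V(\{0\})=0$, the $S$-transform $S_\V$ is defined on $(-1,0)$ via \eqref{eq:ST-symmetric} with non-negative imaginary part there, and matching the two purely imaginary sides of $g=(1+wg)S_\V(-(1+wg))$ forces $-(1+wg)\in(-1,0)$, i.e.\ $1+wg\in(0,1)$. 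The full system \eqref{mainequation} then propagates from $|z|$ large, where Theorem~\ref{freer} guarantees it, down to all $z=iy$ with $y>0$ by analytic continuation along the imaginary axis using~(iv).
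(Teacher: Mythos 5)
Parts (i), (ii), (iii), and (v) of your proposal are essentially correct and match the paper's approach: (i) and (ii) are the same direct calculations, (iii) uses subordination (the paper works with reciprocal Cauchy transforms $F_\mu$ and the decomposition \eqref{eq:subordination}, but your $\omega$-subordinator argument is equivalent), and (v) follows from (iii) and (iv) in both treatments. However, there are genuine gaps in (iv) and (vi).

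In (iv), you correctly identify that the obstruction is the branch point of $\widetilde R_\alpha$ at $\pm i/(2|\alpha|)$ and that the exceptional set where $g(iy,\alpha) = i/(2|\alpha|)$ is isolated. But showing the exceptional set is isolated does not yield an analytic continuation to an open neighborhood of the \emph{whole} upper imaginary half-axis — the neighborhood $U$ must contain those exceptional points, and at a genuine square-root branch point no single-valued analytic continuation exists on any open neighborhood. Your phrase ``one-sided approach from $\mathbb{C}^+$ prevents winding'' only controls monodromy on $U$ \emph{minus} the branch points; it does not make the function analytic \emph{at} them. The essential missing step is the paper's observation: setting $h(z) := 1 + 4|\alpha|^2 g(z,\alpha)^2$, part (iii) gives $h(iy) \in [0,1]$, so any zero $iy_0$ of $h$ on the axis is a local \emph{minimum} of $y \mapsto h(iy)$ and hence a zero of \emph{even order}. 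That is what makes $\sqrt{h}$ single-valued and analytic across $iy_0$, and without it the continuation to all of $U$ is not justified. (As a side remark, the subordination identity should read $\omega_1 + \omega_2 - z = -1/g$, not $+1/g$; but this does not affect the structure of your argument.)

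In (vi), your derivation of $1+wg\in(0,1)$ from ``matching the two purely imaginary sides of $g=(1+wg)S_\V(-(1+wg))$'' is circular: the second equation in \eqref{mainequation} is only known to hold where $S_\V$ is defined, i.e.\ where $-(1+wg)$ lies in (an open set containing) $(-1,0)$ — which is precisely the conclusion you are trying to reach. The inputs you actually have from (i), (v) and \eqref{eq:reci-ineq} give only $iyg(iy,\alpha)\in(-1,0)$ and $\widetilde R_\alpha(-g)\in[-1,0]$, hence $1+wg\in(-1,1)$; the lower bound $1+wg>0$ needs a separate argument. The paper supplies this in two steps you are missing: first, an asymptotic computation at large $y$ (using $\int x^2\,d\mu_{\V(\alpha)} = \int x^2\,d\mu_\V + |\alpha|^2$, see \eqref{eq:asymptapprox}) shows $1+wg>0$ for $y$ large; second, a maximality/continuity argument rules out $1+wg$ ever reaching $0$: if $1+wg(iy_0,\alpha)=0$ at a maximal $y_0$, the equation holds for $y>y_0$, and letting $y\downarrow y_0$ forces $-g(iy_0,\alpha) = \lim_{x\downarrow 0}(-x)S_\V(-x)=0$, contradicting (i). Without these two steps the conclusion $1+wg\in(0,1)$ is unsupported.
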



\begin{proof}
Let us introduce some more notation.
For any probability measure $\mu$ on $\Real$,
we~will~use the Stieltjes transform $g_\mu(z)$,
the Cauchy transform $G_\mu(z)$, 
and the reciprocal Cauchy transform $F_\mu(z) := -1/g_\mu(z)$.
It is well-known that free additive convolution
can be analyzed using \emph{subordinating functions}.
Let $\mathcal F$ denote the class of all functions $F : \mathbb C^+ \to \mathbb C^+$
that arise as reciprocal Cauchy transforms of probability measures on $\mathbb R$.
Then, given $\mu_{\V}$ and $\mu_{\alpha}$,
there exist unique functions $Z_1$ and $Z_2$ in $\mathcal F$ such~that
\begin{align}
\label{eq:subordination}
z = Z_1(z) + Z_2(z) - F_{\V(\alpha)}(z) 
\quad\text{and}\quad
F_{\V(\alpha)}(z) = F_{\V}(Z_1(z)) = F_{\alpha}(Z_2(z))
\end{align}
for all $z \in \mathbb C^+$; see e.\,g.\@ Chistyakov and G\"otze \cite{CG}.
It is easy to see that if the measures $\mu_{\V}$ and $\mu_\alpha$ are symmetric, 
$\mu_{\V(\alpha)}$ is also symmetric, 
and the functions $F_{\V}$, $F_{\alpha}$, $F_{\V(\alpha)}$, $Z_1$ and $Z_2$ 
map $i \mathbb R^+$ to $i \mathbb R^+$.
Finally, let us mention that for any probability measure $\mu$ on $\Real$,
we have
\begin{align}
\label{eq:reci-ineq}
\im g_\mu(z) \leq \frac{1}{\im z}
\qquad\text{and}\qquad
\im F_\mu(z) \geq \im z
\end{align}
for all $z \in \mathbb C^+$, with equality only if $\mu$ is a Dirac measure.

(i) follows from a straightforward calculation.

(ii) follows by observing that
\begin{align}
\label{eq:g-alpha}
\im g_\alpha(iy) = \frac{y}{y^2 + |\alpha|^2}
\end{align}
and by using elementary calculus. \pagebreak[1]

For the proof of (iii), note that \eqref{eq:subordination} and (ii) imply
$$
\inf_{y>0} \im F_{\V(\alpha)}(i y) \ge \inf_{y>0} \im F_{\alpha}(i y) \ge 2|\alpha| \,.
$$

For the proof of (iv), recall that, for $z = iy$ with $y > 0$ large enough,
$$
\widetilde R_\alpha(-g(z,\alpha)) = \tfrac12 \big( {-} 1 + \sqrt{1 + 4 |\alpha|^2 g(z,\alpha)^2} \big) \,.
$$
Since the function $h(z) := 1 + 4 |\alpha|^2 g(z,\alpha)^2$
is a non-constant analytic function, there exists 
a simply connected open neighborhood $U$ of the imaginary axis in $\mathbb C^+$
such that $h(z) \ne 0$ for all $z \in U \setminus i \mathbb R^+$.
Moreover, if $z_0 = i y_0$ is a zero of $h(z)$ on the imaginary axis,
it~follows from (iii) that the (real-valued) function 
$y \mapsto h(i y)$ has a local minimum at the point $y_0$,
and this is only possible if $z_0 = i y_0$ is a zero of even order.
Thus, $h(z)$ is an analytic function on $U$
such that each zero is of even order,
and there exists an analytic branch of $\sqrt{h(z)}$ on $U$.
Changing the sign if necessary,
we may assume that $\sqrt{h(i y)} \in [0,1]$ for all sufficiently large $y$,
and then the desired analytic continuation of $\widetilde R_\alpha(-g(z,\alpha))$
is given by the~function $\tfrac12 \big( {-} 1 + \sqrt{h(z)} \big)$.

This also establishes Equation \eqref{eq:extrtilde}.
Since $h(i y)$ takes values in $[0,1]$ by part (iii),
the~rest of part (v) follows immediately.

We now prove (vi). It follows from our remarks 
around \eqref{eq:ST-unbounded} and \eqref{eq:ST-symmetric}
that since $\mu_\V$ is a symmetric probability measure on $\Real \setminus \{ 0 \}$,
$S_\V$ is analytic in an open set containing the interval $(-1,0)$.
Thus, it remains to show that
$$
1 + w(i y,\alpha) g(i y, \alpha) \in (0,1)
$$
for all $y > 0$, for it then follows by analytic continuation
that the second equation in \eqref{mainequation} holds for all $y > 0$.

\pagebreak[1]

By the definition of $w(iy,\alpha)$, we have
$$
  1 + w(i y,\alpha) g(i y, \alpha)
= 1 + i y g(i y, \alpha) + \widetilde R_\alpha(-g(i y, \alpha)) \,.
$$
Since $i y g(i y, \alpha) \in (-1,0)$ by part (i) and \eqref{eq:reci-ineq}
and $\widetilde R_\alpha(-g(i y, \alpha)) \in [-1,0]$ by part (v), 
it~follows that
$$
1 + w(i y,\alpha) g(i y, \alpha) \in (-1,+1) \,.
$$
Moreover, using that 
$$
\int x^2 d\mu_{\V(\alpha)}(x) = \int x^2 d\mu_\V(x) + |\alpha|^2 \,,
$$
it is straightforward to check that
\begin{align}
\label{eq:asymptapprox}
  \lim_{y \to \infty} y^2 \Big( 1 + i y g(i y, \alpha) + \widetilde R_\alpha(-g(i y, \alpha)) \Big)
= \int x^2 d\mu_\V(x)
> 0 \,.
\end{align}
Thus, we have
$$
1 + w(i y,\alpha) g(i y, \alpha) > 0
$$
for $y > 0$ large enough,
and it remains to show (by continuity) that 
$$
1 + w(i y, \alpha) g(i y, \alpha) \ne 0
$$
for all $y > 0$. 
Suppose by way of contradiction that 
$1 + w(i y_0,\alpha) g(i y_0,\alpha) = 0$ for some $y_0 > 0$.
By~\eqref{eq:asymptapprox}, 
we may assume without loss of generality 
that $y_0 > 0$ is maximal with this property.
Then $1 + w(i y, \alpha) g(i y, \alpha) \in (0,1)$
for all $y > y_0$, and by analytic continuation,
the second equation in \eqref{mainequation} holds
for all $y > y_0$. Letting $y \downarrow y_0$, we get
$$
-g(i y_0,\alpha) = \lim_{y \downarrow y_0} \Big( -(1+w(i y,\alpha) g(i y,\alpha)) S_\V(-(1+w(i y,\alpha) g(i y,\alpha))) \Big) = 0 \,,
$$
since $(-x) S_{\V}(-x) \to 0$ as $x \downarrow 0$.
But this is a contradiction to (i).
\end{proof}

\begin{lem}
\label{lem:limit}
Let $\mu_\V$, $\mu_\alpha$ and $\mu_{\V(\alpha)}$ be defined as above,
and suppose additionally that $\mu_\V$ is a symmetric probability measure on $\Real \setminus \{ 0 \}$,
but not a two-point distribution.
Then the limits 
$g(0,\alpha) := \lim_{y \downarrow 0} g(iy,\alpha)$ 
and 
$(wg)(0,\alpha) := \lim_{y \downarrow 0} (wg)(iy,\alpha)$ 
exist for all $\alpha \ne 0$.
Moreover, with the square root as in~\eqref{eq:extrtilde}, we have
\begin{align}
\label{mainequation-01}
(wg)(0,\alpha) &= \tfrac12\big( {-}1+\sqrt{1+4|\alpha|^2g(0,\alpha)^2} \,\big) \,, \nonumber\\
-g(0,\alpha) &= \widetilde S_{\V}\big( {-}(1+(wg)(0,\alpha)) \big) \,,
\end{align}
as well as
\begin{align}
\label{mainequation-02}
-g(0,\alpha) &= \widetilde S_{\V}\big({-}\tfrac12\big( 1+\sqrt{1+4|\alpha|^2g(0,\alpha)^2} \,\big)\big) \,,
\end{align}
where $\widetilde S_\V(z) := z S_\V(z)$ for $z \in (-1,0)$
and $\widetilde S_\V(z)$ is defined by continuous extension
for~$z \in \{ -1,0 \}.$
\end{lem}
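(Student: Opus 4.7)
The plan is to combine a compactness argument (using the bounds from Lemma \ref{lem:continuation}) with a uniqueness argument for the limiting system, so as to conclude that $g(iy,\alpha)$ and $(wg)(iy,\alpha)$ converge as $y\downarrow 0$, and then to identify the limit equations \eqref{mainequation-01} and \eqref{mainequation-02}.

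First, from parts (iii), (v), (vi) of Lemma \ref{lem:continuation}, the quantities $g(iy,\alpha)\in i[0,\tfrac{1}{2|\alpha|}]$, $\widetilde R_\alpha(-g(iy,\alpha))\in[-1,0]$, and $1+(wg)(iy,\alpha)\in(0,1)$ are all bounded in $y$. Since $(wg)(iy,\alpha) = iy\,g(iy,\alpha) + \widetilde R_\alpha(-g(iy,\alpha))$ and $iy\,g(iy,\alpha) \to 0$ as $y\downarrow 0$, any sequence $y_n \downarrow 0$ admits a subsequence along which $g(iy_n,\alpha)\to g_0 \in i[0,\tfrac{1}{2|\alpha|}]$ and $(wg)(iy_n,\alpha)\to (wg)_0 \in [-1,0]$.

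Next, I would pass to the limit in the two equations of \eqref{mainequation}. By the explicit representation \eqref{eq:extrtilde}, the first equation yields
$(wg)_0 = \tfrac12\bigl({-}1 + \sigma\sqrt{1+4|\alpha|^2 g_0^2}\bigr)$
for some $\sigma\in\{-1,+1\}$, the square root being real on the imaginary axis but with a sign that in principle might depend on the subsequence. Rewriting the second equation as $-g(iy,\alpha) = \widetilde S_\V(-(1+(wg)(iy,\alpha)))$ and invoking the continuous extension of $\widetilde S_\V$ to $[-1,0]$, one obtains $-g_0 = \widetilde S_\V(-(1+(wg)_0))$. Together these statements give \eqref{mainequation-01}, and substitution yields \eqref{mainequation-02}.

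To see that the full limit exists and is independent of the extracted subsequence, I would show that \eqref{mainequation-01} admits a unique solution $(g_0,(wg)_0)$ in the constrained range. Writing $g_0 = it$ with $t\in[0,\tfrac{1}{2|\alpha|}]$ and $u := 1+(wg)_0 \in [0,1]$, squaring the first equation gives the algebraic constraint $u(1-u) = |\alpha|^2 t^2$, while squaring the second and using \eqref{eq:ST-symmetric} reduces, apart from the degenerate endpoints $u\in\{0,1\}$ (which force $t=0$), to the scalar equation $|\alpha|^2\, S_{\mathcal Q(\mu_\V)}(-u) = 1$. Since $\mu_\V$ is not a two-point distribution, $\mathcal Q(\mu_\V)$ is not a Dirac measure, so its $S$-transform is non-constant; the strict monotonicity of the $S$-transform of a non-degenerate probability measure on $(0,\infty)$ then determines $u$ uniquely on $(0,1)$, after which $t$ is pinned down by the algebraic constraint, with the sign of $\sigma$ fixed by the continuity of $y\mapsto g(iy,\alpha)$.

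The principal obstacle is this uniqueness step: rigorously establishing the strict monotonicity of $S_{\mathcal Q(\mu_\V)}$ on $(-1,0)$ under the non-two-point hypothesis, and ruling out the degenerate possibilities $u\in\{0,1\}$ (which would correspond to $g(0,\alpha)=0$). An alternative, possibly cleaner route would be to invoke regularity results for free additive convolution (e.g.\ Biane's continuity results for the density of $\mu_\V \boxplus T(\alpha)$) to conclude directly that $\mu_{\V(\alpha)}$ has a continuous density at $0$, from which the boundary value $g(0,\alpha)$ would exist a priori, and one would only need to take limits in \eqref{mainequation} to identify it.
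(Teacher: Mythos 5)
Your strategy has a genuine gap, and it is in the uniqueness step, which in fact cannot be made to work as stated. The limiting system \eqref{mainequation-01} always admits the ``trivial'' solution $g_0 = 0$, $(wg)_0 = -1$ (i.e.\@ $u := 1+(wg)_0 = 0$), since $\widetilde S_\V(0) = \lim_{x\downarrow 0}(-x)S_\V(-x) = 0$; and it may in addition have a nontrivial solution with $u^*\in(0,1)$. So the system has multiple solutions, and no amount of monotonicity of $S_{\mathcal Q(\mu_\V)}$ will give uniqueness. You flag ``ruling out the degenerate possibilities $u\in\{0,1\}$'' as the obstacle, but this is a misdiagnosis: the trivial solution is not a pathology to be excluded but a legitimate value that $g(0,\alpha)$ genuinely takes for many $\alpha$. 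Indeed, the paper's Lemma~\ref{lem:nonzero} proves $g(0,\alpha)\ne 0$ only for $|\alpha|$ small and only under an extra hypothesis on $\widetilde S_\V$. Lemma~\ref{lem:limit} itself is agnostic about which solution is hit; it must establish existence of the limit without identifying it.

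The second, related, defect is that mere compactness gives subsequential limits but does not exclude oscillation between two isolated solutions of the limiting system. What saves the day in the paper is the observation that $y \mapsto g(iy,\alpha)$ is a \emph{continuous bounded curve} in $i[0,\tfrac1{2|\alpha|}]$, so its set of accumulation points as $y\downarrow 0$ is \emph{connected}, hence a closed interval $I$. If $I$ is non-degenerate, then every $\widetilde g \in I\setminus\{0\}$ satisfies the functional equation; writing $\widetilde g = it$ and $u(t) = \tfrac12(1\pm\sqrt{1-4|\alpha|^2t^2})$, the equation collapses (exactly as in your computation) to $|\alpha|^2 S_{\mathcal Q(\mu_\V)}(-u(t)) = 1$ for a continuum of $t$-values, hence for a non-degenerate interval of arguments of $S_{\mathcal Q(\mu_\V)}$. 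By analyticity $S_{\mathcal Q(\mu_\V)}$ is constant, so $S_\V$ equals $\sqrt{\tfrac{1+z}{z}}\tfrac{1}{|\alpha|}$, the $S$-transform of $T(\alpha)$, and by uniqueness of the $S$-transform \eqref{ST-uniqueness} $\mu_\V = T(\alpha)$, contradicting the non-two-point hypothesis. This is precisely the ingredient your argument is missing: you need the \emph{interval} structure, not just compactness, and you need to rule out a continuum of solutions rather than prove unique solvability. Your alternative route via Biane-type regularity of $\mu_\V \boxplus T(\alpha)$ would, if carried out, sidestep both issues, but it trades a self-contained elementary argument for an appeal to nontrivial external regularity theorems.
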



\begin{proof}
We proceed by contradiction. Suppose that the limit 
$g(0,\alpha) := \lim_{y \downarrow 0} g(iy,\alpha)$ does~not exist.
Then, by Lemma \ref{lem:continuation} (iii) and continuity, 
the set of all accumulation points is a non-degenerate closed interval 
$I \subset i [0,\tfrac{1}{2|\alpha|}]$.
But, as a~con\-sequence of Lemma \ref{lem:continuation} (vi),
for each accumulation point $\widetilde{g} \in I \setminus \{ 0 \}$,
we have 
$$
{-}\widetilde{g}=\widetilde S_{\V}(-\tfrac12(1+\sqrt{1+4|\alpha|^2\widetilde{g}^2})) \,,
$$
where the square-root can be positive or negative.
It is easy to see that this implies that
$$
S_{\V}(z) = \sqrt{\frac{1+z}{z}} \frac{1}{|\alpha|} \,.
$$
In view of our remark above Theorem \ref{freer}, this means that $\mu_{\V} = T(\alpha)$,
in contradiction to our assumption that $\mu_\V$ is not a two-point distribution.
Thus, the limit $g(0,\alpha)$ exists, and \eqref{mainequation-02} holds.

The existence of the limit $(wg)(0,\alpha) := \lim_{y \downarrow 0} (wg)(iy,\alpha)$ 
as well as the relations \eqref{mainequation-01} are now simple consequences.
It is worth noting here that the sign of the square root
$\sqrt{1 + 4|\alpha|^2g(iy,\alpha)^2}$ can only change
when $g(iy,\alpha) = \frac{i}{2|\alpha|}$,
and hence must be constant for~$y \approx 0$
when $g(0,\alpha) \ne \frac{i}{2|\alpha|}$.
\end{proof}

\begin{rem}
\label{rem:limit}
A similar argument shows that under the additional assumption
that $g(iy,\alpha)$ is (jointly) continuous in $y$ and $\alpha$,
we have
$$
g(0,\alpha) = \lim_{y \downarrow 0, \beta \to \alpha} g(iy,\beta)
$$
for all $\alpha \ne 0$, and the resulting function in $\alpha$
is continuous on $\mathbb{C} \setminus \{ 0 \}$.
\end{rem}


Note that Equation \eqref{mainequation-02} has the ``trivial'' solution $g(0,\alpha) = 0$
when the sign of the square-root is negative. The next result gives a sufficient condition
for $g(0,\alpha) \ne 0$.

\begin{lem}
\label{lem:nonzero}
Let $\mu_\V$, $\mu_\alpha$ and $\mu_{\V(\alpha)}$ be defined as above,
and suppose additionally that $\mu_\V$ is a symmetric probability measure on $\Real \setminus \{ 0 \}$,
but not a two-point distribution.
Let~$\widetilde{S}_{\V}(z)$ be defined as in Lemma \ref{lem:limit}.
If $\liminf_{x \uparrow 1} |\widetilde{S}_{\V}(-x)| > 0$, 
then, for $\alpha \ne 0$ sufficiently close to zero, we have $g(0,\alpha) \ne 0$.
\end{lem}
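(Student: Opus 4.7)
The plan is to argue by contradiction: suppose $g(0,\alpha)=0$ for some $\alpha \ne 0$, and show that this is incompatible with the sign of the square root in \eqref{eq:extrtilde} (inherited from the analytic continuation in Lemma \ref{lem:continuation}(iv)) once $|\alpha|$ is sufficiently small. The argument naturally splits into (a) ruling out $g(0,\alpha) = 0$ when the square root is on the \emph{positive branch} at $y = 0$, and (b) showing that no branch change can occur on the positive imaginary axis when $|\alpha|$ is small.

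For (a), on the positive branch $\sqrt{1+4|\alpha|^2 g(iy,\alpha)^2} \to 1$ as $g(iy,\alpha) \to 0$, so $\widetilde R_\alpha(-g(iy,\alpha)) \to 0$, and consequently $(wg)(iy,\alpha) = iy\,g(iy,\alpha) + \widetilde R_\alpha(-g(iy,\alpha)) \to 0$ as $y \downarrow 0$. Hence $1+(wg)(iy,\alpha) \to 1^-$. The second equation in \eqref{mainequation}, rewritten as $-g(iy,\alpha) = \widetilde S_\V(-(1+(wg)(iy,\alpha)))$, then produces a sequence $x_k \uparrow 1$ with $|\widetilde S_\V(-x_k)| = |g(iy_k,\alpha)| \to 0$, contradicting the hypothesis $\liminf_{x \uparrow 1} |\widetilde S_\V(-x)| > 0$.

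For (b), by the remark at the end of the proof of Lemma \ref{lem:limit}, a branch change can occur only at a point $y_0 > 0$ with $g(iy_0,\alpha) = \tfrac{i}{2|\alpha|}$. At such a $y_0$ the square root in \eqref{eq:extrtilde} vanishes, so $\widetilde R_\alpha(-g(iy_0,\alpha)) = -\tfrac12$, and a direct computation yields
\[
(wg)(iy_0,\alpha) = -\tfrac{y_0}{2|\alpha|} - \tfrac12, \qquad \beta := 1 + (wg)(iy_0,\alpha) = \tfrac12 - \tfrac{y_0}{2|\alpha|}.
\]
Lemma \ref{lem:continuation}(vi) forces $\beta \in (0,1)$, hence $y_0 < |\alpha|$ and $\beta \in (0,\tfrac12)$. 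Substituting into the second equation of \eqref{mainequation} gives $\tfrac{i}{2|\alpha|} = \beta S_\V(-\beta) = -\widetilde S_\V(-\beta)$, so $|\widetilde S_\V(-\beta)| = \tfrac{1}{2|\alpha|}$.

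To conclude, I would observe that $\widetilde S_\V$ is continuous on $(-1,0)$ (being given by an analytic formula on this interval) and extends continuously to $\widetilde S_\V(0) = 0$, so it is bounded on the compact interval $[-\tfrac12, 0]$ by some constant $C = C(\mu_\V) < \infty$. For $0 < |\alpha| < \tfrac{1}{2C}$, the identity $|\widetilde S_\V(-\beta)| = \tfrac{1}{2|\alpha|} > C$ with $\beta \in (0,\tfrac12)$ is impossible, so no branch change can occur, and combined with (a) this yields $g(0,\alpha) \ne 0$. The main obstacle is (b): carefully tracking the sign of the analytic continuation of the square root via \eqref{mainequation} in order to turn the geometric branch-change condition $g(iy_0,\alpha) = \tfrac{i}{2|\alpha|}$ into the quantitative constraint $|\widetilde S_\V(-\beta)| = \tfrac{1}{2|\alpha|}$ that can be contradicted for small $|\alpha|$.
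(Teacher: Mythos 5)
Your proof is correct, and it takes a genuinely different route from the paper's. The paper works head-on from the contradiction hypothesis $g(0,\alpha)=0$: by \eqref{mainequation-02} and the hypothesis $\liminf_{x\uparrow 1}|\widetilde S_\V(-x)|>0$, the only zero of $\widetilde S_\V$ on $[-1,0]$ is at $0$, so $1+(wg)(0,\alpha)=0$ and hence the square root must be on the \emph{negative} branch near $y=0$; then a Taylor expansion gives $1+(wg)(iy,\alpha) = iy\,g(iy,\alpha) - |\alpha|^2 g^2(iy,\alpha) + o(g^2)$, yielding $|1+(wg)(iy,\alpha)|\le 2|\alpha|^2|g(iy,\alpha)|^2$, which combined with $|\widetilde S_\V(-r)|\le C\sqrt{r}$ forces $|g(iy,\alpha)|\le \sqrt{2}C|\alpha||g(iy,\alpha)|$, impossible for $|\alpha|<\tfrac{1}{\sqrt 2 C}$. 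Your step (a) is the same observation the paper implicitly makes to rule out the positive branch, but your step (b) replaces the negative-branch Taylor argument by a structural one: you show the branch can never flip along the positive imaginary axis when $|\alpha|$ is small, because at a candidate flip point $g(iy_0,\alpha)=\tfrac{i}{2|\alpha|}$ the system \eqref{mainequation} forces $|\widetilde S_\V(-\beta)|=\tfrac{1}{2|\alpha|}$ for some $\beta\in(0,\tfrac12)$, contradicting the boundedness of $\widetilde S_\V$ on $[-\tfrac12,0]$ (which follows from analyticity on $(-1,0)$ together with the same bound $|\widetilde S_\V(-r)|\le C\sqrt{r}$ near the origin). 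Your route proves slightly more — the square root stays on the positive branch for all $y>0$ once $|\alpha|$ is small — whereas the paper's argument is shorter and stays with a purely local asymptotic estimate as $y\downarrow 0$. Both give an explicit threshold on $|\alpha|$ of the same order.
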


\begin{proof}
On the one hand, it is easy to see that there exists a constant $C > 0$ such that
\begin{align}
\label{eq:tilde-S-limit-2}
|\widetilde{S}_\V(-r)| \le C \sqrt{r}
\end{align}
for all sufficiently small $r > 0$.
On the other hand, our assumption implies that
\begin{align}
\label{eq:tilde-S-limit-1}
|\widetilde{S}_\V(-1+r)| \ge c
\end{align}
for all sufficiently small $r > 0$.

Now fix $\alpha \ne 0$ and suppose by way of contradiction
that $g(0,\alpha) := \lim_{y \to 0} g(iy,\alpha) = 0$.
By \eqref{mainequation-02}, \eqref{eq:tilde-S-limit-1} and the fact 
that $\im \widetilde{S}_{\V}(z)$ is strictly negative for $z \in (-1,0)$, 
this implies that $\lim_{y \to 0} (1+(wg)(iy,\alpha)) = 0$,
or (equivalently)
$$
\lim_{y \to 0} (wg)(iy,\alpha) = \lim_{y \to 0} \left( iy g(iy,\alpha) - \tfrac12 + \tfrac12 \sqrt{1+4|\alpha|^2g(iy,\alpha)^2} \right) = -1 \,.
$$
Thus we find that the square-root must be negative for all sufficiently small $y > 0$.
Using Taylor expansion, it follows that
$$
1 + (wg)(iy,\alpha) = iy g(iy,\alpha) - |\alpha|^2 g^2(iy,\alpha) + o(g^2(iy,\alpha)) \qquad (y \downarrow 0).
$$
Recalling that $1 + (wg)(iy,\alpha)$ takes values in $[0,1]$,
we may conclude that for all sufficiently small $y > 0$,
$$
|1 + (wg)(iy,\alpha)| \leq 2 |\alpha|^2 g^2(iy,\alpha) \,.
$$
By \eqref{mainequation} and \eqref{eq:tilde-S-limit-2}, 
it follows that for all sufficiently small $y > 0$,
$$
|g(iy,\alpha)| = |\widetilde{S}_\V(-(1+(wg)(iy,\alpha)))| \leq \sqrt{2} C |\alpha| |g(iy,\alpha)| \,.
$$
For $|\alpha| < \frac{1}{\sqrt{2} C}$, this is a contradiction.
Consequently, our assumption that $g(0,\alpha) = 0$ is~wrong in this case,
and Lemma \ref{lem:nonzero} is proved.
\end{proof}

\section{Density of Limiting Spectral Distribution}\label{density} 

In this section, we compute the density of the limit distribution 
of the empirical spectral distributions of the matrices $\F_\Y$.
Here we assume that $n = p$, i.e.\@ $\F_\Y$ is a square matrix,
and that the matrices $\Y^{(q)}$ have independent \emph{standard} 
complex Gaussian entries, up to normalization.

\pagebreak[2]

To study the limiting distribution of the eigenvalue distributions
of the matrices $\F_\Y$, we use the~method of hermitization
which goes back to Girko \cite{Girko:84}.
This method may be summarized as follows:

\begin{thm}
\label{thm:hermitization}
For each $n \in \mathbb N$, let $\F_n$ be a random matrix of size $n \times n$
and $\V_n := \begin{bmatrix} 0 & \F_n \\ \F_n^* & 0 \end{bmatrix}$.
Suppose that for all $\alpha \in \mathbb{C}$,
the empirical spectral distributions $\nu_n(\,\cdot\,,\alpha)$ 
of the matrices $\V_n(\alpha) := \V_n + \J_n(\alpha)$
converge weakly in probability to a limit $\nu(\,\cdot\,,\alpha)$
and that the matrices $\F_n$ satisfy Conditions (C0), (C1) and (C2).
Then the empirical spectral distributions of the matrices $\F_n$
converge weakly in probability to a limit $\mu_\F$,
where $\mu_\F$ is the unique probability measure on the complex plane 
such that
\begin{align}
\label{logpotential-1}
U_\F(\alpha) := - \int \log|z-\alpha| \, d\mu_\F(z) = - \int \log|x| \, \nu(dx,\alpha)
\end{align}
for all $\alpha \in \mathbb{C}$.
\end{thm}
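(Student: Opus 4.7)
The plan is to follow Girko's hermitization in the spirit of the proof of Theorem \ref{eigenvalueuniversality}, but now in order to \emph{identify} the limit of $\mu_{\F_n}$ rather than just compare two random sequences. The target is to prove that for every fixed $\alpha \in \mathbb{C}$,
\begin{equation}
\label{eq:planconv}
\frac{1}{n}\log\bigl|\det(\F_n - \alpha \I)\bigr|
= \int_0^\infty \log x \, d\mathcal G_{\F_n}(x,\alpha)
\longrightarrow -U_\F(\alpha)
\quad\text{in probability,}
\end{equation}
where $\mathcal G_{\F_n}(\cdot,\alpha)$ denotes the empirical distribution of the singular values of $\F_n-\alpha\I$ and $U_\F(\alpha) = -\int \log|x|\, d\nu(x,\alpha)$. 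Once \eqref{eq:planconv} is in hand, tightness in probability of $\{\mu_{\F_n}\}$ (which follows from Condition $(C0)$ via Chebyshev's inequality applied to $\frac{1}{n}\sum_k s_k^p(\F_n)$) combined with the ``replacement principle'' of Tao and Vu (\cite{TaoVu:10}, Theorem~2.1, with assumption (i) replaced by $(C0)$, exactly as in the proof of Theorem \ref{eigenvalueuniversality}) forces $\mu_{\F_n}$ to converge weakly in probability to the unique probability measure $\mu_\F$ on $\mathbb{C}$ with logarithmic potential $U_\F$; uniqueness here is standard, since $\mu_\F = -\tfrac{1}{2\pi}\Delta U_\F$ in the sense of distributions.

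To prove \eqref{eq:planconv}, I would fix $\alpha$ and split $\frac{1}{n}\sum_{j=1}^n \log s_j(\F_n-\alpha\I)$ into the three index ranges used already in the proof of Theorem \ref{eigenvalueuniversality}: a bulk piece over $1 \le j \le n-[n\delta_n]$, an intermediate piece over $n-[n\delta_n] < j \le n-[n^\gamma]$, and a tail of very small singular values over $j > n-[n^\gamma]$, for some $\delta_n \downarrow 0$ and $\gamma \in (0,1)$ as in Condition $(C2)$. The intermediate piece is directly negligible by $(C2)$, while the tail is controlled by Condition $(C1)$: together with the crude bound $s_1(\F_n-\alpha\I) \le n^{1/p}\cdot(\text{something bounded in probability}) + |\alpha|$ supplied by $(C0)$, it forces $|\log s_j(\F_n-\alpha\I)| = O(\log n)$ with high probability, so that the total tail contribution is at most $O(n^{\gamma-1}\log n) = o(1)$. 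For the bulk piece I would additionally truncate at a large level $M$ using $(C0)$; the weak convergence in probability of $\mathcal G_{\F_n}(\cdot,\alpha)$ (which follows from the assumed weak convergence of the eigenvalue distributions of $\V_n(\alpha)$ to the symmetric measure $\nu(\cdot,\alpha)$) then allows passage to the limit of $\int_\eta^M \log x \, d\mathcal G_{\F_n}(x,\alpha)$ against the bounded continuous function $\log x$ on $[\eta,M]$, and sending $\eta\downarrow 0$, $M\to\infty$ produces $-U_\F(\alpha)$.

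The main obstacle will be the uniform integrability of $\log x$ against $\mathcal G_{\F_n}(\cdot,\alpha)$ near the two endpoints $x=0$ and $x=\infty$, which is exactly what Conditions $(C0)$--$(C2)$ are engineered to control. The relevant arithmetic is a one-sequence version of the chain \eqref{big}--\eqref{levi2} from the proof of Theorem \ref{eigenvalueuniversality}, so I expect to be able to reuse those estimates essentially verbatim, replacing the term involving $\mathcal G_{\Y}$ there by the deterministic limit $\nu(\cdot,\alpha)$. Once \eqref{eq:planconv} is established pointwise in $\alpha$, the passage from convergence of logarithmic potentials to weak convergence of the empirical spectral measures is the standard Girko/Tao--Vu step, and this identifies the limit as the unique probability measure $\mu_\F$ characterized by \eqref{logpotential-1}.
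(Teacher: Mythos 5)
The paper does not supply its own proof of Theorem~\ref{thm:hermitization}; it cites Lemma~4.3 of Bordenave and Chafa\"{\i} \cite{BordChaf} and, in the remark immediately following the theorem and in Lemma~\ref{lem:logintegrability} of Appendix~\ref{sec:TechnicalDetails}, explains that Conditions $(C0)$, $(C1)$, $(C2)$ together with the assumed weak convergence yield the needed uniform integrability of $\log|\,\cdot\,|$ and the finiteness of the integrals in~\eqref{logpotential-1}. Your proof plan reproduces precisely this mechanism: the reduction to the pointwise statement \eqref{eq:planconv}, the bulk/intermediate/tail decomposition with $(C2)$ for the intermediate range and $(C1)$ plus a crude $(C0)$-based upper bound for the tail, and the truncation at $0$ and $\infty$ for the bulk. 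This is, in substance, a one-sequence version of the argument appearing in the proof of Theorem~\ref{eigenvalueuniversality} and is exactly what Lemma~\ref{lem:logintegrability} formalizes, so the technical core is on target.

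One point is off as a matter of attribution, though not of substance: Theorem~2.1 of Tao and Vu \cite{TaoVu:10} is a \emph{replacement} principle, i.e.\@ it compares the spectral distributions of \emph{two} sequences of matrices given that their shifted log-determinants agree asymptotically; it does not by itself identify the limiting measure of a single sequence from the limit of its log-determinants. What your final step actually needs is a hermitization lemma of the Bordenave--Chafa\"{\i} type (their Lemma~4.3, which the paper cites), whose proof runs via tightness plus the unicity theorem for logarithmic potentials -- exactly the fact you correctly flag with $\mu_\F = -\tfrac{1}{2\pi}\Delta U_\F$ in the sense of distributions. So replace the citation of Tao--Vu Theorem~2.1 in your final paragraph with \cite{BordChaf}, Lemma~4.3 (or spell out the subsequence-plus-unicity argument yourself), and the plan closes cleanly.
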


See e.g.\@ Lemma 4.3 in Bordenave and Chafa\"{i} \cite{BordChaf}.
Let us mention here that Conditions $(C0)$, $(C1)$ and $(C2)$
together with the assumption of weak convergence in probability imply
that the~function $\log|\,\cdot\,|$ is uniformly integrable in probability
for the measures $\nu_n(\,\cdot\,,\alpha)$
and that the integrals in \eqref{logpotential-1} are finite;
see also Lemma \ref{lem:logintegrability} in Appendix \ref{sec:TechnicalDetails}.

\medskip

Let us now suppose that the matrices $\F_n := \F_\Y$ are bi-unitary invariant,
that the empirical spectral distributions of the matrices 
$\V_n := \begin{bmatrix} 0 & \F_n \\ \F_n^* & 0 \end{bmatrix}$
converge weakly in prob\-ability to a compactly supported p.m.\@ $\mu_\V$, 
and that for each $k \in \mathbb N$, $\sup_{n \in \mathbb N} \E \tfrac1n \Tr \V_n^{2k} < \infty$.
\linebreak Then, by the results from Section \ref{freeprob},
the~empirical spectral distributions of the matrices $\V_n(\alpha) := \V_n + \J_n(\alpha)$
converge weakly in probability to the p.m.'s $\nu(\,\cdot\,,\alpha) := \mu_\V \boxplus T(\alpha)$, $\alpha \in \mathbb{C}$.
Hence, if the matrices $\F_\Y$ also satisfy Conditions $(C0)$, $(C1)$ and $(C2)$,
it follows from Theorem \ref{thm:hermitization} that the empirical spectral distributions 
of the matrices $\F_\Y$ converge weakly in probability to a limit $\mu_\F$,
where $\mu_\F$ is the unique probability measure on the complex plane 
such that
\begin{align}
\label{logpotential-2}
U_{\F}(\alpha) := - \int \log|z-\alpha| \, d\mu_{\F}(z) = - \int \log|x| \, \nu(dx,\alpha)
\end{align}
for all $\alpha \in \mathbb{C}$.
Moreover, by Theorem \ref{freer}, for each $\alpha \in \mathbb{C}$, 
the Stieltjes transform $g(z,\alpha)$ of the p.m. $\mu_\V \boxplus T(\alpha)$ 
satisfies the Equations \eqref{mainequation}.

\medskip

We now describe the density $f$ of the limiting spectral distribution $\mu_\F$
in terms of the $S$-transform of the measure $\mu_\V$.
In doing so, we will not use any special properties of random matrices,
but only the probability measures $\mu_\V$ and $\mu_\F$ 
and their properties stated below.


\medskip

For the rest of this section, we make the following assumptions.
%
%
We assume that $\mu_\V$ is a symmetric probability measure on $\mathbb R \setminus \{ 0 \}$
(not necessarily with compact support)
which is not a two-point distribution.
For each $\alpha \in \mathbb C$,
let $\nu(\,\cdot\,,\alpha) := \mu_{\V} \boxplus T(\alpha)$,
and assume that $\log|\,\cdot\,|$ is integrable w.r.t.\@ $\nu(\,\cdot\,,\alpha)$
and that the corresponding Stieltjes transform $g(z,\alpha)$
satisfies the Equations \eqref{mainequation} for all $z \in i \Real^{+}$.
Finally, we assume that $\mu_{\F}$ is a probability measure on $\mathbb C$ 
such that for all $\alpha \setminus \{ 0 \}$,
\begin{align}
\label{eq:logpotential-0}
U_{\F}(\alpha) := - \int \log|z-\alpha| \, d\mu_\F(z) = - \int \log|x| \, \nu(dx,\alpha) \,.
\end{align}
(In particular, we assume that the integrals are finite.)
%
%
In the sequel, we will often write $\alpha = u + iv$, with $u,v \in \Real$,
and regard functions in the complex variable $\alpha$
as functions in the real variables $u$ and $v$.

\pagebreak[2]

We shall additionally make the following assumptions:

\begin{assn}\label{continuity}
The function $g(iy,\alpha)$ is continuous and continuously differentiable
on~the set $(0,\infty) \times (\mathbb R^2 \setminus \{0\})$,
and the partial derivatives satisfy
\begin{equation}\label{eq:derivatives}
 \frac{\partial g(iy,\alpha)}{\partial u}
=\frac{2u(-i)g(iy,\alpha)}{\sqrt{1+4|\alpha|^2g^2(iy,\alpha)}}
 \frac{\partial g(iy,\alpha)}{\partial y} \,,
\end{equation}
where the square-root is the same as in \eqref{eq:extrtilde}.
Moreover, the function $g(iy,\alpha)$ admits a~continuous extension $g(0,\alpha)$ 
as $y \downarrow 0$.
\end{assn}

\begin{assn}\label{lipschitz}
For any compact set $K \subset \Real^2 \setminus \{ 0 \}$,
$$
\lim_{C\to\infty} \sup_{\alpha,\beta \in K} \frac{1}{|\alpha-\beta|} \left|\int_{-\infty}^{\infty}\log\left(1+\frac{y^2}{C^2}\right) \nu(dy,\alpha)-\int_{-\infty}^{\infty}\log\left(1+\frac{y^2}{C^2}\right) \nu(dy,\beta)\right|=0 \,.
$$
\end{assn}

The following lemma shows that Assumptions \ref{continuity} and \ref{lipschitz} are satisfied
if the probability measure $\mu_\V$ has compact support or, more generally, sufficiently small tails.
Since the~proof is rather technical, it is deferred to Appendix \ref{sec:TechnicalDetails}.

\begin{lem}\label{lem:support}
Assumptions \ref{continuity} and \ref{lipschitz} hold for probability measures $\mu_\V$ 
such that \linebreak $\mu_\V([-x,+x]^c) = \mathcal O(x^{-\eta})$ $(x \to \infty)$ for some $\eta > 0$.
\end{lem}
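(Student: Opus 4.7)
The plan is to verify Assumptions \ref{continuity} and \ref{lipschitz} separately. Both will rely on the system \eqref{mainequation} together with Lemmas \ref{lem:continuation} and \ref{lem:limit}, and on~a~uniform decay estimate for $\partial_\alpha g(is,\alpha)$ as $s \to \infty$.

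\textbf{Assumption \ref{continuity}.} The polynomial tail bound ensures in particular that $\mu_\V$ is a symmetric probability measure on $\mathbb R \setminus \{0\}$ that is not a two-point distribution, so the~hypotheses of Lemmas \ref{lem:continuation} and \ref{lem:limit} are satisfied. These give the existence of $g(iy,\alpha)$ on $(0,\infty) \times (\mathbb R^2 \setminus \{0\})$ together with the pointwise limit $g(0,\alpha) := \lim_{y \downarrow 0} g(iy,\alpha)$. For the joint $C^1$-regularity on $(0,\infty) \times (\mathbb R^2 \setminus \{0\})$, I would view the second equation of \eqref{mainequation} combined with \eqref{eq:extrtilde} as a scalar equation $\Phi(g;y,u,v) = g - (1+wg)\, S_\V(-(1+wg)) = 0$, with $w = iy + \widetilde R_\alpha(-g)/g$ and with the branches from Lemma \ref{lem:continuation}(v) and \eqref{eq:ChoiceOfBranch}. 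Since $\mu_\V(\{0\}) = 0$, the $S$-transform $S_\V$ is analytic on an open neighbourhood of $(-1,0)$, and so $\Phi$ is jointly real-analytic in its arguments. The implicit function theorem then yields the desired $C^1$-dependence, provided $\partial_g \Phi \ne 0$ along the solution; this non-degeneracy can be checked via the asymptotic relation \eqref{eq:asymptapprox} together with analytic continuation in $y$. The continuous extension to $y = 0$ then follows from Remark \ref{rem:limit}.

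The formula \eqref{eq:derivatives} is then obtained by implicit differentiation. Setting $u_0 := 1 + wg$ and $h := \sqrt{1+4|\alpha|^2 g^2}$, a direct calculation using that $\widetilde R_\alpha(-g)$ depends on $\alpha$ only through $|\alpha|^2 = u^2+v^2$ gives (with $g$ held fixed) $\partial_u u_0 = 2ug^2/h$, $\partial_v u_0 = 2vg^2/h$, and $\partial_y u_0 = ig$. Since $\partial_u \Phi$ and $\partial_y \Phi$ each carry the same factor $S_\V(-u_0) - u_0 S_\V'(-u_0)$, implicit differentiation yields $\partial_u g / \partial_y g = \partial_u u_0/\partial_y u_0 = -2iug/h$, which is exactly \eqref{eq:derivatives}; the formula for $\partial_v g$ is analogous.

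\textbf{Assumption \ref{lipschitz}.} By Fubini and the elementary primitive $\int_C^\infty y^2/(s(y^2+s^2))\, ds = \tfrac12 \log(1+y^2/C^2)$, I obtain the identity
\begin{equation*}
\int_{-\infty}^{\infty} \log\!\Bigl(1+\frac{y^2}{C^2}\Bigr) \nu(dy,\alpha) = 2 \int_C^\infty \Bigl(\frac{1}{s} - \im g(is,\alpha)\Bigr) ds,
\end{equation*}
valid provided $\int \log(1+y^2)\, \nu(dy,\alpha) < \infty$. The latter follows from the polynomial tail bound on $\mu_\V$, which can be transferred to $\nu(\cdot,\alpha) = \mu_\V \boxplus T(\alpha)$ uniformly for $\alpha$ in a compact set $K \subset \mathbb R^2 \setminus \{0\}$: since $T(\alpha)$ is compactly supported, a moment or free-cumulant estimate shows $\nu(\cdot,\alpha)$ inherits polynomial tail decay with constants controlled on $K$. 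Subtracting the identity for $\alpha$ and $\beta$ and applying the mean value theorem along a segment between them (inside a slightly enlarged compact $K' \subset \mathbb R^2 \setminus \{0\}$), I bound the left side by $2|\alpha-\beta| \int_C^\infty \sup_{\gamma \in K'}(|\partial_u g(is,\gamma)| + |\partial_v g(is,\gamma)|)\, ds$. Formula \eqref{eq:derivatives} combined with the a priori bounds $|g(is,\gamma)| \le 1/s$, $|\partial_s g(is,\gamma)| \le 1/s^2$ and $|h(is,\gamma)| \ge 1/2$ for $s \ge s_0(K)$ (which is valid because $4|\gamma|^2 g(is,\gamma)^2 \to 0$ uniformly on $K'$) yields $|\partial_u g(is,\gamma)| + |\partial_v g(is,\gamma)| \le C(K)/s^3$ in this regime, and integrating gives
\begin{equation*}
\sup_{\alpha,\beta \in K}\frac{1}{|\alpha-\beta|} \Bigl| \int \log\!\Bigl(1+\frac{y^2}{C^2}\Bigr) (\nu(dy,\alpha) - \nu(dy,\beta)) \Bigr| \le \frac{C'(K)}{C^2} \; \longrightarrow \; 0
\end{equation*}
as $C \to \infty$, which is Assumption \ref{lipschitz}.

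\textbf{Main obstacle.} The technical heart of the proof is twofold: (i) the careful verification of the non-degeneracy condition $\partial_g \Phi \ne 0$ needed for the implicit function theorem, which requires tracking the chosen branches of $\sqrt{\,\cdot\,}$ and $S_\V$ across the whole parameter region $(0,\infty) \times (\mathbb R^2 \setminus \{0\})$; and (ii) the precise transfer of the polynomial tail bound from $\mu_\V$ to the free additive convolution $\mu_\V \boxplus T(\alpha)$, uniformly in $\alpha \in K$, which is not entirely routine outside the compactly supported setting, though the approximation \eqref{eq:boxplus-1} together with the continuity estimate \eqref{eq:boxplus-2} should suffice.
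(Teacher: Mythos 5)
Your argument for Assumption \ref{lipschitz} is correct, but it travels a noticeably longer route than the paper. The paper's whole proof of Lemma \ref{lem:support} hinges on a single uniform Lipschitz estimate: by the monotonicity of $\boxplus$ under stochastic order, one shows that for any $C^1$ test function $f$ with $f'(x)=\mathcal O(|x|^{-1})$ one has $\bigl|\int f\,d\mu_{\V(\alpha)}-\int f\,d\mu_{\V(\beta)}\bigr|\le |\alpha-\beta|\,\|f'\|_\infty$ (Eq.~\eqref{eq:newdifference}). Applied to $f(y)=\log(1+y^2/C^2)$, whose derivative has sup-norm $1/C$, this gives Assumption \ref{lipschitz} in one line and independently of Assumption \ref{continuity}. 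Your Fubini identity $\int\log(1+y^2/C^2)\,\nu(dy,\alpha)=2\int_C^\infty\bigl(\tfrac1s-\im g(is,\alpha)\bigr)\,ds$ is correct, as is the resulting $O(C^{-2})$ bound; the price is that you rely on \eqref{eq:derivatives} in the regime $s$ large, tying the two assumptions together. Since for large $s$ the implicit function theorem applies unproblematically (there $g\to 0$, $h\to 1$, and the non-degeneracy is immediate from \eqref{eq:asymptapprox}), this dependence is harmless for Assumption \ref{lipschitz} alone. Likewise, the transfer of the tail bound to $\mu_{\V(\alpha)}$ is routine once one invokes monotonicity under $\lest$, so your item (ii) is not a real obstacle.

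The genuine gap is in your treatment of Assumption \ref{continuity}. You propose to obtain $C^1$-regularity of $g(iy,\alpha)$ on all of $(0,\infty)\times(\Real^2\setminus\{0\})$ by applying the implicit function theorem to $\Phi(g;y,u,v)=0$, and you acknowledge that this requires $\partial_g\Phi\ne 0$ along the whole solution branch. Your proposed verification — ``via \eqref{eq:asymptapprox} together with analytic continuation in $y$'' — does not close the gap: analyticity of $\partial_g\Phi$ along the solution only forces its zero set to be \emph{discrete}, not empty. Indeed, the paper's own computation identifies $\partial_\zeta F = \frac{ig}{\partial_y g}\,\widetilde S_\V'(\cdot)$, which is verified to be nonzero only on the complement of the at-most-countable exceptional set $Y_\alpha$ (where either $\partial_y g=0$ or $g=i/(2|\alpha|)$ so that the square root degenerates); this verification is moreover logically circular as a route to differentiability, since it presupposes $\partial_y g\ne 0$. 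The paper sidesteps the issue entirely: it first establishes joint continuity and the existence of $\partial_u g$ by a quite different mechanism — local joint analyticity of $g(z,\alpha)$ for $\im z$ large (via the additive Voiculescu transform), a Cauchy-coefficient estimate showing the $z$-power series has radius of convergence $\ge\im z_0$, and uniform convergence to propagate $C^1$-regularity in $\alpha$ all the way down to $\im z>0$ — and only \emph{then} invokes the implicit function theorem, on the dense open set $y\notin Y_\alpha$, merely to \emph{identify} the formula \eqref{eq:derivatives}. Without some replacement for that power-series/Vitali step (or a separate argument handling the exceptional points), your IFT-only route does not establish $C^1$-regularity on the full domain.
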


The logarithmic transform of the measures $\nu(\,\cdot\,\alpha)$ is defined by
\begin{align}
\label{eq:logtransform}
\Phi(\alpha):=\int_{-\infty}^{\infty}\log(|x|) \, \nu(dx,\alpha) \qquad (\alpha \ne 0) \,.
\end{align}
Note that this is exactly the integral on the right-hand side in \eqref{eq:logpotential-0}.
Similarly as above, we regard the function $\Phi$ as a function
of the real parameters $u$ and $v$.

\begin{lem}\label{lem:logtransform}
Suppose that Assumptions \ref{continuity} and \ref{lipschitz} hold.
Then the logarithmic transform $\Phi$ is differentiable on $\Real^2 \setminus \{ 0 \}$
with
\begin{align}
\label{eq:phi-derivative}
 \frac{\partial \Phi}{\partial u}(\alpha)
=\frac{u}{2|\alpha|^2} \left( 1-\sqrt{1+4|\alpha|^2g(0,\alpha)^2} \right)
 \qquad (\alpha \ne 0) \,,
\end{align}
where the function $g(0,\alpha)$ and the sign of the square-root are the same as in \eqref{mainequation-02}.
\end{lem}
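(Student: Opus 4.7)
The plan is to represent $\Phi(\alpha)$ through the Stieltjes transform $g(is,\alpha)$ of $\nu(\cdot,\alpha)$, differentiate in $u$, and then simplify using the chain-rule identity \eqref{eq:derivatives}. The starting identity is, for $y>0$ and $x\ne 0$,
$$
\log|x| \;=\; \tfrac12\log(x^2+y^2)\;-\;\int_0^y\frac{s}{x^2+s^2}\,ds,
$$
obtained by direct integration. Integrating this against $\nu(dx,\alpha)$, splitting $\log(x^2+y^2)=2\log y+\log(1+x^2/y^2)$, and using Fubini together with $\im g(is,\alpha)=\int s/(x^2+s^2)\,\nu(dx,\alpha)$ yields, for every $y>0$ and every $\alpha\ne 0$,
$$
\Phi(\alpha)\;=\;\log y\;+\;\tfrac12\!\int\log\!\left(1+\frac{x^2}{y^2}\right)\nu(dx,\alpha)\;-\;\int_0^y\im g(is,\alpha)\,ds.
$$
All integrals are finite since $\log|\cdot|$ is $\nu(\cdot,\alpha)$-integrable by hypothesis.

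Next, I would form the difference quotient of $\Phi$ with respect to $u$ at the point $\alpha$. The $\log y$ term contributes nothing. By Assumption~\ref{lipschitz} the middle term has Lipschitz norm in $\alpha$ at most some $\epsilon(y)\to 0$ as $y\to\infty$, so its difference-quotient contribution is bounded by $\tfrac12\epsilon(y)$ uniformly in the increment. For the third term, Assumption~\ref{continuity} provides $C^1$ dependence of $g(is,\alpha)$ on $\alpha$ for $s$ in compacta, and the explicit expression \eqref{eq:derivatives} together with the bound $|g(is,\alpha)|\le 1/(2|\alpha|)$ from Lemma~\ref{lem:continuation}(iii) furnishes a local integrable domination of the difference quotients of $\im g(is,\alpha)$ in $s$. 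Dominated convergence then produces, as the increment $t\to 0$, the limit $\int_0^y\partial_u\im g(is,\alpha)\,ds$. Letting $y\to\infty$ afterwards shows that $\partial_u\Phi(\alpha)$ exists and
$$
\frac{\partial\Phi}{\partial u}(\alpha)\;=\;-\int_0^\infty\frac{\partial\im g(is,\alpha)}{\partial u}\,ds,
$$
with the improper integral existing as a by-product of the argument.

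Now set $\Sigma(s,\alpha):=\sqrt{1+4|\alpha|^2 g(is,\alpha)^2}$, using the analytic branch from Lemma~\ref{lem:continuation}(iv)--(v). Differentiating $\Sigma^2=1+4|\alpha|^2 g^2$ in $s$ yields $\partial_s\Sigma=(4|\alpha|^2 g/\Sigma)\,\partial_s g$, which lets one rewrite \eqref{eq:derivatives} as
$$
\frac{\partial g(is,\alpha)}{\partial u}\;=\;\frac{-iu}{2|\alpha|^2}\,\frac{\partial\Sigma(s,\alpha)}{\partial s}.
$$
Since $g(is,\alpha)$ is purely imaginary (Lemma~\ref{lem:continuation}(i)), $\Sigma$ is real-valued, so taking imaginary parts gives $\partial_u\im g(is,\alpha)=-\tfrac{u}{2|\alpha|^2}\,\partial_s\Sigma(s,\alpha)$. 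The fundamental theorem of calculus then yields
$$
\frac{\partial\Phi}{\partial u}(\alpha)\;=\;\frac{u}{2|\alpha|^2}\Bigl[\lim_{s\to\infty}\Sigma(s,\alpha)\;-\;\Sigma(0,\alpha)\Bigr]\;=\;\frac{u}{2|\alpha|^2}\Bigl[1-\sqrt{1+4|\alpha|^2 g(0,\alpha)^2}\Bigr],
$$
because $g(is,\alpha)\to 0$ as $s\to\infty$ forces $\Sigma\to +1$ at infinity (the branch fixed in Lemma~\ref{lem:continuation}(v)), and $\Sigma(0,\alpha)$ is precisely the square root appearing in \eqref{mainequation-02}. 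This is \eqref{eq:phi-derivative}.

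The main obstacles are technical: (i) justifying the interchange of the $t\to 0$ limit with the $s$-integral via a uniform integrable dominant extracted from \eqref{eq:derivatives} together with Assumption~\ref{continuity}; and (ii) ensuring that $\Sigma(s,\alpha)$ remains $C^1$ in $s$ despite possible sign changes at points where $|g(is,\alpha)|=1/(2|\alpha|)$. The latter is handled by Lemma~\ref{lem:continuation}(iv): any such zero of $\Sigma^2$ on the positive imaginary axis is of even order, so the analytic continuation $\Sigma$ is smooth through it and the fundamental theorem of calculus applies on all of $(0,\infty)$ without modification.
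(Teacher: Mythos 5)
Your proof follows essentially the same route as the paper's. You decompose $\Phi(\alpha)$ into $\log y$, the tail term $\tfrac12\int\log(1+x^2/y^2)\,\nu(dx,\alpha)$, and the truncated potential integral $\int_0^y\im g(is,\alpha)\,ds$ — precisely the paper's identity for $B(C,\alpha)$, rewritten. You kill the tail term with Assumption \ref{lipschitz}, convert \eqref{eq:derivatives} into the exact antiderivative form $\partial_u\im g(is,\alpha)=-\tfrac{u}{2|\alpha|^2}\,\partial_s\Sigma(s,\alpha)$ with $\Sigma=\sqrt{1-4|\alpha|^2\varkappa^2}$, and apply the fundamental theorem of calculus; that is exactly what the paper does. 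The one place where your sketch is thinner is the interchange of the $h\to0$ limit with the $s$-integral near $s=0$ (your obstacle~(i)). You propose dominated convergence via a ``local integrable dominant'' extracted from \eqref{eq:derivatives}, but Assumption \ref{continuity} only gives $C^1$ control on $(0,\infty)\times(\mathbb R^2\setminus\{0\})$, and $\partial_s\Sigma(s,\beta)$ may blow up as $s\downarrow0$; exhibiting a dominant that is uniform in $\beta$ near $\alpha$ is not immediate, since $\Sigma(\,\cdot\,,\beta)$ has a limit at $s=0$ but need not be $C^1$ there, and the rate of approach may vary with $\beta$. The paper sidesteps this entirely: it truncates the $s$-integral at $c=|h|$ \emph{before} letting $h\to0$, applies the FTC identity on $[c,C]$ where Assumption \ref{continuity} genuinely applies, and controls the leftover $\tfrac1h\int_0^c$ piece directly from the boundedness and continuity of $\varkappa$ itself — no derivative of $g$ in $\alpha$ is needed near $s=0$ and no domination argument is required. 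This is a technical rather than conceptual difference, but it is what makes the paper's version airtight where your sketch still has a loose end.
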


\begin{proof}
For abbreviation, set $\varkappa(y,\alpha) := (-i) g(iy,\alpha)$.
Note that by Lemma \ref{lem:continuation} (iii), \linebreak
we have $\varkappa(y,\alpha) \in [0,\tfrac{1}{2|\alpha|}]$ for all $y \geq 0$.
Throughout this proof, $\alpha \in \Real^2 \setminus \{0\}$ is fixed,
and $h$ denotes a real number different from zero but sufficiently close to zero.

Introduce the integral
$$
B(C,\alpha)=\int_0^C \varkappa(y,\alpha) \, dy, \qquad C > 0,
$$
and observe that
\begin{align*}
 B(C,\alpha)
&=\int_0^C \int_{-\infty}^{+\infty} \frac{y}{y^2+x^2} \, \nu(dx,\alpha) \, dy
=\int_{-\infty}^{+\infty} \int_0^C \frac{y}{y^2+x^2}\, dy \, \nu(dx,\alpha) \\
&= -\int_{-\infty}^{\infty}\log(|x|) \, \nu(dx,\alpha)
+\frac12\int_{-\infty}^{\infty}\log(1+\frac{x^2}{C^2}) \, \nu(dx,\alpha)
+\log C \\
&= -\Phi(\alpha)
+\frac12\int_{-\infty}^{\infty}\log(1+\frac{x^2}{C^2}) \, \nu(dx,\alpha)
+\log C.
\end{align*}
Thus,
\begin{multline*}
  \frac{\Phi(\alpha+h)-\Phi(\alpha)}{h}
= - \frac{B(C,\alpha+h)-B(C,\alpha)}{h} \\
  + \frac{1}{2h}\left(\int_{-\infty}^{\infty}\log(1+\frac{x^2}{C^2}) \, \nu(dx,\alpha+h)-\int_{-\infty}^{\infty}\log(1+\frac{x^2}{C^2}) \, \nu(dx,\alpha)\right) \,.
\end{multline*}
Clearly, by Assumption \ref{lipschitz}, the expression in the second line 
can be made arbitrarily small by choosing $C$ sufficiently large.
Also, note that $\varkappa(C,\alpha) \to 0$ as $C \to \infty$.
Thus, to~complete the proof of the lemma, it remains to show that
for any $C > 0$,
\begin{align}
\label{eq:B-derivative}
 \frac{\partial B}{\partial u}(C,\alpha)
=-\frac{u}{2|\alpha|^2} \left( \sqrt{1-4|\alpha|^2\varkappa^2(C,\alpha)}-\sqrt{1-4|\alpha|^2\varkappa^2(0,\alpha)} \right) \,.
\end{align}
Let us mention here that the sign of the first square-root is positive for $C$ large enough,
whereas the sign of the second square-root may be positive or negative, 
as in Lemma \ref{lem:limit}.

To prove \eqref{eq:B-derivative}, note that by Assumption \ref{continuity}, we have
\begin{align*}
   \int_{c}^{C} \frac{\partial\varkappa}{\partial u}(y,\alpha) \, dy
&= \int_{c}^{C} \frac{2u\varkappa(y,\alpha)}{\sqrt{1-4|\alpha|^2\varkappa^2(y,\alpha)}} \frac{\partial \varkappa(y,\alpha)}{\partial y} \, dy \\
&= - \frac{u}{2|\alpha|^2} \left(\sqrt{1-4|\alpha|^2\varkappa^2(C,\alpha)}-\sqrt{1-4|\alpha|^2\varkappa^2(c,\alpha)}\,\right)
\end{align*}
for any $0 < c < C$. 
It therefore follows that  
\begin{multline*}
  \frac{1}{h} \int_c^C \! \Big( \varkappa(y,\alpha+h)-\varkappa(y,\alpha) \Big) \! \, dy
= \int_c^C \! \int_0^1 \frac{\partial\varkappa}{\partial u}(y,\alpha+th) \, dt \, dy 
= \int_0^1 \! \int_c^C \frac{\partial\varkappa}{\partial u}(y,\alpha+th) \, dy \, dt \quad\\\quad
= \int_0^1 - \frac{u+th}{2|\alpha+th|^2} \left(\sqrt{1-4|\alpha+th|^2\varkappa^2(C,\alpha\!+\!th)}-\sqrt{1-4|\alpha+th|^2\varkappa^2(c,\alpha\!+\!th)}\,\right) \, dt \,.
\end{multline*}
Thus, setting $c := |h|$ and letting $h \to 0$, we obtain 
\begin{multline*}
  \frac{B(C,\alpha\!+\!h)\!-\!B(C,\alpha)}{h}
= \frac{1}{h} \int_c^C \! \Big( \varkappa(y,\alpha+h)-\varkappa(y,\alpha) \Big) \! \, dy + \frac{1}{h} \int_0^c \! \Big( \varkappa(y,\alpha+h)-\varkappa(y,\alpha) \Big) \! \, dy \quad\\\quad
= - \frac{u}{2|\alpha|^2} \left(\sqrt{1-4|\alpha|^2\varkappa^2(C,\alpha)}-\sqrt{1-4|\alpha|^2\varkappa^2(0,\alpha)}\,\right) + o(1) \,,
\end{multline*}
where we have used the fact that the functions
$\varkappa(y,\alpha)$ and $\sqrt{1-4|\alpha|^2\varkappa^2(y,\alpha)}$
are bounded and continuous near the point $(0,\alpha)$.
This completes the proof of \eqref{eq:B-derivative}.
\end{proof}


\pagebreak[2]

\begin{thm}\label{denseigval}
 Suppose that $\mu_\V$ and $\mu_\F$ are as above
 and that Assumptions \ref{continuity} and \ref{lipschitz} hold.
 For $x > 0$ and $\alpha \ne 0$, introduce the functions
 $$
 \varkappa(x,\alpha) := (-i)g(ix,\alpha),
 \quad 
 \psi(x,\alpha) := (-i)g(ix,\alpha) \, (-i)w(ix,\alpha),
 $$
 where $w(z,\alpha)$ is defined as in Theorem \ref{freer},
 and their limits
 $$
 \varkappa(\alpha) := \varkappa(0,\alpha) := \lim_{x \downarrow 0} \varkappa(\alpha),
 \quad 
 \psi(\alpha) := \psi(0,\alpha) := \lim_{x \downarrow 0} \psi(x,\alpha).
 $$
 Then the functions $\varkappa(\,\cdot\,,\alpha)$ and $\psi(\,\cdot\,,\alpha)$ are real-valued
 with values in $[0,\tfrac{1}{2\alpha}]$ and $[0,1]$, respectively.
 Furthermore, set $\xi_\V(x) := i(-x)S_\V(-x)$ $(x \in [0;1])$.
 Then $\xi_\V \ge 0$, and we~have the relation
 \begin{equation}\label{algexp}
  \psi(\alpha)(1-\psi(\alpha))=|\alpha|^2 \, \big(\xi_\V(1-\psi(\alpha))\big)^2 \,.
 \end{equation}
 Alternatively, and more conveniently for applications, 
 we may rewrite Equation \eqref{algexp} in the form of two equations: 
 \begin{align}\label{mainequation10}
  \psi(\alpha)(1-\psi(\alpha))&=|\alpha|^2\varkappa(\alpha)^2,\notag\\
  \varkappa(\alpha)&=\xi_{\V}(1-\psi(\alpha)).
 \end{align} 

 Suppose additionally that there exists a \underline{finite} set $A$
 such that for $|\alpha| \not\in A$,
 the function $\psi(\alpha)$ is continuously differentiable at $\alpha$.
 Then, on the set $\{ (u,v) \in \mathbb{R}^2 \setminus \{ 0 \} : \sqrt{u^2+v^2} \not\in A \}$,
 the measure $\mu_\F$ has the Lebesgue density $f$ given by
 \begin{equation}\label{density1}
  f(u,v)=\frac1{2\pi}\Delta \Phi(\alpha)=\frac{1}{2\pi |\alpha|^2}  \left(u\frac{\partial \psi}{\partial u}+v\frac{\partial \psi}{\partial v}\right).
 \end{equation}
\end{thm}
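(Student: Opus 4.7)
The plan is to handle the theorem in three stages: the sign and range properties of the auxiliary functions $\varkappa$, $\psi$, $\xi_\V$; the algebraic identities \eqref{algexp}--\eqref{mainequation10}; and finally the density formula \eqref{density1} via the Laplacian of the logarithmic transform $\Phi$ introduced in Section~\ref{density}.

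First, I would read the sign and range claims straight out of Lemma \ref{lem:continuation}. Part (i) places $g(iy,\alpha)$ on $i\Real^+$, so $\varkappa(y,\alpha)=-ig(iy,\alpha)$ is automatically real; part (iii) restricts it to $[0,\tfrac{1}{2|\alpha|}]$. For $\psi$, I would write $(wg)(iy,\alpha)= iy\cdot g(iy,\alpha)+\widetilde R_\alpha(-g(iy,\alpha))$; the first summand is real by part (i) and the second lies in $[-1,0]$ by part (v), so $(wg)(iy,\alpha)\in\Real$, and part (vi) pins it down to $(-1,0)$, giving $\psi\in(0,1)$. The nonnegativity of $\xi_\V$ follows from our branch convention: for $x\in(0,1)$, the defining formula \eqref{eq:ST-symmetric} together with positivity of $S_{\mathcal Q(\mu_\V)}$ on $(-1,0)$ makes $S_\V(-x)$ purely imaginary with nonnegative imaginary part, whence $\xi_\V(x)=i(-x)S_\V(-x)\ge 0$; the endpoints are handled by continuous extension.

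Next I would derive \eqref{mainequation10} by specialising Lemma \ref{lem:limit} to the present notation. Substituting $g(0,\alpha)=i\varkappa(\alpha)$ and $(wg)(0,\alpha)=-\psi(\alpha)$ into the second relation of \eqref{mainequation-01} gives $-i\varkappa=-(1-\psi)S_\V(-(1-\psi))$; multiplying by $-i$ and using the definition of $\xi_\V$ yields $\varkappa=\xi_\V(1-\psi)$. The first relation of \eqref{mainequation-01} becomes $1-2\psi=\sqrt{1-4|\alpha|^2\varkappa^2}$, and squaring produces $\psi(1-\psi)=|\alpha|^2\varkappa^2$. Together these are \eqref{mainequation10}, and substituting the second into the first is \eqref{algexp}.

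For the density, I would feed the same substitutions into Lemma \ref{lem:logtransform}: the identity $\sqrt{1+4|\alpha|^2g(0,\alpha)^2}=1-2\psi(\alpha)$ collapses \eqref{eq:phi-derivative} (and its symmetric partner in $v$) to $\nabla\Phi(\alpha)=\psi(\alpha)|\alpha|^{-2}(u,v)$. On the open set $\{(u,v)\in\Real^2\setminus\{0\}:\sqrt{u^2+v^2}\notin A\}$, the hypothesis makes $\psi$ continuously differentiable, hence $\Phi$ is $C^2$ there, and the relation $U_\F=-\Phi$ from \eqref{eq:logpotential-0} combined with the standard distributional identity $\Delta_\alpha(-\log|z-\alpha|)=-2\pi\delta_z(\alpha)$ shows that $\mu_\F$ is absolutely continuous there with density $\tfrac{1}{2\pi}\Delta\Phi$. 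A short direct computation of $\Delta\Phi$ from the above gradient produces the telescoping cancellation $\tfrac{2\psi}{|\alpha|^2}-\tfrac{2(u^2+v^2)\psi}{|\alpha|^4}=0$, leaving precisely \eqref{density1}. The main obstacle throughout is branch consistency: the sign of $\sqrt{1+4|\alpha|^2g(0,\alpha)^2}$ dictated by the analytic continuation in Lemma \ref{lem:continuation}(iv) must be the same one that makes $1-2\psi$ correctly represent it and keeps $\xi_\V\ge 0$; once these choices are aligned across Lemmas \ref{lem:continuation}, \ref{lem:limit} and \ref{lem:logtransform}, the remainder is mechanical.
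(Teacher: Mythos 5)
Your proposal is correct and follows essentially the same route as the paper: read the reality and range statements off Lemma~\ref{lem:continuation}, pass to the limit via Lemma~\ref{lem:limit}, rewrite \eqref{mainequation} (equivalently \eqref{mainequation-01}) in the real variables $\varkappa,\psi$ and square the square-root relation to get \eqref{mainequation10}, then feed $\sqrt{1+4|\alpha|^2 g(0,\alpha)^2}=1-2\psi$ into Lemma~\ref{lem:logtransform} and compute $\Delta\Phi$ by brute force, with the $\mathcal O(1)$ terms cancelling because $u^2+v^2=|\alpha|^2$. The only cosmetic difference is that you invoke the distributional Laplacian of $\log|\cdot|$ directly where the paper cites the potential-theory result from \cite{saff}, Theorem~II.1.3; these are the same fact.
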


\medskip

\begin{rem}
One nuisance is that the solution to \eqref{mainequation10} is not unique.
Indeed, the pair $(\varkappa,\psi) \equiv (0,1)$ is always a solution.
However, this trivial solution can be excluded using Lemma \ref{lem:nonzero}. \\
In typical applications, we will proceed as follows:
For given $\xi_\V$, solve the system \eqref{mainequation10}.
Using Lemma \ref{lem:nonzero}, argue that the solution is unique.
This is possible at least in some cases, and notably in all our applications.
Then check that the unique solution is continuously differentiable
(except on a finite number of rings) and compute $f$ using \eqref{density1}.
Finally, check that $f$ is indeed a probability density. \pagebreak[1] \\
\end{rem}

\begin{proof}[Proof of Theorem \ref{denseigval}]
Note that the measure $\nu(\cdot,\alpha)$ with corresponding Stieltjes transform $g(z,\alpha)$ 
is symmetric to the origin. 
Thus, by Lemma \ref{lem:continuation} (iii) and (vi),
we have 
$\varkappa(x,\alpha) \in [0,\tfrac{1}{2\alpha}]$
and 
$\psi(x,\alpha) \in [0,1]$
for all $x \geq 0$.
Also, the limits $\varkappa(0,\alpha)$ and $\psi(0,\alpha)$ exist
by Lemma \ref{lem:limit} and the subsequent remark.
Furthermore, by our conventions concerning the $S$-transform,
we~have $\xi_\V(x) \geq 0$ for all $x \in (0,1)$.

We now rewrite the Equations \eqref{mainequation} 
in terms of the real-valued functions $\varkappa$, $\psi$ and $\xi_\V$.
Using \eqref{mainequation} with $z=ix$, we~have
\begin{align}
\label{mainequation11}
\psi(x,\alpha)&=x\varkappa(x,\alpha)+\tfrac12-\tfrac12\sqrt{1-4|\alpha|^2\varkappa(x,\alpha)^2},\notag\\
\varkappa(x,\alpha)&=\xi_{\V}(1-\psi(x,\alpha)),
\end{align}
where the sign of the square-root is determined as in \eqref{eq:extrtilde}.
Letting $x \downarrow 0$, we~get
\begin{align}
\label{mainequation12}
\psi(\alpha)&=\tfrac12-\tfrac12\sqrt{1-4|\alpha|^2\varkappa(\alpha)^2},\notag\\
\varkappa(\alpha)&=\xi_{\V}(1-\psi(\alpha)),
\end{align}
where the sign of the square-root is determined by continuous extension.
Taking squares in the first equation and rearranging terms, we deduce that
\begin{align}\label{mainequation12.5}
 \psi(\alpha)(1-\psi(\alpha))&=|\alpha|^2\varkappa(\alpha)^2,\notag\\
 \varkappa(\alpha)&=\xi_{\V}(1-\psi(\alpha)).
\end{align}
Eliminating $\varkappa$ from these equations leads to the equivalent equation \eqref{algexp}. 

Suppose additionally that there exists a \underline{finite} set $A$
such that for $\alpha \not\in A$,
the function $\psi(\alpha)$ is continuously differentiable at $\alpha$.
Let $\alpha \not\in A$.
By Lemma \ref{lem:logtransform} and Equation \eqref{mainequation12}, we have
\begin{equation}\label{potential1}
 \frac{\partial \Phi(\alpha)}{\partial u}
=\frac{u}{2|\alpha|^2} \left( 1-\sqrt{1-4|\alpha|^2\varkappa(\alpha)^2} \right)
=\frac{u}{|\alpha|^2} \psi(\alpha) \,.
\end{equation}
Since $\psi$ is continuously differentiable w.r.t.~$u$, it~follows that
\begin{equation}\label{potential2}
 \frac{\partial^2 \Phi(\alpha)}{\partial u^2}=\frac1{|\alpha|^2}\psi-\frac{2u^2}
 {|\alpha|^4}\psi+\frac{u}{|\alpha|^2} \frac{\partial \psi}{\partial u} \,.
\end{equation}
Note that all functions depend on $|\alpha|$ only,
and are therefore symmetric with respect to $u$~and~$v$.
Thus we also have
\begin{equation}\label{potential3}
 \frac{\partial^2 \Phi(\alpha)}{\partial v^2}=\frac1{|\alpha|^2}\psi-\frac{2v^2}
 {|\alpha|^4}\psi+\frac{v}{|\alpha|^2} \frac{\partial \psi}{\partial v} \,.
\end{equation}
Summing \eqref{potential2} and \eqref{potential3}, we get, for $|\alpha| \not\in A$,
\begin{equation}\label{laplasian}
 \Delta \Phi(\alpha)=\frac{1}{|\alpha|^2}\Big(u\frac{\partial \psi}
 {\partial u}+v\frac{\partial \psi}{\partial v}\,\Big) \,.
\end{equation}
Moreover, it follows from the preceding discussion that on the open set where $|\alpha| \not\in A$,
$\Phi(\alpha)$ is twice continuously differentiable.

In view of relation \eqref{eq:logpotential-0}, this means that 
the log-potential $U_\F(\alpha)$ is twice continuously differentiable
on the open set where $|\alpha| \not\in A$.
It therefore follows by a well known result from potential theory
(see e.g.\@ \cite{saff}, Theorem II.1.3)
that the restriction of $\mu_\F$ to this set 
is absolutely continuous with Lebesgue density
$$
  f(u,v) 
= - \frac{1}{2\pi} \Delta U_\F(\alpha)
= \frac1{2\pi} \Delta \Phi(\alpha)
= \frac{1}{2\pi |\alpha|^2} \Big( u\frac{\partial \psi}{\partial u}+v\frac{\partial \psi}{\partial v} \Big).
$$
This completes the proof of Theorem \ref{denseigval}.
\end{proof}

\begin{rem}
Let us mention that in Chapter II.1 in \cite{saff}, it is assumed that 
the measure $\mu$ under consideration is finite and of compact support.
However, a closer inspection of the proof shows that the latter assumption
may be relaxed; it is sufficient to assume that the~function
$z \mapsto \log^+ |z|$ is integrable w.r.t.\@ $\mu$.
\end{rem}


\section{Applications}
\label{sec:applications}

We consider applications of Theorems \ref{singularvalueuniversality} and \ref{eigenvalueuniversality}.
These applications show that our main results allow old and new results 
on products of independent random matrices to~be~derived in a unified way.
In doing so, we shall always assume that either all random variables $X_{jk}^{(q)}$ are real with
\begin{align}
\label{eq:secondmomentstructure-8a}
\E X^{(q)}_{jk} = 0,\quad
\E |X^{(q)}_{jk}|^2 = 1,
\end{align}
or all random variables $X_{jk}^{(q)}$ are complex with
\begin{align}
\label{eq:secondmomentstructure-8b}
\E X^{(q)}_{jk} = 0,\quad
\E|\re X^{(q)}_{jk}|^2 = \E|\im X^{(q)}_{jk}|^2 = \tfrac12,\quad
\E(\re X^{(q)}_{jk} \im X^{(q)}_{jk}) = 0.
\end{align}
Clearly, under these assumptions, the corresponding random variables $Y_{jk}^{(q)}$
as in \eqref{eq:secondmomentstructure-1} have \emph{standard} real or complex Gaussian distributions,
respectively, and we may use the results from the preceding sections.
Let us note here that although we have stated these results only for the complex case,
there exist analogous results for the real case, as mentioned at the end of Section \ref{freeprob}.
Furthermore, let us emphasize that the assumptions 
\eqref{eq:secondmomentstructure-8a} and \eqref{eq:secondmomentstructure-8b}
are not needed to establish universality, but only to identify the limiting distributions.
Finally, let us mention that the assumptions
\eqref{eq:secondmomentstructure-8a} and \eqref{eq:secondmomentstructure-8b}
can be relaxed a bit; see the~Remark at the end of Section \ref{freeprob}.

\subsection{Applications of Theorem \ref{singularvalueuniversality}: Distribution of singular values} 
In this section we consider some applications of Theorem \ref{singularvalueuniversality}.
We start from the simplest case of Marchenko--Pastur law.
\subsubsection{Marchenko--Pastur Law} \label{MPLaw}
Let $m=1$ and let $\X^{(1)}=\X=\frac1{\sqrt p}(X_{jk})$ be an $n\times p$ matrix. 
We shall assume that $p=p(n)$ and $\lim_{n\to\infty}\frac n{p(n)}=y\in(0,1]$. 
We assume that $X_{jk}$, $j=1,\ldots,n$, $k=1,\ldots, p$, are independent random variables 
as in \eqref{eq:secondmomentstructure-8a} or \eqref{eq:secondmomentstructure-8b}.
Let $\mathbb F(\X) := \X$, and let $\mathcal G_n(x)$ denote the empirical
distribution function of the eigenvalues of the matrix $\W=\F_\X^{}\F_\X^*=\X\X^*$. 
Then we have the following result, cf.\@ Marchenko and Pastur \cite{MP:67}.
 
\begin{thm}\label{MarPas}
 Assume that the random variables $X_{jk}$, $j=1,\ldots,n$, $k=1,\ldots, p$ satisfy the Lindeberg 
 condition \eqref{lind}. Then
 \begin{equation}\notag
 \lim_{n\to\infty} \mathcal G_n(x)=G(x) \quad\text{in probability},
 \end{equation}
 where $G'(x)=\frac{\sqrt{(x-a)(b-x)}}{2\pi xy}$ with $a=(1-\sqrt y)^2$, $b=(1+\sqrt y)^2$.
\end{thm}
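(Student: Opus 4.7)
The plan is to invoke Theorem \ref{singularvalueuniversality} with $m=1$, $\mathbb F(\Z) = \Z$, and $\mathbf B = \mathbf O$ in order to reduce the problem to the Gaussian case, and then to appeal to the classical Marchenko--Pastur theorem for standard Wishart matrices.

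First I would verify the hypotheses of Theorem \ref{singularvalueuniversality}. The Lindeberg condition \eqref{lind} is assumed. The rank condition \eqref{rank} is immediate since $\mathbb F$ is the identity map; it holds with $C(\mathbb F)=1$. For the smoothness conditions \eqref{mat3}--\eqref{mat32a}, observe that because $\V_{\Z}$ depends linearly on the entries of $\Z$, each matrix $\partial \V/\partial \re Z_{jk}^{(1)}$ and $\partial \V/\partial \im Z_{jk}^{(1)}$ is $\Z$-independent, of rank at most two, and of Frobenius norm $O(p^{-1/2})$. Combined with the spectral bound $\|\R(z)\| \le v^{-1}$ valid for $z=u+iv$ with $v>0$, and the resolvent identity $\partial \R/\partial t = -\R (\partial \V/\partial t)\R$, one obtains deterministic bounds
\begin{equation*}
|g_{jk}^{(1)}(\theta)| + |\widehat g_{jk}^{(1)}(\theta)| \le \frac{C}{\sqrt{p}\,v^{2}},\qquad
\Bigl|\tfrac{\partial g_{jk}^{(1)}(\theta)}{\partial \re Z_{jk}^{(1)}}\Bigr| + \Bigl|\tfrac{\partial \widehat g_{jk}^{(1)}(\theta)}{\partial \re Z_{jk}^{(1)}}\Bigr| \le \frac{C}{p\,v^{3}},
\end{equation*}
and analogous $O(p^{-3/2}v^{-4})$ bounds for the second partials required by \eqref{mat32}--\eqref{mat32a}, uniformly in $\theta \in [0,1]$ and in the values of the entries of $\Z$. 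Taking conditional expectations preserves these bounds, so \eqref{mat3}--\eqref{mat32a} hold with constants depending only on $v$. Theorem \ref{singularvalueuniversality} then yields $m_n(z,\X) - m_n(z,\Y) \to 0$ in probability for every fixed $z$ with $\im z > 0$. By the remark following that theorem, it suffices now to prove the claim for the Gaussian model $\Y$.

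In the Gaussian case, $\F_\Y \F_\Y^{*} = \Y\Y^{*}$ is a normalized Wishart matrix with standard complex (or real) entries, so the Marchenko--Pastur theorem of \cite{MP:67} (see also e.g.\@ \cite{BS:10}) directly yields the stated limit $G$. Alternatively, this limit can be recovered within the framework of Sections \ref{sec:stieltjes}--\ref{density} via the well-known $S$-transform $S(z) = 1/(1+yz)$ of the Marchenko--Pastur law, from which a quadratic equation for the Stieltjes transform of $\mathcal G_n$ is extracted and solved in closed form, producing the density with support $[(1-\sqrt y)^2,(1+\sqrt y)^2]$.

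The only mildly technical step is the resolvent bookkeeping required to check \eqref{mat3}--\eqref{mat32a}. The entire universality content is packaged into Theorem \ref{singularvalueuniversality}, and the Gaussian computation is a classical Wishart calculation; the smoothness estimates are particularly painless here because $\mathbb F$ is linear in $\Z$, so the derivatives of $\V_{\Z}$ are constant matrices of rank at most two and operator norm $p^{-1/2}$, and no induction over a product structure (as will be needed in later applications) is required.
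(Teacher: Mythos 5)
Your approach is the same as the paper's: invoke Theorem~\ref{singularvalueuniversality} with $m=1$ and $\mathbb F$ the identity to reduce to the Gaussian case, then cite the classical Marchenko--Pastur theorem. One bookkeeping remark is in order, however. In the paper's convention the variables $Z_{jk}^{(1)}$ are the entries of the already-normalized matrix $\Z^{(1)}$ --- the factor $n_q^{-1/2}$ enters separately, via the chain rule, and is written out explicitly in \eqref{mat0} and \eqref{mat1} --- so $\partial\V/\partial\re Z_{jk}^{(1)} = \mathbf e_j\mathbf e_{k+n}^T + \mathbf e_{k+n}\mathbf e_j^T$ has Frobenius norm $\sqrt 2$ with no $p$-decay, and the correct constants are $A_0 = 2v^{-2}$, $A_1 = 8v^{-3}$, $A_2 = 48v^{-4}$ (as the paper records), not the $O(p^{-1/2}v^{-2})$, $O(p^{-1}v^{-3})$, $O(p^{-3/2}v^{-4})$ you state. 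You have effectively counted the $p^{-1/2}$ twice: once inside the derivative matrix and once in the chain-rule factor that the paper keeps explicit. Since conditions \eqref{mat3}--\eqref{mat32a} ask only for uniform upper bounds, your overstated decay is harmless here and the conclusion still goes through; nevertheless, keeping the normalization straight matters in the later, nonlinear applications (Sections~\ref{products}--\ref{productpowers}), where the same bookkeeping is genuinely delicate.
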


\begin{rem}
The probability distribution given by the distribution function $G(x)$ 
is called the \emph{Marchenko--Pastur distribution} with parameter $y$.
\end{rem}
   
\begin{proof}
For simplicity we shall consider only the case that the r.v.'s $X_{jk}$ are real.
Let~$\Y$ be a Gaussian matrix as in Section \ref{sec:notation}.
We prove only the universality of the singular value distribution of the matrix $\X$,
and then suppose that the limiting distribution of the singular values
of the Gaussian matrix $\Y$ is known.

To apply Theorem \ref{singularvalueuniversality}, we check conditions \eqref{rank}, 
\eqref{mat3}, \eqref{mat31}, and \eqref{mat32}. 
First we~note that in our case
\begin{equation}\label{marpas1}
 g_{jk}^{(1)}=g_{jk}=\Tr\frac{\partial \V}{\partial X_{jk}}\R^2,
\end{equation}
where $\V=\begin{bmatrix}&\mathbf O&\Z&\\ &\Z^*&\mathbf O&\end{bmatrix}$
and $\Z$ is defined as in \eqref{reprsimple}.
Let $\mathbf e_1,\ldots,\mathbf e_{n+p}$ be the standard orthonormal basis 
in $\mathbb R^{n+p}$. Then
\begin{equation}\label{marpas2}
 \frac{\partial \V}{\partial X_{jk}}=\mathbf e_j\mathbf e_{k+n}^T+\mathbf e_{k+n}
 \mathbf e_{j}^T, \text{ for }j=1,\ldots,n; 
k=1,\ldots,p.
\end{equation}
From \eqref{marpas1} and \eqref{marpas2} it follows that
\begin{equation}\label{marpas3}
 g_{jk}=[\R^2]_{j,k+n}+[\R^2]_{k+n,j}.
\end{equation}
Starting from \eqref{marpas3},
it is straightforward to check that condition \eqref{mat3} holds with constant 
$A_0=2v^{-2}$, condition \eqref{mat31} holds with constant
$A_1=8v^{-3}$ and condition \eqref{mat32} holds with constant $A_2=48v^{-4}$.

Furthermore, condition \eqref{rank} holds with constant $C(\mathbb F)=1$.
Thus, we have checked all conditions of Theorem \ref{singularvalueuniversality}. 
By Theorem \ref{singularvalueuniversality}, we obtain that the limit distribution of the singular values of the matrix $\X$ 
is the same as the limit distribution of the singular values of the Gaussian matrix $\Y$. 
It is well known that the limit distribution of the spectra of the matrices $\Y \Y^*$
is the Marchenko--Pastur distribution with parameter $y$. 
The density of this distribution is given by
\begin{equation}\label{MPD}
 p(x)=\frac{\sqrt{(x-a)(b-x)}}{2\pi xy} \mathbb I\{a\le x\le b\},
\end{equation}
where $a=(1-\sqrt y)^2$ and $b=(1+\sqrt y)^2$. 
Thus Theorem \ref{MarPas} is proved.
\end{proof}

\begin{rem}\label{MP}
The $S$-transform of the Marchenko--Pastur distribution 
with parameter $y \in (0,1]$, with density defined in \eqref{MPD}, 
is given by
$$
S(z)=\frac1{1+yz}.
$$
\end{rem}
\begin{proof}[Proof of Remark $\ref{MP}$] Let $g_y(z)$ denote the Stieltjes transform of the Marchenko--Pastur distribution. 
It is well-known that
\begin{equation*}
g_y(z)=-\frac1{z+y-1+yzg_y(z)}.
\end{equation*}
See for instance \cite{GT:04}, Equations (3.1) and (3.9).
Using this equation and the formal identity
\begin{equation*}
g(z)=-\frac1z(1+M(\frac1z)),
\end{equation*}
we get
\begin{equation*}
yzM^2(z)-(1-yz-z)M(z)+z=0.
\end{equation*}
Solving this equation with respect to $z$, we obtain
\begin{equation*}
M^{-1}(z)=\frac z{(1+yz)(1+z)}.
\end{equation*}
This equality immediately implies that
$$
S(z)=\frac1{1+yz},
$$
and Remark \ref{MP} is proved.
\end{proof}

\begin{rem}\label{inverse-MP}
Let $X$ be a r.v.\@ with Marchenko-Pastur distribution with parameter $y=1$,
and let $F_t$ denote the distribution of $(X+t)^{-1}$, $t \geq 0$.
Then $F_t \to F$ in Kolmogorov distance as $t \to 0$,
and the $S$-transform of the limit $X^{-1}$ is given by
$$
S(z)=-z.
$$
\end{rem}

\begin{proof}
The first part follows from the pointwise convergence of the corresponding densities,
which are easily calculated using \eqref{MPD}.
For the second part, we provide a formal proof. 
Recall that $S_X(z)=\frac1{z+1}$.
The corresponding Stieltjes transform is $g_X(z)=\frac12(-1+\sqrt{\frac{z-4}z})$.
Furthermore, we note that formally $M_{X^{-1}}(z)=zg_X(z)$,
where $M_{X^{-1}}(z)$ denotes the generic moment generating function of the distribution of $X^{-1}$.
This implies
\begin{align*} 
 M_{X^{-1}}(z)=\frac{-z+\sqrt{z(z-4)}}2.
\end{align*}
From this equality it follows that
\begin{align*} 
 M_{X^{-1}}^{-1}(z)=\frac{-z^2}{1+z},
\end{align*}
and therefore
\begin{align*} 
 S_{X^{-1}}(z)=-z.
\end{align*}
\end{proof}

\subsubsection{Product of Independent Rectangular Matrices}\label{products}
Let $m\ge 1$ be fixed. Let $n_0,\ldots,n_m$ denote integers 
depending on $n\ge 1$ such that $n_0=n$ and
\begin{equation}\notag
 \lim_{n\to\infty}\frac{n}{n_q(n)}=y_q\in(0,1], \quad q=1,\ldots,m.
\end{equation}
Consider independent random variables 
$X_{jk}^{(q)}$ for $q=1,\ldots,m$, $j=1,\ldots, n_{q-1}$, $k=1,\ldots, n_q$
as in \eqref{eq:secondmomentstructure-8a} or \eqref{eq:secondmomentstructure-8b}.
We introduce the matrices $\X^{(q)}=\frac1{\sqrt{n_q}}(X_{jk}^{(q)})$, 
$j=1,\ldots,n_{q-1}$, $k=1,\ldots,n_q$ for $q=1,\ldots,m$.
Let $\mathbb F(\X^{(1)},\ldots,\X^{(m)})=\prod_{q=1}^m\X^{(q)}$,
$\F=\F_\X$ and $\W=\F\F^*$.
Denote by $\mathcal G_n(x)$ the empirical spectral distribution function 
of matrix $\W$.
Then we~have the following result, see also M\"uller \cite{Mueller}
and Burda, Janik, Waclaw \cite{Burda1} for the Gaussian case 
and Alexeev, G\"otze, Tikhomirov \cite{AGT:10a}, \cite{AGT:10b}, \cite{AGT:12}
and Tikhomirov \cite{Tikh:12} for the general case.

\begin{thm} \label{productsthm}
Assume that the random variables $X_{jk}^{(q)}$, for $q=1,\ldots,m$ and $j=1,\ldots,n_{q-1}$; 
$k=1,\ldots,n_q$, satisfy the Lindeberg condition
\eqref{lind}. Then 
\begin{equation}\notag
 \lim_{n\to\infty}\mathcal G_n(x)=G^{(m)}(x) \quad\text{in probability},
\end{equation}
where the Stieltjes transform 
$s(z)=\int_{-\infty}^{\infty}\frac1{x-z}dG^{(m)}(x)$ 
of the distribution function $G^{(m)}(x)$ is determined by the equation
\begin{equation}
 1+zs(z)-s(z)\prod_{q=1}^m(1-y_q-y_qzs(z))=0.
\end{equation}
\end{thm}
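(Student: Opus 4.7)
The plan is to use Theorem \ref{singularvalueuniversality} to reduce to the Gaussian case and then compute the limit in the Gaussian setting using the multiplicativity of the $S$-transform under free multiplicative convolution, finally translating the resulting $S$-transform identity into the fixed-point equation for the Stieltjes transform.

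First I would verify the hypotheses of Theorem \ref{singularvalueuniversality} with $\mathbf B = \mathbf 0$ and $\mathbb F(\X^{(1)},\ldots,\X^{(m)}) = \prod_{q=1}^m \X^{(q)}$. The rank condition \eqref{rank} follows from the telescoping identity
\begin{equation*}
\prod_{q=1}^m \mathbf A^{(q)} - \prod_{q=1}^m \mathbf B^{(q)} = \sum_{q=1}^m \Bigl(\prod_{r<q}\mathbf B^{(r)}\Bigr)\bigl(\mathbf A^{(q)}-\mathbf B^{(q)}\bigr)\Bigl(\prod_{r>q}\mathbf A^{(r)}\Bigr)
\end{equation*}
combined with subadditivity of rank, giving $C(\mathbb F)=1$. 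For the smoothness conditions \eqref{mat3}--\eqref{mat32a}, the partial derivatives $\partial\V/\partial Z_{jk}^{(q)}$ are block matrices of rank at most two, determined by a single basis vector sandwiched between the partial products $\prod_{r<q}\Z^{(r)}$ and $\prod_{r>q}\Z^{(r)}$ (normalized by $n_q^{-1/2}$). Consequently, $g_{jk}^{(q)}$ and $\widehat g_{jk}^{(q)}$ and their derivatives up to order two reduce to traces of $\R^2,\R^3,\R^4$ against low-rank matrices whose norms are polynomial in the operator norms of these partial products. Using $\|\R\|\le v^{-1}$ together with operator-norm control of the truncated matrices (valid on an event of probability $1-o(1)$, which can be enforced by an additional truncation if needed), one obtains constants $A_0,A_1,A_2$ depending only on $v$ and $m$.

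Once universality is in place, it suffices to compute the limit for the Gaussian matrices $\Y^{(q)}$. By Theorem \ref{MarPas} and Remark \ref{MP}, each empirical spectrum of $\Y^{(q)}(\Y^{(q)})^*$ converges to the Marchenko--Pastur distribution with parameter $y_q$ and $S$-transform $S_q(z)=(1+y_q z)^{-1}$. Applying Proposition \ref{prop:asympfree} together with Lemma~\ref{rectang} iteratively (or, equivalently, using bi-unitary invariance to realize the independent Ginibre matrices as freely independent limits), the blocks $\Y^{(q)}(\Y^{(q)})^*$ are asymptotically free in a suitable sense, so that by Remark \ref{rem:application-RMT} and the multiplicativity of the $S$-transform the limiting distribution of $\F_\Y \F_\Y^* = \Y^{(1)}\cdots\Y^{(m)}(\Y^{(m)})^*\cdots(\Y^{(1)})^*$ has $S$-transform
\begin{equation*}
S(z) = \prod_{q=1}^m \frac{1}{1+y_q z}.
\end{equation*}

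The equation for the Stieltjes transform $s(z)$ then follows from elementary algebra. From $S(z)=\tfrac{1+z}{z}M^{-1}(z)$ I obtain $M^{-1}(z) = z\bigl((1+z)\prod_q(1+y_q z)\bigr)^{-1}$. The identity $s(z) = -\tfrac{1}{z}(1+M(1/z))$ shows that the substitution $w := -1-zs(z)$ corresponds to $M^{-1}(w)=1/z$, which gives $zw=(1+w)\prod_q(1+y_q w)$. Substituting back $w = -1-zs(z)$ and dividing by $-z$ produces
\begin{equation*}
1 + zs(z) - s(z)\prod_{q=1}^m\bigl(1-y_q - y_q z s(z)\bigr) = 0,
\end{equation*}
as required. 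The convergence in probability is then a standard consequence of the variance estimate (Lemma \ref{variance}) together with weak convergence of the mean Stieltjes transforms implied by Theorem \ref{singularvalueuniversality}.

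The hardest part will be the verification of the conditional $L^\infty$ bounds \eqref{mat3}--\eqref{mat32a} for the product map: structurally the derivatives are explicit, but uniformly bounding conditional expectations of expressions involving operator norms of partial products of truncated matrices is delicate, since after truncation these norms are bounded only with high probability. This is circumvented either by a refined truncation procedure that yields deterministic operator-norm bounds of order $O(1)$ on the truncated factors, or by localizing the comparison of Stieltjes transforms to the high-probability event on which these bounds hold and arguing that the complement contributes $o(1)$.
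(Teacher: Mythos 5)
Your overall strategy matches the paper's: apply Theorem \ref{singularvalueuniversality} with $\mathbf B=\mathbf 0$ to reduce to the Gaussian case, use asymptotic freeness and $S$-transform multiplicativity to identify $S(z)=\prod_q(1+y_qz)^{-1}$, and translate this into the fixed-point equation for the Stieltjes transform. Your rank-condition telescoping, your $S$-transform computation via Lemma \ref{rectang}, and your algebraic derivation of $1+zs(z)-s(z)\prod_q(1-y_q-y_qzs(z))=0$ are all correct (and the last step is even more explicit than the paper, which just cites [AGT:10b]).

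The gap is in your verification of the conditional $L^\infty$ bounds \eqref{mat3}--\eqref{mat32a}, and you yourself flag it as the hardest part. You propose to bound the traces by operator norms of partial products of the truncated matrices, and since these are bounded only with high probability, you suggest either a further truncation of operator norms or localizing the Stieltjes-transform comparison to a high-probability event. Neither of these is what the paper does, and neither is clearly viable: Theorem \ref{singularvalueuniversality} requires the bound $\|\E\{g_{jk}^{(q)}(\theta)\mid X_{jk}^{(q)},Y_{jk}^{(q)}\}\|_\infty\le A_0$ to hold for every realization of the conditioning variables, so a high-probability localization would require modifying the theorem itself, and an operator-norm truncation would break the entrywise-independence structure the whole Lindeberg argument rests on. The paper's route avoids operator norms of products entirely. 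Expanding $\partial\V/\partial Z_{jk}^{(q)}$ into rank-one terms (Eq.~\eqref{rhs1}) and applying Cauchy--Schwarz/H\"older to the trace in \eqref{rhs}, one peels off $\|\J\R^2\|\le v^{-2}$ deterministically, leaving products of Euclidean norms $\|\V_{1,q-1}\mathbf e_j^{(q)}\|_2$ and $\|(\mathbf e_k^{(q+1)})^T\V_{q+1,m}\|_2$ of individual columns/rows of the partial products. The crucial observation you are missing is that, because of the block-diagonal structure of $\mathbf H^{(q)}$ and the choice of which block the basis vector lands in, these column/row vectors are \emph{stochastically independent of $\Z^{(q)}$}, so the conditional expectation given $X_{jk}^{(q)},Y_{jk}^{(q)}$ becomes an unconditional one, and their second moments are bounded by $O(1)$ (Lemma 5.1 in the appendix of [AGT:10b]) without any operator-norm control. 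The same mechanism handles the first and second derivatives (using $\partial^2\V/\partial(Z_{jk}^{(q)})^2=\mathbf O$), yielding the unconditional constants $A_0\sim v^{-2}$, $A_1\sim v^{-3}$, $A_2\sim v^{-4}$ required by Theorem \ref{singularvalueuniversality}.
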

\begin{proof}
For simplicity we shall assume that all r.v.'s $X^{(q)}_{jk}$ are real. 
Let $\Y^{(1)},\hdots,\Y^{(q)}$ be Gaussian matrices as in Section \ref{sec:notation}.

We now check that the function $\mathbb F$ satisfies conditions \eqref{rank}, 
\eqref{mat3}, \eqref{mat31} and \eqref{mat32}.
Condition \eqref{rank} follows from the obvious inequality
\begin{equation}\notag
 \text{\rm rank}\{\prod_{q=1}^m\mathbf A^{(q)}-\prod_{q=1}^m\mathbf B^{(q)}\}
 \le \sum_{q=1}^m\text{\rm rank}\{\mathbf A^{(q)}-\mathbf B^{(q)}\}.
\end{equation}
Let $\Z^{(1)},\hdots,\Z^{(q)}$ be defined as in \eqref{reprsimple}.
Furthermore, introduce the matrices
\begin{align}
 \mathbf H^{(q)}&=\begin{bmatrix}&\Z^{(q)}&\mathbf O\\&
 \mathbf O&(\Z^{(m-q+1)})^*\end{bmatrix}, \quad
\V_{a,b}=\prod_{q=a}^b\mathbf H^{(q)},\quad 
\notag\\& 
\J(\alpha)=\begin{bmatrix}
&\mathbf O&-\alpha\I_{n_0}\\&-\overline\alpha\I_{n_m}&\mathbf O\end{bmatrix},\quad
\J=\J(-1). \notag
\end{align}
Using these notations we have
\begin{equation}\notag
 \V=\V(\varphi)=\V_{1,m}\J,
\end{equation}
and
\begin{equation}\notag
 \frac{\partial \V_{1m}}{\partial Z_{jk}^{(q)}}
 =\frac{1}{1+\delta_{q,m-q+1}}
\bigg( \V_{1,q-1} \frac{\partial \mathbf H^{(q)}}{\partial Z_{jk}^{(q)}}\V_{q+1,m}
 +\V_{1,m-q}\frac{\partial\mathbf H^{(m-q+1)}}{\partial Z_{jk}^{(q)}}\V_{m-q+2,m} \bigg) \,.
\end{equation}

For $q=1,\hdots,m+1$,
let $\mathbf e_j^{(q)}$, $j=1,\ldots, n_{q-1}+n_{m-q+1}$, denote 
the standard ortho\-gonal basis in $\mathbb R^{n_{q-1}+n_{m-q+1}}$.
Then
\begin{equation}\label{rhs1}
 \frac{\partial \V_{1m}}{\partial Z_{jk}^{(q)}}
 =\V_{1,q-1}\mathbf e_j^{(q)}(\mathbf e_k^{(q+1)})^T\V_{q+1,m}
 +\V_{1,m-q}\mathbf e_{k+n_{m-q}}^{(m-q+1)}(\mathbf e_{j+n_{m-q+1}}^{(m-q+2)})^T \V_{m-q+2,m},
\end{equation}
for $j=1,\ldots,n_{q-1}; k=1,\ldots,n_{q}$.
From here it follows that
\begin{equation}\label{rhs}
 g_{jk}^{(q)}=\Tr\left(\left(\V_{1,q-1}\mathbf e_j^{(q)}(\mathbf e_k^{(q+1)})^T\V_{q+1,m}
+\V_{1,m-q}\mathbf e_{k+n_{m-q}}^{(m-q+1)}(\mathbf e_{j+n_{m-q+1}}^{(m-q+2)})^T \V_{m-q+2,m}\right)\J\R^2\right).
\end{equation}
Consider for instance the first term in the right hand side of \eqref{rhs}. We have
\begin{equation}\notag
     \big|\Tr\big(\V_{1,q-1}\mathbf e_j^{(q)}(\mathbf e_k^{(q+1)})^T\V_{q+1,m}\J\R^2\big)\big|
 \le v^{-2}\|\V_{1,q-1}\mathbf e_j^{(q)}\|_2\|(\mathbf e_k^{(q+1)})^T\V_{q+1,m}\|_2.
\end{equation}
%
%
%
Note that for each $q=1,\hdots,m$, 
the vector $\V_{1,q-1} \mathbf e_j^{(q)}$ is independent of the matrix $\Z^{(q)}$
due to the block structure of the matrices $\mathbf H^{(q)}$.
From here it follows that
\begin{equation}\notag
 \E\{\|\V_{1,q-1}\mathbf e_j^{(q)}\|_2^2\big|X_{jk}^{(q)},Y_{jk}^{(q)}\}
 =\E\|\V_{1,q-1}\mathbf e_j^{(q)}\|_2^2,
\end{equation}
and by Lemma 5.1 in the Appendix of \cite{AGT:10b}, we have
\begin{equation}\label{rhs2}
 \E\{\|\V_{1,q-1}\mathbf e_j^{(q)}\|_2^2\big|X_{jk}^{(q)},Y_{jk}^{(q)}\}\le C.
\end{equation}
Similarly we get
\begin{equation}\notag
 \E\{(\|\mathbf e_k^{(q+1)})^T\V_{q+1,m}\|_2^2\big|X_{jk}^{(q)},Y_{jk}^{(q)}\}\le C,
\end{equation}
$q=1,\hdots,m$.
%
%
Combining these estimates, it follows that
\begin{equation}\label{deriv001}
 \E\{|g_{jk}^{(q)}|\big|X_{jk}^{(q)},Y_{jk}^{(q)}\}\le Cv^{-2}.
\end{equation}

Furthermore, it is straightforward to check that
\begin{equation}\notag
 \frac{\partial^2\V}{\partial(Z_{jk}^{(q)})^2}=\mathbf O.
\end{equation}
This implies that
\begin{equation}\label{deriv10}
 \frac{\partial g_{jk}^{(q)}}{\partial Z_{jk}^{(q)}}=
-\Tr \frac{\partial\V}{\partial Z_{jk}^{(q)}}\R^2\frac{\partial\V}
{\partial Z_{jk}^{(q)}}\R
-\Tr \frac{\partial\V}{\partial Z_{jk}^{(q)}}\R\frac{\partial\V}
{\partial Z_{jk}^{(q)}}\R^2.
\end{equation}
Using equalities \eqref{deriv10}, \eqref{rhs1}, \eqref{rhs2} 
and Lemma 5.1 in the Appendix of \cite{AGT:10b}, we get
\begin{equation}\notag
 \E\bigg\{\bigg|\frac{\partial g_{jk}^{(q)}}{\partial Z_{jk}^{(q)}}\bigg|\bigg|X_{jk}^{(q)},Y_{jk}^{(q)}\bigg\}
 \le Cv^{-3}.
\end{equation}
Furthermore,
\begin{align}
 \frac{\partial^2 g_{jk}^{(q)}}{\partial (Z_{jk}^{(q)})^2}&=
2\Tr\frac{\partial\V}{\partial Z_{jk}^{(q)}}\R^2
\frac{\partial\V}{\partial Z_{jk}^{(q)}}\R\frac{\partial\V}
{\partial Z_{jk}^{(q)}}\R\notag\\&\qquad+
2\Tr\frac{\partial\V}{\partial Z_{jk}^{(q)}}\R
\frac{\partial\V}{\partial Z_{jk}^{(q)}}\R^2\frac{\partial\V}
{\partial Z_{jk}^{(q)}}\R+
2\Tr\frac{\partial\V}{\partial Z_{jk}^{(q)}}\R
\frac{\partial\V}{\partial Z_{jk}^{(q)}}\R\frac{\partial\V}
{\partial Z_{jk}^{(q)}}\R^2.
\label{deriv11}
\end{align}
Using equalities \eqref{deriv11}, \eqref{rhs1}, \eqref{rhs2} 
and Lemma 5.2 in the Appendix of \cite{AGT:10b}, it is straightforward 
to prove that
\begin{equation}\label{deriv100+}
 \E\bigg\{\bigg|\frac{\partial^2 g_{jk}^{(q)}}{\partial (Z_{jk}^{(q)})^2}\bigg|
 \bigg|X_{jk}^{(q)},Y_{jk}^{(q)}\bigg\}\le Cv^{-4}.
\end{equation}
 Inequalities \eqref{deriv001}, \eqref{deriv10}, \eqref{deriv100+} imply that conditions 
 \eqref{mat3}, \eqref{mat31}, \eqref{mat32} hold.
Thus, from Theorem \ref{singularvalueuniversality}, it follows that the limit distribution of the singular values 
of the matrices $\F_{\X}$ is the same as the limit distribution 
of the singular values of the matrices $\F_{\Y}$.
For the Gaussian case we may prove that the random matrices 
$(\prod_{q=1}^{l-1}\Y^{(q)})^*(\prod_{q=1}^{l-1}\Y^{(q)})$ and 
$\Y^{(l)}{\Y^{(l)}}^*$ are asymptotically free for any $l=1,\ldots,m$. 
For details see \cite{AGT:10b}, Lemma~4.1. From here and Lemma \ref{rectang} 
it~follows that the $S$-transform of the  distribution function $G^{(m)}(x)$ 
is given by
\begin{equation}\label{strans1}
S(z)=\prod_{q=1}^m\frac1{1+y_qz}.
\end{equation}
The last relation implies  that
\begin{equation}\notag
1+zs(z)-s(z)\prod_{q=1}^m(1-y_q-y_qzs(z))=0.
\end{equation}
For details, see Equations (4.9) and (4.13) in \cite{AGT:10b}.
Thus Theorem \ref{productsthm} is proved.
\end{proof}
\begin{cor}
Assume that the conditions of Theorem \ref{productsthm} hold and $y_1=\cdots=y_m=1$. Then
\begin{equation}\notag
 \lim _{n\to\infty}\mathcal G_n(x)=G(x) \text{ in probability},
\end{equation}
where the Stieltjes transform of $G(x)$ is determined by the equation
\begin{equation}\notag
1+zs(z)+(-1)^{m+1}z^ms^{m+1}(z)=0.
\end{equation}
\end{cor}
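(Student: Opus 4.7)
The plan is essentially immediate: the corollary is a direct specialization of the equation derived in Theorem \ref{productsthm}, so the main work is a substitution and a bit of algebra, not a new argument.

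First I would invoke Theorem \ref{productsthm} to conclude that $\mathcal G_n(x) \to G(x)$ in probability (this uses only the Lindeberg condition, which is inherited from the hypothesis of the corollary), and that the Stieltjes transform $s(z)$ of the limit $G$ satisfies
\begin{equation*}
 1+zs(z)-s(z)\prod_{q=1}^m(1-y_q-y_q z s(z))=0.
\end{equation*}

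Next I would specialize to $y_1=\cdots=y_m=1$. For each factor in the product this gives $1-y_q-y_q z s(z) = -z s(z)$, so
\begin{equation*}
\prod_{q=1}^m(1-y_q-y_q z s(z)) = (-z s(z))^m = (-1)^m z^m s^m(z).
\end{equation*}
Substituting back yields
\begin{equation*}
1+zs(z) - s(z)\cdot (-1)^m z^m s^m(z) = 1 + z s(z) + (-1)^{m+1} z^m s^{m+1}(z) = 0,
\end{equation*}
which is the claimed equation. Since there is no genuine obstacle here beyond the plug-in, I do not anticipate any difficulty; the only thing to note is that Theorem \ref{productsthm} is already stated under the assumption $y_q \in (0,1]$, so $y_q = 1$ is an admissible boundary case and no extra argument is required to justify taking this limit inside the equation.
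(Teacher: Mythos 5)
Your proposal is correct and is exactly the (implicit) argument behind the corollary in the paper: substitute $y_q=1$ in the defining equation from Theorem~\ref{productsthm}, use $1-y_q-y_qzs(z)=-zs(z)$, and simplify the resulting sign. The convergence statement is inherited verbatim from the theorem, so nothing further is needed.
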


\subsubsection{Powers of Random Matrices}\label{powers} 
Consider an $n\times n$ random matrix $\X=\frac1{\sqrt n}(X_{jk})$ 
with independent entries $X_{jk}$ 
as in \eqref{eq:secondmomentstructure-8a} or \eqref{eq:secondmomentstructure-8b}.
We shall assume that the Lindeberg condition \eqref{lind} holds.
For fixed $m \geq 1$, consider the function $\mathbb F(\X)=\X^m$.
Let $\F = \F_\X$ and $\W = \F \F^*$, and denote by $\mathcal G_n(x)$ 
the distribution function of the eigenvalues of the matrix $\W$.
Then we~have the following result, cf.\@ Alexeev, G\"otze and Tikhomirov 
\cite{AGT:10}, \cite{AGT:10a}.

\begin{thm}\label{productsingular} 
Assume that the random variables $X_{jk}$ satisfy the Lindeberg condition \eqref{lind}. 
Then
 \begin{equation}\notag
  \lim_{n\to\infty}\mathcal G_n(x)=G^{(m)}(x) \quad\text{in probability},
 \end{equation}
 where $G^{(m)}(x)$ is defined by its Stieltjes transform $s(z)$,
 which satisfies the equation
 \begin{equation}\label{fuss1}
  1+zs(z)+(-1)^{m+1}z^ms^{m+1}(z)=0,
 \end{equation}
 or its moments
 \begin{equation}\label{fuss}
  M_k=\int_0^{\infty} x^kdG^{(m)}(x)=\frac1{mk+1}\binom {(m+1)k}{k}.
 \end{equation}
\end{thm}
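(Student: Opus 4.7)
The plan is to follow the template of the proof of Theorem \ref{productsthm}: first establish universality via Theorem \ref{singularvalueuniversality} to reduce to a Gaussian matrix, and then identify the limiting singular value distribution of $\Y^m$ for a square Ginibre matrix $\Y$ via bi-unitary invariance and the $S$-transform calculus. In the notation of Theorem \ref{singularvalueuniversality} the formal parameter is $1$ here (a single random matrix), while $\mathbb F(\X)=\X^m$ for the fixed exponent $m\ge 1$.

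For the rank condition \eqref{rank}, the telescoping identity
$$
\mathbf A^m - \mathbf B^m \;=\; \sum_{q=1}^{m} \mathbf A^{q-1} (\mathbf A - \mathbf B) \mathbf B^{m-q}
$$
immediately gives $\mathrm{rank}(\mathbf A^m-\mathbf B^m)\le m\cdot\mathrm{rank}(\mathbf A-\mathbf B)$, so $C(\mathbb F)=m$. To verify \eqref{mat3}--\eqref{mat32a}, the derivative $\partial\V/\partial Z_{jk}$ for $\mathbb F(\Z)=\Z^m$ decomposes as a sum of $m$ rank-two contributions in the Hermitized block representation, one for each position at which $Z_{jk}$ occurs inside $\Z^m$; each contribution has the shape $\Z^{a}(\cdot)\Z^{b}$ with $a+b=m-1$, framing a rank-one perturbation. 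Multiplying by $\R^2$, taking the trace, and using $|\Tr(\mathbf u\mathbf v^T\R^2)|\le v^{-2}\|\mathbf u\|_2\|\mathbf v\|_2$ together with the conditional Hilbert--Schmidt estimate $\E\{\|\Z^{a}\mathbf e_j\|_2^{2} \mid X_{jk},Y_{jk}\}\le C$ from Lemma 5.1 in the Appendix of \cite{AGT:10b}, one obtains $\E\{|g_{jk}|\mid X_{jk},Y_{jk}\}\le Cv^{-2}$. The first and second partial derivatives of $g_{jk}$ produce analogous sums of traces of products of two and three copies of $\partial\V/\partial Z_{jk}$ with resolvents and are controlled by $v^{-3}$ and $v^{-4}$, respectively, via the same estimates (cf.\@ Lemma 5.2 in \cite{AGT:10b}).

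Having reduced to the Gaussian case, I would identify the limit as follows. Writing the SVD $\Y=\U\mathbf D\V^*$, where $\U,\V$ are independent Haar unitaries (by the bi-unitary invariance of the Ginibre ensemble) and $\mathbf D$ is the diagonal matrix of singular values, the singular values of $\Y^m$ coincide with those of $\mathbf D(\mathbf W\mathbf D)^{m-1}$, with $\mathbf W:=\V^*\U$ again a Haar unitary. By Voiculescu's asymptotic freeness theorem for Haar unitaries (as used in the proof of Proposition \ref{prop:asympfree} and in Lemma 4.1 of \cite{AGT:10b}), the mixed moments of $\mathbf D$ and $\mathbf W$ factor asymptotically in the free sense, so that $\mathbf D(\mathbf W\mathbf D)^{m-1}\bigl(\mathbf D(\mathbf W\mathbf D)^{m-1}\bigr)^{*}$ has the same limiting squared singular value distribution as $\bigl(\prod_{q=1}^{m}\Y^{(q)}\bigr)\bigl(\prod_{q=1}^{m}\Y^{(q)}\bigr)^{*}$ for $m$ independent square Ginibre matrices $\Y^{(1)},\ldots,\Y^{(m)}$. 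Applying Theorem \ref{productsthm} with $y_1=\cdots=y_m=1$ then yields \eqref{fuss1}, and \eqref{fuss} follows by Lagrange inversion applied to the functional equation $M(w)=w(1+M(w))^{m+1}$ satisfied by the moment generating function $M(w)=\sum_{k\ge 1}M_k w^{k}$ that one extracts from \eqref{fuss1}.

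The main obstacle is in verifying the derivative bounds \eqref{mat3}--\eqref{mat32a}: in the proof of Theorem \ref{productsthm} the second derivative $\partial^2\V/\partial (Z_{jk}^{(q)})^2$ vanished identically because each matrix entry occurred only once in the product $\prod_{q}\Z^{(q)}$, whereas here each $Z_{jk}$ appears at $m$ positions in $\Z^{m}$, so the second derivative contributes $O(m^2)$ cross-terms indexed by pairs of distinct positions, with $\Z$ now occurring on \emph{both} sides of every perturbation. Each individual term is still controlled by the same trace and operator-norm estimates with the correct inverse powers of $v$, but the combinatorial bookkeeping of these cross contributions, and the need to reuse the conditional Hilbert--Schmidt bounds on vectors of the form $\Z^{a}\mathbf e_j$ and $\mathbf e_k^{T}\Z^{b}$ simultaneously, is where the proof departs most visibly from the independent-product case.
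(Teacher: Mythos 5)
Your strategy mirrors the paper's exactly at the structural level: establish universality via Theorem \ref{singularvalueuniversality} (rank condition with $C(\mathbb F)=m$, conditional bounds on $g_{jk}$ and its derivatives), then identify the Gaussian limit by free-probabilistic methods. For the latter the paper simply cites the computation in Section 4 of \cite{AGT:12}, while you supply a self-contained reduction via the SVD $\Y=\U\mathbf D\V^*$: the singular values of $\Y^m$ are those of $\mathbf D(\mathbf W\mathbf D)^{m-1}$ with $\mathbf W=\V^*\U$ Haar unitary and free from $\mathbf D$, so the limiting squared singular value law is $\mu_{\Y\Y^*}^{\boxtimes m}$, the same as for $m$ independent Ginibre factors. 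This is precisely the heuristic the paper itself records in the remark following Lemma \ref{rectang}, so in substance you are following the same route; you make the reduction explicit and feed it through Theorem \ref{productsthm} (with $y_1=\cdots=y_m=1$) and Lagrange inversion for \eqref{fuss}, which is a clean way to fill the gap the paper leaves to an external reference.

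One technical imprecision worth flagging. You invoke the conditional Hilbert--Schmidt bound $\E\{\|\Z^a\mathbf e_j\|_2^2\mid X_{jk},Y_{jk}\}\le C$ ``from Lemma 5.1 in the Appendix of \cite{AGT:10b}''. That lemma applies in the independent-product case, where $\V_{1,q-1}$ is genuinely independent of the conditioned entries $X_{jk}^{(q)},Y_{jk}^{(q)}$, so the conditional expectation collapses to an unconditional one. For powers, $\Z^a$ (with $a\ge1$) does depend on $Z_{jk}$, and the conditional expectation does not collapse. The paper handles this precisely by the expansion
\begin{equation*}
\mathbf H^q=(\mathbf H^{(j,k)})^q+\sum_{s=1}^{q}\Big(\tfrac{Z_{jk}}{\sqrt n}\Big)^{s}\sum_{\substack{m_1,\ldots,m_{q-s}\ge0\\ m_1+\cdots+m_{q-s}\le q-s}}(\mathbf H^{(j,k)})^{m_1}\mathbf\Delta_{jk}\cdots(\mathbf H^{(j,k)})^{m_{q-s}}
\end{equation*}
(Equation \eqref{power1}), which isolates the dependence on the single entry $Z_{jk}$ and reduces matters to bounds on the matrix $\mathbf H^{(j,k)}$, for which Lemma 3 of \cite{TT:13} (not \cite{AGT:10b}) supplies the required estimates. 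You correctly identify this dependence as the ``main obstacle'' in your closing paragraph, but the conditional estimate as stated earlier is not self-justifying and must be routed through that expansion.
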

\begin{rem}
 The numbers $M_k$ appearing in \eqref{fuss} are called Fuss--Catalan numbers.
\end{rem}
\begin{proof}
Again, for simplicity we shall consider only the case that the r.v.'s $X_{jk}$ are real.
Let $\Y$ be a Gaussian matrix as in Section \ref{sec:notation}.

We start by noting that the rank condition \eqref{rank} holds with constant $C(\mathbb F)=m$. 
In~fact,
\begin{equation}\notag
 \text{\rm rank}\{\mathbf A^m-\mathbf B^m\}\le m \,\text{\rm rank} \{\mathbf A-\mathbf B\}.
\end{equation}
In this case we have
\begin{equation}\notag
 g_{jk}=\sum_{q=1}^m\Tr\Big(\mathbf H^{q-1}\frac{\partial \mathbf H}{\partial X_{jk}}
 \mathbf H^{m-q}\J\R^2\Big),
\end{equation}
where $\mathbf H=\begin{bmatrix}&\Z&\mathbf O&\\&\mathbf O&\Z^*&\end{bmatrix}$
and $\J = \J(-1)$. Clearly,
\begin{equation}\notag
 \frac{\partial \mathbf H}{\partial X_{jk}}=\mathbf e_{j}\mathbf e_k^T+\mathbf e_{k+n}\mathbf e_{j+n}^T=:\mathbf \Delta_{jk}.
\end{equation}
Using this notation we may write
\begin{align*}
 g_{jk}&=\sum_{q=1}^m\Tr\Big(\mathbf \Delta_{jk}\mathbf H^{m-q}\J\R^2\mathbf H^{q-1}\Big) \\
       &=\sum_{q=1}^m\mathbf e_k^T\mathbf H^{m-q}\J\R^2\mathbf H^{q-1}\mathbf e_{j}
        +\sum_{q=1}^m\mathbf e_{j+n}^T\mathbf H^{m-q}\J\R^2\mathbf H^{q-1}\mathbf e_{k+n}.
\end{align*}
Consider for instance the first sum on the right-hand side.
Applying H\"older's inequality, we get
\begin{equation}\notag
 |g_{jk}|\le \sum_{q=1}^m\|\mathbf e_k^T\mathbf H^{m-q}\|_2\|\J\R^2
 \mathbf H^{q-1}\mathbf e_j\|_2
\le v^{-2}\|\mathbf e_k^T\mathbf H^{m-q}\|_2\|\mathbf H^{q-1}\mathbf e_j\|_2.
\end{equation}
Furthermore, we note that 
$$
\mathbf H=\mathbf H^{(j,k)}+\frac{Z_{jk}}{\sqrt{n}}\mathbf \Delta_{jk},
$$
where $\mathbf H^{(j,k)}$ is obtained from $\mathbf H$ 
by replacing the entry $Z_{jk}$ with zero.
Note that, for $q\ge2$, 
$$
\mathbf \Delta_{jk}^q
= 
\begin{cases}
	\mathbf \Delta_{jk},&\text{ for }j=k,\\ 
	\mathbf O,&\text{ for }j\ne k.
\end{cases}
$$
We shall use the representation 
\begin{equation}\label{power1}
 \mathbf H^q=
(\mathbf H^{(j,k)})^q+\sum_{s=1}^q
 \left(\frac{Z_{jk}}{\sqrt n}\right)^s\sum\limits_{m_1,\ldots,m_{q-s}\ge0:\atop
              m_1+\cdots+m_{q-s}\le q-s}(\mathbf H^{(j,k)})^{m_1}
              \mathbf \Delta_{jk}\cdots
(\mathbf H^{(j,k)})^{m_{q-s}}. 
\end{equation}
By the independence of the matrices $\mathbf H^{(j,k)}$ and the random variables $Z_{jk}$, we have
\begin{align}
 |\E\{\|\mathbf e_k^T\mathbf H^{m-q}\|_2^2|X_{jk}, Y_{jk}\}|&\le
C_1\E\|\mathbf e_k^T(\mathbf H^{(j,k)})^{m-q}\|_2^2\notag\\
+C_2\sum_{s=1}^{m-q}\sum\limits_{m_1,\ldots,m_{m-q-s}\ge0:\atop
              m_1+\cdots+m_{q-s}\le m-q-s}
              &\E\|\mathbf e_k^T(\mathbf H^{(j,k)})^{m_1}\mathbf \Delta_{jk}\cdots
(\mathbf H^{(j,k)})^{m_{m-q-s}}\|_2^2,\notag
\end{align}
for some absolute positive constants $C_1,C_2$. Similarly we have
\begin{align}
 |\E\{\|\mathbf H^{q-1}\mathbf e_j\|_2^2|X_{jk}, Y_{jk}\}|
 &\le
C_1\E\|(\mathbf H^{(j,k)})^{q-1}\mathbf e_j\|_2^2\notag\\
+C_2\sum_{s=1}^{q-1}\sum\limits_{m_1,\ldots,m_{q-1-s}\ge0:\atop
              m_1+\cdots+m_{q-1-s}\le q-1-s}&\E\|(\mathbf H^{(j,k)})^{m_1}
              \mathbf \Delta_{jk}\cdots
(\mathbf H^{(j,k)})^{m_{q-1-s}}\mathbf e_j\|_2^2.\notag
\end{align}
By Lemma 3 in \cite{TT:13}, we get
\begin{equation}\notag
 \E\{|g_{jk}||X_{jk},Y_{jk}\}\le C,
\end{equation}
for some positive constant $C>0$.
(Let us note here that the moment conditions here are a bit different from those in \cite{TT:13},
but using \eqref{eq:momentsAfterTruncation-11} -- \eqref{eq:momentsAfterTruncation-14},
it is easy to see that the conclusion still holds.)
Furthermore,
\begin{align}\label{deriv1}
 \frac{\partial g_{jk}}{\partial Z_{jk}}&=
-\Tr\frac{\partial \mathbf H^m}{\partial Z_{jk}}\J\R^2\frac{\partial \mathbf H^m}{\partial Z_{jk}}\J\R
-\Tr\frac{\partial \mathbf H^m}{\partial Z_{jk}}\J\R\frac{\partial \mathbf H^m}{\partial Z_{jk}}\J\R^2
+\Tr\frac{\partial^2 \mathbf H^m}{\partial Z_{jk}^2}\J\R^2.
\end{align}
We have
\begin{align}
\frac{\partial^2 \mathbf H^m}{\partial Z_{jk}^2} 
&=\sum_{q=1}^m\sum_{s=1}^{q-1}\mathbf H^{s-1}\frac{\partial \mathbf H}{\partial Z_{jk}}
\mathbf H^{q-1-s}\frac{\partial \mathbf H}{\partial Z_{jk}} \mathbf H^{m-q}+
\sum_{q=1}^m\sum_{s=1}^{m-q}\mathbf H^{q-1}\frac{\partial \mathbf H}{\partial Z_{jk}}
\mathbf H^{s-1}\frac{\partial \mathbf H}{\partial Z_{jk}} \mathbf H^{m-q-s}\notag\\
&=\sum_{q=1}^m\sum_{s=1}^{q-1}\mathbf H^{s-1}\mathbf \Delta_{jk}
\mathbf H^{q-1-s} \mathbf \Delta_{jk}\mathbf H^{m-q}+
\sum_{q=1}^m\sum_{s=1}^{m-q}\mathbf H^{q-1}\mathbf \Delta_{jk}
\mathbf H^{s-1}\mathbf \Delta_{jk} \mathbf H^{m-q-s}.\notag
\end{align}
Thus, we may rewrite the equality \eqref{deriv1} in the form
\begin{align}\label{deriv1*}
 \frac{\partial g_{jk}}{\partial Z_{jk}}=&
-\sum_{q=1}^m\sum_{r=1}^m\Tr\mathbf H^{q-1}\mathbf \Delta_{jk}\mathbf H^{m-q}\J\R^2
\mathbf H^{r-1}\mathbf \Delta_{jk}\mathbf H^{m-r}\J\R\notag\\&
-\sum_{q=1}^m\sum_{r=1}^m\Tr\mathbf H^{q-1}\mathbf \Delta_{jk}\mathbf H^{m-q}\J\R
\mathbf H^{r-1}\mathbf \Delta_{jk}\mathbf H^{m-r}\J\R^2\notag\\&+
\sum_{q=1}^m\sum_{s=1}^{q-1}\Tr\mathbf H^{s-1}\mathbf \Delta_{jk}
\mathbf H^{q-1-s} \mathbf \Delta_{jk}\mathbf H^{m-q}\J\R^2\notag\\&+
\sum_{q=1}^m\sum_{s=1}^{m-q}\Tr\mathbf H^{q-1}\mathbf \Delta_{jk}
\mathbf H^{s-1}\mathbf \Delta_{jk} \mathbf H^{m-q-s}\J\R^2.
\end{align}
All summands on the r.h.s of \eqref{deriv1*} may be bounded similarly.
For instance,
\begin{align*}
 &\E\big\{|\Tr \mathbf H^{q-1}\mathbf \Delta_{jk}\mathbf H^{m-q}\J
\R^2\mathbf H^{r-1}\mathbf \Delta_{jk}\mathbf H^{m-r}\J\R|\big|X_{jk},Y_{jk}\big\}
\\&\qquad \qquad \qquad \le 
v^{-3}\sum_{t,u,v,w}\E\big\{\|\mathbf e_t^T\mathbf H^{m-q}\|_2\|\mathbf H^{r-1} \mathbf e_u\|_2 
	\|\mathbf e_v^T\mathbf H_{m-r}\|_2\|\mathbf H^{q-1}\mathbf e_{w}\|_2\big|X_{jk},Y_{jk}\big\},
\end{align*}
where the sum is taken over all $t,u,v,w$ from the set $\{j,k,j+n,k+n\}$.
Applying H\"older's inequality, the representation \eqref{power1} 
and Lemmas 5.2 and 5.4 in \cite{AGT:12}, we get
\begin{equation}\notag
 \E\big\{|\Tr \mathbf H^{(q-1)} \mathbf \Delta_{jk}\mathbf H^{m-q}\J
\R^2\mathbf H^{r-1}\mathbf \Delta_{jk}\mathbf H^{m-r}\J\R|\big|X_{jk},Y_{jk}\big\}\le C.
\end{equation}
Thus we can prove that
\begin{equation}\label{deriv2+}
 \E\Big\{\left|\frac{\partial g_{jk}} {\partial Z_{jk}}\right|\Big|X_{jk},Y_{jk}\Big\}\le C v^{-3},
\end{equation}
which means that condition \eqref{mat31} holds.

Consider now
\begin{align}
 \frac{\partial^2 g_{jk}}{\partial Z_{jk}^2}&=\Tr \frac{\partial^3 \mathbf H^q}
 {\partial Z_{jk}^3}\R^2
-3\Tr\frac{\partial^2 \mathbf H^q}{\partial Z_{jk}^2}\R^2\frac{\partial \mathbf H^q}
{\partial Z_{jk}}\R
-3\Tr\frac{\partial^2 \mathbf H^q}{\partial Z_{jk}^2}\R\frac{\partial \mathbf H^q}
{\partial Z_{jk}}\R^2\notag\\
&+2\Tr\frac{\partial \mathbf H^q}{\partial Z_{jk}}\R^2\frac{\partial \mathbf H^q}
{\partial Z_{jk}}\R
\frac{\partial \mathbf H^q}{\partial Z_{jk}}\R+2\Tr\frac{\partial \mathbf H^q}
{\partial Z_{jk}}\R
\frac{\partial \mathbf H^q}{\partial Z_{jk}}
\R^2
\frac{\partial \mathbf H^q}{\partial Z_{jk}}\notag\\&+2\Tr\frac{\partial \mathbf H^q}
{\partial Z_{jk}}\R\frac{\partial \mathbf H^q}{\partial Z_{jk}}
\R
\frac{\partial \mathbf H^q}{\partial Z_{jk}}\R^2.\notag
\end{align}
Similarly to inequality \eqref{deriv2+} we get
\begin{equation}\notag
 \E\Big\{\left|\frac{\partial^2 g_{jk}}
 {\partial Z_{jk}^2}(\theta)\right|\Big|X_{jk},Y_{jk}\Big\}\le C v^{-4}.
\end{equation}
Thus condition \eqref{mat32} is proved.

As follows from Theorem \ref{singularvalueuniversality} the limiting singular value distributions 
of the matrices $\F_{\X}$ and $\F_{\Y}$ are the same. 
In the Gaussian case the limit distribution is computed 
in Section~4 of \cite{AGT:12}.
\end{proof}
\begin{rem}
 It follows from equation \eqref{fuss1} that the $S$-transform 
 of the distribution $G^{(m)}(x)$ is given by the formula
 \begin{equation}\label{stransform}
 S(z)=\frac1{(1+z)^m}.
 \end{equation}
 See equality \eqref{strans1} for $y_1=\cdots=y_m=1$ as well.
\end{rem}

\subsubsection{Product of Powers of Independent Matrices}\label{productpowers} 
Consider independent random $n \times n$ matrices $\X^{(1)},\ldots,\X^{(m)}$ 
with independent entries $\frac{1}{\sqrt{n}} X_{jk}^{(q)}$, 
$q=1, \ldots,m$, $j,k=1,\ldots,n$. 
We shall assume that \eqref{eq:secondmomentstructure-8a} or \eqref{eq:secondmomentstructure-8b} holds.
Let $m_1,\ldots,m_m$ be fixed positive integers. 
Let $\mathbb{F}(\X^{(1)},\hdots,\X^{(q)})=\prod_{q=1}^m(\X^{(q)})^{m_q}$,
and let $\mathcal G_n(x)$ denote the empirical distribution function 
of the eigenvalues of matrix $\W_\X=\F_\X^{}\F_\X^*$.
Then we~have the following result, cf.\@ Timushev and Tikhomirov \cite{TT:13}.

\begin{thm}\label{prodpowers}
Assume that the random variables $X_{jk}^{(q)}$, 
for $q=1,\ldots,m$ and $j,k=1,\ldots,n$, satisfy the 
Lindeberg condition
\eqref{lind}. Then 
\begin{equation}\notag
 \lim_{n\to\infty}\mathcal G_n(x)=G^{(m)}(x) \quad\text{in probability},
\end{equation}
where the Stieltjes transform $s(z)=\int_{-\infty}^{\infty}\frac1{x-z}dG^{(m)}(x)$ 
of the distribution function $G^{(m)}(x)$ is determined by the equation
\begin{equation}\notag
 1+zs(z)+(-1)^{k+1}z^ks^{k+1}(z)=0,
\end{equation}
where $k=m_1+\cdots+m_m$.
\end{thm}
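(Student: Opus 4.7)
The plan is to follow the two-step strategy used for Theorems \ref{productsthm} and \ref{productsingular}: first reduce to the Gaussian case via Theorem \ref{singularvalueuniversality}, then compute the limit in the Gaussian case using the multiplicativity of the $S$-transform. In this sense, the statement is a direct fusion of Section \ref{products} (products of independent matrices) and Section \ref{powers} (powers of a single matrix), and the computations are essentially the ``union'' of those two cases.

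For the universality step, I first verify the rank condition \eqref{rank}: writing $\prod_q \mathbf A^{(q)m_q} - \prod_q \mathbf B^{(q)m_q}$ as a telescoping sum and using $\mathrm{rank}\{\mathbf A^{m_q}-\mathbf B^{m_q}\}\le m_q\,\mathrm{rank}\{\mathbf A-\mathbf B\}$, one obtains \eqref{rank} with $C(\mathbb F)=\max_q m_q$ (or simply $\sum_q m_q$). Next, I need \eqref{mat3}--\eqref{mat32a}. Writing $\F_\Z=\prod_q (\Z^{(q)})^{m_q}$ with $\Z^{(q)}$ as in \eqref{reprsimple}, and mimicking the computation in Section \ref{powers} applied blockwise to the $q$-th factor, the derivative $\partial\V/\partial Z_{jk}^{(q)}$ is a sum of $2m_q$ rank-one terms of the form $(\Z^{(q)})^{s-1}\mathbf e_{\bullet}\mathbf e_{\bullet}^T(\Z^{(q)})^{m_q-s}\prod_{r\ne q}(\Z^{(r)})^{m_r}$ (appropriately placed in the $2\times 2$ block structure, compare \eqref{rhs1}). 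Taking traces against $\R^k$ and applying H\"older's inequality reduces the bounds to norms of vectors of the form $(\Z^{(q)})^{s}\mathbf e_\bullet$ and suitable products of the other independent blocks. Exactly as in \eqref{rhs2} and in \cite{AGT:10b}, Lemmas~5.1--5.2, and as in Lemma~3 of \cite{TT:13}, the conditional expectations of such norms (given $X^{(q)}_{jk},Y^{(q)}_{jk}$) are uniformly bounded, using the representation \eqref{power1} to separate out the single entry that is being conditioned on. This yields \eqref{mat3}--\eqref{mat32a} with constants of the form $A_i=C(m_1,\dots,m_m)v^{-2-i}$; the analogous bounds for $\widehat g_{jk}^{(q)}$ are identical.

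In the Gaussian case, let $\W^{(q)}_\Y:=(\Y^{(q)})^{m_q}((\Y^{(q)})^{m_q})^*$. By Theorem \ref{productsingular} applied to each factor, the mean eigenvalue distribution of $\W^{(q)}_\Y$ converges to the Fuss--Catalan distribution with parameter $m_q$, whose $S$-transform is $S_q(z)=(1+z)^{-m_q}$ by \eqref{stransform}. Since the matrices $\Y^{(1)},\dots,\Y^{(m)}$ are independent and bi-unitarily invariant, an iterated application of the argument of Lemma~4.1 in \cite{AGT:10b} (or of Proposition \ref{prop:asympfree} together with Remark \ref{rem:application-RMT}) shows that for each $l=1,\dots,m-1$, the matrices $\bigl(\prod_{q=1}^{l}(\Y^{(q)})^{m_q}\bigr)^*\bigl(\prod_{q=1}^{l}(\Y^{(q)})^{m_q}\bigr)$ and $\W^{(l+1)}_\Y$ are asymptotically free. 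By the multiplicativity \eqref{freeproduct} of the $S$-transform (applied inductively via Lemma \ref{rectang} as in the proof of Theorem \ref{productsthm}), the limiting singular value distribution of $\F_\Y$ therefore has $S$-transform
\begin{equation*}
 S_{\W_\Y}(z)=\prod_{q=1}^m S_q(z)=\prod_{q=1}^m\frac{1}{(1+z)^{m_q}}=\frac{1}{(1+z)^{k}},\qquad k=m_1+\cdots+m_m.
\end{equation*}
This is precisely the $S$-transform \eqref{stransform} of the Fuss--Catalan distribution with parameter $k$. Inverting through $M^{-1}(z)=\tfrac{z}{1+z}S(z)$ and using $g(z)=-\tfrac{1}{z}(1+M(\tfrac{1}{z}))$ as in the derivation of \eqref{fuss1}, the Stieltjes transform of the limit satisfies $1+zs(z)+(-1)^{k+1}z^ks^{k+1}(z)=0$.

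The main obstacle will be the verification of the second-derivative bounds \eqref{mat32}--\eqref{mat32a}: the formula for $\partial^2\V/\partial (Z^{(q)}_{jk})^2$ involves doubly-indexed sums over the position of the two insertions of $\mathbf \Delta_{jk}$ inside $(\Z^{(q)})^{m_q}$, and after applying the product rule for $\partial^2/\partial Z_{jk}^{(q)2}$ to $g_{jk}^{(q)}=\Tr(\partial\V/\partial Z^{(q)}_{jk})\R^2$ one obtains $O(m_q^2)$ terms of a form analogous to \eqref{deriv1*}. Each single term is bounded by the Lemmas~5.1--5.2 in \cite{AGT:10b} combined with the expansion \eqref{power1}, so the bookkeeping, rather than any new analytic difficulty, is the only real obstacle. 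Once \eqref{mat3}--\eqref{mat32a} are established, Theorem \ref{singularvalueuniversality} reduces the general claim to the Gaussian computation carried out above.
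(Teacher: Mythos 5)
Your proposal follows the same two-step strategy as the paper: verify the rank and smoothness conditions to invoke Theorem \ref{singularvalueuniversality} (citing \cite{AGT:10b} and \cite{TT:13} for the derivative bounds, as the paper does), then compute the limit in the Gaussian case via asymptotic freeness and multiplicativity of the $S$-transform, obtaining $S(z)=(1+z)^{-k}$. The only difference is cosmetic: you spell out the final inversion from $S$-transform to the polynomial equation for $s(z)$, which the paper leaves implicit by pointing back to \eqref{fuss1}.
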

\begin{proof}
For simplicity we shall assume that the $X_{jk}^{(q)}$ are real. 
Let $\Y^{(1)},\ldots,\Y^{(m)}$ denote the corresponding Gaussian matrices. 
We shall apply Theorem \ref{singularvalueuniversality}.
First we note that
\begin{equation}\notag
 \text{\rm rank}\{\F_{\X}-\F_{\widehat{\X}}\}\le 
 \sum_{q=1}^m m_{q}\,
\text{\rm rank}\{{\X^{(q)}}-{\widehat{\X}}^{(q)}\}.
\end{equation}
This implies condition \eqref{rank}. 
Conditions \eqref{mat3}, \eqref{mat31}, \eqref{mat32} 
may be checked similarly as in Subsections \ref{products} and \ref{powers}. 
For more details see \cite{TT:13}.

Theorem \ref{singularvalueuniversality} now implies that the limit distributions of the singular values 
of the matrices $\F_{\X}$ and $\F_{\Y}$ are the same. 
For the Gaussian case we use the asymptotic freeness of the matrices
$(\prod_{q=l}^{m}(\Y^{(l)})^{m_l})(\prod_{q=l}^{m}(\X^{(l)})^{m_l})^*$ 
and $((\Y^{(l-1)})^{m_{l-1}})^*(\Y^{(l-1)})^{m_{l-1}})$ 
for $l=2,\ldots,m$. 
The proof of this claim repeats the proof of  Lemma  4.2 in \cite{AGT:12}. 
From here it follows that the $S$-transform of the distribution of $\mathcal G_n(x)$
is given by
\begin{equation}\notag
 S(z)=\prod_{q=1}^m\frac1{(1+z)^{m_q}}=\frac1{(1+z)^k}.
\end{equation}
This completes the proof of Theorem \ref{prodpowers}.
\end{proof}

\subsubsection{Polynomials of Random Matrices}
Let $\X^{(1)},\ldots, \X^{(m)}$ be independent $n\times n$ random matrices
with the entries $\frac{1}{\sqrt{n}} X_{jk}^{(q)}$,
where the r.v.'s $X_{jk}^{(q)}$ are independent random variables
as in \eqref{eq:secondmomentstructure-8a} or \eqref{eq:secondmomentstructure-8b}.
Consider the matrix-valued function 
$\mathbb F(\X^{(1)},\hdots,\X^{(m)})=\sum_{q=1}^m\sum_{1\le i_1,\ldots, i_q\le m}a_{i_1\cdots i_q}
\prod_{s=1}^q\X^{(i_s)}$ and the matrix $\W_{\X}=\F_{\X}^{}\F_{\X}^*$.

Let $\Y^{(1)},\hdots,\Y^{(q)}$ be Gaussian random matrices as in Section \ref{sec:notation},
and let $\W_\Y$ be defined analogously to $\W_\X$.
Let $\mathcal G_{\X}(x)$ and $\mathcal G_{\Y}(x)$ denote the empirical spectral
distribution functions of the matrices $\W_{\X}$ and $\W_{\Y}$, respectively.

\begin{thm}\label{polynom}
 Let $\E X_{jk}^{(q)}=0$ and $\E|X_{jk}^{(q)}|^2=1$.
 Assume that the random variables $X_{jk}^{(q)}$ for $q=1,\ldots,m$; $j,k=1,\ldots,m$
 satisfy the Lindeberg condition \eqref{lind}. 
 Then
 \begin{equation}\notag
  \lim_{n\to\infty}(\mathcal  G_{\X}(x)-\mathcal G_{\Y}(x))=0 \quad\text{in probability.}
 \end{equation}
\end{thm}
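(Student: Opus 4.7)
The plan is to apply Theorem \ref{singularvalueuniversality} with $\mathbf B = \mathbf O$. The Lindeberg condition \eqref{lind} holds by hypothesis, so what remains is to verify the rank condition \eqref{rank} and the six conditional bound conditions \eqref{mat3} -- \eqref{mat32a}. Granted these, the theorem yields $m_n(z,\Y) - m_n(z,\X) \to 0$ in probability for each $z \in \mathbb C_+$, and the conclusion $\mathcal G_{\X}(x) - \mathcal G_{\Y}(x) \to 0$ in probability follows from the standard Stieltjes-transform continuity theorem (cf.\ the remark after Theorem \ref{singularvalueuniversality}).

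For the rank condition I would use linearity of $\mathbb F$ and a telescoping identity on each monomial. For each monomial $\prod_{s=1}^q \mathbf Z^{(i_s)}$ appearing in $\mathbb F$,
\begin{equation*}
\prod_{s=1}^q \mathbf A^{(i_s)} - \prod_{s=1}^q \mathbf B^{(i_s)} = \sum_{t=1}^q \Big( \prod_{s<t} \mathbf A^{(i_s)} \Big) \big( \mathbf A^{(i_t)} - \mathbf B^{(i_t)} \big) \Big( \prod_{s>t} \mathbf B^{(i_s)} \Big),
\end{equation*}
and the rank of this matrix is bounded by $\sum_t \text{rank}(\mathbf A^{(i_t)} - \mathbf B^{(i_t)})$. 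Summing over all monomials weighted by $|a_{i_1\cdots i_q}|$ and noting that the number of monomials and their degrees depend only on $\mathbb F$ yields \eqref{rank} with a suitable constant $C(\mathbb F)$.

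For the conditions \eqref{mat3} -- \eqref{mat32a} I would follow the blueprint of Subsections \ref{products} and \ref{powers}, which already treat the two key special cases. Since $\V$ depends linearly on each $\Z^{(q)}$, differentiating $\mathbb F$ via Leibniz once or twice with respect to $\re Z_{jk}^{(q)}$ or $\im Z_{jk}^{(q)}$ produces a sum of $O(m^2)$ matrix products in which the basis matrix $\mathbf \Delta_{jk} := \mathbf e_j \mathbf e_{k+n}^T + \mathbf e_{k+n} \mathbf e_j^T$ is inserted at one or two slots of some monomial of $\mathbb F$. Combined with the resolvent identity $\partial \R / \partial(\cdot) = -\R (\partial \V / \partial(\cdot)) \R$, this turns $g_{jk}^{(q)}$, $\widehat g_{jk}^{(q)}$ and their first and second partial derivatives into finite sums of traces of the form $\Tr(\mathbf M_1 \R \mathbf M_2 \R \cdots \mathbf M_r \R)$ with $r \le 4$, where each $\mathbf M_\ell$ is a monomial in $\Z^{(1)},\ldots,\Z^{(m)}$ of degree at most $m$ carrying at most two $\mathbf \Delta_{jk}$ factors. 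Bounding each such trace via $\|\R\| \le v^{-1}$ and Hölder's inequality reduces matters to conditional second-moment estimates of $\|\mathbf e_i^T \mathbf M(\widehat \Z)\|_2$, where $\mathbf M$ is a monomial in the truncated matrices; these follow from Lemma~5.1 of \cite{AGT:10b} and its analogues in \cite{AGT:12} and \cite{TT:13}, after expanding $\mathbf M$ via the identity \eqref{power1} to isolate the entry $Z_{jk}^{(q)}$ being conditioned on. This yields $A_0 = C v^{-2}$, $A_1 = C v^{-3}$, $A_2 = C v^{-4}$ with constants $C = C(\mathbb F)$ independent of $n$.

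The main obstacle is bookkeeping rather than genuine analysis. No single one of the $O(m^2)$ trace estimates is harder than those already carried out for pure products (Subsection \ref{products}) or pure powers (Subsection \ref{powers}); the subtlety is that for a monomial of the form $\cdots \Z^{(q)} \cdots \Z^{(q)} \cdots$, a second derivative in $Z_{jk}^{(q)}$ may produce mixed insertions of $\mathbf \Delta_{jk}$ at two distinct slots, so one must separately enumerate the positions of the derivatives and check that each resulting term still fits the $\ell^2$-norm framework. Once this enumeration is carried out, the same moment bounds apply verbatim and the hypotheses of Theorem \ref{singularvalueuniversality} are fulfilled, completing the proof.
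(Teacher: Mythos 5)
Your proposal follows essentially the same route as the paper's own sketch: reduce to each monomial, verify the rank condition via the telescoping identity, verify \eqref{mat3}--\eqref{mat32a} by the now-standard combination of H\"older, resolvent bounds $\|\R\|\le v^{-1}$, and the moment lemmas from \cite{AGT:10b}, \cite{AGT:12}, \cite{TT:13}, then sum over monomials using that $\partial \V/\partial Z_{jk}^{(q)}$ is linear in $\mathbb F$ while the resolvent $\R$ is common to all terms and bounded. One small slip: you define $\mathbf\Delta_{jk}=\mathbf e_j\mathbf e_{k+n}^T+\mathbf e_{k+n}\mathbf e_j^T$, which is the off-diagonal insertion matrix from the Marchenko--Pastur subsection where $\V$ itself is the off-diagonal block matrix; for monomials of degree $\ge 2$ the relevant representation is $\V=\mathbf H^{(1)}\cdots\mathbf H^{(m)}\J$ with block-diagonal $\mathbf H^{(q)}$, so the insertion matrix is $\mathbf e_j\mathbf e_k^T+\mathbf e_{k+n}\mathbf e_{j+n}^T$ as in the powers subsection. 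Your remark about the need to handle mixed insertions of $\mathbf\Delta_{jk}$ at two slots when a factor $\X^{(q)}$ repeats is precisely the point treated in the powers and product-of-powers subsections (via the expansion \eqref{power1}), and noting it explicitly is a useful clarification of the paper's terser sketch.
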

\begin{proof}[Sketch of Proof]
Similarly as in Sections \ref{products} and \ref{productpowers} 
we may check the conditions \eqref{rank} and \eqref{mat3} -- \eqref{mat32a} 
of Theorem \ref{singularvalueuniversality} for each monomial functional $\mathbb F$.
Using linearity and the boundedness of the resolvents $\R_\Z$,
we may conclude that all these condition hold for the polynomial functional $\mathbb F$. 
Thus, by Theorem \ref{singularvalueuniversality}, the matrices $\W_{\X}$ and $\W_{\Y}$ 
have the~same limiting empirical spectral distribution,
and Theorem \ref{polynom} is proved.
\end{proof}

\begin{rem}
There has recently been considerable progress in computing
the limiting spectral distributions for polynomials of random matrices;
see the approach by Belinschi, Mai and Speicher \cite{BMS:13}
for self-adjoint polynomials of self-adjoint random matrices.
Possibly this approach can also be used to compute 
the limiting distributions in Theorem \ref{polynom}.
\end{rem}

\subsubsection{Spherical Ensemble}
In this section we consider the so-called spherical ensemble. 
Assume that the $X^{(q)}_{jk}$, $q=1,2$, $j,k=1,\hdots,n$,
are independent random variables 
as in \eqref{eq:secondmomentstructure-8a} or \eqref{eq:secondmomentstructure-8b}.
Moreover, assume that the r.v.'s $X_{jk}^{(2)}$ satisfy the condition
\begin{equation}\label{eq:spherical-UI2}
\max_{j,k}\E|X_{jk}^{(2)}|^2\mathbb I\{|X_{jk}^{(2)}|>M\}\to 0,\quad\text{as}\quad M\to\infty.
\end{equation}
Let $\F=\X^{(1)}(\X^{(2)})^{-1}$, where $\X^{(1)}$ and $\X^{(2)}$ denote the $n \times n$ matrices
with the entries $\frac{1}{\sqrt{n}} X_{jk}^{(1)}$ and $\frac{1}{\sqrt{n}} X_{jk}^{(2)}$,
respectively.

\emph{Remark.} It is well-known that under Condition \eqref{eq:spherical-UI2}, 
the matrix $\X^{(2)}$ is invertible with probability $1 + o(1)$ as $n \to \infty$, 
see e.g.\@ Lemma \ref{C1} in Appendix \ref{sec:SmallSingularValues}.
Thus, since we are interested in convergence in probability, 
we may restrict ourselves to the event where $\X^{(2)}$ is invertible.
This will tacitly be assumed in the subsequent proofs.

Let $\W=\F\F^*$, and let $\mathcal G_n(x)$ denote the empirical spectral distribution function 
of the matrix $\W$. Then we~have the following result, cf. Tikhomirov \cite{Tikh:13}.

\begin{thm}\label{spherical}
Assume that the random variables $X_{jk}^{(q)}$, for $q=1,2$ and $j,k=1,\ldots,n$ 
satisfy the Lindeberg condition \eqref{lind}.
Also, assume that Condition \eqref{eq:spherical-UI2} holds.
Then
\begin{equation}\notag
 \lim_{n\to\infty}\mathcal G_n(x)=G(x) \quad\text{in probability},
\end{equation}
where $g(x)=G'(x)=\frac1{\pi}\frac1{\sqrt x(1+x)}\mathbb I\{x\ge 0\}.$
\end{thm}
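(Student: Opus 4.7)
The plan is to establish the theorem in two stages: universality (reduction to independent standard Gaussian entries) via Theorem \ref{singularvalueuniversality}, followed by identification of the Gaussian limit through the $S$-transform. For the universality step I would apply Theorem \ref{singularvalueuniversality} with $m=2$ and $\mathbb F(\mathbf A^{(1)}, \mathbf A^{(2)}) = \mathbf A^{(1)}(\mathbf A^{(2)})^{-1}$, defined on the set where $\mathbf A^{(2)}$ is invertible. The rank condition \eqref{rank} is a consequence of the algebraic identity
$$\mathbf A^{(1)}(\mathbf A^{(2)})^{-1} - \mathbf B^{(1)}(\mathbf B^{(2)})^{-1} = (\mathbf A^{(1)} - \mathbf B^{(1)})(\mathbf A^{(2)})^{-1} + \mathbf B^{(1)}(\mathbf B^{(2)})^{-1}(\mathbf B^{(2)} - \mathbf A^{(2)})(\mathbf A^{(2)})^{-1},$$
which yields control of the rank by $\mathrm{rank}(\mathbf A^{(1)} - \mathbf B^{(1)}) + \mathrm{rank}(\mathbf A^{(2)} - \mathbf B^{(2)})$. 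The smoothness conditions \eqref{mat3}--\eqref{mat32a} reduce, as in the proofs of Theorems \ref{productsthm} and \ref{productsingular}, to bounding traces of the form $\Tr (\partial \V/\partial Z^{(q)}_{jk})\R^a$ and their derivatives; for $q=2$ these bring in factors of $(\Z^{(2)})^{-1}$, and the delicate point is to control $\|(\Z^{(2)})^{-1}\|$. Condition \eqref{eq:spherical-UI2} is exactly tailored to this purpose: through a variant of Lemma \ref{C1} in the appendix it ensures that $s_{\min}(\X^{(2)}) \geq n^{-Q}$ with probability $1-o(1)$, and after restricting to this event the bounds propagate as in Sections \ref{products}--\ref{productpowers}.

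Once universality is in place it suffices to identify the limit for independent standard complex Gaussian matrices $\Y^{(1)}$ and $\Y^{(2)}$. Since both are bi-unitarily invariant, the families $\{\Y^{(1)}, (\Y^{(1)})^*\}$ and $\{\Y^{(2)}, (\Y^{(2)})^*\}$ are asymptotically free (Hiai--Petz \cite{Petz-1}, Theorem 4.3), and consequently so are $\Y^{(1)*}\Y^{(1)}$ and $(\Y^{(2)*}\Y^{(2)})^{-1}$. By Theorem \ref{MarPas} and Remark \ref{MP}, the limiting squared-singular-value distribution of $\Y^{(1)}$ is Marchenko--Pastur with parameter $y=1$ and $S$-transform $S_1(z) = 1/(1+z)$; by Remark \ref{inverse-MP}, the limiting squared-singular-value distribution of $(\Y^{(2)})^{-1}$ has $S$-transform $S_2(z) = -z$. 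Since the nonzero eigenvalues of $\W_\Y = \Y^{(1)}(\Y^{(2)*}\Y^{(2)})^{-1}\Y^{(1)*}$ coincide with those of $(\Y^{(1)*}\Y^{(1)})(\Y^{(2)*}\Y^{(2)})^{-1}$, the multiplicativity of the $S$-transform under free multiplicative convolution (Lemma \ref{rectang}) then gives
$$S_\W(z) = S_1(z)\, S_2(z) = -\frac{z}{1+z}.$$
To read off the density, I would invert: from $\psi_\W^{-1}(z) = \tfrac{z}{1+z} S_\W(z) = -z^2/(1+z)^2$, a direct computation substituting $u = \sqrt{x}$ and using partial fractions verifies that the candidate density $g(x) = \frac{1}{\pi\sqrt x\,(1+x)}\mathbb I\{x\ge0\}$ has $\psi$-transform $\psi_g(z) = -\sqrt{-z}/(1+\sqrt{-z})$ with the correct inverse $-v^2/(1+v)^2$. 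Uniqueness \eqref{ST-uniqueness} then identifies $G$ as the limit.

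The principal obstacle is the universality step. Unlike for the product and power functionals treated in Sections \ref{products}--\ref{productpowers}, differentiating $\mathbf A^{(2)} \mapsto (\mathbf A^{(2)})^{-1}$ produces factors of $(\mathbf A^{(2)})^{-1}$ whose operator norm is of order $\sqrt n$ for a typical Gaussian $\Y^{(2)}$. The conditional-expectation bounds demanded by \eqref{mat3}--\eqref{mat32a} must therefore be established on the good event $\{s_{\min}(\X^{(2)}), s_{\min}(\Y^{(2)}) \geq n^{-Q}\}$, whose complement is asymptotically negligible by \eqref{eq:spherical-UI2}. A careful choice of truncation level $\tau_n$ (consistent with \eqref{asumplind}), combined with standard resolvent identities, then suffices to close the estimates in the proof of Theorem \ref{singularvalueuniversality}.
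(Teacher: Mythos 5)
Your second stage (identifying the Gaussian limit via $S$-transforms, using Remarks \ref{MP} and \ref{inverse-MP} and multiplicativity) is correct and essentially matches the paper. The universality stage, however, has a genuine gap.

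You propose to apply Theorem \ref{singularvalueuniversality} directly to $\mathbb F(\mathbf A^{(1)},\mathbf A^{(2)}) = \mathbf A^{(1)}(\mathbf A^{(2)})^{-1}$, establishing the conditional-expectation bounds \eqref{mat3}--\eqref{mat32a} on the event $\{s_{\min}(\X^{(2)}), s_{\min}(\Y^{(2)}) \ge n^{-Q}\}$. This cannot close the argument as stated. Conditions \eqref{mat3}--\eqref{mat32a} require $L^\infty$ bounds by \emph{fixed} constants $A_0, A_1, A_2$ (depending on $v = \im z$ but not on $n$). Differentiating $(\Z^{(2)})^{-1}$ produces factors of $\|(\Z^{(2)})^{-1}\|$, which even on your good event is only bounded by $n^{Q}$ (and is typically of order $n$). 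Tracing through the proof of Theorem \ref{singularvalueuniversality}, the error term $\Sigma_1$ is of order $A_0\,\tau_n^3$, and since $\tau_n \geq n^{-1/3}$ one has $\tau_n^3 \geq n^{-1}$; so $A_0 \sim n^{c}$ with $c \geq 1$ destroys the convergence entirely. The situation is worse for $\Sigma_4,\Sigma_5,\Sigma_6 \sim A_2\tau_n$. There is also a secondary issue: Theorem \ref{singularvalueuniversality} is proved by interpolating $\Z^{(q)}(\varphi) = \widehat{\X}^{(q)}\cos\varphi + \widehat{\Y}^{(q)}\sin\varphi$, and you would need invertibility of $\Z^{(2)}(\varphi)$ uniformly in $\varphi\in[0,\tfrac{\pi}{2}]$, which a good event formulated only in terms of $\X^{(2)}$ and $\Y^{(2)}$ does not provide.

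The missing idea is a \emph{regularization} that keeps the smoothness constants $n$-independent. The paper replaces $(\X^{(2)})^{-1}$ by $(\X^{(2)})^{-1}_t := ((\X^{(2)})^*\X^{(2)} + t\I)^{-1}(\X^{(2)})^*$ for fixed $t>0$. Then $\|\mathbf A_t\| \le t^{-1}$, $\|\Z^{(2)}\mathbf A_t\| \le t^{-1/2}$, $\|\Z^{(2)}\mathbf A_t(\Z^{(2)})^*\| \le 1$ deterministically, so the constants $A_0,A_1,A_2$ come out of order $v^{-a}t^{-b}$, uniformly in $n$. Theorem \ref{singularvalueuniversality} is applied to $\F_t=\X^{(1)}(\X^{(2)})^{-1}_t$ for each fixed $t$, and Lemma \ref{inverse10} (which uses Conditions (C0), (C1), (C2) for $\X^{(2)}$ and the log-integrability of the Marchenko--Pastur law) then shows $\lim_{t\to 0}\limsup_{n\to\infty} |s_t(z) - s(z)| = 0$ in probability, allowing the limits to be interchanged. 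Your good-event idea plays the role that Lemma \ref{C1} plays \emph{inside the proof of Lemma \ref{inverse10}}, not the role of making Theorem \ref{singularvalueuniversality} directly applicable.

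Your rank estimate via $\mathbf A^{(1)}(\mathbf A^{(2)})^{-1} - \mathbf B^{(1)}(\mathbf B^{(2)})^{-1} = (\mathbf A^{(1)}-\mathbf B^{(1)})(\mathbf A^{(2)})^{-1} + \mathbf B^{(1)}(\mathbf B^{(2)})^{-1}(\mathbf B^{(2)}-\mathbf A^{(2)})(\mathbf A^{(2)})^{-1}$ is correct and would also work for the regularized function; the paper obtains $C(\mathbb F_t)=3$ by a similar decomposition.
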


\noindent\emph{Remark.} Note that if $\xi$ has Cauchy density then $\eta=\xi^2$ has density $p(x)$.

\begin{proof}
In order to apply Theorem \ref{singularvalueuniversality} we need to regularize the inverse matrix $(\X^{(2)})^{-1}$.
To begin with, note that 
$
(\X^{(2)})^{-1}=((\X^{(2)})^*\X^{(2)})^{-1} (\X^{(2)})^*=(\X^{(2)})^* (\X^{(2)}(\X^{(2)})^*)^{-1}.
$
We now introduce the following matrices.
For any $t > 0$, let 
$$
\mathbf A_t=((\X^{(2)})^*\X^{(2)}+t\I)^{-1}, \quad
\widetilde{\mathbf A}_t=(\X^{(2)}(\X^{(2)})^*+t\I)^{-1},
$$
$$
(\X^{(2)})^{-1}_t=\mathbf A_t(\X^{(2)})^*=(\X^{(2)})^*\widetilde{\mathbf A}_t, \quad
\F_t=\X^{(1)}(\X^{(2)})^{-1}_t, \quad 
\W_t=\F_t\F_t^*.
$$
Also, let $s_t(z)=\frac1n\Tr \R_t$ and $s(z)=\frac1n\Tr \R$,
where $\R_t=(\W_t-z\I)^{-1}$, $\R=(\W-z\I)^{-1}$,
and $z=u+iv$, $v>0$. We prove the following.
\begin{lem}\label{inverse10} 
Under condition \eqref{eq:spherical-UI2}, we have
\begin{equation}\notag
\lim_{t\to 0}\limsup_{n\to\infty}|s_t(z)-s(z)|=0 \quad\text{in probability}.
\end{equation}
\end{lem}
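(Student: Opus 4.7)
The guiding idea is to separate the effect of the small singular values of $\X^{(2)}$, which make $(\X^{(2)})^{-1}$ ill-conditioned, from the effect of the regularization $t$, by exploiting the insensitivity of Stieltjes transforms to low-rank perturbations. Let $\X^{(2)} = U\Sigma V^*$ be a singular value decomposition with $\Sigma = \mathrm{diag}(\sigma_1,\ldots,\sigma_n)$, set $\widetilde\X := \X^{(1)} V$, and note the identities $\W = \widetilde\X\,\Sigma^{-2}\widetilde\X^*$ and $\W_t = \widetilde\X D_t^2 \widetilde\X^*$, where $D_t := \mathrm{diag}(\sigma_j/(\sigma_j^2+t))$. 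Fix a cutoff $\delta>0$, let $P_G$ be the diagonal projection onto $\{j : \sigma_j > \delta\}$, $P_B := \I - P_G$, and $k := \mathrm{rank}(P_B)$. Introduce the well-conditioned parts $\W_G := \widetilde\X P_G\Sigma^{-2}P_G\widetilde\X^*$ and $\W_{t,G} := \widetilde\X P_G D_t^2 P_G\widetilde\X^*$, with Stieltjes transforms $s_G,s_{t,G}$, and split
\[
|s_t(z)-s(z)| \le |s(z)-s_G(z)| + |s_G(z)-s_{t,G}(z)| + |s_{t,G}(z)-s_t(z)|.
\]

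Since $\W - \W_G = \widetilde\X P_B\Sigma^{-2}P_B\widetilde\X^*$ and $\W_t - \W_{t,G}$ both have rank at most $k$, Bai's rank inequality (Corollary~A.44 in \cite{BS:10}) combined with integration by parts bounds the first and third terms by $\pi k/(nv)$. Because condition \eqref{eq:spherical-UI2} implies the Lindeberg condition for $\X^{(2)}$, Theorem~\ref{MarPas} with $y=1$ gives $k/n \to F_{MP}(\delta^2)$ in probability, and the square-root singularity of the Marchenko--Pastur density at the origin shows $F_{MP}(\delta^2) \le C\delta$ for small~$\delta$. Thus each of these two terms is at most $C\delta/v$ with probability tending to one.

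For the middle term, the resolvent identity combined with the cyclic property of the trace yields
\[
s_G(z)-s_{t,G}(z) = -\frac1n\sum_{j:\sigma_j>\delta}\bigl(\sigma_j^{-2}-d_{t,j}^2\bigr)\,\langle\widetilde x_j,(\W_{t,G}-z\I)^{-1}(\W_G-z\I)^{-1}\widetilde x_j\rangle,
\]
where $\widetilde x_j$ is the $j$-th column of $\widetilde\X$. Bounding each resolvent in operator norm by $1/v$, and using the elementary estimate $|\sigma_j^{-2}-d_{t,j}^2| = (2t\sigma_j^2+t^2)/(\sigma_j^2(\sigma_j^2+t)^2) \le Ct/\delta^4$ for $\sigma_j>\delta$ and $t\le\delta^2$, I obtain
\[
|s_G(z)-s_{t,G}(z)| \le \frac{Ct}{v^2\delta^4\,n}\sum_{j=1}^n\|\widetilde x_j\|_2^2 = \frac{Ct\,\|\X^{(1)}\|_2^2}{v^2\delta^4\,n},
\]
which is $O(t/\delta^4)$ in probability because $\|\X^{(1)}\|_2^2/n \to 1$ by the weak law of large numbers (valid under the Lindeberg condition).

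Combining these bounds, for any $\varepsilon>0$ we first choose $\delta$ small enough that the two $O(\delta/v)$ contributions are each $<\varepsilon/3$, and then choose $t>0$ small enough that $Ct/(v^2\delta^4)<\varepsilon/3$, yielding $\lim_{t\downarrow 0}\limsup_{n\to\infty}\Pr(|s_t(z)-s(z)|>\varepsilon)=0$, which is the desired convergence in probability. The main technical obstacle is precisely the middle term: a direct estimate through $\|\W-\W_t\|$ or $\|\F-\F_t\|_2$ diverges as $t\downarrow 0$ because inverse powers of the small $\sigma_j$ enter, and the essential trick is to use the cyclic property of the trace to pair each diagonal weight $\|\widetilde x_j\|_2^2$ with its individual spectral factor $|\sigma_j^{-2}-d_{t,j}^2|$, rather than relying on a single operator-norm bound for $\widetilde\X$ (which would in turn require moment control of $\X^{(1)}$ beyond Lindeberg).
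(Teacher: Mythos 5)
Your proposal is correct, and it follows a genuinely different route from the paper's. The paper proceeds by differentiating the resolvent with respect to the regularization parameter, writing $\R_t-\R_0 = 2\int_0^t \R_u\F_u\widetilde{\mathbf A}_u\F_u^*\R_u\,du$ with $\widetilde{\mathbf A}_u=(\X^{(2)}\X^{(2)*}+u\I)^{-1}$, using the positivity of $\widetilde{\mathbf A}_u$ to pull out the operator norms $\|\R_u\F_u\|$, $\|\F_u^*\R_u\|\le (v^{-1}(1+|z|v^{-1}))^{1/2}$, and then bounding $\int_0^t\tfrac1n\Tr\widetilde{\mathbf A}_u\,du = \tfrac1n\sum_k(\log(s_k^2+t)-\log(s_k^2))$. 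Controlling this logarithmic sum requires the full apparatus of Conditions $(C0)$--$(C2)$ for $\X^{(2)}$ (verified via Lemmas \ref{C0}--\ref{C2}) together with the log-integrability result, Lemma~\ref{lem:logintegrability}. Your approach replaces all this with a cutoff-and-rank argument: you split the singular values of $\X^{(2)}$ at a threshold $\delta$, dispose of the bad block $\{\sigma_j\le\delta\}$ by Bai's rank inequality plus the Marchenko--Pastur control $k/n\to F_{MP}(\delta^2)=O(\delta)$, and handle the good block by a resolvent-identity estimate in which the cyclic trace is used to pair each spectral factor $|\sigma_j^{-2}-d_{t,j}^2|\le Ct/\delta^4$ with the diagonal weight $\|\widetilde x_j\|_2^2$, summing to $\|\X^{(1)}\|_2^2/n=O_{\Pr}(1)$. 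Your route is more elementary for this particular lemma -- it bypasses $(C0)$--$(C2)$ and the log-integrability lemma entirely -- but it leans on two features specific to the square spherical ensemble: the Frobenius-norm control of $\X^{(1)}$ and the square-root vanishing of the Marchenko--Pastur CDF at the origin (so that $F_{MP}(\delta^2)=O(\delta)$). The paper's differential/logarithmic approach is heavier but structurally aligned with the later Regularization Lemma~\ref{AXB-lemma}, where $\mathbf B$ is an arbitrary factor satisfying $(C0)$--$(C2)$ and no explicit small-$\delta$ CDF bound is available; there the log-integral route is essential.
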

\begin{proof}
Write
\begin{align}
  \R_t - \R 
= \int_0^t \frac{d\R_u}{du} \, du 
= - \int_0^t \R_u \frac{d\W_u}{du} \R_u \, du
= 2 \int_0^t \R_u \F_u \widetilde{\mathbf A}_u \F_u^* \R_u \, du \,.
\label{eq:reg-1}
\end{align}
Because the matrix $\widetilde{\mathbf A}_t$ is positive definite, 
we have
\begin{align}      
|\Tr(\R_u \F_u \widetilde{\mathbf A}_u \F_u^* \R_u)|  
\leq 
\Tr \widetilde{\mathbf A}_u \| \R_u \F_u \| \| \F_u^* \R_u \| \,.
\label{eq:reg-2}
\end{align}
Also, for any $u > 0$, we have
\begin{align}
\label{eq:reg-3}
\|\R_u \F_u\|^2 \leq v^{-1} ( 1 + |z| v^{-1} ) 
\quad\text{and}\quad
\|\F_u^* \R_u\|^2 \leq v^{-1} ( 1 + |z| v^{-1} ) \,.
\end{align}
We therefore obtain
\begin{align}      
|s_t(z) - s_0(z)| &\leq 2v^{-1} ( 1 + |z| v^{-1} ) \, \int_0^t \tfrac1n \Tr \widetilde{\mathbf A}_u \, du \,.
\label{eq:reg-5}
\end{align}
Let $s_1 \ge \cdots \ge s_n$ denote the singular values of the matrix $\X^{(2)}$.
Then the integral in~\eqref{eq:reg-5} may be represented as
\begin{align}      
  \int_0^t \tfrac1n \Tr \widetilde{\mathbf A}_u \, du
= \tfrac1n \sum_{k=1}^{n} \int_0^t (s_k^2 + u)^{-1} \, du
= \tfrac1n \sum_{k=1}^{n} \big( \log (s_k^2 + t) - \log (s_k^2) \big) \,.
\label{eq:reg-6}
\end{align}
Now, by the Marchenko--Pastur theorem (Theorem \ref{MarPas}),
we have, for any fixed $t > 0$,
\begin{align}      
\lim_{n \to \infty} \tfrac1n \sum_{k=1}^{n} \log(s_k^2 + t) = \int_0^4 \log(x + t) \, \tfrac{1}{2\pi} \sqrt{(4-x)/x} \, dx \quad\text{in probability} \,.
\label{eq:reg-7}
\end{align}
By Assumption \eqref{eq:spherical-UI2} and Lemmas \ref{C0} -- \ref{C2} 
in Appendix \ref{sec:SmallSingularValues},
the matrix $\X^{(2)}$ satisfies Conditions $(C0)$, $(C1)$ and $(C2)$.
Thus, by the Marchenko--Pastur theorem and Lemma \ref{lem:logintegrability}, 
we~also~have
\begin{align}      
\lim_{n \to \infty} \tfrac1n \sum_{k=1}^{n} \log(s_k^2) = \int_0^4 \log(x) \, \tfrac{1}{2\pi} \sqrt{(4-x)/x} \, dx \quad\text{in probability} \,.
\label{eq:reg-8}
\end{align}
Now fix $\varepsilon > 0$, and take $t > 0$ sufficiently small so that
$$
\int_0^4 \left( \log(x + t) - \log(x) \right) \, \tfrac{1}{2\pi} \sqrt{(4-x)/x} \, dx \leq \frac{\varepsilon}{3} \,.
$$
It then follows from \eqref{eq:reg-6} -- \eqref{eq:reg-8} that
\begin{multline*}
    \lim_{n \to \infty} \Pr \left\{ \int_0^{t} \tfrac1n \Tr \widetilde{\mathbf A}_u \, du \geq \varepsilon \right\} \\
\le \lim_{n \to \infty} \Pr \left\{ \left| \tfrac1n \sum_{k=1}^{n} \log(s_k^2 + t) - \int_0^4 \log(x+t) \, \tfrac{1}{2\pi} \sqrt{(4-x)/x} \, dx \right| \geq \frac{\varepsilon}{3} \right\} \\
  + \lim_{n \to \infty} \Pr \left\{ \left| \tfrac1n \sum_{k=1}^{n} \log(s_k^2) - \int_0^4 \log(x) \, \tfrac{1}{2\pi} \sqrt{(4-x)/x} \, dx \right| \geq \frac{\varepsilon}{3} \right\} 
 = 0 \,.
\end{multline*}
In view of \eqref{eq:reg-5}, this implies the statement of the lemma.
\end{proof}

Now it is enough to determine the limit distribution of the singular values  
of the matrix $\F_t$ for fixed $t > 0$ and then to take the limit as $t \to 0$ 
to find the limit distribution of the singular values of the matrix $\F$. 
We check that the conditions of Theorem \ref{singularvalueuniversality} hold
for the matrix-valued function $\mathbb F_t(\X^{(1)},\X^{(2)})=\X^{(1)}(\X^{(2)})^{-1}_t$.
Let $\F_t(\X)=\X^{(1)}(\X^{(2)})^{-1}_t$ and
$\F_t(\Y)=\Y^{(1)}(\Y^{(2)})^{-1}_t$, 
where the $\Y^{(q)}$ denote random matrices with independent Gaussian entries
as in Section \ref{sec:notation}.
Also,
let $\mathbf A_t(\X^{(2)})=((\X^{(2)})^*\X^{(2)}+t\I)^{-1}$
and $\mathbf A_t(\Y^{(2)})=((\Y^{(2)})^*\Y^{(2)}+t\I)^{-1}$.
We first check the rank condition \eqref{rank}. Clearly,
\begin{align*}
     \text{\rm rank}&\{\F_t(\X)-\F_t(\Y)\} \\
&\le \text{\rm rank} (\X^{(1)}-\Y^{(1)}) 
  + \text{\rm rank} (\mathbf A_t(\X^{(2)})-\mathbf A_t(\Y^{(2)}))
  + \text{\rm rank} (\X^{(2)}-\Y^{(2)})^* \\
&\le \text{\rm rank} (\X^{(1)}-\Y^{(1)}) + 3 \text{\rm rank} (\X^{(2)}-\Y^{(2)}) \,.  
\end{align*}
Thus, the rank condition \eqref{rank} holds with $C(\mathbb F_t) = 3$.
We now check conditions \eqref{mat3}, \eqref{mat31} and \eqref{mat32}.
As usual, we~restrict ourselves to the real case for simplicity.
By \eqref{def+}, we have 
\begin{equation}\notag
g_{jk}^{(q)}=\Tr\frac{\partial\V}{\partial Z_{jk}^{(q)}}\R^2,
\end{equation}
where 
$$
\V=\begin{bmatrix}&\mathbf O&\F_t(\mathbf Z)&\\&\F_t^*(\mathbf Z)&\mathbf O&\end{bmatrix}
\quad\text{and}\quad \R := (\V - z\I)^{-1}.
$$
Introduce the matrices
\begin{equation}\notag
 \mathbf H^{(1)}=\begin{bmatrix}&\Z^{(1)}&\mathbf O&\\&\mathbf O
 &\Z^{(2)}\mathbf A_t&\end{bmatrix},\quad
 \mathbf H^{(2)}=\begin{bmatrix}&\mathbf A_t({\Z^{(2)}})^*&\mathbf O&\\
 &\mathbf O&({\Z^{(1)}})^*&\end{bmatrix},
\end{equation}
where now $\mathbf A_t=((\Z^{(2)})^*\Z^{(2)}+t\I)^{-1}$.
We have the representation
\begin{equation}\notag
 \V=\mathbf H^{(1)}\mathbf H^{(2)}\J,
\end{equation}
where $\J=\J(-1)$. 
(Recall that $\J(\alpha)$ was defined in \eqref{jz}.)
Denote by~$\mathbf e_j$, $j=1,\dots,2n$, the vectors of 
the standard orthonormal basis of $\mathbb R^{2n}$.
First we note, for $q=1$ and $j,k=1,\ldots,n$,
\begin{align}\label{spher1}
\frac{\partial\V}{\partial Z_{jk}^{(1)}}=\mathbf e_j\mathbf e_{k}^T
\mathbf H^{(2)}\J
+\mathbf H^{(1)}
\mathbf e_{k+n}\mathbf e_{j+n}^T\J.
\end{align}
Applying H\"older's inequality, we get
\begin{align}
|g_{jk}^{(1)}(\theta)|\le \|\mathbf e_{k}^T\mathbf H^{(2)}\J\|_2\|\R^2\mathbf e_j\|_2+
\|\mathbf e_{j+n}^T\J\R^2\|_2\|\mathbf H^{(1)}\mathbf e_{k+n}\|_2.\notag
\end{align}
This implies that
\begin{multline*}
\E\left\{|g_{jk}^{(1)}(\theta)|\Big|X_{jk}^{(1)},Y_{jk}^{(1)}\right\} \\
\le v^{-2}\E\left\{\|\mathbf e_{k}^T\mathbf H^{(2)}\J\|_2\Big|X_{jk}^{(1)},Y_{jk}^{(1)}\right\}
+v^{-2}\E\left\{\|\mathbf H^{(1)}\mathbf e_{k+n}\|_2\Big|X_{jk}^{(1)},Y_{jk}^{(1)}\right\}.\notag
\end{multline*}
Note that
\begin{equation}\notag
 \|\mathbf e_{k}^T\mathbf H^{(2)}\J\|_2
=\|\overline{\mathbf e}_k^T\mathbf A_t(\Z^{(2)})^*\|_2,\quad
 \|\mathbf H^{(1)}\mathbf e_{k+n}\|_2
=\|\Z^{(2)}\mathbf A_t{\overline{\mathbf e}_k}\|_2,
\end{equation}
where $\overline{\mathbf e}_k$ denotes the corresponding standard basis vector of $\mathbb R^{n}$. 
Because the matrices $\Z^{(2)}$ and the r.v.'s $Z_{jk}^{(1)}$ are independent and 
$\|\Z^{(2)}\mathbf A_t\|\le t^{-\frac12}$, we get
\begin{align}\notag
\E\Big\{|g_{jk}^{(1)}(\theta)|\Big|X_{jk}^{(1)},Y_{jk}^{(1)}\Big\}&\le Cv^{-2}t^{-\frac12}.
\end{align}
Consider the function $g_{jk}^{(2)}$ now. Introduce some auxiliary matrices.
Let 
\begin{align}
\mathbf L_t=\begin{bmatrix}&\mathbf A_t&\mathbf O&\\&\mathbf O&\mathbf A_t&\end{bmatrix},
\quad 
\mathbf M_t=\begin{bmatrix}&\Z^{(2)}&\mathbf O&\\
&\mathbf O&\Z^{(2)}&\end{bmatrix}.\notag
\end{align}
With this notation, we have
\begin{align}\label{fu1}
  \frac{\partial \mathbf H^{(1)}}{\partial Z_{jk}^{(2)}}
 =\mathbf e_{j+n}\mathbf e_{k+n}^T\mathbf L_t
 -\mathbf M_t\mathbf L_t\mathbf e_{k+n}\mathbf e_{j+n}^T\mathbf M_t\mathbf L_t
 -\mathbf M_t\mathbf L_t\mathbf M_t^*\mathbf e_{j+n}\mathbf e_{k+n}^T\mathbf L_t
\end{align}
and
\begin{align}\label{fu2}
  \frac{\partial \mathbf H^{(2)}}{\partial Z_{jk}^{(2)}}
 =\mathbf L_t\mathbf e_{k}\mathbf e_{j}^T
 - \mathbf L_t\mathbf e_{k}\mathbf e_{j}^T\mathbf M_t\mathbf L_t\mathbf M_t^*
 - \mathbf L_t\mathbf M_t^*\mathbf e_{j}\mathbf e_{k}^T\mathbf L_t\mathbf M_t^*.
\end{align}
Furthermore,
\begin{align}\label{deriv100++}
\frac{\partial \V}{\partial Z_{jk}^{(2)}}=
\frac{\partial \mathbf H^{(1)}}{\partial Z_{jk}^{(2)}}\mathbf H^{(2)}\J+
\mathbf H^{(1)}\frac{\partial \mathbf H^{(2)}}{\partial Z_{jk}^{(2)}}\J.
\end{align}
By H\"older's inequality, we have
\begin{align}
|g_{jk}^{(2)}|&\le 
\|\mathbf e_{k+n}^T\mathbf L_t\mathbf H^{(2)}\J\|_2
\|\R^2\mathbf e_{j+n}\|_2\notag\\&+
\|\mathbf e_{j+n}^T\mathbf M_t\mathbf L_t\mathbf H^{(2)}\J\|_2
\|\R^2\mathbf M_t\mathbf L_t\mathbf e_{k+n}\|_2+
\|\mathbf e_{k+n}^T\mathbf L_t\mathbf H^{(2)}\J\|_2
\|\R^2\mathbf M_t\mathbf L_t\mathbf M_t^*\mathbf e_{j+n}\|_2\notag\\&+
\|\mathbf e_j^T\J\|_2
\|\R^2\mathbf H^{(1)}\mathbf L_t\mathbf e_k\|_2\notag\\&+
\|\mathbf e_j^T\mathbf M_t\mathbf L_t\mathbf M_t^*\J\|_2
\|\R^2\mathbf H^{(1)}\mathbf L_t\mathbf e_k\|_2+
\|\mathbf e_{k}^T\mathbf L_t\mathbf M_t^*\J\|_2
\|\R^2\mathbf H^{(1)}\mathbf L_t\mathbf M_t^*\mathbf e_j\|_2.\notag
\end{align}
Simple calculations show that
\begin{align}
|g_{jk}^{(2)}|&\le 
v^{-2} \|\overline{\mathbf e}_{k}^T\mathbf A_t(\Z^{(1)})^*\|_2
\|\overline{\mathbf e}_{j}\|_2\notag\\&+
v^{-2} \|\overline{\mathbf e}_{j}^T\Z^{(2)}\mathbf A_t(\Z^{(1)})^*\|_2
\|\Z^{(2)}\mathbf A_t\overline{\mathbf e}_{k}\|_2+
v^{-2} \|\overline{\mathbf e}_{k}^T\mathbf A_t(\Z^{(1)})^*\|_2
\|\Z^{(2)}\mathbf A_t(\Z^{(2)})^*\overline{\mathbf e}_{j}\|_2\notag\\&+
v^{-2} \|\overline{\mathbf e}_j^T\|_2
\|\Z^{(1)}\mathbf A_t\overline{\mathbf e}_k\|_2\notag\\&+
v^{-2} \|\overline{\mathbf e}_j^T\Z^{(2)}\mathbf A_t(\Z^{(2)})^*\|_2
\|\Z^{(1)}\mathbf A_t\overline{\mathbf e}_k\|_2+
v^{-2} \|\overline{\mathbf e}_{k}^T\mathbf A_t(\Z^{(2)})^*\|_2
\|\Z^{(1)}\mathbf A_t(\Z^{(2)})^*\overline{\mathbf e}_j\|_2.\notag
\end{align}
By definition of $\mathbf A_t$, we have
$$
\|\mathbf A_t\| \leq t^{-1},\quad
\|\Z^{(2)}\mathbf A_t\| \leq t^{-1/2},\quad
\|\Z^{(2)}\mathbf A_t(\Z^{(2)})^{*}\| \leq 1.
$$
Combining the last two relations, we get
\begin{align}\label{norma0}
\E\Big\{|g_{jk}^{(2)}(\theta)|\Big|X_{jk}^{(2)},Y_{jk}^{(2)}\Big\}&\le 
v^{-2} \E\Big\{\|\overline{\mathbf e}_{k}^T\mathbf A_t(\Z^{(1)})^*\|_2\Big|X_{jk}^{(2)},Y_{jk}^{(2)}\Big\}\notag\\&+
v^{-2} t^{-1/2} \E\Big\{\|\overline{\mathbf e}_{j}^T\Z^{(2)}\mathbf A_t(\Z^{(1)})^*\|_2\Big|X_{jk}^{(2)},Y_{jk}^{(2)}\Big\}\notag\\&+
v^{-2} \E\Big\{\|\overline{\mathbf e}_{k}^T\mathbf A_t(\Z^{(1)})^*\|_2\Big|X_{jk}^{(2)},Y_{jk}^{(2)}\Big\}\notag\\&+
v^{-2} \E\Big\{\|\Z^{(1)}\mathbf A_t\overline{\mathbf e}_k\|_2\Big|X_{jk}^{(2)},Y_{jk}^{(2)}\Big\}\notag\\&+
v^{-2} \E\Big\{\|\Z^{(1)}\mathbf A_t\overline{\mathbf e}_k\|_2\Big|X_{jk}^{(2)},Y_{jk}^{(2)}\Big\}\notag\\&+
v^{-2} t^{-1/2} \E\Big\{\|\Z^{(1)}\mathbf A_t(\Z^{(2)})^*\overline{\mathbf e}_j\|_2\Big|X_{jk}^{(2)},Y_{jk}^{(2)}\Big\}.
\end{align}
Now, writing $\Z^{(1)} = (\Z^{(1)} - \E\Z^{(1)}) + \E\Z^{(1)}$
and using Eqs.\@ \eqref{asumplind} and \eqref{eq:momentsAfterTruncation-11},
it is straightforward to check that, for any constant vector $\mathbf v$, we have
$\E \|\Z^{(1)}\mathbf v\|_2^2 \leq C \|\mathbf v \|_2^2$.
Thus, because the matrix $\Z^{(1)}$ and the r.v.'s $X_{jk}^{(2)},Y_{jk}^{(2)}$ 
are independent, we obtain
\begin{align}
\label{norma2}
\E\Big\{\|\overline{\mathbf e}_j^T\Z^{(2)}\mathbf A_t{\Z^{(1)}}^*\|_2^{2}\Big|X_{jk}^{(2)},Y_{jk}^{(2)}\Big\} \leq Ct^{-1} \,,
\\
\label{norma4}
\E\Big\{\|\overline{\mathbf e}_j^T\mathbf A_t
{\Z^{(1)}}^*\|_2^2\Big|X_{jk}^{(2)},Y_{jk}^{(2)}\Big\} \le Ct^{-2}.
\end{align}
Inserting the bounds \eqref{norma2} and \eqref{norma4} into \eqref{norma0}, we get
\begin{equation}\notag
\E\Big\{|g_{jk}^{(2)}(\theta)|\Big|X_{jk}^{(2)},Y_{jk}^{(2)}\Big\}\le Cv^{-2}(t^{-2}+t^{-1}). 
\end{equation}
This inequality implies condition \eqref{mat3} for $q=2$.
Now we consider the condition \eqref{mat31}.
We have
\begin{equation}\label{derivspher}
 \frac{\partial g_{jk}^{(q)}}{\partial Z_{jk}^{(q)}}
 =\Tr\frac{\partial^2 \V}{\partial (Z_{jk}^{(q)})^2}\R^2
 -\Tr \frac{\partial \V}{\partial {Z_{jk}^{(q)}}}
 \R\frac{\partial \V}{\partial {Z_{jk}^{(q)}}}\R^2-
 \Tr \frac{\partial \V}{\partial {Z_{jk}^{(q)}}}\R^2
 \frac{\partial \V}{\partial {Z_{jk}^{(q)}}}\R.
\end{equation}
Using representation \eqref{spher1}, it is straightforward to check that
\begin{equation}\notag
 \frac{\partial^2 \V}{\partial (Z_{jk}^{(1)})^2}=0.
\end{equation}
This equality and \eqref{derivspher} together imply
\begin{align}
\frac{\partial g_{jk}^{(1)}}{\partial Z_{jk}^{(1)}}
&=-2\Tr (\frac{\partial \V}{\partial {Z_{jk}^{(1)}}}\R)^2\R\notag\\&=
-2\Tr (\mathbf e_j\mathbf e_{k}^T\mathbf H^{(2)}\J\R
+\mathbf H^{(1)}
\mathbf e_{k+n}\mathbf e_{j+n}^T\J\R)^2\R=-2(T_1+T_2+T_3+T_4),\notag
\end{align}
where
\begin{align}
 T_1&=\Tr \mathbf e_j\mathbf e_k^T\mathbf H^{(2)}\J\R
 \mathbf e_j\mathbf e_k^T\mathbf H^{(2)}\J\R^2,\notag\\
 T_2&=\Tr \mathbf e_j\mathbf e_k^T\mathbf H^{(2)}\J\R
 \mathbf H^{(1)}\mathbf e_{k+n}\mathbf e_{j+n}^T\J\R^2,\notag\\
 T_3&=\Tr\mathbf H^{(1)}\mathbf e_{k+n}\mathbf e_{j+n}^T\J
 \R\mathbf e_j\mathbf e_k^T\mathbf H^{(2)}\J\R^2,\notag\\
 T_4&=\Tr\mathbf H^{(1)}\mathbf e_{k+n}\mathbf e_{j+n}^T\J
 \R\mathbf H^{(1)}\mathbf e_{k+n}\mathbf e_{j+n}^T\J\R^2.\notag
\end{align}
It is easy to see that
\begin{align}
 |T_1|&\le |\mathbf e_k^T\mathbf H^{(2)}\J\R\mathbf e_j|
 |\mathbf e_k^T\mathbf H^{(2)}\J\R^2\mathbf e_j|\le 
 v^{-3}\|\mathbf e_k^T\mathbf H^{(2)}\|_2^2\le v^{-3}t^{-1},\notag\\
 |T_2|&\le |\mathbf e_k^T\mathbf H^{(2)}\J\R\mathbf H^{(1)}
 \mathbf e_{k+n}||\mathbf e_{j+n}^T\J\R^2\mathbf e_j|\le 
 v^{-3}\|\mathbf e_k^T\mathbf H^{(2)}\|_2\|\mathbf H^{(1)}\mathbf e_{k+n}\|_2
 \le v^{-3}t^{-1},\notag\\
 |T_3|&\le |\mathbf e_{j+n}^T\J\R\mathbf e_j||\mathbf e_k^T\mathbf H^{(2)}
 \J\R^2\mathbf H^{(1)} \mathbf e_{k+n}|
 \le v^{-3}\|\mathbf H^{(1)}\mathbf e_{k+n}\|_2\|\mathbf e_k^T
 \mathbf H^{(2)}\|_2\le v^{-3}t^{-1},\notag\\
 |T_4|&\le |\mathbf e_{j+n}^T\J
 \R\mathbf H^{(1)}\mathbf e_{k+n}||\mathbf e_{j+n}^T\J\R^2 
 \mathbf H^{(1)}\mathbf e_{k+n}|
 \le v^{-3}\|\mathbf H^{(1)}\mathbf e_{k+n}\|_2^2\le v^{-3}t^{-1}.\notag
\end{align}
This implies that
\begin{equation}\label{e01}
 \E\Big\{\Big|\frac{\partial g_{jk}^{(1)}}{\partial Z_{jk}^{(1)}}
 \Big|\,\Big|X_{jk}^{(1)},Y_{jk}^{(1)}\Big\}\le C v^{-3}t^{-1}.
\end{equation}
Thus condition \eqref{mat31} holds for $q=1$.
Consider $q=2$ now.
We have
\begin{align}\label{e02}
\frac{\partial g_{jk}^{(2)}}{\partial Z_{jk}^{(2)}}
&=-2\Tr \Big(\frac{\partial \V}{\partial {Z_{jk}^{(2)}}}\R\Big)^2\R+
\Tr \frac{\partial^2 \V}{\partial \big(Z_{jk}^{(2)}\big)^2}\R^2.
\end{align}
Using formula \eqref{deriv100++}, we get
\begin{align}\label{e03}
 \frac{\partial^2 \V}{\partial \big(Z_{jk}^{(2)}\big)^2}
 =\frac{\partial^2 \mathbf H^{(1)}}{\partial \big(Z_{jk}^{(2)}\big)^2}\mathbf H^{(2)}\J+
 2\frac{\partial \mathbf H^{(1)}}{\partial {Z_{jk}^{(2)}}}
 \frac{\partial \mathbf H^{(2)}}{\partial {Z_{jk}^{(2)}}}\J
 +\mathbf H^{(1)}\frac{\partial^2 \mathbf H^{(2)}}{\partial \big(Z_{jk}^{(2)}\big)^2}\J.
\end{align}
Introduce the matrices
\begin{align}
 \mathbf P_t^{(1)}&=\mathbf e_{k+n}\mathbf e_{j+n}^T\mathbf M_t
 +\mathbf M_t^*\mathbf e_{j+n}\mathbf e_{k+n}^T,\notag\\
 \mathbf P_t^{(2)}&=\mathbf e_{k}\mathbf e_{j}^T\mathbf M_t
 +\mathbf M_t^*\mathbf e_{j}\mathbf e_{k}^T.\notag
\end{align}
Simple calculations show that
\begin{align}\notag
 \frac{\partial^2 \mathbf H^{(1)}}{\partial (Z_{jk}^{(2)})^2}={\pm}A_1{\pm}\cdots{\pm}A_{9},
\end{align}
where
\begin{align}
  A_1&=\mathbf e_{j+n}\mathbf e_{k+n}^T\mathbf L_t\mathbf P_t^{(1)}
  \mathbf L_t,\quad\quad\quad\quad\quad
 A_2=\mathbf e_{j+n}\mathbf e_{k+n}^T\mathbf L_t\mathbf e_{k+n}\mathbf e_{j+n}^T
 \mathbf M_t\mathbf L_t,\notag\\
  A_3&=\mathbf M_t\mathbf L_t\mathbf P_t^{(1)}\mathbf L_t\mathbf e_{k+n}\mathbf e_{j+n}^T
  \mathbf M_t\mathbf L_t,\quad
 A_4=\mathbf M_t\mathbf L_t\mathbf e_{k+n}\mathbf e_{j+n}^T\mathbf e_{j+n}
 \mathbf e_{k+n}^T\mathbf L_t,\notag\\
A_5&=\mathbf M_t\mathbf L_t\mathbf e_{k+n}\mathbf e_{j+n}^T\mathbf M_t\mathbf L_t
\mathbf P_t^{(1)}\mathbf L_t,\quad
 A_6=\mathbf e_{j+n}\mathbf e_{k+n}^T\mathbf L_t\mathbf M_t^*\mathbf e_{j+n}
 \mathbf e_{k+n}^T\mathbf L_t,\notag\\
 A_7&=\mathbf M_t\mathbf L_t\mathbf P_t^{(1)}\mathbf L_t\mathbf M_t^*\mathbf e_{j+n}
 \mathbf e_{k+n}^T\mathbf L_t,\quad
 A_8=\mathbf M_t\mathbf L_t\mathbf e_{k+n}\mathbf e_{j+n}^T\mathbf e_{j+n}
 \mathbf e_{k+n}^T\mathbf L_t,\notag\\
 A_9&=\mathbf M_t\mathbf L_t\mathbf M_t^*\mathbf e_{j+n}
 \mathbf e_{k+n}^T\mathbf L_t\mathbf P_t^{(1)}\mathbf L_t.\notag
 \end{align}
Using H\"older's inequality, we may prove that
\begin{multline}\label{e1}
 |\Tr A_i \mathbf H^{(2)}\J\R^2|\le Cv^{-2}(t^{-1}+t^{-\frac12})(\|\overline{\mathbf e}_j^T\Z^{(2)}
 \mathbf A_t{\Z^{(1)}}^*\|_2+
 \|\overline{\mathbf e}_j^T\mathbf A_t{\Z^{(1)}}^*\|_2), 
 \\\quad\text{for}\quad i=1,\ldots,9.
\end{multline}
Using inequality \eqref{norma2} and \eqref{norma4}, we therefore obtain
\begin{equation}\label{e2}
 \E\Big\{\Big|\Tr\frac{\partial^2\mathbf H^{(1)} }{(\partial Z_{jk}^{(2)})^2}
 \mathbf H^{(2)}\J\R^2\Big|\Bigg|X_{jk}^{(2)},Y_{jk}^{(2)}\Big\}
 \le Cv^{-2}(t^{-2}+t^{-1}). 
\end{equation}
Analogously we get
\begin{align}\notag
 \frac{\partial^2 \mathbf H^{(2)}}{\partial (Z_{jk}^{(2)})^2}={\pm}B_1{\pm}\cdots{\pm}B_9,
\end{align}
where
\begin{align}
 B_1=&\mathbf L_t\mathbf P_t^{(2)}\mathbf L_t\mathbf e_k\mathbf e_j^T,
 \quad\quad\quad
 B_2=\mathbf L_t\mathbf P_t^{(2)}\mathbf L_t\mathbf e_k\mathbf e_j^T\mathbf M_t
 \mathbf L_t\mathbf M_t^*,\notag\\
 B_3=&\mathbf L_t\mathbf e_k\mathbf e_j^T\mathbf e_j\mathbf e_k^T
 \mathbf L_t\mathbf M_t^*,\quad
 B_4=\mathbf L_t\mathbf e_k\mathbf e_j^T\mathbf M_t\mathbf L_t\mathbf P_t^{(2)}
 \mathbf L_t\mathbf M_t^*,\notag\\
 B_5=&\mathbf L_t\mathbf e_k\mathbf e_j^T\mathbf M_t\mathbf L_t\mathbf e_k
 \mathbf e_j^T,\quad
 B_6=\mathbf L_t\mathbf P_t^{(2)}\mathbf L_t\mathbf M_t^*\mathbf e_j
 \mathbf e_k^T\mathbf L_t\mathbf M_t^*,\notag\\
 B_7=&\mathbf L_t\mathbf e_k\mathbf e_j^T\mathbf e_j\mathbf e_k^T\mathbf L_t
 \mathbf M_t^*,\quad
 B_8=\mathbf L_t\mathbf M_t^*\mathbf e_j\mathbf e_k^T\mathbf L_t\mathbf P_t^{(2)}
 \mathbf L_t\mathbf M_t^*,\notag\\
 B_9=&\mathbf L_t\mathbf M_t^*\mathbf e_j\mathbf e_k^T\mathbf L_t\mathbf e_k\mathbf e_j^T.\notag
 \end{align}
Using H\"older's  inequality we get
\begin{multline}\label{e4}
 |\Tr \mathbf H^{(1)}B_i\J\R^2|\le Cv^{-2}(t^{-1}+t^{-\frac12})(\|\overline{\mathbf e}_j
 \Z^{(2)}\mathbf A_t{\Z^{(1)}}^*\|_2+
 \|\overline{\mathbf e}_j\mathbf A_t{\Z^{(1)}}^*\|_2), \\
 \quad\text{for}\quad i=1,\ldots,9.
\end{multline}
Similarly to \eqref{e2} we obtain
\begin{equation}\label{e3}
 \E\Big\{\Big|\Tr\mathbf H^{(1)}\frac{\partial^2\mathbf H^{(2)} }
 {(\partial Z_{jk}^{(2)})^2}\J\R^2\Big|
 \Bigg|X_{jk}^{(2)},Y_{jk}^{(2)}\Big\}\le Cv^{-2}(t^{-2}+t^{-1}). 
\end{equation}
Finally, using the block structure of the matrices $\mathbf H^{(1)}$ and $\mathbf H^{(2)}$,
it is easy to see that
\begin{align}\label{e3a}
\Tr\frac{\partial \mathbf H^{(1)}}{\partial Z_{jk}^{(2)}}
\frac{\partial \mathbf H^{(2)}}{\partial Z_{jk}^{(2)}}
\J\R^2=0.
\end{align}
Relations \eqref{e03}, \eqref{e2}, \eqref{e3} and \eqref{e3a} together imply
\begin{equation}\label{e5}
 \E\Big\{\Big|\Tr \frac{\partial^2 \V}
 {\partial {Z_{jk}^{(2)}}^2}\R^2\Big|\Bigg|X_{jk}^{(2)},Y_{jk}^{(2)}\Big\}
 \le Cv^{-2}(t^{-2}+t^{-1}).
\end{equation}
Furthermore, relations \eqref{fu1}, \eqref{fu2}, \eqref{deriv100++},
together imply
\begin{align}\label{e7}
\Big|\Tr (\frac{\partial \V}{\partial {Z_{jk}^{(2)}}}\R)^2\R\Big|
&\le Cv^{-3}(t^{-1}+1)
(\|\overline{\mathbf e}_j^T\Z^{(2)}\mathbf A_t{\Z^{(1)}}^*\|_2^2+
\|\overline{\mathbf e}_j^T\mathbf A_t{\Z^{(1)}}^*\|_2^2)
\end{align}
and therefore, by \eqref{norma2} and \eqref{norma4},
\begin{equation}\label{e5a}
\E\Big\{\Big|\Tr (\frac{\partial \V}{\partial {Z_{jk}^{(2)}}}\R)^2\R\Big|
\Bigg|X_{jk}^{(2)},Y_{jk}^{(2)}\Big\}\le Cv^{-3}(t^{-3}+t^{-1}).
\end{equation}
This concludes the proof of \eqref{mat31} for $q=2$.
The proof of condition \eqref{mat32} is similar and hence omitted.

\pagebreak[2]

We may now apply Theorem \ref{singularvalueuniversality}.
It follows from this theorem that for each $z \in \mathbb C$,
the~Stieltjes transforms $s_t(z;\X)$ and $s_t(z;\Y)$ 
associated with the matrices $\W_t(\X)$ and $\W_t(\Y)$
have the same limit in probability (if~existent).
It then follows by Lemma \ref{inverse10} that for each $z \in \mathbb C$,
the Stieltjes transforms $s(z;\X)$ and $s(z;\Y)$ 
associated with the matrices $\W(\X)$ and $\W(\Y)$
also have the same limit in probability (if~existent).
Thus, it~remains to identify the limit in the Gaussian case.

From now on, let $\mathbf A_t$ be defined by $\mathbf A_t = ((\Y^{(2)})^*\Y^{(2)}+t\I)^{-1}$.
Then the matrices $(\Y^{(1)})^*\Y^{(1)}$ and $\mathbf A_t (\Y^{(2)})^*\Y^{(2)} \mathbf A_t$ 
are asymptotically free. By the Marchenko--Pastur theorem (Theorem \ref{MarPas}), 
their limiting (mean) empirical spectral distributions
are given by $\varrho$, the Marchenko--Pastur distribution,
and $\sigma_t$, the induced measure of $\varrho$
under the mapping $x \mapsto (x+t)^{-1} x (x+t)^{-1}$,
respectively.
Thus, by Lemma \ref{rectang},
the limiting (mean) empirical spectral distribution of $\W_t(\Y)$
is given by $\varrho \boxtimes \sigma_t$,
with $S$-transform $S_\varrho \cdot S_{\sigma_t}$.
Clearly, as $t \to 0$, we have $\sigma_t \to \sigma$ in Kolmogorov distance,
where $\sigma$ denotes the induced measure of $\varrho$ under the mapping $x \mapsto x^{-1}$.
Using \eqref{eq:boxtimes-2} and \eqref{ST-continuity}, we obtain
$\varrho \boxtimes \sigma_t \to \varrho \boxtimes \sigma$ in Kolmogorov distance
and $S_\varrho \cdot S_{\sigma_t} \to S_\varrho \cdot S_\sigma$.
It therefore follows by Lemma \ref{inverse10} that 
the limiting (mean) empirical spectral distribution 
of $\W(\Y)$ has the $S$-transform $S_\varrho \cdot S_\sigma$.
Now, by Remarks \ref{MP} and \ref{inverse-MP}, we have
\begin{equation}\notag
S_\varrho(z)=\frac1{1+z}
\qquad\text{and}\qquad
S_\sigma(z)=-z
\end{equation}
and therefore
\begin{equation}\notag
(S_\varrho \cdot S_\sigma)(z)=-\frac{z}{1+z}.
\end{equation}
After a simple calculation we get that the density of the limit distribution is given by
\begin{equation}\label{dens}
p(x)=\frac1{\pi}\frac1{\sqrt x(1+x)}\mathbb I\{x\ge 0\}.
\end{equation}
This completes the proof of Theorem \ref{spherical}. 
\end{proof}

\subsubsection{Product of Independent Matrices from Spherical Ensemble}
In this section we consider products of independent matrices 
of type $\X^{(2q-1)}(\X^{(2q)})^{-1}$,
for $q=1,\ldots, m$, assuming that all matrices and all entries of matrices are independent. 
Let $\F=\prod_{q=1}^m\X^{(2q-1)}(\X^{(2q)})^{-1}$ and $\W=\F\F^*$. 
Let $s_1^2\ge \ldots \ge s_n^2$ denote the eigenvalues of the matrix $\W$ 
and let $\mathcal G_n(x)$ denote  the empirical distribution function
$$
\mathcal G_n(x)=\frac1n\sum_{j=1}^n\mathbb I\{s_j^2\le x\}.
$$
We shall assume as usual that $\X^{(q)} = \frac{1}{\sqrt{n}} (X_{jk}^{(q)})$,
and that \eqref{eq:secondmomentstructure-8a} or \eqref{eq:secondmomentstructure-8b} holds.
Then we~have the following result, which was already announced 
by the first and third author of this paper in \cite{Tikh:talk12}. 
See also Forrester \cite{Forrester} and Forrester and Liu \cite{ForresterLiu} for the Gaussian case.
The density in \eqref{eq:psm-density} also occurs in Biane \cite{Biane}
in the context of free multiplicative L\'evy processes.

\begin{thm}\label{productspherical} 
Assume that the random variables $X_{jk}^{(q)}$, for $q=1,\ldots,2m$ and
$j,k=1,\ldots,n$ satisfy condition \eqref{eq:spherical-UI2}. 
Then 
\begin{equation}\notag
 \lim_{n\to\infty}\mathcal G_n(x)=G_m(x) \quad\text{in probability},
\end{equation}
where $G_m(x)$ denotes the distribution function with density $p_m(x)=G_m'(x)$ given by
\begin{equation}\notag
p_m(x)=\frac{1}{\pi}\frac{\sin\frac{\pi m}{m+1}}{x^{\frac m{m+1}}
\big(x^{\frac {2}{m+1}}-2x^{\frac1{m+1}}\cos\frac{\pi m}{m+1}+1\big)}.
\end{equation} 
\end{thm}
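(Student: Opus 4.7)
The plan is to follow the two-step strategy already used for Theorem~\ref{spherical} and for the earlier product theorems in this section: first reduce to the Gaussian case via Theorem~\ref{singularvalueuniversality}, then identify the limit via asymptotic freeness and the extended $S$-transform calculus of Section~\ref{freeprob}. Since $\mathbb F$ contains inverses, I begin by regularizing each factor as in the proof of Theorem~\ref{spherical}, setting
\[
\mathbb F_t(\X^{(1)},\ldots,\X^{(2m)}) := \prod_{q=1}^m \X^{(2q-1)} \mathbf A_{t,q} (\X^{(2q)})^*,
\qquad
\mathbf A_{t,q} := ((\X^{(2q)})^*\X^{(2q)} + t\I)^{-1}.
\]
For fixed $t>0$, the rank condition~\eqref{rank} and the derivative bounds \eqref{mat3}--\eqref{mat32a} can be verified by combining the product-rule argument from Subsection~\ref{products} (writing $\partial \V/\partial Z^{(q)}_{jk}$ as a rank-one insertion into a product of block matrices) with the regularization argument in the proof of Theorem~\ref{spherical} (using $\|\mathbf A_{t,q}\| \le t^{-1}$, $\|\X^{(2q)}\mathbf A_{t,q}\| \le t^{-1/2}$, $\|\X^{(2q)}\mathbf A_{t,q}(\X^{(2q)})^*\| \le 1$, together with the a priori bound $\E\|\X^{(2q-1)}\mathbf v\|_2^2 \le C\|\mathbf v\|_2^2$ for deterministic $\mathbf v$). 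Theorem~\ref{singularvalueuniversality} then gives universality for $\W_t := \F_t\F_t^*$ at every fixed $t>0$. The limit $t \downarrow 0$ is handled by an adaptation of Lemma~\ref{inverse10}: the difference of Stieltjes transforms is controlled by integrals of $\tfrac1n\Tr((\X^{(2q)})^*\X^{(2q)}+u\I)^{-1}$ over $u \in [0,t]$, which reduces to $\tfrac1n\sum_k(\log(s_k^2+t)-\log s_k^2)$. Assumption~\eqref{eq:spherical-UI2} secures Conditions $(C0)$--$(C2)$ for each $\X^{(2q)}$ via Appendix~\ref{sec:SmallSingularValues}, so that Marchenko--Pastur together with Lemma~\ref{lem:logintegrability} forces this quantity to vanish in probability as $t \downarrow 0$.

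In the Gaussian case each factor $\Y^{(2q-1)}(\Y^{(2q)})^{-1}$ is bi-unitary invariant, and the different $q$'s involve disjoint independent matrices. Iterating Proposition~\ref{prop:asympfree} along the partial products shows that the Hermitizations of the $m$ single-spherical factors are asymptotically free. By Theorem~\ref{spherical} the limiting squared singular value distribution of each factor is the measure $\mu_1$ with density $1/(\pi\sqrt{x}(1+x))$, and a direct $\psi$-transform computation via~\eqref{eq:ST-unbounded} yields $S_1(w) = -w/(1+w)$. Multiplicativity of the $S$-transform under free multiplicative convolution (Lemma~\ref{rectang}), together with the approximations \eqref{eq:boxtimes-1}--\eqref{eq:boxtimes-2} and the continuity~\eqref{ST-continuity} needed to handle the unbounded support of $\mu_1$, then yields
\[
S_m(w) \;=\; \Bigl(-\frac{w}{1+w}\Bigr)^{\!m}
\]
for the limiting squared singular value distribution $\mu_m$ of $\F$.

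For the density extraction, \eqref{eq:ST-unbounded} gives $\psi_m^{-1}(w) = (-1)^m w^{m+1}/(1+w)^{m+1}$. Using the identity $\psi_m(1/z) = -1 - z g_m(z)$ between $\psi_m$ and the Stieltjes transform $g_m$ and taking boundary values, one has $p_m(x) = (\pi x)^{-1}\,\im\psi_m(\tfrac1x + i0^+)$ for $x>0$. Setting $z = 1/x$ in $\psi_m(z)=w$ and solving for $w$ gives $\bigl(w/(1+w)\bigr)^{m+1} = (-1)^m/x$. The branch that continuously extends the map $\{\im z > 0\} \to \{\im w > 0\}$ singles out $\zeta := w/(1+w) = x^{-1/(m+1)} e^{i\pi m/(m+1)}$ among the $m+1$ candidate roots, and the elementary identity $\im w = \im\zeta/|1-\zeta|^2$ then reproduces precisely the density formula stated in the theorem.

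The principal technical difficulty I expect to be the handling of the $S$-transform for the \emph{unbounded} measure $\mu_1$: its tails $\sim x^{-3/2}$ entail no positive-integer moments, so the formal power-series machinery of Subsections~\ref{products}--\ref{productpowers} no longer applies and one must rely on the extended framework of Bercovici--Voiculescu via $\psi_\nu$ together with the approximation scheme \eqref{eq:boxtimes-1}--\eqref{eq:boxtimes-2}. A closely related subtlety is the branch selection at the end of Step~3: the $(m+1)$-th root equation $\zeta^{m+1} = (-1)^m/x$ has $m+1$ solutions, and only one of them corresponds to the analytic continuation of $\psi_m^{-1}$ from the upper half-plane down to the positive real axis. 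This selection is exactly what produces the $\sin(\pi m/(m+1))$ factor in the numerator. By contrast, the universality step and the algebraic checks in Step~2 are essentially routine adaptations of the techniques already developed for the previous theorems in this section.
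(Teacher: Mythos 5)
Your overall strategy matches the paper's: regularize the inverses, apply Theorem~\ref{singularvalueuniversality} for each fixed $t$, identify the Gaussian limit via asymptotic freeness and the extended $S$-transform calculus of Section~\ref{freeprob}, then take $t\downarrow0$. Your computations are correct. In particular, $S_1(w)=-w/(1+w)$ follows from the explicit density $1/(\pi\sqrt x(1+x))$; multiplicativity gives $S_m(w)=(-w/(1+w))^m$; and the inversion $\psi_m^{-1}(w)=(-1)^m w^{m+1}/(1+w)^{m+1}$ together with $p_m(x)=(\pi x)^{-1}\im\psi_m(\tfrac1x+i0^+)$ and the branch $\zeta=x^{-1/(m+1)}e^{i\pi m/(m+1)}$ does recover the stated density. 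The paper instead solves for the Stieltjes transform $g(z)$ directly from $S_{\W}(-(1+zg(z)))=g(z)/(1+zg(z))$ and takes boundary values; both routes are equivalent, and yours is a clean alternative for the final density extraction.

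However, there is a genuine gap in the $t\downarrow0$ step. You claim that an adaptation of Lemma~\ref{inverse10} bounds $|s_t(z)-s_0(z)|$ by $\int_0^t\tfrac1n\Tr((\X^{(2q)})^*\X^{(2q)}+u\I)^{-1}\,du$. That bound is specific to the single-factor case: there $\frac{d\W_u}{du}=-2\F_u\widetilde{\mathbf A}_u\F_u^*$ with $\widetilde{\mathbf A}_u$ positive definite, so that $|\Tr(\R_u\F_u\widetilde{\mathbf A}_u\F_u^*\R_u)|\le\|\R_u\F_u\|\,\|\F_u^*\R_u\|\,\Tr\widetilde{\mathbf A}_u$. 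For an inverse sitting in the interior of the product, write $\F_t=\mathbf A(\X^{(2q)})_t^{-1}\mathbf B$ with $\mathbf B$ the product of the later spherical factors; then $\frac{d\F_t}{dt}=-\mathbf A(\X^{(2q)})_t^{-1}\widetilde{\mathbf A}_t\mathbf B$, the resolvent factor $\widetilde{\mathbf A}_t$ is trapped between $\mathbf A(\X^{(2q)})_t^{-1}$ and $\mathbf B$, and moving it outward so that it appears in the form $\F_t(\,\cdot\,)\F_t^*$ requires $\mathbf B^{-1}$, whose operator norm is not controlled. The paper handles this by its Regularization Lemma~\ref{AXB-lemma}: one first replaces the outer matrix $\mathbf B$ by a regularized $\mathbf B_s$ with uniformly bounded condition number, so that the commuting step produces $\F_{t,s}\big(\mathbf B_s^{-1}\widetilde{\mathbf A}_t\mathbf B_s+\mathbf B_s^*\widetilde{\mathbf A}_t(\mathbf B_s^*)^{-1}\big)\F_{t,s}^*$ with a self-adjoint, trace-controllable middle factor, and then sends $t\to0$ and $s\to0$ along a suitable diagonal. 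The full proof of Theorem~\ref{productspherical} is then run by induction on $m$, telescoping over the $m$ regularization parameters one at a time (Lemma~\ref{smooth10}), with the Conditions $(C0)$--$(C2)$ required for the intermediate product $\mathbf B$ supplied by the inductive hypothesis together with a forward reference to Theorem~\ref{spherensprod}. This double regularization and the induction are the technical heart of the proof; they are not a routine adaptation of Lemma~\ref{inverse10}, and the argument would not close without them.
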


Similarly as the proof of Theorem \ref{spherical},
the proof of Theorem \ref{productspherical} is rather technical.
Before we can apply Theorem \ref{singularvalueuniversality}, we must regularize all the inverse matrices,
and this requires a slightly more complicated construction than 
in the previous subsection. 

Let us formulate a general result.
Let $\mathbf A$ and $\mathbf B$ be random matrices of size $n \times n$,
and let $\X$ be a Girko-Ginibre matrix of size $n \times n$
satisfying \eqref{eq:UI2}.
Introduce the inverse $\X_0^{-1} := \X^{-1}$
and the regularized inverse
$$
\X^{-1}_t:=(\X^*\X+t\I)^{-1}\X^*=\X^*(\X\X^*+t\I)^{-1} \,.
$$
For any $t \geq 0$,
let $\F_t := \mathbf A \X_t^{-1} \mathbf B$,
$\W_t := \F_t \F_t^*$,
$\R_t(z) := (\W_t - z)^{-1}$ ($z \not\in \Real$)
and $g_t(z) := \tfrac1n \Tr \R_t(z)$.

\begin{lem}[Regularization Lemma]
\label{AXB-lemma}
Let $\mathbf A$ and $\mathbf B$ be random matrices of size $n \times n$,
and let $\X$ be Girko--Ginibre random matrices of size $n \times n$
satisfying \eqref{eq:UI2}.
Suppose that the matrices $\mathbf B$ satisfy Conditions (C0), (C1), (C2)
and that their squared singular value distributions converge weakly in probability
to some probability measure $\nu$.
Then, for any $z \not\in \mathbb R$, 
$$
\lim_{t \to 0} \limsup_{n \to \infty} |g_{t}(z) - g_{0}(z)| = 0 \quad \text{in probability,}
$$
Even more, the convergence is uniform in $\mathbf A$.
\end{lem}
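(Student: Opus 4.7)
The plan is to mimic the strategy of Lemma \ref{inverse10}, starting from the integral representation
\[
g_t(z) - g_0(z) = \int_0^t \tfrac{1}{n}\Tr\tfrac{d\R_u}{du}\,du .
\]
Writing $\widetilde{\mathbf A}_u := (\X\X^* + u\I)^{-1}$ so that $\X_u^{-1} = \X^*\widetilde{\mathbf A}_u$ and $\tfrac{d}{du}\X_u^{-1} = -\X^*\widetilde{\mathbf A}_u^2$, the identity $\tfrac{d\R_u}{du} = -\R_u\tfrac{d\W_u}{du}\R_u$ combined with cyclicity of the trace gives
\[
\tfrac{1}{n}\Tr\tfrac{d\R_u}{du} = \tfrac{2}{n}\re\Tr\bigl(\widetilde{\mathbf A}_u \cdot \mathbf B\,(\F_u^*\R_u^2\F_u)\,\mathbf B^{-1}\bigr).
\]
The middle factor simplifies via the intertwining relation $\F_u^*\R_u = \widetilde\R_u\F_u^*$, where $\widetilde\R_u := (\F_u^*\F_u - z)^{-1}$ (which follows directly from $\F_u^*(\F_u\F_u^* - z) = (\F_u^*\F_u - z)\F_u^*$); this yields $\F_u^*\R_u^2\F_u = \widetilde\R_u + z\widetilde\R_u^2$, whose operator norm is bounded by $v^{-1} + |z|v^{-2} =: C(z)$ uniformly in $u$.

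Using positivity of $\widetilde{\mathbf A}_u$ and the elementary trace inequality $|\Tr(\widetilde{\mathbf A}_u\mathbf M)| \le \|\mathbf M\|\Tr\widetilde{\mathbf A}_u$, this produces the clean bound
\[
\Bigl| \tfrac{1}{n}\Tr\tfrac{d\R_u}{du} \Bigr|
\;\le\; C(z)\,\kappa(\mathbf B)\,\tfrac{1}{n}\Tr\widetilde{\mathbf A}_u ,
\]
where $\kappa(\mathbf B) := \|\mathbf B\|\|\mathbf B^{-1}\|$. Crucially, the estimate is independent of $\mathbf A$, which accounts for the claimed uniformity in $\mathbf A$. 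Integrating then gives exactly the quantity handled in Lemma \ref{inverse10}, namely
$\int_0^t \tfrac{1}{n}\Tr\widetilde{\mathbf A}_u\,du = \tfrac{1}{n}\sum_k \log\bigl((s_k^2(\X)+t)/s_k^2(\X)\bigr)$. Since $\X$ is a Girko--Ginibre matrix satisfying \eqref{eq:UI2}, Theorem \ref{MarPas} combined with Conditions $(C0)$--$(C2)$ for $\X$ (valid by Lemmas \ref{C0}--\ref{C2}) and the $\log$-integrability furnished by Lemma \ref{lem:logintegrability} ensure that this sum converges in probability to $\int_0^4 \log((x+t)/x)\,\tfrac{1}{2\pi}\sqrt{(4-x)/x}\,dx$, which vanishes as $t \downarrow 0$ by dominated convergence.

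The main obstacle is the factor $\kappa(\mathbf B)$: under Condition $(C1)$ alone, $\|\mathbf B^{-1}\|$ may grow polynomially (as $n^Q$), and under $(C0)$ alone, $\|\mathbf B\|$ may also grow polynomially, so the naive bound is ineffective. I will handle this via a two-sided truncation: for $0 < \delta < 1$, let $\mathbf B_\delta$ be obtained by clipping every singular value of $\mathbf B$ outside $[\delta,\delta^{-1}]$ to the respective endpoint, giving $\kappa(\mathbf B_\delta) \le \delta^{-2}$. Apply the preceding argument to the regularized matrix $\F_u^{(\delta)} := \mathbf A\X_u^{-1}\mathbf B_\delta$ (whose $\kappa$ is now $n$-independent), and control the discrepancy $|g_t^{\mathbf B} - g_t^{\mathbf B_\delta}|$ via Bai's rank inequality as in Lemma \ref{lem:truncation}, using Conditions $(C0)$, $(C1)$, $(C2)$ on $\mathbf B$ to bound the proportion of singular values of $\mathbf B$ falling outside $[\delta,\delta^{-1}]$. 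Taking limits in the order $n \to \infty$, then $t \to 0$, then $\delta \to 0$ closes the argument and yields uniformity in $\mathbf A$.
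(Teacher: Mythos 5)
Your argument is sound and reaches the same conclusion, but it handles the ill-conditioning of $\mathbf B$ by a genuinely different mechanism than the paper. The paper replaces $\mathbf B$ by a \emph{smoothly} deformed matrix $\mathbf B_s = \U f(s,\mathbf\Lambda)\V^*$ with $f(s,x)=\sqrt{x^2+s}/(1+s\sqrt{x^2+s})$, writes $\R_t-\R_0$ as a three-term telescoping sum over the two parameters $(t,s)$, and controls the two ``$\mathbf B$-regularization'' legs by differentiating in $s$: one arrives at $\int_0^s\tfrac1n\Tr|\mathbf H_u|\,du$, which is then estimated by explicit logarithmic bounds using $\log$-integrability of $\nu$ via Lemma~\ref{lem:logintegrability}; a diagonalization $s=s(t)\to0$ then closes the argument. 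You instead use a \emph{hard} truncation $\mathbf B_\delta$ of the singular values to $[\delta,\delta^{-1}]$ and control $g_t^{\mathbf B}-g_t^{\mathbf B_\delta}$ (uniformly in $t$) by Bai's rank inequality, which only requires bounding $\tfrac1n\mathrm{rank}(\mathbf B-\mathbf B_\delta)$, i.e.\ the fraction of singular values of $\mathbf B$ falling outside $[\delta,\delta^{-1}]$; this vanishes as $\delta\downarrow0$ via the assumed weak convergence $\mathcal G_n\Rightarrow\nu$ and $\nu(\{0\})=0$ (the latter furnished by Lemma~\ref{lem:logintegrability}). The ``core'' middle term is treated identically in both proofs: $|g_t^{\mathbf B_\delta}-g_0^{\mathbf B_\delta}|\le C(z)\,\kappa(\mathbf B_\delta)\int_0^t\tfrac1n\Tr(\X\X^*+u)^{-1}\,du$ with $\kappa(\mathbf B_\delta)$ controlled by the clipping, and the integral handled exactly as in Lemma~\ref{inverse10}; both approaches then need to let $\delta$ (resp.\ $s$) vanish after $t$. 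Your route is more elementary — it replaces the smooth construction and derivative bound with a one-line rank estimate — at the cost of discarding the smoothness in $s$, which the paper never actually exploits beyond the $\Tr|\mathbf H_s|$ bound anyway. Two small points: the identity $\tfrac1n\Tr\tfrac{d\R_u}{du}=\tfrac2n\re\Tr(\cdots)$ isn't literally correct, since $\R_u$ is not self-adjoint for $z\notin\Real$ and so the two summands of $\Tr(\R_u\tfrac{d\W_u}{du}\R_u)$ are not complex conjugates; but since you only use absolute values, the bound $\bigl|\tfrac1n\Tr\tfrac{d\R_u}{du}\bigr|\le C(z)\kappa(\mathbf B)\tfrac1n\Tr\widetilde{\mathbf A}_u$ is correct. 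And the order of limits you state should be understood as: fix $\delta$ to make the two rank terms small (uniformly in $t$), then send $t\to0$ to kill the middle term; the bound $\lim_{t\to0}\limsup_n\le 2\limsup_n\Pr(\text{rank term}>\varepsilon/3)$ holds for every $\delta$, and finally $\delta\to0$ on the right-hand side gives the claim.
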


\noindent\emph{Remark.}
Note that our assumptions on $\mathbf X$ and $\mathbf B$ imply that
$$
\Pr(\{ \text{$\X$ invertible and $\mathbf B$ invertible} \}) = 1 + o(1) \quad \text{as $n \to \infty$ \,.}
$$
In the following considerations we always work on this event.
(This is possible because we~are interested in convergence in probability.)
In particular, the inverse $\X_0^{-1}$ exists on this event.


\begin{proof}
The main idea of the proof is as follows.
Firstly, we replace the matrix $\mathbf B$ by some regularized version $\mathbf B_s$
whose singular value distribution is bounded away from zero and infinity.
Secondly, we regularize the matrix $\X^{-1}$ as described above.
Thirdly, we undo the~regularization of the matrix $\mathbf B$.

\medskip

For $x > 0$ and $s \in (0,1)$, let
$$
f(s,x) := \frac{\sqrt{x^2 + s}}{1 + s \sqrt{x^2 + s}} \,.
$$
Note that as $s \to 0$, we have $f(s,x) \to f(0,x) := x$ for any $x > 0$.
Furthermore, note that for each $s > 0$, the function $x \mapsto f(s,x)$ is increasing in $x$,
with values in the bounded interval $(\frac{s^{1/2}}{1+s^{3/2}},s^{-1})$.
Also, setting $h(s,x) := \log f(s,x)$, we may write
$$
f(s,x) = \exp(h(s,x)) \quad \text{and} \quad \partial_1 f(s,x) = f(s,x) \, \partial_1 h(s,x) \,. 
$$
Here, $\partial_1$ denotes the partial derivative w.r.t.\@ the first argument.

Write $\mathbf B = \U \mathbf \Lambda \V^*$ (singular value distribution), 
and set $\mathbf B_s := \U f(s,\mathbf \Lambda) \V^*$, $s \geq 0$.
Here, $f(s,\mathbf \Lambda)$ is obtained by applying $f(s,\,\cdot\,)$
to the diagonal elements of $\mathbf \Lambda$.
Note that
\begin{align}
\label{eq:condition}
\varkappa(\mathbf B_s) := \| \mathbf B_s \| \| \mathbf B_s^{-1} \| \leq 1 + s^{-3/2}
\end{align}
and
\begin{align}
\label{eq:hs-def}
  \frac{\partial \mathbf B_s}{\partial s} 
= \U f(s,\mathbf \Lambda) \partial_1 h(s,\mathbf \Lambda) \V^*
= \U f(s,\mathbf \Lambda) \V^* \V \partial_1 h(s,\mathbf \Lambda) \V^*
=: \mathbf B_s \mathbf H_s \,.
\end{align}
Set $\F_{t,s} := \mathbf A \X_t^{-1} \mathbf B_s$,
$\W_{t,s} := \F_{t,s} \F_{t,s}^*$ and
$\R_{t,s}(z) := (\W_{t,s} - z)^{-1}$ ($z \not\in \Real$).
Then we may write
\begin{align}
  \R_t(z)- \R_0(z)
&= \R_{t,0}(z) - \R_{t,s}(z)
+ \R_{t,s}(z) - \R_{0,s}(z)
+ \R_{0,s}(z) - \R_{0,0}(z)
\nonumber\\ &= 
- \int_0^s \frac{\partial}{\partial s} \R_{t,u}(z) \, du
+ \int_0^t \frac{\partial}{\partial t} \R_{u,s}(z) \, du
+ \int_0^s \frac{\partial}{\partial s} \R_{0,u}(z) \, du \,.
\label{eq:RtR0}
\end{align}
It is easy to check that
\begin{align}
\label{eq:ddtRts}
  \frac{\partial}{\partial t} \R_{t,s} 
= \R_{ts} \F_{ts} \Big( \mathbf B_s^{-1} (\X \X^* + t)^{-1} \mathbf B_s + \mathbf B_s^* (\X \X^* + t)^{-1} (\mathbf B_s^*)^{-1} \Big) \F_{ts}^* \R_{ts} 
\end{align}
and, by \eqref{eq:hs-def},
\begin{align}  
\label{eq:ddsRts}
  \frac{\partial}{\partial s} \R_{t,s} 
= - 2 \R_{ts} \F_{ts} \mathbf H_{s} \F_{ts}^* \R_{ts} \,.
\end{align}
Now, for any $n \times n$ matrices $\M_1$, $\M_2$, $\M_3$, with $\M_2$ self-adjoint, we have
$$
|\Tr(\M_1 \M_2 \M_3)| \leq \| \M_1 \| \| \M_3 \| \Tr |\M_2| ,
$$
where $|\M_2|$ is defined by spectral calculus.
Furthermore, for any $t,s \geq 0$, we have
$$
\| \R_{ts} \F_{ts} \| \leq \left( v^{-1}(1+|z|v^{-1}) \right)^{1/2} \,,
\quad
\| \F_{ts}^* \R_{ts} \| \leq \left( v^{-1}(1+|z|v^{-1}) \right)^{1/2} \,.
$$
It therefore follows from \eqref{eq:condition}, \eqref{eq:ddtRts} and \eqref{eq:ddsRts} that
\begin{align}
\label{eq:ddtRts-2}
     \left| \frac{\partial}{\partial t} \, \tfrac1n \Tr \R_{t,s} \right|
\leq 2(1 + s^{-3/2}) \left( v^{-1}(1+|z|v^{-1}) \right) \tfrac1n \Tr (\X \X^* + t)^{-1} \,.
\end{align}
and
\begin{align}  
\label{eq:ddsRts-2}
     \left| \frac{\partial}{\partial s} \, \tfrac1n \Tr \R_{t,s} \right|
\leq 2 \left( v^{-1}(1+|z|v^{-1}) \right) \tfrac1n \Tr |\mathbf H_s| \,.
\end{align}
Taking the normalized trace in \eqref{eq:RtR0} and using the previous estimates, 
it follows that for~any $s \in (0,1)$,
\begin{multline*}
|\tfrac1n \Tr \R_t(z) - \tfrac1n \Tr \R_0(z)|
\leq 
2(1 + s^{-3/2}) \left( v^{-1}(1+|z|v^{-1}) \right) \int_0^t \tfrac1n \Tr (\X \X^* + u)^{-1} \, du \\
+
4 \left( v^{-1}(1+|z|v^{-1}) \right) \int_0^s \tfrac1n \Tr |\mathbf H_u| \, du \,.
\end{multline*}
We will now show the following:
\begin{align}
\label{eq:reg-conv-t}
(\forall \varepsilon > 0) \ \lim_{t \to 0} \limsup_{n \to \infty} \Pr\left( \int_0^t \tfrac1n \Tr (\X \X^* + u)^{-1} \, du \geq \varepsilon\right) = 0 \,.
\end{align}
\begin{align}
\label{eq:reg-conv-s}
(\forall \varepsilon > 0) \ \lim_{s \to 0} \limsup_{n \to \infty} \Pr\left( \int_0^s \tfrac1n \Tr |\mathbf H_u| \, du \geq \varepsilon\right) = 0 \,.
\end{align}
Once we have \eqref{eq:reg-conv-t} and \eqref{eq:reg-conv-s}, it is easy to complete the proof.
Indeed, fix $\varepsilon > 0$.
Then, by \eqref{eq:reg-conv-t}, there exists a function $s(t)$
such that $s(t) \to 0$ as $t \to 0$ and still
\begin{align}
\label{eq:reg-diagonalization}
\lim_{t \to 0} \limsup_{n \to \infty} \Pr\left( (1+s(t)^{-3/2}) \int_0^t \tfrac1n \Tr (\X \X^* + u)^{-1} \, du \geq \varepsilon\right) = 0 \,.
\end{align}
Using \eqref{eq:reg-conv-s}, we therefore obtain
\begin{multline*}
\lim_{t \to 0} \limsup_{n \to \infty} \Pr(|\tfrac1n \Tr \R_t(z) - \tfrac1n \Tr \R_0(z)| \geq 2\varepsilon)
\\ \leq
\lim_{t \to 0} \limsup_{n \to \infty} \Pr\left( 2(1 + s(t)^{-3/2})\big(v^{-1} (1+|z|v^{-1})\big) \int_0^t \tfrac1n \Tr (\X \X^* + u)^{-1} \, du \geq \varepsilon\right)
\\ +
\lim_{t \to 0} \limsup_{n \to \infty} \Pr\left( 4\big(v^{-1} (1+|z|v^{-1})\big) \int_0^{s(t)} \tfrac1n \Tr |\mathbf H_u| \, du \geq \varepsilon\right)
=
0 \,,
\end{multline*}
i.e.\@ the desired conclusion.

Thus, it remains to show \eqref{eq:reg-conv-t} and \eqref{eq:reg-conv-s}.
We have already checked that \eqref{eq:reg-conv-t} follows from our assumption
that $\X$ satisfies Conditions (C0), (C1) and (C2); 
see the proof of Lemma \ref{inverse10}.
It is straightforward to check that
$$
\partial_1 h(s,x) = \frac{1-2(x^2+s)^{3/2}}{2(x^2+s)(1+s(x^2+s)^{1/2})} \,.
$$
Since $s < 1$, it follows that
$$
\left| \partial_1 h(s,x) \right| \leq \frac{C}{s+x^2} \quad (x \le 1)
\qquad\text{and}\qquad
\left| \partial_1 h(s,x) \right| \leq \frac{Cx}{1+sx} \quad (x \ge 1) \,.
$$
We therefore obtain
\begin{align*}
     \int_0^{s} & \tfrac1n \Tr |\mathbf H_u| \, du
\leq \int_0^{s} \tfrac1n \sum_{k=1}^{n} \left| \partial_1 h(u,s_k(\mathbf B)) \right| \, du \\
&\leq \frac{C}{n} \sum_{\substack{k: s_k(\mathbf B) \le 1}} \int_0^s \frac{1}{u+s_k^2(\mathbf B)} \, du
    + \frac{C}{n} \sum_{\substack{k: s_k(\mathbf B) \ge 1}} \int_0^s \frac{s_k(\mathbf B)}{1+u s_k(\mathbf B)} \, du \\
&\leq \frac{C}{n} \sum_{\substack{k: s_k(\mathbf B) \le 1}} \Big( \log(s+s_k^2(\mathbf B)) - \log(s_k^2(\mathbf B)) \Big)
    + \frac{C}{n} \sum_{\substack{k: s_k(\mathbf B) \ge 1}} \log(1+s s_k(\mathbf B)) \,.
\end{align*}
Since the matrix $\mathbf B$ satisfies (C0), (C1), (C2)
and the squared singular value distribution of $\mathbf B$ is weakly convergent
to a limit $\nu$ with $\int |\log| \, d\nu < \infty$ 
(by Lemma \ref{lem:logintegrability} in the appendix),
this implies \eqref{eq:reg-conv-s}.
\end{proof}

We now return to Theorem \ref{productspherical}.

\begin{proof}[Proof of Theorem \ref{productspherical}]
The proof is by induction on $m$.
Suppose that $m > 1$ and that the result is true of all smaller values of $m$.
(The case of a single spherical matrix is Theorem~\ref{spherical}.)
We start with a regularisation of the inverse matrices.
We introduce the following matrices
\begin{align}
(\X^{(2q)})_0^{-1}=(\X^{(2q)})^{-1}
\quad\text{and}\quad
(\X^{(2q)})_t^{-1}=({\X^{(2q)}}^*{\X^{(2q)}}+t\I)^{-1}
{\X^{(2q)}}^*.\notag
\end{align}
Let $\F_t=\prod_{q=1}^m(\X^{(2q-1)}(\X^{(2q)})_t^{-1})$ 
and $\W_t=\F_t\F_t^*$, $t \ge 0$.
Also, let $\R_t(z) := (\W_t - z\I)^{-1}$
and $s_t(z) := \frac1n \Tr \R_t(z)$, $t \geq 0$, $z \in \mathbb{C}^+$.

\begin{lem}\label{smooth10}
\begin{equation}\notag
\lim_{t\to0}\limsup_{n\to\infty}|s_t(z)-s_0(z)|=0 \quad \text{in probability.}
\end{equation}
\end{lem}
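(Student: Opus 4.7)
\textbf{Proof plan for Lemma \ref{smooth10}.} The plan is to proceed by induction on $m$: the base case $m=1$ is exactly Lemma \ref{inverse10}. For the inductive step I will peel off the regularization one inverse at a time, using the general Regularization Lemma \ref{AXB-lemma} repeatedly, and appealing to the induction hypothesis to handle the product that remains on the right-hand side of each factorization.

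Concretely, define the interpolating matrices
$$
\F_t^{[k]} := \Big(\prod_{r=1}^{k}\X^{(2r-1)}(\X^{(2r)})_t^{-1}\Big)\cdot\Big(\prod_{r=k+1}^{m}\X^{(2r-1)}(\X^{(2r)})^{-1}\Big),\qquad k=0,1,\dots,m,
$$
so that $\F_t^{[0]}=\F_0$, $\F_t^{[m]}=\F_t$, and let $s_t^{[k]}(z) := \tfrac{1}{n}\Tr(\F_t^{[k]}(\F_t^{[k]})^*-z\I)^{-1}$. Telescoping yields
$$
s_t(z)-s_0(z)=\sum_{k=0}^{m-1}\bigl(s_t^{[k+1]}(z)-s_t^{[k]}(z)\bigr),
$$
so it is enough to show each of the $m$ summands tends to $0$ in probability as $t\downarrow 0$. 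For a fixed $k$, observe that $\F_t^{[k+1]}$ and $\F_t^{[k]}$ differ only in the $(k+1)$-th factor, and admit the common factorization
$$
\F_t^{[k+1]}=\mathbf A_k(t)\,(\X^{(2k+2)})_t^{-1}\,\mathbf B_k,\qquad \F_t^{[k]}=\mathbf A_k(t)\,(\X^{(2k+2)})^{-1}\,\mathbf B_k,
$$
where $\mathbf A_k(t)=\bigl(\prod_{r=1}^{k}\X^{(2r-1)}(\X^{(2r)})_t^{-1}\bigr)\X^{(2k+1)}$ depends on $t$, while $\mathbf B_k=\prod_{r=k+2}^{m}\X^{(2r-1)}(\X^{(2r)})^{-1}$ does not. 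By the independence assumption, $\X^{(2k+2)}$ is independent of both factors.

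Now I would apply Lemma \ref{AXB-lemma} with $\mathbf A=\mathbf A_k(t)$, $\mathbf B=\mathbf B_k$, $\X=\X^{(2k+2)}$. The crucial feature is that the estimate there is uniform in $\mathbf A$, so the fact that $\mathbf A_k(t)$ itself depends on $t$ causes no difficulty. What must be verified are the hypotheses placed on $\mathbf B_k$: weak convergence in probability of its squared singular value distribution, together with Conditions (C0), (C1) and (C2). For $k=m-1$ we have $\mathbf B_k=\I$, so all hypotheses are trivial. For $k<m-1$, $\mathbf B_k$ is a product of $m-k-1<m$ independent spherical blocks, so the induction hypothesis gives weak convergence of its squared singular value distribution to $G_{m-k-1}$, while (C0), (C1) and (C2) follow from the corresponding single-factor estimates of Lemmas \ref{C0}--\ref{C2} in the appendix combined with the known bounds of \cite{TaoVu:10}, \cite{GT:10aop}, \cite{GT:12}. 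Granting this, Lemma \ref{AXB-lemma} gives $\lim_{t\downarrow 0}\limsup_{n\to\infty}|s_t^{[k+1]}(z)-s_t^{[k]}(z)|=0$ in probability for each $k$, and summing over the finitely many values of $k$ completes the proof.

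The main obstacle that I expect is the verification of Conditions (C0), (C1), (C2) for the products $\mathbf B_k$: these are not formal consequences of the weak convergence to $G_{m-k-1}$ delivered by the induction, and in particular the lower bound (C1) on the smallest singular value of a product of spherical matrices has to be extracted from the corresponding estimates on the individual factors $\X^{(2r-1)}$ and $(\X^{(2r)})^{-1}$. The remaining steps (the telescoping and the invocation of Lemma \ref{AXB-lemma}) are mechanical once this verification is in place.
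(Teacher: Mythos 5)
Your proposal is correct and follows the same route as the paper: telescope $s_t-s_0$ by regularizing one inverse at a time, apply the Regularization Lemma \ref{AXB-lemma} to each increment (with $\X=\X^{(2k+2)}$, $\mathbf A$ the already-regularized left block, $\mathbf B$ the untouched right block), and supply the hypotheses on $\mathbf B$ from the inductive hypothesis plus the verification of (C0)--(C2) for products of spherical blocks. The only cosmetic difference is that you make the $(\mathbf A,\X,\mathbf B)$ factorization explicit, whereas the paper states the telescoping decomposition and cites Lemma \ref{AXB-lemma} directly; the paper also explicitly defers the (C0)--(C2) check for products of spherical matrices to the proof of Theorem \ref{spherensprod} (noting the verification does not depend on this section), which matches the ``main obstacle'' you correctly identify.
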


\begin{proof}
We apply Lemma \ref{AXB-lemma}.
Let $\Z_t^{(k)} := \X^{(2k-1)} (\X^{(2k)})_t^{-1}$, $k = 1,\hdots,m$ and
$$
s(t_1,\hdots,t_m;z) := \tfrac{1}{n} \Tr\big((\Z^{(1)}_{t_1} \cdots \Z^{(m)}_{t_m})(\Z^{(1)}_{t_1} \cdots \Z^{(m)}_{t_m})^* - z\I\big)^{-1} \,.
$$
Clearly, writing $\mathbf v_k := (t,\hdots,t,0,\hdots,0)$
for the vector consisting of $k$ $t$'s and $m-k$ $0$'s,
we have
\begin{align}
\label{eq:s-telescope}
|s_t(z) - s_0(z)| \leq \sum_{k=1}^{n} | s_k(\mathbf v_k;z) - s_k(\mathbf v_{k-1};z) | \,.
\end{align}
By Lemma \ref{AXB-lemma}, each of the summands converges to zero in probability.
Here we use
(i) the inductive hypothesis
and
(ii) the fact that an arbitrary product of independent spherical matrices 
satisfies Conditions (C0) -- (C2). 
The latter will be checked in the proof of Theorem \ref{spherensprod};
note that the verification does not rely on the results in this section.
This com\-pletes the proof of Lemma \ref{smooth10}.
\end{proof}

We continue with the proof of Theorem \ref{productspherical}.
We may now use Theorem \ref{singularvalueuniversality} for the matrix $\W_t$. 
The Lindeberg condition \eqref{lind} follows from condition \eqref{eq:spherical-UI2}.
The check of the remaining conditions of Theorem \ref{singularvalueuniversality} 
is similar to that in the previous subsection; we~omit the details.
Thus, by a similar argument as in the proof of Theorem \ref{spherical}
(but with Lemma \ref{smooth10} instead of Lemma \ref{inverse10}),
it remains to identify the limiting empirical spectral distribution
of the matrix $\W(\Y)$ in the Gaussian case.

Here we can use the same approach and notation as in the proof of Theorem \ref{spherical}.
Firstly, by asymptotic freeness, Lemma \ref{rectang} and Theorem \ref{MarPas},
the limiting (mean) empirical spectral distribution of the matrices $\W_t(\Y)$ 
is given by $\varrho^{\boxtimes m} \boxtimes \sigma_t^{\boxtimes m}$,
with corresponding $S$-transform $S_{\W_t}(z)=S_{\varrho}^m(z) \cdot S_{\sigma_t}^m(z)$.
Secondly, using Lemma \ref{smooth10}, it follows that
the limiting (mean) empirical spectral distribution of the matrices $\W(\Y)$
is given by $\varrho^{\boxtimes m} \boxtimes \sigma^{\boxtimes m}$,
with corresponding $S$-transform $S_{\W}(z)=S_{\varrho}^m(z) \cdot S_{\sigma}^m(z)$.
Thirdly, by Remarks \ref{MP}~and~\ref{inverse-MP}, we get
\begin{equation}\label{str9}
S_{\W}(z)=(-1)^m\frac{z^{m}}{(z+1)^m}.
\end{equation}
Using the representation \eqref{str9} we may now determine the Stieltjes transform of the asymptotic 
distribution of the eigenvalues of the matrix $\W$ and from here determine the density of its distribution.
Let $g(z)$ denote the Stieltjes transform of the asymptotic distribution of the eigenvalues of
matrix $\W$. By definition of the $S$-transform, we have
\begin{equation}\notag
S_{\W}\big({-}(1+zg(z))\big)=\frac{g(z)}{1+zg(z)}.
\end{equation}
Combining the last two equalities,  we get
\begin{equation}\notag
(-1)^m\frac{(1+zg(z))^m}{(zg(z))^m}=\frac{g(z)}{1+zg(z)}.
\end{equation}
Solving the last equation, we obtain
\begin{equation}\notag
g(z)=-\frac1{z-(-1)^{\frac m{m+1}}z^{\frac m{m+1}}}.
\end{equation}
Since $\im g(z)\ge 0$ we may take here the root 
$(-1)^{\frac m{m+1}}=\cos\frac{\pi m} {m+1}-i\sin\frac{\pi m}{m+1}$.
The density of the distribution of $F_m(x)$ satisfies the equality
\begin{align}\label{eq:psm-density}
p_m(x)=F_m'(x)=\frac{1}{\pi}\lim_{v\to0}\im g(x+iv))=\frac{\sin\frac{\pi m}{m+1}}
{\pi x^{\frac m{m+1}}(x^{\frac2{m+1}}-2x^{\frac1{m+1}}\cos\frac{\pi m}{m+1}+1)}.
\end{align}
For $m=2$, we have
\begin{equation}\notag
p_2(x)=\frac{\sqrt 3}{2\pi x^{\frac23}(x^{\frac23}+x^{\frac13}+1)}.
\end{equation}
Thus Theorem \ref{spherical} is proved completely.
\end{proof}
\subsection{Applications of Theorem \ref{eigenvalueuniversality}: Distribution of eigenvalues} 
In this section we consider applications of Theorem \ref{eigenvalueuniversality}.
These applications rely on the previous applications of Theorem \ref{singularvalueuniversality}.
Actually, we need the universality of the limiting singular value distribution
not only for the matrices $\F_\X$ and $\F_\Y$, but also for all the shifted matrices 
$\F_\X-\alpha\I$ and $\F_\Y-\alpha\I$, with $\alpha \in \mathbb{C}$. 
However, since these extensions are more~and~less straightforward,
we omit the details.

\subsubsection{Circular law}
Let $\X$ denote the $n\times n$ random matrix with independent entries $\frac1{\sqrt n}X_{jk}$ 
such that \eqref{eq:secondmomentstructure-8a} or \eqref{eq:secondmomentstructure-8b} holds.
Assume that the r.v.'s $X_{jk}$ satisfy the condition
\begin{equation}\label{uniformintegr}
 \max_{j,k}\E|X_{jk}|^2\mathbb I\{|X_{jk}|>M\}\to 0,\quad\text{as}\quad M\to\infty.
\end{equation}
Let $\lambda_1,\ldots,\lambda_n$ denote the eigenvalues of the matrix $\X$. 
Denote by $\mu_n$ the empirical spectral distribution of the matrix $\X$.
Then we~have the following result, cf.\@ Girko \cite{Girko:84}, Bai \cite{Bai},
Pan and Zhou \cite{PanZhou}, G\"otze and Tikhomirov \cite{GT:10aop} as well as 
Tao and Vu \cite{TaoVu:10}.

\begin{thm}\label{circlelaw} 
Assume that condition \eqref{uniformintegr} holds. 
Then the measures $\mu_n$ converge weakly in probability 
to the uniform distribution $\mu$ on the unit disc.
\end{thm}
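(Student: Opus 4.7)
The plan is to apply Theorem \ref{eigenvalueuniversality} to reduce to the Gaussian case (where $\Y$ is the $n \times n$ matrix with i.i.d.\@ standard real or complex Gaussian entries, normalized by $1/\sqrt{n}$) and then to identify the limiting spectral distribution of $\Y$ by combining Proposition \ref{prop:asympfree}, Theorem \ref{freer} and Theorem \ref{denseigval}.

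First I would verify the hypotheses of Theorem \ref{eigenvalueuniversality} for the matrix-valued function $\mathbb{F}(\X) = \X$, which corresponds to $m=1$ and $n_0 = n_1 = n$. The Lindeberg condition \eqref{lind} is immediate from the uniform integrability \eqref{uniformintegr}, and the rank condition \eqref{rank} holds trivially with $C(\mathbb{F}) = 1$. Since the shift $\mathbf{B} = -\alpha \I$ is deterministic, the derivative bounds \eqref{mat3}--\eqref{mat32a} are verified by the computations in Subsection \ref{MPLaw}: the shift only changes $\V$ by a deterministic additive constant, so the partial derivatives $\partial \V / \partial Z_{jk}$, and hence the structure of $g_{jk}$, are unchanged. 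Conditions $(C0)$, $(C1)$, $(C2)$ for the shifted matrices $\X - \alpha \I$ and $\Y - \alpha \I$ are classical; I would invoke the results of Tao and Vu \cite{TaoVu:10} and G\"otze and Tikhomirov \cite{GT:10aop}, \cite{GT:12}, together with Lemmas \ref{C0}--\ref{C2} in the appendix.

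It then remains to identify the limit in the Gaussian case. By Theorem \ref{MarPas}, the squared singular values of $\Y$ converge to the Marchenko--Pastur distribution $\varrho$ with $y = 1$, so the symmetric measure $\mu_\V$ of Section \ref{sec:stieltjes} satisfies $\mathcal Q(\mu_\V) = \varrho$. Combining Remark \ref{MP} with the definition \eqref{eq:ST-symmetric} and the branch rule \eqref{eq:ChoiceOfBranch} yields $S_\V(z) = 1/\sqrt{z}$, and therefore $\xi_\V(x) = i(-x) S_\V(-x) = \sqrt{x}$ for $x \in (0,1)$. Proposition \ref{prop:asympfree} supplies the asymptotic freeness of $\V_\Y$ and $\J(\alpha)$ (since $\Y$ is bi-unitary invariant), and Lemma \ref{lem:support} verifies Assumptions \ref{continuity} and \ref{lipschitz} for $\mu_\V$ since the latter has compact support. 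I may therefore apply Theorem \ref{denseigval}; the system \eqref{mainequation10} reduces to
\[
\psi(\alpha)\big(1 - \psi(\alpha)\big) = |\alpha|^2 \big(1 - \psi(\alpha)\big), \qquad \varkappa(\alpha) = \sqrt{1 - \psi(\alpha)},
\]
whose non-trivial solution is $\psi(\alpha) = |\alpha|^2$ for $|\alpha| \le 1$ and $\psi(\alpha) = 1$ for $|\alpha| \ge 1$. The trivial branch $\psi \equiv 1$ near the origin is excluded by Lemma \ref{lem:nonzero}, since $|\widetilde S_\V(-x)| = \sqrt{x} \to 1$ as $x \uparrow 1$.

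Finally I would compute the density via formula \eqref{density1} with exceptional set $A = \{1\}$: inside the unit disc $\partial_u \psi = 2u$ and $\partial_v \psi = 2v$, yielding
\[
f(u,v) = \frac{1}{2\pi |\alpha|^2} \big( 2u^2 + 2v^2 \big) = \frac{1}{\pi}, \qquad u^2 + v^2 < 1,
\]
while $f(u,v) = 0$ for $u^2 + v^2 > 1$; this is precisely the uniform density on the unit disc. The main technical obstacle will be the verification of conditions $(C1)$ and $(C2)$ for the shifted matrices $\X - \alpha \I$ uniformly in $\alpha$ under only the mild integrability assumption \eqref{uniformintegr}, in particular the quantitative lower bound on the smallest singular value of $\X - \alpha \I$; this is the deep part of the circular law established by Tao and Vu \cite{TaoVu:10} and G\"otze--Tikhomirov \cite{GT:10aop}, \cite{GT:12}, and the entire argument above rests on importing it as a black box.
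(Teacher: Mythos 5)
Your proposal is correct and follows essentially the same route as the paper: reduce to the Gaussian case via Theorem \ref{eigenvalueuniversality} (deferring the verification of \eqref{rank} and \eqref{mat3}--\eqref{mat32a} to Subsection \ref{MPLaw} and of (C0)--(C2) to the appendix lemmas and Tao--Vu / G\"otze--Tikhomirov), then apply Proposition \ref{prop:asympfree} and Theorem \ref{denseigval} with $S_\V(z)=-1/\sqrt{z}$ (which, modulo your different sign/branch convention, agrees with your $1/\sqrt{z}$ and gives $\xi_\V(x)=\sqrt{x}$), solve the system \eqref{mainequation10}, exclude the trivial branch via Lemma \ref{lem:nonzero} and continuity, and read off the density via \eqref{density1}. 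You are in fact slightly more careful than the paper's own proof in explicitly invoking Lemma \ref{lem:support} to justify Assumptions \ref{continuity} and \ref{lipschitz} before using Theorem \ref{denseigval}.
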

\begin{proof}
To prove Theorem \ref{circlelaw}, we first apply Theorem \ref{eigenvalueuniversality}. 
Then we compute the limit distribution for the Gaussian case using the result 
of Theorem \ref{denseigval}. 

Note that condition \eqref{lind} is implied by condition \eqref{uniformintegr}
and that conditions \eqref{rank} and \eqref{mat3} -- \eqref{mat32a} of Theorem~\ref{singularvalueuniversality} 
have been checked in Subsection \ref{MPLaw}.
It remains to check conditions (C0), (C1) and (C2) of Theorem \ref{eigenvalueuniversality}.
To this end we can use existing bounds for singular values;
see Lemmas \ref{C0}, \ref{C1} and \ref{C2} in Appendix \ref{sec:SmallSingularValues}.
We may therefore apply Theorem \ref{eigenvalueuniversality}.

Let $\V = \V_\Y$ be defined as in Section \ref{sec:stieltjes}, with $\F = \F_\Y = \Y$.
By Proposition \ref{prop:asympfree}, the matrices $\V$ and $\J(\alpha)$ 
are asymptotically free. 
Furthermore, it follows from Theorem \ref{MarPas} 
that the limiting eigenvalue distribution 
of the matrices $\V_\Y$ is given by the semi-circular law. 
We now compute the limiting eigenvalue distribution 
of the matrices $\F_\Y$ using Theorem \ref{denseigval}.
It is well-known, see e.g.\@ Rao and Speicher \cite{SpeicherRao:2007}, Section~3, that
$$
S_{\V}(z)=-\frac1{\sqrt z}.
$$
Thus, the equations \eqref{mainequation10} read
\begin{align*}
 \psi(\alpha)(1-\psi(\alpha))&=|\alpha|^2\varkappa(\alpha)^2,\\
 \varkappa(\alpha)&=\sqrt{1-\psi(\alpha)}.
\end{align*} 
Solving these equations, we get 
\begin{align*}
\label{eq:twosolutions}
\psi(\alpha)=1, \varkappa(\alpha)=0
\qquad
\text{or} 
\qquad
\psi(\alpha)=|\alpha|^2, \varkappa(\alpha)=\sqrt{1-|\alpha|^2}.
\end{align*}
Now recall that 
(i) $\psi(\alpha) \in [0,1]$ by Theorem \ref{eigenvalueuniversality},
(ii) $\varkappa$ is continuous by Remark \ref{rem:limit},
and
(iii) $\varkappa(\alpha) \ne 0$ for $\alpha \approx 0$ by Lemma \ref{lem:nonzero}.
Thus, we obtain a unique solution, namely
\begin{align*}
 \psi=\begin{cases}|\alpha|^2,\quad& u^2+v^2\le1,\\1,\quad& u^2+v^2>1,\end{cases}\\
\varkappa=\begin{cases}\sqrt{1-|\alpha|^2},
\quad& u^2+v^2\le1,\\0,\quad& u^2+v^2>1.\end{cases}
\end{align*}
It follows from here that
\begin{equation}\notag
 u\frac{\partial \psi}{\partial u}+ v\frac{\partial \psi}{\partial v}=
 \begin{cases}2|\alpha|^2,\quad&u^2+v^2\le1,\\0,\quad&u^2+v^2>1.\\\end{cases}
\end{equation}
Using \eqref{density1}, it therefore follows that the density $f(u,v)$ 
of the limiting empirical spectral distribution of the matrix $\Y$ 
is given by the equality
$$
f(u,v)=\begin{cases}\frac1{\pi},\quad &u^2+v^2\le1,\\0,\quad &u^2+v^2>1.\end{cases}
$$
\end{proof}

\subsubsection{Product of Independent Square Matrices} 
Let $m \ge 1$. 
Consider independent random matrices $\X^{(q)}$, $q=1,\ldots,m$ 
with independent entries $\tfrac{1}{\sqrt{n}} X_{jk}^{(q)}$, $1\le j,k\le n$, $q=1,\ldots,m$,
and suppose that \eqref{eq:secondmomentstructure-8a} or \eqref{eq:secondmomentstructure-8b} holds.
Let $\F=\prod_{q=1}^m\X^{(q)}$.
Then we~have the following result, 
cf. Burda, Janik and Waclaw \cite{Burda1} for the Gaussian case 
and G\"otze and Tikhomirov \cite{GT:12} as well as O'Rourke and Soshnikov \cite{SR:10} 
for the general case.

\begin{thm}\label{producteigenvalue}
Let the r.v.'s $X_{jk}^{(q)}$ satisfy the condition
\begin{equation}\label{uniformintegr1}
 \max_{1\le q\le m}\max_{j,k\ge 1}\E|X_{jk}^{(q)}|^2\mathbb I\{|X_{jk}^{(q)}|>M\}\to 0,
 \quad\text{as}\quad M\to\infty.
\end{equation}
Then the empirical spectral distributions $\mu_n$
of the matrices $\F$ converge weakly in probability 
to the measure $\mu$ with Lebesgue density
\begin{equation}\notag
 f(u,v)=\frac1{m\pi (u^2+v^2)^{\frac{m-1}{m}}}\mathbb I\{u^2+v^2\le1\}.
\end{equation}
\end{thm}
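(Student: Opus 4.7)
The plan is to follow the two-step strategy developed in the paper: first reduce to the Gaussian case via Theorem \ref{eigenvalueuniversality}, and then identify the limiting density via Theorem \ref{denseigval}.

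\textbf{Step 1: Universality.} I would verify the hypotheses of Theorem \ref{eigenvalueuniversality} with $\F = \prod_{q=1}^m \X^{(q)}$ and $\mathbf B = -\alpha \I$ for each fixed $\alpha \in \mathbb C$. The Lindeberg condition \eqref{lind} follows from the uniform integrability assumption \eqref{uniformintegr1} by a standard argument. The rank condition \eqref{rank} holds with $C(\mathbb F)=1$ as observed in Subsection \ref{products}, and the same argument applies to $\F_\X - \alpha \I$ since shifting by a deterministic matrix does not affect the relative rank. The derivative conditions \eqref{mat3} -- \eqref{mat32a} were essentially verified in Subsection \ref{products}; one only needs to check that the additional constant term $-\alpha\I$ does not spoil the $v^{-k}$-type bounds on $\E\{|g_{jk}^{(q)}|\,|\,X_{jk}^{(q)},Y_{jk}^{(q)}\}$ and its derivatives, which is immediate from the block structure of $\V_\Z$. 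Conditions $(C0)$, $(C1)$, $(C2)$ for the shifted matrices follow from the bounds on singular values of products of Ginibre-type matrices collected in Lemmas \ref{C0}, \ref{C1}, \ref{C2} in the appendix.

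\textbf{Step 2: Gaussian computation.} Let $\Y = (\Y^{(1)},\ldots,\Y^{(m)})$ be the corresponding Gaussian $m$-tuple. By Theorem \ref{productsthm} with $y_1=\cdots=y_m=1$, the limiting distribution of the squared singular values of $\F_\Y$ has $S$-transform $S_\varrho(z) = 1/(1+z)^m$. By \eqref{eq:ST-symmetric}, the symmetrized singular value distribution $\mu_\V$ of $\F_\Y$ has $S$-transform
\begin{equation*}
S_\V(z) = \sqrt{\frac{z+1}{z}\cdot \frac{1}{(1+z)^m}} = \frac{1}{\sqrt{z(1+z)^{m-1}}},
\end{equation*}
with the branch chosen so that $\im S_\V(z) \ge 0$ for $z \in (-1,0)$. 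Proposition \ref{prop:asympfree} guarantees the asymptotic freeness of $\V_\Y$ and $\J(\alpha)$ (the moment bound of Remark \ref{rem:asympfree} is applicable here because $\F_\Y$ is a product of Gaussian matrices). A direct computation yields $\xi_\V(x) := i(-x)S_\V(-x) = \sqrt{x/(1-x)^{m-1}}$ for $x \in (0,1)$. Substituting into the system \eqref{mainequation10} from Theorem \ref{denseigval} gives
\begin{align*}
\psi(\alpha)\bigl(1-\psi(\alpha)\bigr) &= |\alpha|^2 \varkappa(\alpha)^2,\\
\varkappa(\alpha)^2 &= \bigl(1-\psi(\alpha)\bigr)/\psi(\alpha)^{m-1}.
\end{align*}
Eliminating $\varkappa$ reduces this to $\bigl(1-\psi(\alpha)\bigr)\bigl(\psi(\alpha)^m - |\alpha|^2\bigr) = 0$, so either $\psi(\alpha)=1$ (the trivial solution) or $\psi(\alpha) = |\alpha|^{2/m}$.

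\textbf{Step 3: Selecting the correct branch and computing the density.} For $|\alpha| > 1$ the only admissible solution with $\psi \in [0,1]$ is $\psi(\alpha) = 1$. For $|\alpha| \le 1$, uniqueness of the non-trivial solution follows by combining Lemma \ref{lem:nonzero} (which rules out $\psi(\alpha)=1$ near the origin, since $\liminf_{x\uparrow 1}|\widetilde S_\V(-x)| > 0$ as can be verified from the explicit form of $S_\V$) together with continuity of $\varkappa$ (Remark \ref{rem:limit}). Hence on the closed unit disc $\psi(\alpha)=|\alpha|^{2/m}=(u^2+v^2)^{1/m}$, and outside it $\psi \equiv 1$. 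This function is continuously differentiable off the unit circle (take $A = \{1\}$ in Theorem \ref{denseigval}), and
\begin{equation*}
u\frac{\partial \psi}{\partial u} + v\frac{\partial \psi}{\partial v} = \frac{2}{m}(u^2+v^2)^{1/m} \quad \text{for } u^2+v^2 < 1,
\end{equation*}
while it vanishes for $u^2+v^2>1$. Formula \eqref{density1} then yields
\begin{equation*}
f(u,v) = \frac{1}{2\pi(u^2+v^2)}\cdot\frac{2}{m}(u^2+v^2)^{1/m} = \frac{1}{m\pi(u^2+v^2)^{(m-1)/m}}
\end{equation*}
on the open unit disc, and $f \equiv 0$ outside, as claimed. (A quick check confirms that this is indeed a probability density.) The main technical obstacle in this approach is the verification of Conditions $(C0)$, $(C1)$, $(C2)$ for the shifted matrices $\F_\X - \alpha \I$; this requires small ball estimates for the smallest singular value of such products, which however are by now standard and collected in the appendix.
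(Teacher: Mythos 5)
Your proposal follows essentially the same route as the paper: reduce to the Gaussian case via Theorem \ref{eigenvalueuniversality}, compute the $S$-transform $S_\V(z)=-1/\bigl(\sqrt{z}(1+z)^{(m-1)/2}\bigr)$ from $S_{\F\F^*}(z)=(1+z)^{-m}$ via \eqref{eq:ST-symmetric}, substitute into the system \eqref{mainequation10}, and eliminate $\varkappa$ to get $(1-\psi)(\psi^m-|\alpha|^2)=0$. The branch selection via Lemma \ref{lem:nonzero} and Remark \ref{rem:limit}, and the final application of formula \eqref{density1}, all match the paper's argument and your algebra is correct.

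There is one inaccuracy in Step~1 that would cause a reader to stall: you assert that Conditions $(C0)$, $(C1)$, $(C2)$ for the shifted products $\F_\X-\alpha\I$ ``follow from the bounds on singular values of products of Ginibre-type matrices collected in Lemmas \ref{C0}, \ref{C1}, \ref{C2} in the appendix.'' Those appendix lemmas are stated only for a \emph{single} $n\times n$ matrix $\X$ with independent entries (shifted by a scalar multiple of the identity), not for products $\prod_{q=1}^m\X^{(q)}$. The paper instead invokes the product-specific estimates from G\"otze--Tikhomirov \cite{GT:12}: Lemma~7.2 there gives $(C0)$ (boundedness of $\tfrac1n\E\|\F\|_2^2$), Lemma~5.1 gives $(C1)$ (lower bound on $s_n(\F-\alpha\I)$), and inequality~(5.16) together with Lemma~5.2 gives $(C2)$. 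Since small-ball estimates for the least singular value of a product are genuinely harder than the single-matrix case and cannot be derived from the appendix lemmas alone, the citation needs correcting; once that is done, the rest of the proof is sound.
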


\begin{rem}
The limiting measure $\mu$ is the induced measure of the uniform distribution on the unit disc
under the mapping $z \mapsto z^m$.
Consequently, the product of $m$ independent square matrices 
has the same limiting empirical spectral distribution 
as the $m$th power of a single matrix.
\end{rem}

\begin{proof}
The conditions of Theorem \ref{singularvalueuniversality} were checked in the proof of Theorem \ref{products}. 
The condition $(C0)$ of Theorem  \ref{eigenvalueuniversality} for $p=2$ follows from
Lemma 7.2 in \cite{GT:12}, where it is shown that
\begin{equation}\notag
 \frac1n\E\|\F\|_2^2\le C.
\end{equation}
Furthermore, in Lemma 5.1 in \cite{GT:12} it is proved that there exist
positive constants $Q$ and $A$ such that
\begin{equation}\notag
 \Pr\{s_n(\F-\alpha\I)\le n^{-Q}\}\le Cn^{-A}.
\end{equation}
This implies condition $(C1)$ of Theorem \ref{eigenvalueuniversality}. 
Moreover, inequality (5.16) and Lemma 5.2 in \cite{GT:12} together imply that for 
some $0<\gamma<1$, 
for any sequence $\delta_n\to0$
\begin{equation}\notag
 \lim_{n\to\infty}\frac1n\sum_{k=n_1}^{n_2}|\log s_k(\F-\alpha\I)|=0,
\end{equation}
with $n_1=[n-n\delta_n]+1$ and $n_2=[n-n^{\gamma}]$. \pagebreak[1]
This implies condition $(C2)$ of Theorem~\ref{eigenvalueuniversality}. 
According to Theorem \ref{eigenvalueuniversality} we may now consider 
the Gaussian matrices $\Y^{(q)}$, $q=1,\ldots,m$.
Let $$
\V=\begin{bmatrix}\mathbf O&\F_\Y\\\F_\Y^*&\mathbf O\end{bmatrix}.
$$
By Proposition \ref{prop:asympfree}, the matrices $\V$ and $\J(\alpha)$ are asymptotically free.
Moreover, it follows from Remark \ref{MP} and Lemma \ref{rectang}
that the $S$-transform corresponding to the matrices $\W := \F^{}_\Y \F_\Y^*$ is given by
$$
 S_{\W}(z)=\left(\frac{1}{z+1}\right)^m \,.
$$
Thus, the $S$-transform of the~matrix $\V$ is given by the formula
\begin{equation}\notag
 S_{\V}(z)=-\frac1{\sqrt z(1+z)^{\frac{m-1}2}}.
\end{equation}
We rewrite equations (\ref{mainequation10}) for this case:
\begin{align}\notag
 \psi(\alpha)(1-\psi(\alpha))=|\alpha|^2\varkappa^2(\alpha),\notag\\
 \varkappa(\alpha)=\sqrt{1-\psi(\alpha)}(\psi(\alpha))^{-\frac{m-1}{2}}.
\end{align}
Solving this system we find by similar arguments as in the previous subsection that
\begin{align}
 \psi(\alpha)&=\begin{cases}|\alpha|^{\frac2m},&u^2+v^2 \le1,\\1,&u^2+v^2>1,\\\end{cases}\\
 \varkappa(\alpha)&=\begin{cases}|\alpha|^{-\frac{m-1}m}\sqrt{1-|\alpha|^{\frac2m}},&u^2+v^2\le1,\\0,&u^2+v^2>1,\\\end{cases}
\end{align}
 and, for $u^2+v^2\le 1$,
\begin{equation}\notag
 u\frac{\partial \psi}{\partial u}+ v\frac{\partial \psi}{\partial v}
 =\frac{2|\alpha|^{\frac 2m}}m.
\end{equation}
By \eqref{density1}, these relations immediately imply that
\begin{equation}\notag
 f(x,y)=\begin{cases}\frac1{\pi m(x^2+y^2)^{\frac{m-1}{m}}},\quad &x^2+y^2\le1,\\0,
 \quad &x^2+y^2>1,\end{cases}
\end{equation}
and Theorem \ref{producteigenvalue} is proved.
\end{proof}
\subsubsection{Product of Independent Rectangular Matrices}
Let $m\ge1$ be fixed. Let for any $n\ge1$ be given  
integers $n_0=n, n_1\ge n,\ldots, n_{m-1}\ge n$ and $n_m=n$.
Assume that $y_{q}=\lim_{n\to\infty}\frac{n}{n_{q}}\in (0,1]$, $q=1,\ldots,m$.
Note that $y_m=1$.
Consider independent random matrices $\X^{(q)}$ of order $n_{q-1}\times n_{q}$, 
$q=1,\ldots,m$, with independent entries $\frac{1}{\sqrt{n_q}} X_{jk}^{(q)}$
as in \eqref{eq:secondmomentstructure-8a} or \eqref{eq:secondmomentstructure-8b}.
Put $\F=\prod_{q=1}^m\X^{(q)}$.
Then we have the following~result,
see also Burda, Jarosz, Livan, Nowak, Swiech \cite{Burda2} and Tikhomirov \cite{Tikh:12} 
for related results in the Gaussian case and in the general case, respectively.

\begin{thm}\label{productrecteig}
 Assume that the r.v.'s $X_{jk}^{(q)}$ for $q=1,\ldots,m$, and $j=1,\ldots,p_{q-1}$, 
 $k=1,\ldots,p_{q}$ satisfy the condition \eqref{uniformintegr1}. 
 Then the empirical spectral distributions of the matrices $\F$
 weakly converge in probability to the measure $\mu$ with Lebesgue density 
 \begin{align}
 \label{eq:productrecteig-f}
 f(u,v)=\frac{1}{2\pi |\alpha|^2}\left(u\frac{\partial \psi}{\partial u}+v\frac{\partial \psi}{\partial v}\right)\mathbb{I}\{u^2+v^2 \le 1\},
 \end{align}
 where $\alpha=u+iv$ and $\psi(\alpha)$ is given by the unique solution in the interval $[0,1]$
 to the equation
 \begin{align*}
 \psi(\alpha)\prod_{\nu=1}^{m-1}(1-y_{\nu}+y_{\nu}\psi(\alpha))=|\alpha|^2, \qquad |\alpha| \le 1 \,.
 \end{align*}
 In the case $m=2$ this is
 \begin{equation}\notag
  f(u,v)=\frac{1}{\pi\sqrt{(1-y_1)^2+4(u^2+v^2)y_1}}\mathbb I\{u^2+v^2\le1\}.
 \end{equation}
\end{thm}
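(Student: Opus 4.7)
The proof follows the same two-step template used for the previous applications of Theorem \ref{eigenvalueuniversality}: first reduce to the Gaussian case via Theorems \ref{singularvalueuniversality} and \ref{eigenvalueuniversality}, and then identify the limiting eigenvalue distribution in the Gaussian case by feeding the $S$-transform of the limiting singular value distribution into Theorem \ref{denseigval}.

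\textbf{Reduction to the Gaussian case.} To apply Theorem \ref{singularvalueuniversality} to the shifted matrices $\F_\X - \alpha\I$ (with $\mathbf B = -\alpha\I$), I would verify the rank condition \eqref{rank} via the elementary bound $\text{rank}(\prod \mathbf A^{(q)}-\prod \mathbf B^{(q)}) \le \sum_q \text{rank}(\mathbf A^{(q)}-\mathbf B^{(q)})$ established in the proof of Theorem \ref{productsthm}, and the smoothness conditions \eqref{mat3}--\eqref{mat32a}. The latter are checked exactly as in Section~\ref{products}: inserting a constant block $-\alpha \I$ only shifts the argument of the resolvent $\R$ but does not alter the partial derivatives of $\V$ with respect to $Z^{(q)}_{jk}$, so the operator-norm bounds carry over with constants depending on $v$, $|\alpha|$ and $m$. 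Conditions $(C0)$, $(C1)$, $(C2)$ for rectangular products are obtained from existing small ball estimates of \cite{GT:10aop}, \cite{GT:12} and \cite{TaoVu:10}, which extend from the square case treated in Theorem \ref{producteigenvalue} to the rectangular case by the usual embedding arguments (one completes the rectangular matrices to square ones with identity blocks, or invokes the rectangular extensions in the cited references). This justifies applying Theorem \ref{eigenvalueuniversality} and reduces the problem to~identifying the limiting eigenvalue distribution of~$\F_\Y$.

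\textbf{Computation of the $S$-transform and the algebraic equation.} For the Gaussian matrices $\Y^{(1)},\ldots,\Y^{(m)}$, Theorem \ref{productsthm} and Eq.~\eqref{strans1} give that the limiting squared singular value distribution of $\F_\Y$ has $S$-transform $S_\W(z) = \prod_{q=1}^{m}(1+y_q z)^{-1}$. Using $y_m = 1$ and the symmetric $S$-transform $S_\V(z) = \sqrt{\tfrac{z+1}{z} S_{\mathcal Q(\mu)}(z)}$ from \eqref{eq:ST-symmetric}, a short computation yields
\begin{equation}\notag
\xi_\V(x) = i(-x)S_\V(-x) = \sqrt{\frac{x}{\prod_{q=1}^{m-1}(1-y_q x)}}, \qquad x \in (0,1).
\end{equation}
Asymptotic freeness of $\V_\Y$ and $\J(\alpha)$ is provided by Proposition~\ref{prop:asympfree} (since the products of rectangular Gaussian matrices are bi-unitary invariant). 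Substituting this expression for $\xi_\V$ into the system \eqref{mainequation10} and eliminating $\varkappa$ gives, for $\psi \ne 1$,
\begin{equation}\notag
\psi(\alpha)\prod_{q=1}^{m-1}\bigl(1 - y_q + y_q \psi(\alpha)\bigr) = |\alpha|^2.
\end{equation}
The left-hand side is a polynomial in $\psi$ that is strictly increasing on $[0,1]$ from $0$ to $1$, hence for $|\alpha| \le 1$ there is a unique root $\psi(\alpha) \in [0,1]$, while for $|\alpha| > 1$ only the trivial solution $\psi \equiv 1$, $\varkappa \equiv 0$ is possible (so the density vanishes outside the unit disc). Lemma \ref{lem:nonzero} applied to $\xi_\V$ shows that $\varkappa(\alpha) \ne 0$ for $\alpha$ near $0$, which rules out the trivial branch inside the unit disc and, by continuity (Remark after Lemma \ref{lem:limit}), throughout $\{|\alpha| < 1\}$.

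\textbf{Density.} Since $\psi$ is the inverse of a smooth strictly increasing polynomial in $\psi$, it~is~smooth in $|\alpha|^2$ except at $|\alpha|=1$, so that Assumptions \ref{continuity} and \ref{lipschitz} hold (using Lemma \ref{lem:support} for the compactly supported limit $\mu_\V$) and Theorem \ref{denseigval} yields the density \eqref{eq:productrecteig-f}. For the special case $m=2$ the equation reduces to the quadratic $y_1\psi^2+(1-y_1)\psi-|\alpha|^2=0$, whose relevant root gives $\partial_{|\alpha|^2}\psi=((1-y_1)^2+4y_1|\alpha|^2)^{-1/2}$ and hence $u \partial_u\psi + v \partial_v\psi = 2|\alpha|^2/\sqrt{(1-y_1)^2+4y_1|\alpha|^2}$; \eqref{density1} then yields the explicit formula in the statement. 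The main obstacle in the plan is the rectangular extension of the small singular value bounds underlying $(C1)$ and $(C2)$; the remaining steps are essentially routine adaptations of earlier arguments.
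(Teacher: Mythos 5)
Your proposal is correct and essentially reproduces the paper's own argument: reduce to the Gaussian case via Theorems \ref{singularvalueuniversality} and \ref{eigenvalueuniversality} (the paper likewise only sketches the verification of $(C0)$--$(C2)$ for the rectangular case), then use Lemma \ref{rectang} and Remark \ref{MP} to compute $S_\V(z) = -z^{-1/2}\prod_{\nu=1}^{m-1}(1+y_\nu z)^{-1/2}$ — equivalently your $\xi_\V(x)=\sqrt{x/\prod_{q=1}^{m-1}(1-y_q x)}$ via $\xi_\V(x)=i(-x)S_\V(-x)$ — and feed this into \eqref{mainequation10} to obtain $\psi\prod_{\nu=1}^{m-1}(1-y_\nu+y_\nu\psi)=|\alpha|^2$, from which \eqref{density1} yields the density. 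A small plus of your write-up is that by invoking Lemma \ref{lem:nonzero} to keep $\varkappa(\alpha)\ne0$ on the open disc, you avoid the paper's apparent misprint ``$\varkappa(\alpha)=0$'' for $|\alpha|\le1$, which is inconsistent with $\psi\in(0,1)$ there.
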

\begin{proof}
The conditions of Theorem \ref{singularvalueuniversality} were checked in Subsubsection \ref{products}. 
The conditions $(C0)$, $(C1)$ and $(C2)$ of Theorem \ref{eigenvalueuniversality}
may be checked similarly as in the proof of the pre\-vious Theorem;
we omit the details.
To compute the limit measure $\mu$ in the Gaussian~case,
we may use Theorem \ref{denseigval} now.
Using Remark \ref{MP} and Lemma \ref{rectang}, we may show that 
\begin{align}
\label{eq:productrecteig-V}
S_{\V}(z)=-\frac1{\sqrt z}\prod_{\nu=1}^{m-1}\frac1{\sqrt{1+y_{\nu}z}}.
\end{align}
We have used here that $y_m=1$. Inserting \eqref{eq:productrecteig-V} into \eqref{mainequation10}, we get
\begin{align}
 \psi(\alpha)(1-\psi(\alpha))&=|\alpha|^2\varkappa(\alpha)^2,\notag\\
 \varkappa(\alpha)&=\sqrt{1-\psi(\alpha)}\prod_{\nu=1}^{m-1}\frac1{\sqrt{1-y_{\nu}+y_\nu\psi(\alpha)}}\,.\notag
\end{align} 
Solving this system we find that for $|\alpha| \leq 1$,
\begin{equation}\notag
 \psi(\alpha)\prod_{\nu=1}^{m-1}(1-y_{\nu}+y_{\nu}\psi(\alpha))=|\alpha|^2
 \qquad\text{and}\qquad
 \varkappa(\alpha)=0,
\end{equation}
while for $|\alpha| > 1$, 
\begin{equation}\notag
 \psi(\alpha) = 1
 \qquad\text{and}\qquad
 \varkappa(\alpha) = 0.
\end{equation}
We have used here that the function 
$
h(x) := x \prod_{\nu=1}^{m-1} (1-y_\nu+y_\nu x)
$
is strictly increasing on $[0,\infty)$ with $h(0) = 0$ and $h(1) = 1$.
Now \eqref{eq:productrecteig-f} follows immediately from \eqref{density1}.
In~order to check that $f$ is in fact a probability density,
regard $\psi$ and $f$ as functions of $r := \sqrt{u^2+v^2}$.
Then
$$
f(r) = \frac{\psi'(r)}{2 \pi r}, \qquad 0 < r < 1,
$$
and it follows using polar coordinates that
$$
\int_{\{ u^2+v^2<1 \}} f(u,v) \, du \, dv = \int_0^1 2\pi r f(r) \, dr = \int_0^1 \psi'(r) \, dr = \psi(1) - \psi(0) = 1 \,.  
$$

Finally, for $m=2$ and $u^2+v^2\le1$, we get
\begin{equation}\notag
 \psi(\alpha) (1-y_1+y_1\psi(\alpha))=|\alpha|^2
\end{equation}
and therefore
$$
\psi(\alpha)=\frac{-(1-y_1)+\sqrt{(1-y_1)^2+4|\alpha|^2y_1}}{2y_1}.
$$
Hence, on the set $\{ u^2+v^2\le1 \}$, we obtain
$$
f(u,v)=\frac{1}{2\pi|\alpha|^2}\left( u\frac{\partial\psi}{\partial u}+v\frac{\partial\psi}{\partial v} \right) =  \frac{1}{\pi\sqrt{(1-y_1)^2+4|\alpha|^2y_1}} \,.
$$
Theorem \ref{productrecteig} is proved.
\end{proof}

\subsubsection{Spherical Ensemble} 
Let $\X^{(1)}$ and $\X^{(2)}$ be independent $n\times n$ random matrices 
with independent entries $\frac{1}{\sqrt{n}} X_{jk}^{(q)}$ such that
\eqref{eq:secondmomentstructure-8a} or \eqref{eq:secondmomentstructure-8b} holds.
Consider the matrix $\F_\X=\X^{(1)}({\X^{(2)}})^{-1}$. 
Let $\mu_{n}$ denote the empirical spectral distribution 
of the matrix $\F_\X$.
Then we have the following result, cf.\@ Bordenave \cite{Bordenave}.
\begin{thm}\label{spherens}
 Let the r.v.'s $X_{jk}^{(q)}$ for $q=1,2$ and $j,k=1,\ldots,n$ satisfy the condition 
 \eqref{uniformintegr1}. Then the measures $\mu_{n}$ weakly converge in probability 
 to the measure $\mu$ with Lebesgue density
 \begin{equation}\notag
  f(x,y)=\frac1{\pi(1+(x^2+y^2))^2}.
 \end{equation}
\end{thm}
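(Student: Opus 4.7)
The plan is to follow the same two-stage strategy as in the previous subsections: first reduce to the Gaussian case via Theorem \ref{eigenvalueuniversality}, then identify the limiting density in the Gaussian case by means of Theorem \ref{denseigval}.

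For the universality step, the Lindeberg condition \eqref{lind} is immediate from \eqref{uniformintegr1}, and the rank condition \eqref{rank} as well as the derivative conditions \eqref{mat3}--\eqref{mat32a} have already been verified for the matrix-valued function $\F_t(\X^{(1)},\X^{(2)}) = \X^{(1)}(\X^{(2)})^{-1}_t$ (with any fixed $t>0$) in the proof of Theorem \ref{spherical}; the adjustments needed to accommodate the shift $\B = -\alpha\I$ are routine, since $\B$ only enters linearly in $\V$. Together with the regularization Lemma \ref{inverse10}, this yields universality of the singular value distribution of every shifted matrix $\F_\X-\alpha\I$. It remains to check conditions $(C0)$, $(C1)$, $(C2)$ of Theorem \ref{eigenvalueuniversality} for the matrices $\F_\X-\alpha\I$. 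Here I would use the factorization $\F_\X-\alpha\I = (\X^{(1)}-\alpha\X^{(2)})(\X^{(2)})^{-1}$ to reduce these conditions to analogous statements for products / inverses of Girko--Ginibre matrices. Specifically, $(C0)$ with $p=1/2$ follows by combining the bound $s_k(\F_\X-\alpha\I) \le s_k(\X^{(1)}-\alpha\X^{(2)}) s_1((\X^{(2)})^{-1})$ with standard estimates on $s_n(\X^{(2)})$ from Appendix \ref{sec:SmallSingularValues}; $(C1)$ follows from the corresponding small-ball estimates for $\X^{(1)}-\alpha\X^{(2)}$ together with Lemma \ref{C1}; and $(C2)$ can be obtained by splitting the sum over $n_1 \leq j \leq n_2$ according to whether the index corresponds to a large or small singular value and using Lemma \ref{C2} for both factors. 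I expect this verification of $(C1)$ and $(C2)$ in the presence of the inverse to be the main technical obstacle.

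Once universality is established, it suffices to identify the limit for the Gaussian case. Let $\Y^{(1)},\Y^{(2)}$ be independent standard complex Gaussian matrices and set $\F_\Y := \Y^{(1)}(\Y^{(2)})^{-1}$. By Theorem \ref{spherical}, the squared singular value distribution of $\F_\Y$ converges to $\mathcal Q(\mu_\V) = \varrho \boxtimes \sigma$ with $S$-transform $S_{\mathcal Q(\mu_\V)}(z) = -z/(1+z)$. Using \eqref{eq:ST-symmetric}, the $S$-transform of the symmetric measure $\mu_\V$ itself is then
\begin{equation*}
S_\V(z) = \sqrt{\frac{z+1}{z} \cdot \frac{-z}{z+1}} = i \,,
\end{equation*}
with the branch chosen as in \eqref{eq:ChoiceOfBranch}. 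Consequently $\xi_\V(x) = i(-x)S_\V(-x) = x$ on $[0,1]$. By Proposition \ref{prop:asympfree} the matrices $\V_\Y$ and $\J(\alpha)$ are asymptotically free, so the hypotheses of Theorem \ref{denseigval} apply (Assumptions \ref{continuity} and \ref{lipschitz} hold via Lemma \ref{lem:support} since $\mu_\V$ has tails of Cauchy type and in particular $\mu_\V([-x,x]^c) = O(x^{-1})$).

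Substituting $\xi_\V(x) = x$ into system \eqref{mainequation10} gives
\begin{equation*}
\psi(\alpha)(1-\psi(\alpha)) = |\alpha|^2 \varkappa(\alpha)^2, \qquad \varkappa(\alpha) = 1 - \psi(\alpha).
\end{equation*}
Discarding the trivial solution $\psi \equiv 1$ (excluded by Lemma \ref{lem:nonzero}, whose hypothesis $\liminf_{x \uparrow 1}|\widetilde S_\V(-x)| > 0$ is satisfied here as $\widetilde S_\V(-x) = -ix$), we solve and obtain the unique continuous solution
\begin{equation*}
\psi(\alpha) = \frac{|\alpha|^2}{1+|\alpha|^2}, \qquad \varkappa(\alpha) = \frac{1}{1+|\alpha|^2},
\end{equation*}
valid on all of $\mathbb C \setminus \{0\}$. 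A direct differentiation yields
\begin{equation*}
u\frac{\partial \psi}{\partial u} + v\frac{\partial \psi}{\partial v} = \frac{2|\alpha|^2}{(1+|\alpha|^2)^2},
\end{equation*}
so that formula \eqref{density1} gives the density
\begin{equation*}
f(u,v) = \frac{1}{2\pi|\alpha|^2} \cdot \frac{2|\alpha|^2}{(1+|\alpha|^2)^2} = \frac{1}{\pi(1+u^2+v^2)^2},
\end{equation*}
as claimed. A quick check using polar coordinates confirms that $f$ integrates to one over $\mathbb R^2$, which completes the identification of the limit.
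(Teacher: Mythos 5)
Your overall strategy, Gaussian computation of $\psi$, and the final density formula all match the paper's proof. However, there is a genuine gap in your verification of Condition $(C0)$. The bound $s_k(\F_\X-\alpha\I) \le s_k(\X^{(1)}-\alpha\X^{(2)})\, s_1\big((\X^{(2)})^{-1}\big)$ that you propose is too crude: $s_1\big((\X^{(2)})^{-1}\big) = s_n(\X^{(2)})^{-1}$, and by Lemma \ref{C1} the only available lower bound on $s_n(\X^{(2)})$ is $n^{-Q}$ (with high probability), so the pulled-out factor $s_1^p((\X^{(2)})^{-1})$ is of order $n^{pQ}$ and the resulting bound on $\tfrac1n\sum_k s_k^p(\F_\X-\alpha\I)$ is not uniformly bounded in probability for any $p>0$. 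The paper instead uses the weak log-majorization inequality (Theorem 3.3.14\,(c) in \cite{HornJ}), which gives $\tfrac1n\sum_k s_k^p(\F-\alpha\I) \le \tfrac1n\sum_k s_k^p(\X^{(1)}-\alpha\X^{(2)})\, s_k^p\big((\X^{(2)})^{-1}\big)$ with \emph{matched} indices; only after H\"older's inequality, a careful split of the index range, Lemma \ref{C12} and Marchenko--Pastur asymptotics does one get boundedness in probability, and this works only for small $p$ (the paper takes $p<2/9$, whereas your $p=1/2$ is not admissible in this scheme because of the constraints on $\beta = 2p/(2-p)$ and the singularity of the Marchenko--Pastur density near zero). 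The observation you flagged as ``the main technical obstacle'' is correct, but the bound you offer to resolve it does not.

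A second, subtler issue: Proposition \ref{prop:asympfree} cannot be applied directly to $\V_\Y$ with $\F_\Y = \Y^{(1)}(\Y^{(2)})^{-1}$, since it requires the limiting measure $\mu_\V$ to have compact support together with uniformly bounded moments $\sup_n \tfrac1n\E\Tr\V_n^{2k} < \infty$ for all $k$, neither of which holds for the spherical ensemble (the limiting density has Cauchy-type tails). The paper resolves this by working with the regularized matrices $\F_t = \Y^{(1)}(\Y^{(2)})^{-1}_t$, which do have bounded norm, establishing asymptotic freeness and the system \eqref{mainequation-01} at positive $t$, and then passing to the limit $t\to 0$ using the regularization lemma together with continuity of $\boxplus$, $\boxtimes$ and the $S$-transform (\eqref{eq:boxplus-2}, \eqref{eq:boxtimes-2}, \eqref{ST-continuity}). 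You should make this approximation step explicit rather than invoking Proposition \ref{prop:asympfree} directly.
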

\begin{rem}
 This density corresponds after stereographic projection of the complex plane to the uniform 
 distribution on the sphere. 
\end{rem}
\begin{proof}
We use Remark \ref{spher0}. By Theorem \ref{spherical}, for each $\alpha \in \mathbb{C}$, 
the empirical singular value distributions of the matrices $\F_\X-\alpha\I$ have the same weak limit in probability
as those of the matrices $\F_\Y-\alpha\I$, where $\F_\Y=\Y^{(1)}({\Y^{(2)}})^{-1}$
and $\Y^{(1)}$ and $\Y^{(2)}$ are independent random matrices with independent Gaussian entries.
We check the conditions $(C0)$, $(C1)$, $(C2)$. Fix $\alpha \in \mathbb C$,
and write $\F$ instead of $\F_\X$.
We start with the condition $(C0)$ for some $p<\frac19$. 
According to Theorem~3.3.14, c) in \cite{HornJ}, we have
\begin{equation}\notag
 \frac1n\sum_{k=1}^ns_k^p(\F-\alpha\I)
 \le\frac1n\sum_{k=1}^n s_k^p(\X^{(1)}-\alpha\X^{(2)}) s_k^p((\X^{(2)})^{-1}). 
\end{equation}
Applying H\"older inequality, we get
\begin{equation}\label{g1}
 \frac1n\sum_{k=1}^ns_k^p(\F-\alpha\I)
 \le\Big(\frac1n\sum_{k=1}^ns_k^2(\X^{(1)}-\alpha\X^{(2)})\Big)^{\frac p2}
 \Big(\frac1n\sum_{k=1}^ns_k^{-\frac{2p}{2-p}}(\X^{(2)})\Big)^{\frac{2-p}2}.
\end{equation}
 Note that
 \begin{equation}\notag
  \E\Big(\frac1n\sum_{k=1}^ns_k^2(\X^{(1)}-\alpha\X^{(2)})\Big)\le 1+|\alpha|.
 \end{equation}
This implies that the first factor on the r.h.s.\@ of \eqref{g1} is bounded in probability.
For $p<\frac29$ we have $\beta=\frac{2p}{2-p}<\frac14$.  
Denote by $\mathcal G_n(x)$ the empirical distribution function 
of the squared singular values of the matrix $\X^{(2)}$. 
By the Marchenko--Pastur theorem
\begin{equation}\notag
 \lim_{n\to\infty}\varkappa_n := \lim_{n \to \infty}\sup_x|\mathcal G_n(x)-G(x)|=0\quad\text{in probability},
\end{equation}
where $G(x)$ has Lebesgue density $g(x)=G'(x)=\frac{\sqrt{4-x}}{2\pi\sqrt x}
\mathbb I\{0<x\le4\}$.
Furthermore, let $0 < \gamma < 1$ be as in Lemma \ref{C12}, and let 
$n_1=[n-n^{\gamma}]$ and $n_2=\min\{n_1,[n(1-\varkappa_n)]\}$.
We~have the following decomposition
\begin{equation}\label{g3}
\frac1n\sum_{k=1}^ns_k^{-\beta}(\X^{(2)})
=\frac1n\sum_{k=n_1+1}^ns_k^{-\beta}(\X^{(2)})
+\frac1n\sum_{k=n_2+1}^{n_1}s_k^{-\beta}(\X^{(2)})+
\frac1n\sum_{k=1}^{n_2}s_k^{-\beta}(\X^{(2)}).
\end{equation}
By Lemma \ref{C1}, we have
\begin{equation}\notag
 \lim_{n\to\infty}\Pr\{s_k(\X^{(2)})\le n^{-Q}\}=0.
\end{equation}
This implies that for $p<\frac{2(1-\gamma)}{2Q+1-\gamma}$
\begin{equation}\notag
 \frac1n\sum_{k=n_1+1}^ns_k^{-\beta}(\X^{(2)})\to 0\quad\text{as}
 \quad n\to\infty\quad \text{in probability}.
\end{equation}
Furthermore, by Lemma \ref{C12}, we have
\begin{equation}\label{g5}
 \Pr \Big\{s_k^{-\beta}(\X^{(2)})>c\left(\frac{n-k}{n}\right)^{-\beta} \text{ for some $k=1,\ldots,n_1$} \Big\} \le \exp\{-cn\}.
\end{equation}
Since $\beta < 1$, this implies that
\begin{equation}\notag
 \Pr\Big\{\frac1n\sum_{k=n_2+1}^{n_1}s_k^{-\beta}(\X^{(2)})
> c' \left(\frac{n-n_2}{n}\right)^{1-\beta}\Big\}\leq \exp\{-c'n\}.
\end{equation}
From $\frac {n-n_2}n\le\varkappa_n$ it follows that 
\begin{equation}\notag
 \lim_{n\to\infty}\frac1n\sum_{k=n_2+1}^{n_1}s_k^{-\beta}(\X^{(2)})=0
 \quad\text{in probability}.
\end{equation}
It remains to prove that the last summand on the r.h.s.\@ of \eqref{g3}
is bounded in probability.
Again by Lemma \ref{C12}, with probability $1 - o(1)$, 
we have $s_{n_2}(\X^{(2)}) \geq c\varkappa_n$ and therefore
\begin{equation}\label{g4}
 \frac1n\sum_{k=1}^{n_2}s_k^{-\beta}(\X^{(2)})
 \le \Big|\int_{c\varkappa_n}^{\infty}x^{-\beta/2} \, d(\mathcal G_n(x)-G(x))\Big|
 +\int_0^4x^{-\beta/2}dG(x).
\end{equation}
The last integral on the r.h.s. of \eqref{g4} is bounded for $\beta<1$.
Integrating by parts in the first integral on the r.h.s.\@ of \eqref{g4}, we get
\begin{align}
\Big|\int_{c\varkappa_n}^{\infty}x^{-\beta}\, d(\mathcal G_n(x)-G(x))\Big|
&\le C\varkappa_n^{-\beta} |\mathcal G_n(\varkappa_n)-G(\varkappa_n)| + \int_{c\varkappa_n}^{\infty} \beta y^{-\beta-1} |\mathcal G_n(y)-G(y)| \, dy \notag\\&\le
C\varkappa_n^{1-\beta}.\notag
\end{align}
The last inequalities imply that the last summand on the r.h.s.\@ 
of \eqref{g3} is bounded in probability. 
This concludes the proof of the condition $(C0)$.
The condition $(C1)$ follows from the~bound
\begin{equation}\notag
 s_n(\F-\alpha\I) \ge s_n(\X^{(1)}-\alpha \X^{(2)}) s_1^{-1}(\X^{(2)})
\end{equation}
and Lemmas \ref{C0} and \ref{C11}.
To prove the condition $(C2)$, fix a sequence $(\delta_n)$ 
with $\delta_n \geq n^{-\gamma}$ for all $n$ and $\delta_n \to 0$,
and let $n_2 := n[1-\delta_n]$.
Then, by the arguments for condition $(C1)$ as well as Theorem 3.3.4 in \cite{HornJ}, 
we have, with probability $1 - o(1)$,
\begin{multline*}
    \lim_{n \to \infty} \frac1n\sum_{k=n_2+1}^{n_1} |\log s_k(\F-\alpha\I)| \\
\le \lim_{n \to \infty} \frac1n\sum_{k=n_2+1}^{n_1} |\log s_k(\X^{(1)}-\alpha\X^{(2)})|
 +  \lim_{n \to \infty} \frac1n\sum_{k=n_2+1}^{n_1} |\log s_{n-k+1}(\X^{(2)})| \,.
\end{multline*}
Note that both sums on the r.h.s.\@ converge to zero in probability.
For the first sum, this follows from Lemma \ref{C12} applied to the matrix $\X^{(1)} - \alpha\X^{(2)}$,
\pagebreak[2] while for the second sum, this follows from the observation that we have,
with probability $1 - o(1)$,
 \begin{align}
  \frac1n\sum_{k=n_2+1}^{n_1} |\log s_{n-k+1}(\X^{(2)})|
  &\le \frac1n\sum_{k=1}^{n-n_2} |\log s_k(\X^{(2)})| \notag\\
  &\le \frac1n\sum_{1\le k\le n-n_2\atop s_k\le \delta_n^{-1}}|\log s_k(\X^{(2)})|
  +\int_{\delta_n^{-1}}^{\infty} \log x \, d\mathcal G(x)\notag\\
  &\le \frac{n-n_2}n|\log\delta_n|+\delta_n^2|\log\delta_n| \, 
  \frac1n\sum_{k=1}^n s_{k}^2(\X^{(2)})\notag\\&
  \le \delta_n|\log\delta_n|\left( 1+\frac1n\|\X^{(2)}\|_2^2 \right) \,.\notag
 \end{align}
Combining these estimates, we come to the conclusion that $\frac1n\sum_{k=n_2+1}^{n} |\log s_k(\F-\alpha\I)|$ converges to zero in probability, 
i.e.\@ condition (C2) is proved.

\pagebreak[2]

Thus, the conditions (C0), (C1), (C2) have been checked for the matrices $\F_\X$.
For the Gaussian matrices $\F_\Y$, the proof is the same.
We may now apply Theorem \ref{eigenvalueuniversality} to~conclude
that the limiting eigenvalue distributions of the matrices
$\F_\X$ and $\F_\Y$ are the~same (if~existent). Thus, it remains
to compute the limit of the empirical distribution of the eigen\-values 
of the matrix $\F_\Y$.

From now on, let the matrices $\F_t := \F_t(\Y)$ be defined as in the proof of Theorem \ref{spherical}.
We shall use the asymptotic freeness of matrices 
$$
\V_t=\begin{bmatrix}&\mathbf 0&\F_t&\\&\F_t^*&\mathbf 0&\end{bmatrix}
\quad\text{and}\quad
\J(\alpha)=\begin{bmatrix}&\mathbf O&-\alpha\I&\\&-\overline \alpha\I&\mathbf O &\end{bmatrix}.
$$
As we have seen in the proof of Theorem \ref{spherical}, 
the limiting (mean) empirical spectral distribution of the~matrices $\F_t \F_t^*$
is given by $\sigma \boxtimes \varrho_t$, with $S$-transform $S_\sigma \cdot S_{\varrho_t}$.
Here, $\sigma$~and~$\varrho_t$ denote the Marchenko-Pastur distribution
and its induced measure under the mapping $x \mapsto (x+t)^{-1} x (x+t)^{-1}$, respectively.
Thus, the limiting (mean) empirical spectral distribution of the matrices $\V_t$
is given by $\mathcal Q^{-1}(\sigma \boxtimes \varrho_t)$,
where $\mathcal Q$ is as in \eqref{eq:Q-definition},
and the limiting spectral distribution of the matrices $\V_t(\alpha) := \V_t + \J(\alpha)$
is given by $\mathcal Q^{-1}(\sigma \boxtimes \varrho_t) \boxplus T(\alpha)$,
where $T(\alpha)$ is as in Section~\ref{sec:stieltjes}.
By a variant of Lemma \ref{inverse10} for shifted matrices
as well as relations \eqref{eq:boxplus-2} and \eqref{eq:boxtimes-2},
it then follows that the limiting (mean) empirical spectral distributions of the matrices
$\V$ and $\V(\alpha) := \V + \J(\alpha)$ are given by 
$\mathcal Q^{-1}(\sigma \boxtimes \varrho)$
and
$\mathcal Q^{-1}(\sigma \boxtimes \varrho) \boxplus T(\alpha)$,
respectively.

Moreover, the $S$-transform of the limiting eigenvalue distribution of $\F \F^*$ is given by 
\begin{align}
\label{eq:spherical-SW}
S_{\F \F^*}(z)=-\frac z{z+1},
\end{align}
as we have seen in the proof of Theorem \ref{spherical}.
Thus, by \eqref{eq:ST-symmetric}, the $S$-transform of the limiting eigenvalue distribution of $\V$ 
is given by
\begin{align}
\label{eq:spherical-SV}
S_{\V}(z) = i \,.
\end{align}
Finally, by Theorem \ref{freer}, the Stieltjes transform $g_t(z,\alpha)$
associated with the matrices $\V_t(\alpha)$ 
satisfies the system of equations \eqref{mainequation-01}
with $S_\V$ replaced by $S_{\V_t}$.
It therefore follows by continuity that the Stieltjes transform $g(z,\alpha)$
associated with the matrices $\V(\alpha)$ 
satisfies the system of equations \eqref{mainequation-01}.

Thus, the assumptions stated above Assumption \ref{lipschitz} are satisfied,
and we may apply Theorem \ref{denseigval}. Solving now the system
\begin{align}
\psi(\alpha)(1-\psi(\alpha))=|\alpha|^2\varkappa^2(\alpha),\notag\\
\varkappa(\alpha)=1-\psi(\alpha),
\end{align}
we obtain
\begin{equation}\notag
 \psi(\alpha)=\frac{|\alpha|^2}{1+|\alpha|^2},
 \qquad
 \varkappa(\alpha)=\frac{1}{1+|\alpha|^2} 
\end{equation}
and
\begin{equation}\notag
 u\frac{\partial \psi}{\partial u}+ v\frac{\partial \psi}{\partial v}=\frac{2|\alpha|^2}
 {(1+|\alpha|^2)^2}.
\end{equation}
The last equality and equality \eqref{density1} together imply
\begin{equation}\notag
 f(u,v)=\frac1{\pi(1+(u^2+v^2))^2}.
\end{equation}
Thus Theorem \ref{spherens} is proved.
\end{proof}
\subsubsection{Product of Independent Matrices from Spherical Ensemble}

For fixed $m \ge 1$, let $\X^{(q)}$, $q=1,\ldots,2m$, be independent $n \times n$ random matrices
with independent entries $\frac{1}{\sqrt{n}} X_{jk}^{(q)}$.
Suppose that \eqref{eq:secondmomentstructure-8a} or \eqref{eq:secondmomentstructure-8b} holds.
Consider the matrix $\F_m=\F_m(\X)=\prod_{q=1}^m\X^{(2q-1)}(\X^{(2q)})^{-1}$,
and denote by $\mu_n$ its empirical spectral distribution.
\begin{thm}\label{spherensprod}Let the r.v.'s $X_{jk}^{(q)}$ for $q=1,\ldots,2m$ and
$j,k=1,\ldots,n$ satisfy the condition \eqref{uniformintegr1}. Then 
 the measures $\mu_n$ weakly converge in probability to the measure 
 $\mu$ with Lebesgue density
 \begin{equation}\notag
  p(x,y)=\frac1{\pi m(u^2+v^2)^{\frac {m-1}m}(1+(u^2+v^2)^{\frac1m})^2}.
 \end{equation}
 
\end{thm}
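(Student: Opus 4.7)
The plan is to follow the same two-stage strategy used for the previous eigenvalue-distribution theorems: first reduce to the Gaussian case by means of Theorem \ref{eigenvalueuniversality}, and then compute the limiting density in the Gaussian case by combining Theorem \ref{freer} and Theorem \ref{denseigval}.

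\textbf{Stage 1 (universality).} I would apply Theorem \ref{eigenvalueuniversality} to the matrix-valued function $\mathbb F = \prod_{q=1}^m \X^{(2q-1)} (\X^{(2q)})^{-1}$. The Lindeberg condition \eqref{lind} is a direct consequence of~\eqref{uniformintegr1}. The rank condition \eqref{rank} and the smoothness conditions \eqref{mat3}--\eqref{mat32a} (for the shifted matrices $\F_m-\alpha\I$) are obtained by the same arguments as in Theorem \ref{productspherical} and Theorem \ref{spherens}: one regularizes the inverses $(\X^{(2q)})^{-1}$ via $(\X^{(2q)})^{-1}_t$ and uses the regularization lemma (Lemma \ref{AXB-lemma}) together with the block-matrix calculus already employed in Section~\ref{products}. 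Conditions $(C0)$, $(C1)$, $(C2)$ on the shifted matrices $\F_m-\alpha\I$ follow by essentially the same reasoning as in the proof of Theorem \ref{spherens}: one uses the inequality $s_k(\F_m-\alpha\I)\le s_k(\F_m)+|\alpha|$ for $(C0)$, bounds on the smallest singular value from Lemmas \ref{C0}--\ref{C2} (applied inductively to the product) for $(C1)$, and the Marchenko--Pastur-type control of the small singular values of the $\X^{(2q)}$ for $(C2)$. This stage is where the technical obstacle lies: controlling the small singular values of a product of independent spherical matrices is more delicate than for a single one, and one has to juggle the regularization parameter $t$ carefully via Lemma \ref{AXB-lemma} to reduce to the already-known tail bounds.

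\textbf{Stage 2 (Gaussian case).} Once universality is established, we can assume that all $\X^{(q)}$ are standard complex Gaussian. From the proof of Theorem \ref{productspherical}, the $S$-transform of the limiting squared singular value distribution of $\F_m\F_m^*$ is
\begin{equation*}
S_{\F_m\F_m^*}(z) \;=\; (-1)^m \, \frac{z^m}{(z+1)^m}.
\end{equation*}
By Proposition \ref{prop:asympfree} (with an additional regularization step, as in Theorem \ref{spherens}, to handle the unbounded support), the matrices $\V_{\F_m}$ and $\J(\alpha)$ are asymptotically free. Using \eqref{eq:ST-symmetric} and the choice of branch \eqref{eq:ChoiceOfBranch}, one computes, for $x\in(0,1)$,
\begin{equation*}
\xi_\V(x) \;=\; i(-x)\,S_\V(-x) \;=\; x \left( \frac{x}{1-x} \right)^{(m-1)/2}.
\end{equation*}
The system \eqref{mainequation10} of Theorem \ref{denseigval} then becomes
\begin{align*}
\psi(\alpha)(1-\psi(\alpha)) &= |\alpha|^2\varkappa(\alpha)^2, \\
\varkappa(\alpha) &= (1-\psi(\alpha)) \left( \frac{1-\psi(\alpha)}{\psi(\alpha)} \right)^{(m-1)/2}.
\end{align*}
Eliminating $\varkappa$ gives $\psi^m = |\alpha|^{2}(1-\psi)^m$, i.e.~$\psi(\alpha)/(1-\psi(\alpha)) = |\alpha|^{2/m}$, so
\begin{equation*}
\psi(\alpha) \;=\; \frac{|\alpha|^{2/m}}{1+|\alpha|^{2/m}}.
\end{equation*}
Uniqueness of this solution (ruling out the trivial branch $\psi\equiv 1$) follows from Lemma \ref{lem:nonzero} together with the continuity of $\varkappa(\alpha)$.

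\textbf{Stage 3 (density computation).} Since $\psi$ depends only on $r=|\alpha|=\sqrt{u^2+v^2}$, one has $u\partial_u\psi+v\partial_v\psi = r\psi'(r)$, and \eqref{density1} gives $f(u,v) = \psi'(r)/(2\pi r)$. A direct differentiation of $\psi(r) = r^{2/m}/(1+r^{2/m})$ yields
\begin{equation*}
\psi'(r) \;=\; \frac{(2/m)\,r^{2/m-1}}{(1+r^{2/m})^2},
\end{equation*}
from which
\begin{equation*}
f(u,v) \;=\; \frac{1}{\pi m\, r^{2(m-1)/m}\,(1+r^{2/m})^2}
\;=\; \frac{1}{\pi m\,(u^2+v^2)^{(m-1)/m}\,(1+(u^2+v^2)^{1/m})^2},
\end{equation*}
as claimed. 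A final check that $f$ integrates to one is immediate from the polar identity $\int_0^\infty 2\pi r f(r)\,dr = \int_0^\infty \psi'(r)\,dr = 1$. The main obstacle throughout is the verification of $(C0)$--$(C2)$ in Stage~1 for the shifted product, which requires a careful combination of the regularization scheme and the singular-value bounds of Appendix~\ref{sec:SmallSingularValues} applied iteratively in $m$.
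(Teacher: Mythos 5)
Your Stage 2 and Stage 3 are correct and coincide exactly with the paper's computation: the $S$-transform of the squared-singular-value limit is $(-z/(z+1))^m$, passing through \eqref{eq:ST-symmetric} gives $S_{\V}(z)=i(-z/(z+1))^{(m-1)/2}$, hence $\xi_\V(x)=x\bigl(x/(1-x)\bigr)^{(m-1)/2}$, and the system \eqref{mainequation10} collapses to $\psi^m=|\alpha|^2(1-\psi)^m$ exactly as you wrote, giving $\psi=|\alpha|^{2/m}/(1+|\alpha|^{2/m})$ and the stated density.

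Where your Stage 1 is too vague is the verification of $(C0)$--$(C2)$ for the shifted product $\F_m-\alpha\I$. You propose to apply Lemmas \ref{C0}--\ref{C2} ``inductively to the product,'' but those lemmas control $s_n(\X-\alpha\I)$ for a single Girko--Ginibre matrix with a \emph{scalar} shift; they do not by themselves see the product structure, and a shift by $\alpha\I$ does not distribute over the product. The ingredient that makes the induction actually run is the factorization
\begin{equation*}
 \F_m-\alpha\I = \F_{m-1}\,\bigl(\X^{(2m-1)}-\alpha\,\F_{m-1}^{-1}\X^{(2m)}\bigr)\,(\X^{(2m)})^{-1}\,,
\end{equation*}
which rewrites the shift of the $m$-fold product as a \emph{matrix-valued} shift of the single matrix $\X^{(2m-1)}$, multiplied by factors controlled by the inductive hypothesis. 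Together with the majorization inequality of Horn--Johnson (Theorem 3.3.14\,(c)) this reduces $(C0)$--$(C2)$ to bounds on the three factors, and the relevant tools are Lemmas \ref{C11} and \ref{C12} (which allow the shift $\M$ to be an arbitrary matrix of polynomially bounded Frobenius norm, after conditioning on everything except $\X^{(2m-1)}$) rather than Lemmas \ref{C0}--\ref{C2}. Your ``$s_k(\F_m-\alpha\I)\le s_k(\F_m)+|\alpha|$'' step for $(C0)$ is fine, but without the factorization above it is not clear how you would close the argument for $(C1)$ and $(C2)$.

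Also, a small point of style: the paper routes the universality step through Remark \ref{spher0} (which only requires convergence of the L\'evy distance of shifted singular-value distributions, already established in the course of proving Theorem \ref{productspherical}) rather than re-checking conditions \eqref{mat3}--\eqref{mat32a} for every $\alpha$. This is lighter than what you propose, though both are correct.
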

\begin{proof} The condition of Remark \ref{spher0} follows from Theorem \ref{productspherical}.
Conditions $(C0)$--$(C2)$ follow from induction principle and the proof of Theorem \ref{spherens}.
Indeed, for $\alpha \in \mathbb{C}$, we have the representation
\begin{align}
  \F_m-\alpha\I 
= \F_{m-1} (\X^{(2m-1)} - \alpha \F_{m-1}^{-1} \X^{(2m)}) (\X^{(2m)})^{-1}
\end{align}
Write $s_1(\mathbf A) \ge \cdots \ge s_n(\mathbf A)$ for the singular values of the matrix $\mathbf A$.
Then, similarly as in \cite{HornJ}, Theorem 3.3.14\,(c),
we~have, for any $k=1,\ldots,n$ and for any function $f$ such that 
$\varphi(t)=f({\rm e}^t)$ is increasing and convex, 
\begin{equation}\notag
 \sum_{j=1}^kf(s_j(\F_m-\alpha\I))\le \sum_{j=1}^k
 f\Big( s_j({\F_{m-1}}) s_j(\X^{(2m-1)}-\alpha\F_{m-1}^{-1}\X^{(2m)}) s_j((\X^{(2m)})^{-1}) \Big).
\end{equation}
Now use similar arguments as in the proof of Theorem \ref{spherens}.
Thus, Theorem \ref{eigenvalueuniversality} and Remark \ref{spher0} are applicable,
and it remains to determine the limiting empirical spectral distribution
in the Gaussian case.

Write $\F = \F(\Y)$ for the products of independent ``Gaussian'' spherical matrices.
To find their limiting empirical spectral distribution,
we use the results from Section \ref{density}.
For~brevity, we give only a formal proof;
it is straightforward (although a bit cumbersome) to make this proof rigorous
by using similar arguments as in the proof of Theorem \ref{spherens}
(using a variant of the regularization lemma \ref{smooth10} this time).
First, we find the $S$-transform $S_{\F\F^*}(z)$ associated with the matrices $\W := \F\F^*$.
Remember that, for any $q=1,\ldots,m$, 
the $S$-transform associated with the matrices 
$\Y^{(q)}{\Y^{(q+1)}}^{-1}({\Y^{(q+1)}}^{-1})^*(\Y^{(q)})^*$ 
is given by
$$
-\frac z{z+1}
$$
by \eqref{eq:spherical-SW}.
Thus, by the multiplicative property of $S$-transform, we formally have
$$
S_{\W}(z)=\Big({-}\frac z{z+1}\Big)^m.
$$
Using \eqref{eq:ST-symmetric}, it~follows that
$$
S_{\V}(z)=i\Big({-}\frac z{z+1}\Big)^{\frac{m-1}{2}}.
$$
Solving now the system
\begin{align}
 \psi(1-\psi)=|\alpha|^2\varkappa^2,\notag\\
 \varkappa=(1-\psi)(\frac {1-\psi}{\psi})^{\frac {m-1}{2}},
\end{align}
we find that
\begin{equation}\notag
 \psi=\frac{|\alpha|^{\frac 2m}}{1+|\alpha|^{\frac 2m}}
\end{equation}
and
\begin{equation}\notag
 u\frac{\partial \psi}{\partial u}+ v\frac{\partial \psi}{\partial v}
 =\frac{2|\alpha|^{\frac 2m}}{m(1+|\alpha|^{\frac 2m})^2}.
\end{equation}
The last equality and equality (\ref{density1}) together imply
\begin{equation}
 f(u,v)=\frac1{\pi m(u^2+v^2)^{\frac {m-1}m}(1+(u^2+v^2)^{\frac1m})^2}.
\end{equation}
Thus Theorem \ref{spherensprod} is proved.
\end{proof}

\appendix

\section{Appendix}
\subsection{Variance of Stieltjes Transforms}
\label{sec:variance}

In this subsection, $\F = \F_{\X}$ is defined as in Section \ref{sec:notation}, 
and $\V$ and $\R$ are the Hermitian matrices defined by
 $$
 \V=\begin{bmatrix}&\mathbf O&\F+\mathbf B&\\
 &{(\F+\mathbf B)}^*&\mathbf O&\end{bmatrix},\quad
 \R=(\V-z\I)^{-1},
 $$ 
where $\mathbf B$ is a non-random matrix and $z = u + iv$ with $v > 0$.

\begin{lem}\label{variance}
Suppose that the {\it rank condition} \eqref{rank} holds. 
Then
\begin{equation}\notag
 \E\Big|\frac1n\Tr\R-\E\frac1n\Tr\R\Big|^2\le \frac{C}{nv^2}.
\end{equation}
\end{lem}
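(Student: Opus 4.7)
The plan is to bound the variance via a martingale decomposition of $\tfrac{1}{n}\Tr\R - \E\tfrac{1}{n}\Tr\R$, revealing the randomness one \emph{row} at a time, rather than entry by entry or matrix by matrix. The choice of granularity is the crux: revealing individual entries would produce $\sim n^2$ martingale-difference terms, each of size $O(1/(nv))$, and Pythagoras would then only give a variance bound of $O(1/v^2)$; conversely, revealing one full matrix $\X^{(q)}$ at a time yields only $m$ terms but the corresponding perturbation has rank of order $n$, which is useless. Rows hit the sweet spot: there are only $M := \sum_{q=1}^m n_{q-1} \le Cn$ of them (by \eqref{samplesize}), and replacing one row of some $\X^{(q)}$ is still a rank-$1$ perturbation of that matrix, so the rank condition \eqref{rank} keeps the per-term bound at the same $O(1/(nv))$ as in the entry-by-entry case.

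Concretely, I would enumerate the rows of $\X^{(1)},\ldots,\X^{(m)}$ in any fixed order as $R_1,\ldots,R_M$, and let $\E_i[\,\cdot\,] := \E[\,\cdot\,|R_1,\ldots,R_i]$ denote the conditional expectation given the first $i$ rows (with $\E_0 := \E$). Then
\[
\tfrac{1}{n}\Tr\R - \E\tfrac{1}{n}\Tr\R = \sum_{i=1}^M \gamma_i,\qquad \gamma_i := (\E_i - \E_{i-1})\tfrac{1}{n}\Tr\R,
\]
and orthogonality of martingale differences gives
$\E\bigl|\tfrac{1}{n}\Tr\R - \E\tfrac{1}{n}\Tr\R\bigr|^2 = \sum_{i=1}^M \E|\gamma_i|^2$.
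For each $i$, introduce the surrogate matrix $\V^{[i]}$ obtained from $\V$ by zeroing out the $i$-th row in the appropriate $\X^{(q)}$, and let $\R^{[i]} := (\V^{[i]} - z\I)^{-1}$. Since $\R^{[i]}$ is a function of $(R_j)_{j \ne i}$ alone, $(\E_i - \E_{i-1})\Tr\R^{[i]} = 0$, whence
\[
\gamma_i = (\E_i - \E_{i-1})\tfrac{1}{n}(\Tr\R - \Tr\R^{[i]}),
\]
and bounding $|\gamma_i|$ reduces to an almost-sure control of $\bigl|\tfrac{1}{n}(\Tr\R - \Tr\R^{[i]})\bigr|$.

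The per-row bound $|\gamma_i| \le C/(nv)$ then comes from the rank condition \eqref{rank}: zeroing one row of $\X^{(q)}$ is a rank-one perturbation of $\X^{(q)}$, so $\F_\X - \F_\X^{[i]}$ has rank at most $C(\mathbb F)$, and consequently $\V - \V^{[i]}$ (which has block off-diagonal form) has rank at most $2C(\mathbb F)$. Combining Bai's rank inequality (Theorem~A.44 in~\cite{BS:10}) with integration by parts, exactly as in the derivation of \eqref{trunc2}, yields $|\Tr\R - \Tr\R^{[i]}| \le 2\pi C(\mathbb F)/v$, so $|\gamma_i| \le C/(nv)$ almost surely. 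Summing,
\[
\E\bigl|\tfrac{1}{n}\Tr\R - \E\tfrac{1}{n}\Tr\R\bigr|^2 \le M \cdot \frac{C^2}{n^2 v^2} \le \frac{C}{n v^2},
\]
which is the claim. The only genuinely subtle point is identifying rows as the right granularity; once that is done, the argument is a routine combination of the martingale method with the rank condition and Bai's inequality already deployed in Lemma~\ref{lem:truncation}.
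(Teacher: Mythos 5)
Your proposal is correct and follows essentially the same route as the paper: a row-by-row martingale decomposition (the paper indexes the filtration as $\mathfrak M_{q,j}$, revealing rows of $\X^{(q)}$ one at a time, then passing to $\X^{(q+1)}$), the same surrogate matrices with one row zeroed out so that the martingale difference can be rewritten as $(\E_{q,j-1}-\E_{q,j})(\Tr\R-\Tr\R^{(q,j)})$, and the same combination of Bai's rank inequality with the rank condition \eqref{rank} to get a per-term $O(1/v)$ bound and an overall $O(n/v^2)$ estimate on the unnormalized variance. The only cosmetic difference is that the paper states the resolvent form of Bai's rank inequality directly rather than passing through \eqref{trunc2}, and orders the filtration in the opposite direction.
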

\begin{proof}
We introduce the $\sigma$-algebras $\textfrak M_{q,j}=\sigma\{X^{(q)}_{lk},\,
j< l\le n_{q-1}, k=1,\ldots,n_{q}; X^{(r)}_{pk}$,
$r=q+1,\ldots m, \,p=1,\ldots,n_{r-1},\, k=1,\ldots,n_{r}\}$ and use the representation
\begin{equation}\notag
\Tr\R-\E\Tr\R=\sum_{q=1}^m\sum_{j=1}^{n_{q-1}}(\E_{q,j-1}\Tr\R
-\E_{q,j}\Tr\R),
\end{equation}
where $\E_{q,j}$ denotes  conditional expectation given the  
$\sigma$-algebra $\textfrak M_{q,j}$.
 Note that $\textfrak M_{q,n_{q-1}}=\textfrak M_{q+1,0}$. 
 Furthermore, we introduce the matrices $\X^{(q,j)}$ 
 obtained from $\X^{(q)}$ by replacing the entries 
 $X_{jk}^{(q)}$ ($k=1,\hdots,n_q$) by zero's. Define the matrices
 $$
 \F^{(q,j)}=\mathbb F(\X^{(1)},\ldots,\X^{(q-1)},\X^{(q,j)},
 \X^{(q+1)},\ldots,\X^{(m)})
 $$
 and 
 $$
 \V^{(q,j)}=\begin{bmatrix}&\mathbf O&\F^{(q,j)}+\mathbf B&\\
 &(\F^{(q,j)}+\mathbf B)^*&\mathbf O&\end{bmatrix},\quad
 \R^{(q,j)}=(\V^{(q,j)}-z\I)^{-1}.
 $$ 
 Note that
 $\E_{q,j}\Tr \R^{(q,j)}=\E_{q,j-1}\Tr \R^{(q,j)}$, and we may write
 $$
 \Tr\R-\E\Tr\R=\sum_{q=1}^m\sum_{j=1}^{n_{q-1}}
 \Big(\E_{q,j-1}(\Tr\R-\Tr \R^{(q,j)})-\E_{q,j}(\Tr\R-\Tr\R^{(q,j)})\Big),
 $$
 and
 $$
 \E|\Tr\R-\E\Tr\R|^2=\sum_{q=1}^m\sum_{j=1}^{n_{q-1}}
 \E\Big|\E_{q,j-1}(\Tr\R-\Tr \R^{(q,j)})-\E_{q,j}(\Tr\R-\Tr \R^{(q,j)})\Big|^2.
 $$
 By the rank inequality of Bai, we have
 \begin{equation}\notag
  \Big|\Tr\R-\Tr\R^{(q,j)}\Big|\le 
  \frac{\text{\rm rank}\{\V-\V^{(q,j)}\}}{v}.
 \end{equation}
 By the {\it rank condition} \eqref{rank}, we have
 \begin{equation}\notag
  \text{\rm rank}\{\V-\V^{(q,j)}\}\le 2\text{\rm rank}\{\F-\F^{(q,j)}\} 
  \le 2 C(\mathbb F) \text{\rm rank}\{\X^{(q)}-\X^{(q,j)}\}\le 2 C(\mathbb F) .
 \end{equation}
This concludes the proof of Lemma \ref{variance}.
\end{proof}

\subsection{\textit{S}-Transform for Rectangular Matrices}

The $S$-transform of a compactly supported probability distribution 
on $\Real$ was introduced in Section~\ref{freeprob}.
It is well-known that for free random variables $\xi,\eta\ge0$,
with $\xi,\eta \ne 0$,
$$
S_{\xi\eta}(z)=S_{\eta}(z)S_{\xi}(z).
$$
We may interpret this equality for random matrices as follows.
For each $n \in \mathbb N$, let $\X_n$ and $\Y_n$ 
be two random square matrices of size $n\times n$.
Assume that the matrices $\X_n^*\X_n$ and $\Y_n\Y_n^*$
are asymptotically free and that the mean empirical spectral distributions
of the matrices $\X_n \X_n^*$ and $\Y_n \Y_n^*$ converge in moments 
to compactly supported probability measures $\mu$ and $\nu$, respectively,
with $\mu,\nu \ne \delta_0$.
Then the mean empirical spectral distributions of the matrices
$\X_n\Y_n\Y_n^*\X_n^*$ converge in moments
to the probability measure $\mu\boxtimes\nu$ and 
\begin{align}
\label{eq:square}
S_{\mu\boxtimes\nu}(z)=S_{\mu}(z)S_{\nu}(z).
\end{align}
In the case of rectangular matrices this relation is not true anymore.
The next lemma gives the correct relation for products of rectangular matrices.

\begin{lem}\label{rectang}
For each $n \in \mathbb N$, let $\X_n$ and $\Y_n$ be two rectangular random matrices
of the~sizes $n \times p_n$ and $p_n \times \widetilde{p}_n$, respectively,
and assume that $y=\lim_{n \to \infty} \frac {n}{p_n} \in (0,\infty)$.
Assume additionally that the matrices $\X_n^*\X_n$ and $\Y_n\Y_n^*$
are asymptotically free and that the mean empirical spectral distributions
of the matrices $\X_n\X_n^*$ and $\Y_n\Y_n^*$ converge in moments 
to compactly supported probability measures $\mu$ and $\nu$, respectively,
with $\mu,\nu \ne \delta_0$.
Denote by $S_{\mu}$ and $S_{\nu}$ the corresponding $S$-transforms. 
Then the mean empirical spectral distributions of the matrices $\X_n\Y_n\Y_n^*\X_n^*$ 
converge in moments to a probability measure $\xi$,
and the corresponding $S$-transform $S_\xi$ is equal to
$$
S_{\xi}(z)=S_{\mu}(z)S_{\nu}(zy).
$$
\end{lem}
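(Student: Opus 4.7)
The plan is to reduce the rectangular case to the square case \eqref{eq:square} by passing to $p_n\times p_n$ matrices, and then translating back via the identity that $AB$ and $BA$ share the same nonzero eigenvalues (with multiplicities). Concretely, I~would consider the $p_n\times p_n$ positive semi-definite matrices $\X_n^*\X_n$ and $\Y_n\Y_n^*$, which are asymptotically free by assumption. Denote by $\mu'$ the limit in moments of the mean empirical spectral distribution of $\X_n^*\X_n$, and by $\xi$ and $\xi'$ the analogous limits for $\X_n\Y_n\Y_n^*\X_n^*$ (size $n\times n$) and $\X_n^*\X_n\Y_n\Y_n^*$ (size $p_n\times p_n$) respectively; the existence of $\xi'$ (and hence of $\xi$) follows from \eqref{eq:square} applied to $\X_n^*\X_n$ and $\Y_n\Y_n^*$, and the same result gives
\begin{equation}\label{eq:sq-id}
S_{\xi'}(z)=S_{\mu'}(z)\,S_{\nu}(z).
\end{equation}

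Next I~would relate $S_{\mu'}$ to $S_\mu$ and $S_{\xi'}$ to $S_\xi$ via the standard fact that for any matrices $A\in\mathcal M_{n\times p}$, $B\in\mathcal M_{p\times n}$ the products $AB$ and $BA$ have identical nonzero eigenvalue lists. In the moment generating function this yields
$$
n\,M_{\mu_n}(z)=p_n\,M_{\mu'_n}(z),\qquad n\,M_{\xi_n}(z)=p_n\,M_{\xi'_n}(z),
$$
since atoms at the origin contribute $0$ to $M(z)=\int\tfrac{zx}{1-zx}\,d\cdot$. Passing to the limit gives $M_{\mu'}(z)=y\,M_\mu(z)$ and $M_{\xi'}(z)=y\,M_\xi(z)$, whence $M_{\mu'}^{-1}(z)=M_\mu^{-1}(z/y)$ and the definition of the $S$-transform \eqref{S-transform} combined with the identity $M_\mu^{-1}(w)=\tfrac{w}{w+1}S_\mu(w)$ leads, after a short formal manipulation, to
\begin{equation}\label{eq:rect-Srel}
S_{\mu'}(z)=\frac{z+1}{z+y}\,S_\mu(z/y),\qquad S_{\xi'}(z)=\frac{z+1}{z+y}\,S_\xi(z/y).
\end{equation}

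Finally, substituting \eqref{eq:rect-Srel} into \eqref{eq:sq-id} yields
$$
\frac{z+1}{z+y}\,S_\xi(z/y)=\frac{z+1}{z+y}\,S_\mu(z/y)\,S_\nu(z),
$$
so $S_\xi(z/y)=S_\mu(z/y)\,S_\nu(z)$, and the change of variable $u=z/y$ gives $S_\xi(u)=S_\mu(u)\,S_\nu(uy)$, as desired.

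The main obstacle I~foresee is verifying that all the formal manipulations on $S$-transforms are valid. When $y\le 1$ one has $\mu'=y\mu+(1-y)\delta_0$, which is immediately a probability measure, but when $y>1$ the equality $n\mu_n=p_n\mu'_n+(n-p_n)\delta_0$ forces $\mu$ to have mass at least $1-1/y$ at the origin (which is indeed the case, since $\X_n\X_n^*$ has rank at most $p_n$), and one must be careful that $S_\mu$ is then well-defined near $-1$ in the sense of the extensions discussed around \eqref{eq:ST-unbounded}. The same check is needed for $S_\xi$, using $\mu,\nu\ne\delta_0$ to rule out $\mu'=\delta_0$ and hence $\xi'=\delta_0$. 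Once these technicalities are addressed (by working first with bounded support and then using the continuity relations \eqref{eq:boxtimes-2} and \eqref{ST-continuity}), the remaining steps are a direct computation.
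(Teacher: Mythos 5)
Your proposal is correct and takes essentially the same route as the paper: pass to the $p_n\times p_n$ products $\X_n^*\X_n$ and $\Y_n\Y_n^*$, apply the square-case multiplicativity of the $S$-transform there, and translate back to the $n\times n$ side via the relation $\mu' = y\,\mu + (1-y)\,\delta_0$ between the limiting spectral measures of $AB$ and $BA$, which yields exactly the $S$-transform rescaling $S_{\mu'}(z)=\tfrac{z+1}{z+y}S_\mu(z/y)$ used in the paper's Equation (A.6). Your derivation of that rescaling through the $M$-transform and your explicit attention to the case $y>1$ (where $\mu$ must carry mass at least $1-1/y$ at the origin) just make explicit what the paper compresses by fixing $n\le p_n$ "for definiteness."
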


\begin{proof}
By slight abuse of notation, given a sequence $(\mathbf A_n)_{n \in \mathbb N}$ of self-adjoint matrices, 
we denote by $\mu_{\mathbf A}$ the limiting eigenvalue distribution 
and by $S_{\mathbf A}(z)$ the corresponding $S$-transform (if~existent).
For definiteness, assume that $n\le p_n$.
It is easy to see that
$$
\mu_{\Y\Y^*\X^*\X}=y\mu_{\X\Y
\Y^*\X^*}
+(1-y)\delta_0
$$
where $\delta_0$ denotes the unit atom at zero.
From here it follows that 
\begin{equation}\notag
S_{\Y\Y^*\X^*\X}(z)=\frac{z+1}{z+y}S_{\X\Y\Y^*\X^*}(\frac zy).
\end{equation}
We may rewrite this equality as follows
\begin{equation}\label{key}
S_{\X\Y\Y^*\X^*}(z)=\frac{y(z+1)}{yz+1}S_{\Y\Y^*
\X^*\X}(zy).
\end{equation}
By asymptotic freeness and the multiplicative property of the $S$-transform, we have
$$
S_{\Y\Y^*\X^*\X}(zy)=S_{\Y\Y^*}(zy)S_{\X^*\X}(zy).
$$
(In particular, the limiting eigenvalue distribution $\mu_{\Y\Y^*\X^*\X}$ exists.)
By the same argument as for (\ref{key}), we get
$$
S_{\X\X^*}(z)=\frac{y(z+1)}{yz+1}S_{\X^*\X}(zy).
$$
The three last equalities together imply the result of Lemma.
\end{proof}

\begin{rem}
Using the preceding results, it is easy to see why
the $m$th power of a random square matrix $\Y_n$ 
and the product $\Y^{(1)}_n \cdots \Y^{(m)}_n$
of $m$ independent copies of this matrix
should have the same limiting singular value and eigenvalue distributions.
Indeed, let $\Y_n$ be bi-unitary invariant random square matrices such that 
the empirical spectral distribution of the matrices $\Y_n\Y^*_n$ 
converges weakly in probability as well as in moments 
to a~compactly supported probability measure $\mu_{\Y\Y^*}$.

\pagebreak[2]

Then, similarly as in Hiai and Petz \cite{Petz-1},
using the singular value decomposition of the matrix $\Y_n$,
one can show that $\Y_n \Y^*_n$ and $\Y^*_n \Y_n$ are asymptotically free,
and it follows from Equation \eqref{eq:square} (and induction)
that $\Y^m_n (\Y^m_n)^*$ converges in moments to $\mu_{\Y\Y^*}^{\boxtimes m}$.
A similar argument, also based on Equation \eqref{eq:square},
shows that the same is true for 
$(\Y^{(1)}_n \cdots \Y^{(m)}_n)(\Y^{(1)}_n \cdots \Y^{(m)}_n)^*$,
where $\Y^{(1)}_n,\hdots,\Y^{(m)}_n$ are $m$ independent copies of $m$.
Thus, the matrices $\Y^m_n$ and $\Y^{(1)}_n \cdots \Y^{(m)}_n$
will have the same limiting singular value distributions.

Now the {\it $S$-transform} of the limiting singular value distribution
of the shifted matrices $\F-\alpha\I$ is well defined by
$\alpha$ and the limiting singular value distribution of the matrix $\F$. 
To prove this we must use the additive property of the {\it $R$-transform}
and the correspondence between {\it $R$-} and {\it $S$-}{\it transforms}.
Furthermore, note that the limit measure for the eigenvalue distribution is well defined 
by its logarithmic potential and that we may reconstruct the logarithmic potential from
the family of the singular value distribution of the shifted matrices. 
It therefore follows that the limiting eigenvalue distribution 
of the matrices $\Y^m_n$ and $\Y^{(1)}_n \cdots \Y^{(m)}_n$ will also be the same.


But the eigenvalues of the $m$th power of a matrix 
are the $m$th powers of the eigenvalues of that matrix.
For example, if the limiting eigenvalue distribution of the random matrix $\Y_n$ is the circular law, 
then the limiting eigenvalue distribution of the product $\Y^{(1)}_n\cdots\Y^{(m)}_n$ 
is the $m$th power of the uniform distribution in the unit disc.
\end{rem}

\subsection{Bounds on Singular Values}
\label{sec:SmallSingularValues}

Throughout this subsection, let $\X$ denote an $n\times n$ random matrix with independent entries 
$\frac{1}{\sqrt n}X_{jk}$ such that $\E X_{jk}=0$ and $\E|X_{jk}|^2=1$. 
Let $s_1(\X) \ge\cdots\ge s_n(\X)$ denote the singular values of the matrix $\X$.
Then we have the following result:

\begin{lem}\label{C0}
We have $\lim_{t \to \infty} \limsup_{n \to \infty} \Pr\{ \tfrac1n \sum_{k=1}^{n} s_k^2(\X) \geq t \} = 0$.
\end{lem}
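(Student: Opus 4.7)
The plan is to observe that the quantity in question is essentially the normalized Frobenius norm of $\X$, whose expectation is exactly $1$, and then to apply Markov's inequality. So no heavy machinery is required.

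First I would use the identity
\begin{equation*}
\sum_{k=1}^{n} s_k^2(\X) = \Tr(\X \X^*) = \|\X\|_2^2 = \frac{1}{n} \sum_{j=1}^{n} \sum_{k=1}^{n} |X_{jk}|^2,
\end{equation*}
which is just the definition of singular values together with the normalization $\X = (\tfrac{1}{\sqrt n} X_{jk})$. Dividing by $n$,
\begin{equation*}
\frac{1}{n} \sum_{k=1}^{n} s_k^2(\X) = \frac{1}{n^2} \sum_{j,k=1}^{n} |X_{jk}|^2.
\end{equation*}

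Next I would take expectations, using the standing assumption $\E|X_{jk}|^2 = 1$ from Section \ref{sec:notation}, to get
\begin{equation*}
\E \Big( \frac{1}{n} \sum_{k=1}^{n} s_k^2(\X) \Big) = \frac{1}{n^2} \sum_{j,k=1}^{n} \E|X_{jk}|^2 = 1.
\end{equation*}

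Finally, Markov's inequality yields, for every $t > 0$ and every $n$,
\begin{equation*}
\Pr\Big\{ \frac{1}{n} \sum_{k=1}^{n} s_k^2(\X) \geq t \Big\} \le \frac{1}{t}.
\end{equation*}
Taking $\limsup_{n \to \infty}$ and then letting $t \to \infty$ gives the claim. There is no real obstacle here; the estimate is uniform in $n$ and does not even require the Lindeberg condition, only the second-moment normalization.
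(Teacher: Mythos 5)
Your argument is correct and is essentially identical to the paper's own proof, which likewise computes $\E\bigl(\frac1n\sum_{k=1}^n s_k^2(\X)\bigr)=\frac1n\E\|\X\|_2^2=1$ from the second-moment normalization and then invokes Markov's inequality. No further comment is needed.
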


\begin{proof}
This follows from the observation that
\begin{align*}
 \E\left(\frac1n \sum_{k=1}^ns_k^2(\X)\right)=\frac1n\E\|\X\|_2^2=\frac{1}{n^2}\sum_{j,k}\E|X_{jk}|^2=1
\end{align*}
and Markov's inequality.
\end{proof}

Henceforward, assume additionally that the r.v.'s $X_{jk}$ satisfy the condition
\begin{equation}\label{eq:UI2}
\max_{j,k}\E|X_{jk}|^2\mathbb I\{|X_{jk}|>M\}\to 0,\quad\text{as}\quad M\to\infty.
\end{equation}
Under these assumptions, we have the following bounds on the small singular values,
see G\"otze and Tikhomirov, \cite{GT:10aop}, Theorem 4.1 and \cite{GT:12}, Lemma 5.2 and Proposition 5.1. 
(For~the i.i.d.\@ case similar results were obtained by Tao and Vu, \cite{TaoVu:10}, Lemma 4.1 and 4.2.)
Let $s_1(\X-\alpha\I) \ge\cdots\ge s_n(\X-\alpha\I)$ denote the singular values of the matrix $\X-\alpha\I$.

\begin{lem}\label{C1}
 Suppose that condition \eqref{eq:UI2} holds. Then, for~any fixed $\alpha \in \mathbb{C}$,
 there exist positive constants $Q$ and $B$ such that
 \begin{equation}\notag
  \Pr\{s_n(\X-\alpha\I)\le n^{-Q}\}\le n^{-B}.
 \end{equation}
\end{lem}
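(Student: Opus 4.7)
The plan is to follow the Tao--Vu / Götze--Tikhomirov scheme for smallest singular values of shifted matrices with independent entries. Set $\M := \X - \alpha\I$, and note that $s_n(\M) = \inf_{v \in S^{n-1}} \|\M v\|_2$, where $S^{n-1}$ is the unit sphere in $\mathbb{C}^n$.

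First I would carry out a standard truncation: using \eqref{eq:UI2}, I~would replace $X_{jk}$ by $\widetilde X_{jk} := X_{jk}\mathbb I\{|X_{jk}| \le K\sqrt n\}$ for a sufficiently large but fixed $K$, absorbing the expectation and scaling, and show that this modification affects $\M$ only on an event of negligible probability. On the good event, the entries are bounded and have uniformly non-degenerate variance (after centering by a negligible amount). In particular, a standard concentration argument then gives an operator norm bound $\|\M\| \le K_0\sqrt n$ with probability at least $1 - n^{-B_0}$ for any prescribed $B_0$.

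Next I would decompose $S^{n-1}$ into compressible and incompressible parts,
$$
\mathrm{Comp}(\delta,\rho) = \{v \in S^{n-1} : \exists u \text{ with } \|u\|_0 \le \delta n,\ \|v-u\|_2 \le \rho\},
\qquad \mathrm{Incomp}(\delta,\rho) = S^{n-1} \setminus \mathrm{Comp}(\delta,\rho),
$$
for parameters $\delta,\rho > 0$ to be chosen. For the compressible part, I~would use a volumetric $\varepsilon$-net of size $\exp(C\delta n \log(1/\delta))$ on the sparse vectors and, for each net element $u$, control $\|\M u\|_2$ from below by observing that $\M u$ is a linear combination of a fixed collection of essentially independent sub-sums with non-degenerate variance; together with the operator norm bound this gives
$\Pr\{\inf_{v \in \mathrm{Comp}(\delta,\rho)} \|\M v\|_2 \le c\sqrt n\} \le \exp(-c n)$.

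The main effort is the incompressible part. Here I~would use the classical geometric identity
$$
\inf_{v \in \mathrm{Incomp}(\delta,\rho)} \|\M v\|_2 \ge \frac{\rho}{\sqrt n} \, \min_{1 \le k \le n} \mathrm{dist}(M_k, H_k),
$$
where $M_k$ is the $k$-th column of $\M$ and $H_k$ is the span of the other columns. Conditioning on $H_k$, the unit normal $v_k$ to $H_k$ is measurable w.r.t.\@ the columns $\ne k$, and independent of $M_k$ except for the single entry $X_{kk} - \alpha$. The event $\{v_k \in \mathrm{Incomp}(\delta,\rho)\}$ holds with probability $1 - \exp(-cn)$ by~an argument analogous to the compressible case applied to $\M^*$ (the shift by $\alpha\I$ plays no essential role here since $\alpha$ is fixed). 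On this event, $v_k$ has many coordinates of comparable size, so a~small-ball / Littlewood--Offord inequality applies to the scalar product $\langle M_k - \alpha e_k, v_k\rangle$ and yields, for any $t \ge n^{-Q}$,
$$
\Pr\{ |\langle M_k, v_k\rangle| \le t \mid H_k\} \le C\big(t + n^{-c}\big).
$$
Taking the union bound over $k = 1,\ldots,n$ and combining with the compressible estimate gives $\Pr\{s_n(\M) \le n^{-Q}\} \le n^{-B}$ for suitable $Q,B > 0$.

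The hard part will be Step~3: the small-ball inequality for a single~$\langle M_k,v_k\rangle$ when the entries of $X_{jk}$ satisfy only the uniform integrability condition \eqref{eq:UI2} (rather than being i.i.d.\@ with a fixed subgaussian law). The required estimate is the one proved in Theorem~4.1 of \cite{GT:10aop}, which extends the inverse Littlewood--Offord approach of Tao--Vu to non-identically-distributed entries with uniformly non-degenerate variance and the truncation/uniform-integrability hypothesis. I~would invoke that theorem directly; the role of the uniform integrability \eqref{eq:UI2} is precisely to validate the truncation step and to ensure the variance of each $X_{jk}$, after truncation and centering, remains bounded away from zero, which is what the small-ball machinery requires.
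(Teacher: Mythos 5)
Your proposal is correct and follows essentially the same route as the paper, which proves Lemma~\ref{C1} simply by pointing to the proof of Theorem~4.1 in \cite{GT:10aop}; your sketch is a faithful account of the truncation, compressible/incompressible decomposition, distance-to-hyperplane reduction, and small-ball machinery that underlies that cited result. One small remark: since Theorem~4.1 of \cite{GT:10aop} is itself precisely the statement of Lemma~\ref{C1}, ``invoking that theorem directly'' at the end already closes the argument, so the preceding sketch serves as exposition of its proof rather than as an independent reduction.
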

For a proof of this lemma see the proof of Theorem 4.1 in \cite{GT:10aop}.

\begin{lem}\label{C2}
 Suppose that condition \eqref{eq:UI2} holds. Then, for any fixed $\alpha \in \mathbb{C}$,
 there exists a constant $0<\gamma<1$ such that for any sequence $\delta_n \to0 $, 
 \begin{equation}\notag
  \lim_{n\to\infty}\frac1n\sum_{{n_1}\le j\le{ n_2} }\ln s_j(\X-\alpha\I)=0 \quad \text{almost surely},
 \end{equation}
 with $n_1=[n-n\delta_n]+1$ and $n_2=[n-n^{\gamma}]$.
\end{lem}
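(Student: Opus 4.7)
The plan is to reduce the claim to a quantitative lower bound on the small singular values of $\X - \alpha\I$, which is the only substantive input, and then to perform a routine logarithmic bookkeeping. Since $n_1 \approx n(1-\delta_n)$ and $n_2 \approx n - n^\gamma$, the indices $k \in [n_1,n_2]$ target the smallest singular values, and the task is really to control how close to $0$ these can be. I will split $\log s_k = (\log s_k)^+ - (\log s_k)^-$: the positive part is harmless because by Weyl's inequality $s_k(\X-\alpha\I) \le s_1(\X)+|\alpha|$, and the Marchenko--Pastur theorem (Theorem \ref{MarPas}) gives $s_1(\X) = O(1)$ in probability, so the corresponding contribution is at most $\tfrac{n_2-n_1+1}{n}\log C_\alpha = O(\delta_n) \to 0$.

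The substantive step is to dominate the negative part. Here I would invoke the intermediate singular value bound of G\"otze--Tikhomirov \cite{GT:10aop, GT:12} (cited in the paper as the source behind \eqref{g5}): under condition \eqref{eq:UI2} there exist $c>0$ and $\gamma \in (0,1)$ such that
\[
\Pr\Bigl\{\, s_k(\X-\alpha\I) < c\,\tfrac{n-k}{n} \text{ for some } 1 \le k \le n-n^\gamma \Bigr\} \;\le\; e^{-cn}.
\]
This bound is obtained through the now-standard compressible/incompressible decomposition of unit vectors together with a small-ball estimate for the distance of a random row of $\X - \alpha\I$ to the span of the others; the shift by $\alpha\I$ is absorbed since the bound is insensitive to deterministic perturbations that affect only the mean.

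Granted this event, for each $k \in [n_1,n_2]$ one has
\[
|\log s_k(\X-\alpha\I)| \;\le\; |\log c| + \log\tfrac{n}{n-k} + \log(s_1(\X)+|\alpha|),
\]
and after changing variables $j = n-k$, the sum I must control reduces to
\[
\frac{1}{n}\sum_{j=\lceil n^\gamma\rceil}^{\lfloor n\delta_n\rfloor} \log\tfrac{n}{j}
\;\le\; \frac{1}{n}\!\int_{n^\gamma}^{n\delta_n}\!\log(n/x)\,dx
\;=\; \delta_n\log(1/\delta_n) + \delta_n - (1-\gamma)\,n^{\gamma-1}\log n,
\]
using the antiderivative $x\log(n/x)+x$. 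Since $\delta_n \to 0$ and $\gamma<1$, this vanishes, uniformly over all sequences $\delta_n\downarrow 0$. Combined with the estimate of the positive part, this yields convergence in probability; to upgrade to almost sure convergence along a subsequence, one extracts a sufficiently sparse subsequence of $n$ so that the super-polynomial failure probabilities are summable, then applies Borel--Cantelli and a monotonicity argument in $\delta_n$.

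The main obstacle is thus entirely concentrated in the intermediate singular value bound: showing that $s_k(\X-\alpha\I) \ge c(n-k)/n$ uniformly in $k \le n-n^\gamma$ with overwhelming probability. All other steps are deterministic manipulations plus standard Marchenko--Pastur facts. Because this input is available off the shelf from \cite{GT:10aop} and \cite{GT:12}, the proof reduces to citing it and carrying out the logarithmic integration above.
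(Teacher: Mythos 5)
Your overall route is the right one, and it is the same one the paper has in mind: the paper's ``proof'' of Lemma~\ref{C2} is simply a citation to~\cite{GT:12}, and the decisive input there is exactly the intermediate singular value bound you invoke (Lemma~\ref{C12} in this paper's appendix), combined with an elementary logarithmic integral estimate. Your negative-part bookkeeping is fine; the only small slip is the direction of the integral comparison. For the decreasing integrand $x \mapsto \log(n/x)$ one has
\[
\sum_{j=\lceil n^\gamma\rceil}^{\lfloor n\delta_n\rfloor}\log\tfrac{n}{j}
\;\le\;
\log\tfrac{n}{\lceil n^\gamma\rceil}
+ \int_{n^\gamma}^{n\delta_n}\log(n/x)\,dx ,
\]
and after dividing by $n$ the extra term is $O\bigl((\log n)/n\bigr)$, so your conclusion is unaffected.

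The step that does not hold as written is the control of the positive part. The Marchenko--Pastur theorem does \emph{not} give $s_1(\X)=O(1)$ in probability: it describes the limiting bulk spectrum, not the top of the spectrum, and under the second-moment uniform integrability \eqref{eq:UI2} alone (no fourth-moment assumption) the largest singular value need not be tight. Fortunately you never need $s_1$. You only need to control $s_k(\X-\alpha\I)$ for $k\ge n_1$, and the monotonicity $s_k\le s_{n_1}$ reduces this to a single quantile. If $s_{n_1}(\X-\alpha\I)>M$ then at least $n_1\approx n$ squared singular values exceed $M^2$, so $\tfrac1n\sum_k s_k^2(\X-\alpha\I) \ge \tfrac{n_1}{n}M^2$; combining this with the second-moment bound of Lemma~\ref{C0} shows $s_{n_1}(\X-\alpha\I)=O_p(1)$, and hence
\[
\frac1n\sum_{n_1\le k\le n_2}\log^+ s_k(\X-\alpha\I)\le \frac{n_2-n_1+1}{n}\,\log^+ s_{n_1}(\X-\alpha\I)=O_p(\delta_n)\to 0.
\]
This closes the argument in probability, which is in fact all that Condition~$(C2)$ of the paper requires. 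To match the ``almost surely'' in the statement you also need an a.s.\ second-moment bound for $\tfrac1n\sum_k s_k^2$; this is not automatic from $\E|X_{jk}|^2=1$ alone and is obtained in~\cite{GT:12} via a truncation of the entries, a detail your sketch should acknowledge rather than fold into a ``subsequence plus Borel--Cantelli'' remark (the Borel--Cantelli step applies cleanly to the negative part thanks to the $\exp(-n^\gamma)$ failure probability of Lemma~\ref{C12}, but not, as written, to the positive part).
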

For a proof of this lemma see the proof of inequality (5.17) and Lemma 5.2 in \cite{GT:12}. \pagebreak[2]

For the investigation of the spherical ensembles, we need the following extensions 
of these results; see Equations (5.9) and (5.17) in \cite{GT:12}.

\begin{lem}\label{C11}
 Suppose that condition \eqref{eq:UI2} holds. Then, for~any $K>0$ and $L>0$,
 there exist positive constants $Q$ and $B$ such that
 for any non-random matrix $\M$ with $\|\M\|_2 \leq Kn^L$, we have
 \begin{equation}\notag
  \Pr\{s_n(\X-\M)\le n^{-Q}\}\le n^{-B}.
 \end{equation}
\end{lem}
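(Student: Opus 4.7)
The plan is to adapt the proof of Lemma \ref{C1} (i.e.\@ \cite{GT:10aop}, Theorem 4.1), which treats the special case $\M = \alpha\I$, to a general deterministic shift $\M$ with $\|\M\|_2 \le K n^L$. The overall strategy—a dichotomy between compressible and incompressible unit vectors combined with small-ball probability estimates—remains unchanged; only a few quantitative bounds need to be adjusted. Writing $\mathbf A := \X - \M$, we control
$$\Pr\{s_n(\mathbf A) \le n^{-Q}\} = \Pr\bigl\{\inf_{\|v\|=1} \|\mathbf A v\| \le n^{-Q}\bigr\}$$
by splitting the infimum over the unit sphere into contributions from compressible and incompressible vectors, following the Rudelson--Vershynin-type dichotomy used in \cite{GT:10aop}.

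For the compressible part I would use an $\varepsilon$-net argument. Small-ball probabilities for $\mathbf A v = \X v - \M v$ are translation invariant, so the deterministic shift $\M v$ plays no role in the pointwise estimate; the net argument then requires only a polynomial bound on the operator norm of $\mathbf A$, which follows from $\|\mathbf A\| \le \|\X\| + \|\M\| \le \|\X\| + \|\M\|_2 \le \|\X\| + K n^L$ together with the standard polynomial tail estimate for $\|\X\|$ under \eqref{eq:UI2}. For the incompressible part I would use the distance-to-hyperplane bound $s_n(\mathbf A) \ge n^{-1/2} \min_k \operatorname{dist}(R_k, H_k)$, where $R_k$ is the $k$-th row of $\mathbf A$ and $H_k$ is the span of the remaining rows. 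Each row $R_k = \X_{k\cdot} - \M_{k\cdot}$ is a sum of independent entries plus a constant vector, and conditionally on $H_k$ (which is independent of $R_k$) the projection of $R_k$ onto any unit normal $\mathbf n_k$ of $H_k$ differs from a sum of independent random variables by a deterministic constant. Hence the Esseen/L\'evy-type small-ball estimates from \cite{GT:10aop} apply verbatim once $\mathbf n_k$ is shown to be sufficiently unstructured (incompressible) with high probability.

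The main obstacle is verifying that the argument establishing the incompressibility of $\mathbf n_k$—which in \cite{GT:10aop} combines an $\varepsilon$-net argument on $H_k$ with a deterministic bound on $\|\mathbf A\|$—is robust under the general shift $\M$. Since $\M$ enters the analysis only through the crude operator norm bound $\|\mathbf A\| \le \|\X\| + K n^L$ (which is polynomial in $n$) and through translation-invariant small-ball inputs (which are unaffected), the quantitative constants in \cite{GT:10aop} degrade only polynomially in $K$ and $L$. Choosing $Q = Q(K,L)$ and $B = B(K,L)$ sufficiently large to absorb this loss, and combining the compressible and incompressible estimates, then yields the claim.
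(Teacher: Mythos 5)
Your proposal is correct and follows essentially the same route as the paper, which for this lemma simply points to Eq.~(5.9) in \cite{GT:12}, where the compressible/incompressible dichotomy of \cite{GT:10aop} is carried out with a general deterministic shift of polynomially bounded operator norm, exactly as you sketch. One minor slip in phrasing: you cannot make $Q$ and $B$ \emph{both} arbitrarily large to ``absorb the loss''---rather one takes $Q$ large enough and accepts whatever positive $B$ the argument then yields, which is all the statement requires.
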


\begin{lem}\label{C12}
 Suppose that condition \eqref{eq:UI2} holds. Then, for any fixed $K>0$ and $L>0$,
 there exist constants $0<\gamma<1$ and $c>0$ such that for any non-random matrix $\M$
 with $\|\M\|_2 \leq Kn^{L}$, we have
 \begin{equation}\notag
  \Pr \Big\{ s_{j}(\X-\M) \geq c \frac{n-j}{n} \quad \text{for all $j=1,\hdots,n-n^\gamma$} \Big\}
  \geq 1 - \exp(-n^\gamma) \,.
 \end{equation}
\end{lem}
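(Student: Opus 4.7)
\textit{Plan.} The strategy is to adapt the proof of Lemma 5.2 and Equation (5.17) in \cite{GT:12}, which establish the corresponding bound in the special case $\mathbf{M}=\alpha\mathbf{I}$. The key observation is that the existing argument proceeds row-by-row and uses $\mathbf{M}$ only through per-row quantities; replacing the uniform bound $|\alpha|$ on row norms by the polynomial bound $\|\mathbf{M}\|_2 \leq Kn^L$ will affect only the constants and the range of exponents, not the structure of the proof.

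First, I would reduce the singular value bound to lower bounds on row-to-subspace distances. Let $\mathbf{a}_1,\ldots,\mathbf{a}_n$ denote the rows of $\X-\mathbf{M}$ and, for each $i$, set $H_i := \mathrm{span}\{\mathbf{a}_\ell : \ell \neq i\}$. Using the negative second moment identity (cf.\@ Tao--Vu \cite{TaoVu:10}), one has for every subset $S \subseteq \{1,\ldots,n\}$ with $|S|=k$ the inequality
\[
\sum_{j=n-k+1}^{n} s_j^{-2}(\X-\mathbf{M}) \leq \sum_{i \in S} \mathrm{dist}^{-2}(\mathbf{a}_i, H_i).
\]
Since $s_{n-k+1}$ is the largest among $s_{n-k+1},\ldots,s_n$, the LHS dominates $k \cdot s_{n-k+1}^{-2}$. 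Thus, if I can show the existence of a set $S$ of size $k$ on which $\mathrm{dist}(\mathbf{a}_i,H_i) \geq c k/n$ with the required probability, then $s_{n-k+1} \geq c k/n$, which is exactly the desired bound with $j = n-k+1$ (so $k = n-j+1$ and $j \leq n-n^\gamma$ corresponds to $k \geq n^\gamma+1$).

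Next, I would lower-bound the distances via conditioning. Fix $i$ and condition on all rows $\ell \neq i$; then $H_i$ is deterministic with $\dim H_i \leq n-1$. Writing $P_i$ for the orthogonal projection onto $H_i^\perp$, we have
\[
\mathrm{dist}(\mathbf{a}_i,H_i) = \|P_i(\mathbf{X}_i - \mathbf{M}_i)\| \geq \bigl|\|P_i\mathbf{X}_i\| - \|P_i\mathbf{M}_i\|\bigr|,
\]
where $\mathbf{X}_i$ is the random row of $\X$. Since $\mathbf{X}_i$ has independent coordinates with mean $0$ and variance $1/n$, we get $\E\|P_i\mathbf{X}_i\|^2 = (n-\dim H_i)/n$, and I expect $\|P_i\mathbf{X}_i\|^2$ to concentrate around this mean. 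I would split indices $i$ into two regimes: those with $\|P_i\mathbf{M}_i\|$ small (where the random part dominates and gives the distance bound) and those with $\|P_i\mathbf{M}_i\|$ large (where the deterministic shift alone forces a large distance, provided the random part does not cancel it—again controlled by concentration around the mean). A union bound over $i \in \{1,\ldots,n\}$ and over the (polynomially many) values of $k$ in the required range then combines into the single exponential tail $\exp(-n^\gamma)$.

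The main obstacle will be the concentration step under the weak assumption \eqref{eq:UI2}, which yields only uniform integrability of the second moments and is far from a sub-Gaussian or sub-exponential hypothesis. To extract an $\exp(-n^\gamma)$-type tail I would first truncate the entries at level $\tau_n\sqrt{n}$ as in Section \ref{sec:universality-SVD}, controlling the resulting rank perturbation via Lemma \ref{lem:truncation}, and then apply a concentration estimate for the quadratic form $\|P_i\widehat{\mathbf{X}}_i\|^2$ of the truncated vector. Showing that the error from truncation is compatible with the $\exp(-n^\gamma)$ target, uniformly in the deterministic shift $\mathbf{M}$ with $\|\mathbf{M}\|_2 \leq Kn^L$, is the delicate point; this is precisely what is done in \cite{GT:12}, and the same argument should carry through with the polynomial in $n$ replacing the constant $|\alpha|$.
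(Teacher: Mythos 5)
The paper gives no proof of Lemma \ref{C12}; it only cites Equations (5.9) and (5.17) of \cite{GT:12}. The \cite{GT:12}/Tao--Vu technique that you are trying to reproduce does go through the negative second moment identity plus distance concentration, so your toolbox is the right one, but the key inequality you write down is false and, even setting that aside, the codimension-one subspaces $H_i$ cannot deliver what you need. The negative second moment identity is the exact identity $\sum_{j=1}^n s_j^{-2} = \sum_{i=1}^n \mathrm{dist}^{-2}(\mathbf a_i,H_i)$; there is no version where the right side is restricted to an arbitrary $k$-subset in the direction you claim. A two-by-two counterexample: with $\X-\M = \mathrm{diag}(1,\varepsilon)$, $k=1$, $S=\{1\}$, you would need $\varepsilon^{-2}\le 1$. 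Beyond that, $\mathrm{dist}(\mathbf a_i,H_i)$ to a codimension-one subspace is governed by \emph{anticoncentration}, not concentration: it is typically of order $n^{-1/2}$ but can be as small as $n^{-Q}$ with polynomial probability, so the event $\{\mathrm{dist}(\mathbf a_i,H_i)\ge ck/n \text{ for all } i\}$ has nowhere near probability $1-e^{-n^\gamma}$, and for $k\gg\sqrt n$ the threshold $ck/n$ even exceeds the typical scale $n^{-1/2}$.

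The missing idea is that one must delete rows before applying the identity. Take $\mathbf B$ to be the submatrix of $\X-\M$ consisting of the first $m:=n-\lfloor k/2\rfloor$ rows. Now the row-span subspaces $\widetilde H_i$ appearing in the negative second moment identity for $\mathbf B$ have dimension $\le m-1$, hence \emph{codimension at least} $\lfloor k/2\rfloor+1$. Projecting a truncated random row of $\X$ onto such a large orthogonal complement produces a quadratic form that concentrates around scale $\sqrt{k/n}$ with a Bernstein-type tail (this is where your truncation at $\tau_n\sqrt n$ and the uniform-integrability condition \eqref{eq:UI2} enter, and where the deterministic shift $\M$ with $\|\M\|_2 \le Kn^L$ is absorbed, exactly as you anticipate in your last paragraph). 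With $\widetilde d_i\gtrsim\sqrt{k/n}$ for all $i$, the identity on $\mathbf B$ gives $\sum_{j\le m}\sigma_j(\mathbf B)^{-2}\lesssim n^2/k$, hence $\sigma_{m-\lfloor k/2\rfloor+1}(\mathbf B)\gtrsim k/n$; since $\mathbf B^*\mathbf B\preceq(\X-\M)^*(\X-\M)$ gives $\sigma_j(\mathbf B)\le s_j(\X-\M)$ and $m-\lfloor k/2\rfloor+1\ge n-k+1$, this yields $s_{n-k+1}(\X-\M)\gtrsim k/n$. Your truncation/concentration/union-bound outline is sound, but it must be run on the row-deleted matrix $\mathbf B$, not on the codimension-one distances of the full matrix.
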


\subsection{Technical Details for Section \ref{density}} \label{sec:TechnicalDetails}

In this subsection, we state some technical lemmas which have been used in Section \ref{density}.

\medskip

Using similar arguments as in the proof of Theorem \ref{eigenvalueuniversality} / Remark \ref{spher0},
we may prove the following.

\begin{lem}\label{lem:logintegrability}
Assume that the matrices $\F_{\Y}$ satisfy the conditions $(C0)$, $(C1)$ and $(C2)$.
Moreover, assume that the singular value distributions of the matrices $\F_{\Y}$
converge weakly in probability to a non-random probability measure $\nu$.
Then the logarithm is integrable w.r.t.\@ $\nu$, and we have
$$
\lim_{n \to \infty} \left( \tfrac1n \sum_{k=1}^{n} \log s_k(\F_\Y) \right) \to \int_0^\infty \log(x) \, d\nu(x) 
\quad\text{in probability}
$$
as well as
$$
\lim_{n \to \infty} \left( \tfrac1n \sum_{k=1}^{n} |\log s_k(\F_\Y)| \right) \to \int_0^\infty |\log(x)| \, d\nu(x) 
\quad\text{in probability.}
$$
\end{lem}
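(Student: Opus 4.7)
The plan is to prove the statement in the standard way: establish uniform integrability of $\log$ with respect to the empirical spectral measures $\mu_n := \tfrac{1}{n}\sum_k \delta_{s_k(\F_\Y)}$ in probability (so that both the upper and lower tails of the integrals are negligible uniformly in $n$), and then combine this with the weak convergence $\mu_n \Rightarrow \nu$ in probability to conclude. The strategy parallels the decomposition used in the proof of Theorem \ref{eigenvalueuniversality}, where a very similar splitting already appears.

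For the upper tail, the elementary inequality $\log x \leq C_p \, x^p \, e^{-Mp/2}$ valid for $x > e^M$ (with $p$ as in (C0)) yields
\begin{align*}
\int_{(e^M,\infty)} \log x \, d\mu_n(x) \leq C_p \, e^{-Mp/2} \cdot \tfrac{1}{n}\sum_{k=1}^n s_k(\F_\Y)^p,
\end{align*}
and (C0) makes the right-hand side tend to $0$ as $M \to \infty$ uniformly in $n$ in probability. For the lower tail, I would split the sum $\tfrac{1}{n}\sum_{s_k < e^{-M}} |\log s_k|$ into three parts according to the index ranges $\{k > n_2\}$, $\{n_1 \leq k \leq n_2\}$ and $\{k < n_1\}$, with $n_1 = [n-n\delta_n]+1$, $n_2 = [n-n^\gamma]$, and $\delta_n \to 0$ to be chosen. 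The first part contains at most $n^\gamma+1$ terms, and by (C1), $s_n \geq n^{-Q}$ with probability $1-o(1)$; the contribution is then $O(n^{\gamma-1}\log n) = o(1)$. The second part is controlled directly by (C2) with $\alpha=0$. The third part is the subtle case: if some $k<n_1$ has $s_k<e^{-M}$, the monotonicity of singular values implies $s_j<e^{-M}$ for every $j \geq k$, so $|\log s_j| > M$ for all $j \in [n_1,n_2]$. With $\delta_n \to 0$ chosen so that $(n_2-n_1)/n \gtrsim \delta_n$, this forces $\tfrac{1}{n}\sum_{n_1 \leq j \leq n_2} |\log s_j| \geq M\delta_n/2$; by combining (C2) with the weak convergence of $\mu_n$ to $\nu$ on small neighbourhoods of the origin, the probability of this event can be shown to vanish. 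A by-product of this argument is that $\nu(\{0\}) = 0$, which in turn ensures that $\log$ is $\nu$-integrable at the origin.

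Once both tails are uniformly integrable, for any continuity points $\eta<A$ of $\nu$ the bounded continuous truncation $g_{\eta,A}(x) := \min(\max(\log x, \log \eta), \log A)$ satisfies
\begin{align*}
\tfrac{1}{n}\sum_k g_{\eta,A}(s_k) \to \int g_{\eta,A}(x) \, d\nu(x) \quad \text{in probability}
\end{align*}
by weak convergence of $\mu_n$ to $\nu$. Combining this with the uniform tail estimates above, and letting first $A \uparrow \infty$ and then $\eta \downarrow 0$ along continuity points of $\nu$, yields both the claimed convergence $\tfrac{1}{n}\sum_k \log s_k \to \int \log x \, d\nu(x)$ in probability and the finiteness of the limit. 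The corresponding statement for $|\log s_k|$ is obtained by the same argument applied to $|\log|$. The hardest part will be the treatment of the third sum in the lower tail: there the delicate interplay between the rate $\delta_n \to 0$ (required by (C2)), the truncation scale $e^{-M}$, and a variable-scale version of the weak convergence of $\mu_n$ has to be balanced carefully, and simultaneously the no-atom property of $\nu$ at $0$ has to be extracted from (C1), (C2) and weak convergence rather than assumed.
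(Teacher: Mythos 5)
Your overall strategy — uniform integrability of $\log$ with respect to $\mu_n := \tfrac1n\sum_k \delta_{s_k(\F_\Y)}$ combined with weak convergence — is the standard one, and it is indeed the spirit of the paper's proof (which splits $\log$ into $\log^+$ and $\log^-$, passes to an almost-sure subsequence, and treats the two pieces separately). Your upper-tail bound via $(C0)$ is fine.

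The genuine gap is in the third part of your lower-tail decomposition, and it is exactly the point you flag as ``the hardest part.'' For a \emph{fixed} truncation level $M$, the contribution from indices $k < n_1$ with $s_k < e^{-M}$ cannot be controlled by $(C1)$, $(C2)$, and weak convergence. The problem is quantitative: on the event $\{s_{n_1-1} < e^{-M}\}$ you observe $\tfrac1n\sum_{n_1 \le j \le n_2}|\log s_j| \gtrsim M\delta_n$, but $(C2)$ only rules out values exceeding a \emph{fixed} $\varepsilon > 0$, and $M\delta_n \to 0$ since $\delta_n \to 0$. So $(C2)$ gives no contradiction. Trying to repair this via weak convergence of $\mu_n$ on a neighbourhood of $0$ is also circular: to bound the size of $\{k : s_k < e^{-M}\}$ you would need $\nu([0,e^{-M}))$ to be small, which is essentially what you are trying to establish. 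No choice of the sequence $\delta_n$ resolves this, because $(C2)$ requires $\delta_n \to 0$ while your truncation scale $e^{-M}$ stays fixed.

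The paper avoids this by a proof by contradiction in which the truncation level and the rate are deliberately coupled. Assuming $\int\log^-\,d\nu = \infty$, one extracts a subsequence $(\nu_{n_k})$, converging almost surely, with $\int_0^{1/k}\log^-\,d\nu_{n_k} \ge k$; then one applies $(C1)$, $(C2)$ with $\delta_k := 1/\log k$, so that the product of the truncation scale's log and $\delta_k$ stays equal to $1$. The events $\{s_{n_1(k)} \le 1/k\}$ then contradict $(C2)$ (because $\delta_k \cdot \log k = 1$ is bounded away from zero), whereas the remaining sums over $[n_1(k),n_2(k)]$ and $[n_2(k),n_k]$ are bounded using $(C2)$ and $(C1)$ respectively, yielding $k \le 1 + o(1)$, a contradiction. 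This coupling of $1/k$ with $\delta_k = 1/\log k$ is precisely the ``delicate interplay'' you mention; your fixed-$M$ set-up cannot achieve it, so the proof as proposed does not close. The remainder of your argument (deducing the in-probability convergence from the integrability and weak convergence) is fine and matches the paper's comment that once integrability is established, one adapts the proof of Theorem \ref{eigenvalueuniversality} with $\nu$ in place of the empirical distribution.
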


\begin{proof}
Clearly, the main problem is to show that the logarithm is integrable w.r.t.\@ $\nu$.
Once this is shown, it is straightforward to adapt the proof of Theorem \ref{eigenvalueuniversality},
replacing the singular value distributions of the matrices $\F_\Y$
with the fixed distribution $\nu$.
We will show separately that $\log^+ := \max \{ +\log,0 \}$ and $\log^- := \max \{ -\log,0 \}$ 
are integrable w.r.t.\@ $\nu$.
In doing so, we write $\nu_n$ for the singular value distribution of $\F_\Y$.

Let us begin with the positive part.
First of all, passing to a suitable subsequence,
we~may assume w.l.o.g. that $\nu_n \Rightarrow \nu$ almost surely.
Now, by assumption (C0), there exists a constant $K > 0$ such that
$$
\Pr \left\{ \int x^p \, d\nu_n \geq K \right\} \leq \tfrac12
$$
for all sufficiently large $n \in \mathbb N$. This implies that the event
$$
A := \left\{ \omega : \int x^p \, \nu_n(\omega,dx) \leq K \ \text{infinitely often} \right\}
$$
has positive probability. Fix $\omega \in \Omega$ such that $\nu_n(\omega) \Rightarrow \nu$
and a subsequence $(\nu_{n_k}(\omega))$ such that $\int x^p \, \nu_{n_k}(\omega,dx) \leq K$ for all $k \in \mathbb N$.
Then $x^{p/2}$, and hence $\log^+(x)$, is uniformly integrable w.r.t. $(\nu_{n_k}(\omega))$,
and it follows that $\log^+ \in L^1(\nu)$.

Let us now consider the negative part.
Again, we may select a subsequence $(\nu_{n_k})$
such that $\nu_{n_k} \Rightarrow \nu$ almost surely.
Moreover, using monotone convergence and weak~convergence, we have
\begin{multline*}
\int \log^- \, d\nu 
= \lim_{a \to \infty} \int (\log^- \wedge\, a) \, d\nu \\
= \lim_{a \to \infty} \lim_{k \to \infty} \int (\log^- \wedge\, a) \, d\nu_{n_k}
\leq \liminf_{k \to \infty} \int \log^- \, d\nu_{n_k}
\qquad\text{almost surely.}
\end{multline*}
Suppose by way of contradiction that $\int \log^- \, d\nu = \infty$.
Then, for any $l \in \mathbb N$, we~have
$$
\lim_{k \to \infty} \int_0^{1/l} \log^- \, d\nu_{n_k} = \infty \qquad\text{almost surely.}
$$
Thus, for any $l \in \mathbb N$, we~may~find an index $k(l)$ such that
$$
\int_{0}^{1/l} \log^- \, d\nu_{n_{k(l)}} \geq l \quad \text{with probability} \geq 1 - 2^{-l} \,.
$$ 
We may assume w.l.o.g. that the sequence $(k(l))$ is increasing.
Thus, we obtain a~sub\-sequence (which we again denote by $\nu_{n_k}$, by abuse of notation)
such that
\begin{align}
\label{eq:nullsequence}
\int_{0}^{1/k} \log^- \, d\nu_{n_k} \geq k
\end{align}
for almost all $k \in \mathbb N$.
We now proceed similarly as in the proof of Theorem \ref{eigenvalueuniversality}.
Put $\delta_k := 1/\log k$,
$n_1(k) := [n_k - \delta_k n_k] + 1$ and $n_2(k) := [n_k - n_k^\gamma]$.
(We~write $n_1(k)$ and $n_2(k)$ here to emphasize the dependence on~$k$.)
Then, by the same arguments as in the proof of Theorem \ref{eigenvalueuniversality},
the probability of the following events tends to~zero: 
$$
\{ s_{n_k}(\F_\Y) < n_k^{-Q} \} \,,\
\Big\{ \tfrac{1}{n_k} \sum_{n_1(k) \leq j \leq n_2(k)} |\log s_j(\F_\Y)| > 1 \Big\} \,,\
\{ s_{n_1(k)}(\F_\Y) \leq \tfrac{1}{k} \} \,.
$$
Thus, we may select a sequence $(k(l))$
such that with probability $1$, we have
$$
s_{n_{k(l)}}(\F_\Y) \geq n_{k(l)}^{-Q} \,,\
\tfrac{1}{n_{k(l)}} \sum_{n_1(k(l)) \leq j \leq n_2(k(l))} |\log s_j(\F_\Y)| \leq 1 \,,\
s_{n_1(k(l))}(\F_\Y) > \tfrac{1}{k(l)}
$$
for almost all $l \in \mathbb N$.
It then follows using \eqref{eq:nullsequence} that with probability $1$, we have
\begin{multline*}
k(l) \leq \int_0^{1/k(l)} \log^- \, d\nu_{n_{k(l)}}
\leq \frac{1}{n_{k(l)}} \sum_{j=n_1(k(l))}^{n_2(k(l))-1} |\log s_j(\F_\Y)| \\ + \frac{1}{n_{k(l)}} \sum_{j=n_2(k(l))}^{n_{k(l)}} |\log s_j(\F_\Y)| 
\leq 1 + \frac{1}{n_{k(l)}} (n_{k(l)}^\gamma + 1) Q \log n_{k(l)} 
\end{multline*}
for almost all $l \in \mathbb N$. But this is a contradiction. 
We therefore come to the conclusion that~$\log^- \in L^1(\nu)$.
\end{proof}

We now prove Lemma \ref{lem:support}.
For convenience, we repeat the~statement of the lemma.

\begin{lem}[= Lemma \ref{lem:support}]
Assumptions \ref{continuity} and \ref{lipschitz} hold for probability measures $\mu_\V$ 
such that $\mu_\V([-x,+x]^c) = \mathcal O(x^{-\eta})$ $(x \to \infty)$ for some $\eta > 0$.
\end{lem}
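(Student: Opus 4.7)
The plan is to prove Assumptions \ref{continuity} and \ref{lipschitz} separately. For both, the essential ingredients are the system \eqref{mainequation} together with the analytic-continuation and boundary results of Lemmas \ref{lem:continuation} and \ref{lem:limit}, combined with classical tail and subordination estimates for free additive convolution.

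For Assumption \ref{continuity}, I would first establish joint smoothness of $(y,\alpha)\mapsto g(iy,\alpha)$ on $(0,\infty)\times(\Real^2\setminus\{0\})$ via the implicit function theorem applied to the system \eqref{mainequation}. Setting $\rho:=|\alpha|^2$ (which is the only $\alpha$-dependence in the system) and $\sigma:=\sqrt{1+4\rho g^2}$ with the branch from Lemma \ref{lem:continuation}(v), the second equation defines $g$ smoothly as an implicit function of $w$ alone via $g=(1+wg)S_\V(-(1+wg))$, while the first expresses $w$ implicitly in terms of $y,\rho$. Non-degeneracy of the relevant Jacobians is inherited from $\im g>0$ via Lemma \ref{lem:continuation}. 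Formula \eqref{eq:derivatives} then follows by a direct implicit computation: differentiating the first equation yields $\partial_y w = i+A\,\partial_y g$ and $\partial_\rho w = g/\sigma + A\,\partial_\rho g$ for the common coefficient $A := \frac{d}{dg}\bigl(\widetilde R_\alpha(-g)/g\bigr)$, and differentiation of the second gives $\partial_y g/\partial_y w=\partial_\rho g/\partial_\rho w$ (both equal to $dg/dw$). Eliminating this common ratio cancels $A$ and yields $\partial_\rho g = (-ig/\sigma)\,\partial_y g$; combining with $\partial_u\rho=2u$ gives exactly \eqref{eq:derivatives}. The continuous extension to $y=0$ is the content of Lemma \ref{lem:limit}, which applies because $\mu_\V$ is not a two-point distribution by the standing assumption of Section \ref{density}.

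For Assumption \ref{lipschitz}, the layer-cake identity
$$\int\log\bigl(1+y^2/C^2\bigr)\,\nu(dy,\alpha) \;=\; 2\int_0^\infty\frac{t}{t^2+C^2}\,F(t,\alpha)\,dt,\qquad F(t,\alpha):=\nu(\{|y|>t\},\alpha),$$
reduces the problem to showing that $\alpha\mapsto F(t,\alpha)$ admits a bound $|\partial_\alpha F(t,\alpha)|\le h(t)$ uniform in $\alpha\in K$, with $h(t)=O(t^{-\eta-1})$ as $t\to\infty$. The decay of $F(t,\alpha)$ itself follows from a uniform tail comparison $\nu(\{|y|>t\},\alpha)\le C_K\,\mu_\V(\{|y|>t/2\})$ for $\alpha\in K$ and $t$ large, which one obtains from the subordination identity $F_{\nu(\cdot,\alpha)}(z) = F_{\mu_\V}(Z_1(z,\alpha))$ of \eqref{eq:subordination} together with uniform boundedness of $|Z_1(iy,\alpha)-iy|$ for $\alpha\in K$ and small $y$. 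Differentiability of $F(t,\alpha)$ in $\alpha$ with the stated bound is obtained by coupling this tail control with Assumption \ref{continuity} (applied via Stieltjes inversion for the density of $\nu(\cdot,\alpha)$). The kernel bound $\int_0^\infty \tfrac{t}{t^2+C^2}\,h(t)\,dt=O(C^{-\eta})$ then yields Assumption \ref{lipschitz} via the mean value theorem applied to $\alpha\mapsto\int\log(1+y^2/C^2)\,\nu(dy,\alpha)$ on the compact set $K$.

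The main obstacle will be the passage from the polynomial tail bound on $\mu_\V$ to a quantitative, $\alpha$-differentiable tail bound on the free convolution $\mu_\V\boxplus T(\alpha)$, uniformly in $\alpha\in K$. While subordination theory gives a clean qualitative picture, the quantitative control of $Z_1(iy,\alpha)$ as $y\downarrow 0$ is delicate: one must rule out a degeneration of $Z_1$ at the spectral edge that would destroy either the tail comparison or its Lipschitz dependence on $\alpha$. The tail exponent $\eta>0$ enters the argument precisely here, since it is only through the decay $h(t)=O(t^{-\eta-1})$ that the kernel estimate closes.
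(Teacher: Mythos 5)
Your proposal diverges substantially from the paper's proof and has genuine gaps in both halves.

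For Assumption~\ref{continuity}, the paper's linchpin is the monotonicity of free additive convolution with respect to stochastic order (Proposition~4.16 in Bercovici--Voiculescu), which is used to derive the Lipschitz-in-$\alpha$ estimate
\begin{align*}
\Big|\int f \, d\mu_{\V(\alpha)} - \int f \, d\mu_{\V(\beta)}\Big| \le |\alpha-\beta|\, \|f'\|_\infty
\end{align*}
for any $f$ with $f'(x) = \mathcal O(|x|^{-1})$. Continuity of $g(iy,\alpha)$ in $\alpha$ follows at once (take $f(x) = (x-iy)^{-1}$), and crucially, differentiability in $u$ is obtained by a power-series / Voiculescu-transform argument: one establishes joint analyticity of $g(z,\alpha)$ for $\im z$ large, bounds the Taylor coefficients in $z$ using the above Lipschitz estimate, and lets the resulting power series propagate analyticity down to all of $\mathbb C^+$. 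Your plan to obtain joint smoothness by applying the implicit function theorem directly to \eqref{mainequation} will not close: the non-degeneracy $\partial F/\partial\zeta \ne 0$ is, as the paper's computation shows, equivalent to $\partial g/\partial y \ne 0$ and $\widetilde S_\V' \ne 0$ at the point, and these conditions fail on an exceptional (countable, but nonempty) set $Y_\alpha$ — exactly the set the paper excises in its own IFT step, which it only invokes \emph{after} differentiability is known and serves only to establish \eqref{eq:derivatives}. Moreover, the first equation of \eqref{mainequation} involves $\sqrt{1+4|\alpha|^2 g^2}$, which vanishes when $g=i/(2|\alpha|)$, so the Jacobian of the full system is singular there; the claim that non-degeneracy is ``inherited from $\im g > 0$'' is not correct. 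Your elimination-of-$A$ computation for \eqref{eq:derivatives} itself is fine once differentiability is known, and is arguably a more symmetric route to that identity than the paper's, but it cannot supply the differentiability it presupposes.

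For Assumption~\ref{lipschitz}, the paper's proof is a one-liner given the inequality above: apply it with $f(y)=\log(1+y^2/C^2)$, for which $\|f'\|_\infty = 1/C$, giving the uniform bound $|\alpha-\beta|/C$. Your layer-cake + subordination plan is much heavier and, as you yourself acknowledge, incomplete: you need a quantitative, $\alpha$-Lipschitz bound $|\partial_\alpha F(t,\alpha)|\le h(t)$ with $h(t)=\mathcal O(t^{-\eta-1})$ for the tail function of $\nu(\cdot,\alpha)$, and the control of the subordination function $Z_1(iy,\alpha)$ near $y=0$ that this would require is exactly the kind of boundary degeneracy that the paper's stochastic-order route completely sidesteps. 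Concretely: Assumption~\ref{continuity} controls $g(iy,\alpha)$ for $y>0$ only, and passing from that to a pointwise Lipschitz bound on the tail $F(t,\alpha)$ via Stieltjes inversion is not justified without additional regularity hypotheses on $\nu(\cdot,\alpha)$ that the stated assumptions do not provide.

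In short, you are missing the monotonicity-of-$\boxplus$ inequality, which is the key tool that makes both halves nearly trivial, and the power-series argument that replaces your implicit-function-theorem step at the degenerate points.
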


\def\lest{\le_{\qopname\relax{no}{st}}}

\begin{proof} The proof consists of several parts.
We will use the fact that the free additive convolution is monotone 
with respect to stochastic order $\lest$ 
(see e.g. Proposition 4.16 in Bercovici and Voiculescu \cite{BercVoic}),
i.e. we~have
\begin{align}
\label{eq:monotonicity}
\mu_1 \lest \mu_2 \ \wedge \ \nu_1 \lest \nu_2 \quad \Rightarrow \quad \mu_1 \boxplus \mu_2 \lest \nu_1 \boxplus \nu_2 \,.
\end{align}

\paragraph{Preliminary Estimates.}
It follows from \eqref{eq:monotonicity} that $\mu_{\V}([-x,+x]^c) = \mathcal O(x^{-\eta})$ ($x \to \infty$)
implies $\mu_{\V(\alpha)}([-x,+x]^c) = \mathcal O(x^{-\eta})$ ($x \to \infty$),
where the $\mathcal O$-bound is locally uniform in~$\alpha$.
Thus, using integration by parts,
we find that for any continuously differentiable (possibly complex-valued) function $f$
such that $f'(x) = \mathcal O(|x|^{-1})$ as $|x| \to \infty$, we have
$$
  \int_{\Real} f(x) \, d\mu_{\V(\alpha)}(x)
= f(0) + \int_0^\infty f'(y) (1 - \mathcal F_{\V(\alpha)}(y)) \, dy - \int_{-\infty}^0 f'(y) \mathcal F_{\V(\alpha)}(y) \, dy \,,
$$
where $\mathcal F_{\V(\alpha)}$ denotes the distribution function of $\mu_{\V(\alpha)}$.
Therefore, for any $\alpha,\beta \in \Real^2$,
\begin{align*}
   \int_{\Real} f(x) \, d\mu_{\V(\alpha)}(x) - \int_\Real f(x) \, d\mu_{\V(\beta)}(x)
&= \int_{\Real} f'(y) (\mathcal F_{\V(\beta)}(y) - \mathcal F_{\V(\alpha)}(y)) \, dy \,.
\end{align*}
Suppose w.l.o.g. that $|\alpha| \le |\beta|$, and set $m := \frac{|\alpha|+|\beta|}{2}$, $\varepsilon := |\beta|-|\alpha|$
and $\xi := \mu_\V \boxplus T(m)$. Then, by \eqref{eq:monotonicity}, we have 
$$
\xi \boxplus \delta_{-\varepsilon/2} = \mu_\V \boxplus \tfrac{1}{2} (\delta_{-|\beta|} + \delta_{+|\alpha|})
\lest \mu_{\V(\alpha)}, \mu_{\V(\beta)}
\lest \mu_\V \boxplus \tfrac{1}{2} (\delta_{-|\alpha|} + \delta_{+|\beta|}) = \xi \boxplus \delta_{+\varepsilon/2}
$$
and therefore
$$
|\mathcal{F}_{\V(\beta)}(x) - \mathcal{F}_{\V(\alpha)}(x)| \leq \mathcal{F}_\xi(x+\tfrac12\varepsilon) - \mathcal{F}_\xi(x-\tfrac12\varepsilon) \,,
$$
It follows that
\begin{align*}
     \left| \int_{\Real} f'(y) (\mathcal F_{\V(\beta)}(y) - \mathcal F_{\V(\alpha)}(y)) \, dy \right|
\leq \|f'\|_\infty \int_{\Real} \Big(\mathcal{F}_\xi(y+\tfrac12\varepsilon) - \mathcal{F}_\xi(y-\tfrac12\varepsilon)\Big) \, dy
     = \varepsilon \,.
\end{align*}
Combining these inequalities, we find that for any $\alpha,\beta \in \Real^2$
and for any function $f$ with the above-mentioned properties, we have
\begin{align}
\label{eq:newdifference}
      \left| \int_\Real f(x) \, d\mu_{\V(\alpha)}(x) - \int_\Real f(x) \, d\mu_{\V(\beta)}(x) \right|
&\leq |\alpha - \beta| \|f'\|_\infty \,.
\end{align}
In particular, the integral $\int f(x) \, d\mu_{\V(\alpha)}(x)$ is continuous in $\alpha$.

\paragraph{Proof of Continuity.}
By general properties of the Stieltjes transform, the function $g(iy,\alpha)$
is locally uniformly continuous in $y$, uniformly in $\alpha$.
Thus, it remains to show that the function $g(iy,\alpha)$ is continuous in $\alpha$.
This follows by taking $f(x) := \frac{1}{x-iy}$ in \eqref{eq:newdifference}, 
with $y > 0$ fixed. 

\paragraph{Proof of Differentiability.}
By general properties of the Stieltjes transform, the function $g(iy,\alpha)$
is differentiable with respect to $y$, with derivative
$$
\frac{\partial g}{\partial y}(iy,\alpha) = \int \frac{i}{(x-iy)^2} \, d\mu_{\V(\alpha)}(x) \,.
$$
It therefore follows by the same arguments as in the preceding paragraph
that $\frac{\partial g}{\partial y}(iy,\alpha)$ is continuous.

The argument for $\frac{\partial g}{\partial u}(iy,\alpha)$ is a bit longer,
and we confine ourselves to a rough sketch.
It is straightforward to see \pagebreak[2]
(e.g.\@ by using the additivity of the \emph{Voiculescu transform},
\linebreak[2] see e.g.\@ Corollary 5.7 in \cite{BercVoic})
that the Stieltjes transform $g(z,\alpha)$
is locally analytic in $(z,\alpha)$ around the point $(z_0,\alpha_0)$, 
for $\alpha_0 \ne 0$ fixed and $z_0 \in \mathbb C^+$ with $\im z_0$ sufficiently large.
Thus, we \emph{locally} have the power series expansions
\begin{align*}
   g(z,\alpha) 
&= \sum_{k=0}^{\infty} \sum_{j=0}^{\infty} c_{jk} (\alpha-\alpha_0)^j (z-z_0)^k
=: \sum_{k=0}^{\infty} c_k(\alpha) (z-z_0)^k \,,
\\
  \frac{\partial g}{\partial u}(z,\alpha) 
&= \sum_{k=0}^{\infty} \sum_{j=0}^{\infty} c_{jk} \, j (\alpha-\alpha_0)^{j-1} (z-z_0)^{k}
=: \sum_{k=0}^{\infty} \widetilde{c}_k(\alpha) (z-z_0)^k \,.
\end{align*}
Suppose that the bivariate power series converge on the set of all $(z,\alpha)$
with $|z-z_0| < \varepsilon$ and $|\alpha-\alpha_0| < \varepsilon$,
where $\varepsilon = \varepsilon(z_0,\alpha_0) > 0$. \pagebreak[1]

Let us investigate the growth of the coefficients,
and hence the radius of convergence,
of the \emph{univariate} power series in $z$.
Since 
$$
  |c_k(\alpha)| 
= \left| \frac{1}{k!} \frac{\partial^k}{\partial z^k} g(z_0,\alpha) \right|
= \left| \frac{1}{k!} \int \frac{k!}{(t-z)^{k+1}} \, d\mu_{\V(\alpha)}(dt) \right|
\leq \frac{1}{(\im z_0)^{k+1}} \,,
$$
$$
  |\widetilde{c}_k(\alpha)| 
= \left| \frac{1}{k!} \frac{\partial^k}{\partial z^k} \frac{\partial}{\partial u} g(z_0,\alpha) \right|
= \left| \frac{1}{k!} \frac{\partial}{\partial u} \frac{\partial^k}{\partial z^k} g(z_0,\alpha) \right|
\leq \frac{k+1}{(\im z_0)^{k+1}} \,,
$$
where the last estimate follows from Equation \eqref{eq:newdifference},
the two power series have radius of convergence $\geq \im z_0$.
Furthermore, since the functions $c_k(\alpha)$ and $\widetilde{c}_k(\alpha)$ 
are continuous in $\alpha$
and, for any fixed $\delta > 0$, the two power series converge uniformly
for $|z-z_0| < (\im z_0) - \delta$ and $|\alpha - \alpha_0| < \varepsilon - \delta$,
they~represent continuous functions $f$ and $\widetilde{f}$
defined on the set $B(z_0,\im z_0) \times B(\alpha_0,\varepsilon)$.
(Here, $B(z,r)$ denotes the open ball of radius $r$ around the point $z$.)
Thus, again by uniform convergence, we may conclude that 
the~function $f$ is continuously differentiable with respect to $u$.
\pagebreak[2] Since the function $f(z,\alpha)$ coincides with $g(z,\alpha)$
on the set $B(z_0,\varepsilon) \times B(\alpha_0,\varepsilon)$ (by~construction)
and therefore on the set $B(z_0,\im z_0) \times B(\alpha_0,\varepsilon)$
(by analytic continuation in $z$), this proves our claim about
the~existence and the~continuity of $\frac{\partial g}{\partial u}(iy,\alpha)$.

\paragraph{Proof of \eqref{eq:derivatives}.}
Since for fixed $\alpha$, $g(z,\alpha)$ is a non-constant analytic function 
in a certain open set containing the upper imaginary half-axis,
there exists an at most countable set $Y_\alpha = \{ y_1,y_2,y_3,\hdots \}$
such that for all $y \not\in Y_\alpha$,
\begin{align}
\label{deriv41}
g(iy,\alpha) \ne \frac{i}{2|\alpha|}
\quad\text{and}\quad
\frac{\partial g}{\partial y}(iy,\alpha) \ne 0 \,.
\end{align}
For $y \not\in Y_\alpha$,
differentiating the second equation in \eqref{mainequation} with respect to $y$,
we get
\begin{multline}
\label{deriv42}
-\frac{\partial g}{\partial y}(iy,\alpha) 
= 
\widetilde S_{\V}'\Big({-}(1+iyg(iy,\alpha)-\tfrac12+\tfrac12\sqrt{1+4|\alpha|^2g(iy,\alpha)^2}\,)\Big) \\
\times \left[ {-}ig(iy,\alpha)-iy\tfrac{\partial g}{\partial y}(iy,\alpha)-\frac{2|\alpha|^2g(iy,\alpha)\tfrac{\partial g}{\partial y}(iy,\alpha)}{\sqrt{1+4|\alpha|^2g(iy,\alpha)^2}} \right] \,,
\end{multline}
where $\widetilde S_\V(z) := z S_\V(z)$.

\pagebreak[2]

Now fix $(iy_0,\alpha_0)$ with $\alpha_0 \ne 0$ and $y_0 \not\in Y_{\alpha_0}$.
Then there exists a small neighborhood $N$
such that for $(iy,\alpha) \in N$, we have \eqref{deriv41}, \eqref{deriv42}, and
$$
F(g(iy,\alpha),iy,\alpha) = 0 \,,
$$
where
$$
F(\zeta,iy,\alpha) := \zeta + \widetilde S_{\V}\Big({-}(1+iy\zeta-\tfrac12+\tfrac12\sqrt{1+4|\alpha|^2\zeta^2}\,)\Big)
$$
and the sign of the square-root is constant in $N$.
Note that $F$ is an analytic function with
\begin{multline}
\label{deriv43}
\frac{\partial F}{\partial \zeta}(g(iy,\alpha),iy,\alpha)
=
1 + \widetilde S_{\V}'\Big({-}(1+iyg(iy,\alpha)-\tfrac12+\tfrac12\sqrt{1+4|\alpha|^2g(iy,\alpha)^2}\,)\Big) \\
\times \left[ -iy-\frac{2|\alpha|^2g(iy,\alpha)}{\sqrt{1+4|\alpha|^2g(iy,\alpha)^2}} \right] \,.
\end{multline}
Moreover, comparing \eqref{deriv42} and \eqref{deriv43}, we see that
$$
\frac{\partial F}{\partial \zeta}(g(iy,\alpha),iy,\alpha)
=
\frac{ig(iy,\alpha)}{\frac{\partial g}{\partial y}(iy,\alpha)} \widetilde S_{\V}'\Big({-}(1+iyg(iy,\alpha)-\tfrac12+\tfrac12\sqrt{1+4|\alpha|^2g(iy,\alpha)^2}\,)\Big)
\ne 0 \,.
$$
It therefore follows from the implicit function theorem for real-analytic functions
that there exists a small neighborhood $\widetilde{N} \subset N$ of the point $(iy_0,\alpha_0)$
such that $g(iy,\alpha)$, the~solution to the equation $F(\zeta,iy,\alpha) = 0$, 
is analytic on~$\widetilde N$, with gradient
$$
\frac{\partial g}{\partial(y,u,v)} = - \left( \frac{\partial F}{\partial \zeta} \right)^{-1} \frac{\partial F}{\partial(y,u,v)} \,.
$$
Equation~\eqref{eq:derivatives} now follows by a straightforward calculation.

\paragraph{Existence of continuous extension.}
This follows from Lemma \ref{lem:limit}
and the subsequent Remark \ref{rem:limit}.

\paragraph{Proof of Assumption \ref{lipschitz}.}
Let $K$ be a compact set as in Assumption \ref{lipschitz},
and let $\alpha,\beta \in K$.
For fixed $C > 0$, consider the function $f(y) := \log(1+y^2/C^2)$.
Since $f'(y) = \frac{2y}{C^2+y^2}$, this function satisfies
the conditions of Equation \eqref{eq:newdifference},
and we obtain
$$
\left| \int \log(1+y^2/C^2) d\mu_{\V(\alpha)}(y) - \int \log(1+y^2/C^2) d\mu_{\V(\beta)}(y) \right|
\leq |\alpha-\beta| / C \,,
$$
from which Assumption \ref{lipschitz} follows immediately.
\end{proof}

\pagebreak[2]


\paragraph{Acknowledgement.}
We thank Peter J.\@ Forrester for pointing out some relevant references.

\end{document}